\documentclass[10pt]{article}
\usepackage{amsfonts,exscale}
\usepackage{mleftright}
 \usepackage{srcltx,hyperref}
\usepackage[curve,matrix,arrow,cmtip]{xy}
 \bibliographystyle{amsalpha}


\newdir^{ (}{{}*!/-3pt/\dir^{(}}    
\newdir^{  }{{}*!/-3pt/\dir^{}}    
\newdir_{ (}{{}*!/-3pt/\dir_{(}}    
\newdir_{  }{{}*!/-3pt/\dir_{}}    
 
\def\?{\ 
{\bf\color{red}???}\ 
\immediate\write16{}
\immediate\write16{Warning: There was still a question mark . . . }
\immediate\write16{}}

\long\def\forget#1{}

\usepackage{color}
\newcounter{commentcounter}

\usepackage{geometry}
\geometry{body={17.3cm,24cm}}

\usepackage{amsmath}
\usepackage{amssymb}
\usepackage{amscd}
\usepackage{textcomp}


\newcommand{\DS}{\displaystyle}
\newcommand{\TS}{\textstyle}
\newcommand{\SC}{\scriptstyle}
\newcommand{\SSC}{\scriptscriptstyle}


\usepackage{amsthm}
\theoremstyle{plain}
\newtheorem{Lemma}{Lemma}[section]
\newtheorem{Theorem}[Lemma]{Theorem}
\newtheorem{Proposition}[Lemma]{Proposition}
\newtheorem{Corollary}[Lemma]{Corollary}
\newtheorem{Conjecture}[Lemma]{Conjecture}

\theoremstyle{definition}
\newtheorem{Definition}[Lemma]{Definition}
\newtheorem{Example}[Lemma]{Example}
\newtheorem{Remark}[Lemma]{Remark}
\newtheorem{Notation}[Lemma]{Notation}
\newtheorem{Situation}[Lemma]{Situation}
\newtheorem{Convention}[Lemma]{Convention}
\newtheorem{Point}[Lemma]{}


\def\theenumi{(\alph{enumi})}

\def\p@enumii{\theenumi}


\newcommand{\Art}{{\rm Art}}

\newcommand{\Betti}{{\rm Betti}}
\newcommand{\dR}{{\rm dR}}

\DeclareMathOperator{\QHom}{QHom}
\DeclareMathOperator{\QEnd}{ QEnd}

\DeclareMathOperator{\Aut}{Aut}

\DeclareMathOperator{\End}{End}

\DeclareMathOperator{\Frob}{Frob}
\DeclareMathOperator{\Fil}{Fil}

\DeclareMathOperator{\Gal}{Gal}
\DeclareMathOperator{\GL}{GL}

\DeclareMathOperator{\Koh}{H}

\DeclareMathOperator{\Hom}{Hom}

\renewcommand{\Im}{\FI\Fm}

\DeclareMathOperator{\Ind}{Ind}

\DeclareMathOperator{\Lie}{Lie}

\DeclareMathOperator{\Nm}{Norm}

\DeclareMathOperator{\Pic}{Pic}
\DeclareMathOperator{\PGL}{PGL}

\DeclareMathOperator{\Res}{Res}

\DeclareMathOperator{\Spec}{Spec}

\newcommand{\T}{{\rm T}}
\DeclareMathOperator{\Var}{V}
\DeclareMathOperator{\Quot}{Quot}
\DeclareMathOperator{\Frac}{Frac}

\DeclareMathOperator{\aff}{aff}
\newcommand{\alg}{{\rm alg}}

\DeclareMathOperator{\charakt}{char}

\DeclareMathOperator{\coker}{coker}

\DeclareMathOperator{\di}{div}
\newcommand{\et}{{\rm\acute{e}t}}
\newcommand{\fppf}{{\it fppf\/}}

\newcommand{\id}{{\rm id}}

\renewcommand{\mod}{\;{\rm mod}\;}

\DeclareMathOperator{\ord}{ord}

\newcommand{\red}{{red}}

\newcommand{\rig}{{\rm rig}}
\DeclareMathOperator{\rk}{rk}
\newcommand{\sep}{{\rm sep}}

\DeclareMathOperator{\Cant}{\BC\,{-antilinear}}


\let\setminus\smallsetminus

\newcommand{\es}{\enspace}

\newcommand{\notdiv}{\nmid}
\newcommand{\dual}{^\vee}

\newcommand{\mal}{^{\SSC\times}}
\newcommand{\fdot}{\;\,_{_{\bullet}}\,\;}
\newcommand{\dbl}{{\mathchoice{\mbox{\rm [\hspace{-0.15em}[}}
                              {\mbox{\rm [\hspace{-0.15em}[}}
                              {\mbox{\scriptsize\rm [\hspace{-0.15em}[}}
                              {\mbox{\tiny\rm [\hspace{-0.15em}[}}}}
\newcommand{\dbr}{{\mathchoice{\mbox{\rm ]\hspace{-0.15em}]}}
                              {\mbox{\rm ]\hspace{-0.15em}]}}
                              {\mbox{\scriptsize\rm ]\hspace{-0.15em}]}}
                              {\mbox{\tiny\rm ]\hspace{-0.15em}]}}}}
\newcommand{\dpl}{{\mathchoice{\mbox{\rm (\hspace{-0.15em}(}}
                              {\mbox{\rm (\hspace{-0.15em}(}}
                              {\mbox{\scriptsize\rm (\hspace{-0.15em}(}}
                              {\mbox{\tiny\rm (\hspace{-0.15em}(}}}}
\newcommand{\dpr}{{\mathchoice{\mbox{\rm )\hspace{-0.15em})}}
                              {\mbox{\rm )\hspace{-0.15em})}}
                              {\mbox{\scriptsize\rm )\hspace{-0.15em})}}
                              {\mbox{\tiny\rm )\hspace{-0.15em})}}}}
\newcommand{\invlim}[1][]{\ifthenelse{\equal{#1}{}}
{\DS \lim_{\longleftarrow}}
{\DS \lim_{\underset{#1}{\longleftarrow}}}
}
\newcommand{\dirlim}[1][]{\ifthenelse{\equal{#1}{}}
{\DS \lim_{\longrightarrow}}
{\DS \lim_{\underset{#1}{\longrightarrow}}}
}
\newcommand{\ul}[1]{{\underline{#1}}}
\newcommand{\ol}[1]{{\overline{#1}}}

\newcommand{\wt}[1]{{\widetilde{#1}}}
\newcommand{\wh}[1]{{\widehat{#1}}}

\usepackage{xcolor}
\usepackage{graphicx}
\newcommand{\Bmu}{\mbox{$\raisebox{-0.59ex}{$l$}\hspace{-0.16em}\mu\hspace{-0.91em}\raisebox{-0.95ex}{\scalebox{2}{$\color{white}.$}}\hspace{-0.59em}\raisebox{+0.78ex}{\scalebox{2}{$\color{white}.$}}\hspace{0.46em}$}{}}

\newcommand{\BOne} {{\mathchoice{\hbox{\rm1\kern-2.7pt l\kern.9pt}}
                              {\hbox{\rm1\kern-2.7pt l\kern.9pt}}
                              {\hbox{\scriptsize\rm1\kern-2.3pt l\kern.4pt}}
                              {\hbox{\scriptsize\rm1\kern-2.4pt l\kern.5pt}}}}
\newcommand{\BA}{{\mathbb{A}}}
\newcommand{\BB}{{\mathbb{B}}}
\newcommand{\BC}{{\mathbb{C}}}

\newcommand{\BF}{{\mathbb{F}}}
\newcommand{\BG}{{\mathbb{G}}}

\newcommand{\BN}{{\mathbb{N}}}

\newcommand{\BP}{{\mathbb{P}}}
\newcommand{\BQ}{{\mathbb{Q}}}
\newcommand{\BR}{{\mathbb{R}}}

\newcommand{\BZ}{{\mathbb{Z}}}

\newcommand{\sC}{{\mathscr{C}}}

\newcommand{\sG}{{\mathscr{G}}}

\usepackage{mathrsfs}

\newcommand{\CA}{{\cal{A}}}

\newcommand{\CC}{{\cal{C}}}

\newcommand{\CG}{{\cal{G}}}
\newcommand{\CH}{{\cal{H}}}

\newcommand{\CJ}{{\cal{J}}}

\newcommand{\CL}{{\cal{L}}}
\newcommand{\CM}{{\cal{M}}}

\newcommand{\CO}{{\cal{O}}}
\newcommand{\CP}{{\cal{P}}}

\newcommand{\CR}{{\cal{R}}}

\newcommand{\CX}{{\cal{X}}}

\newcommand{\FC}{{\mathfrak{C}}}
\newcommand{\FD}{{\mathfrak{D}}}

\newcommand{\FI}{{\mathfrak{I}}}

\newcommand{\FP}{{\mathfrak{P}}}

\newcommand{\Fa}{{\mathfrak{a}}}

\newcommand{\Fd}{{\mathfrak{d}}}
\newcommand{\Ff}{{\mathfrak{f}}}

\newcommand{\Fp}{{\mathfrak{p}}}
\newcommand{\Fq}{{\mathfrak{q}}}
\newcommand{\Fm}{{\mathfrak{m}}}

\def\longto{\longrightarrow}
\def\into{\hookrightarrow}
\let\onto\twoheadrightarrow

\def\isoto{\stackrel{}{\mbox{\hspace{1mm}\raisebox{+1.4mm}{$\SC\sim$}\hspace{-3.5mm}$\longrightarrow$}}}

\newbox\mybox
\def\arrover#1{\mathrel{
       \setbox\mybox=\hbox spread 1.4em{\hfil$\scriptstyle#1$\hfil}
       \vbox{\offinterlineskip\copy\mybox
             \hbox to\wd\mybox{\rightarrowfill}}}}

\def\ulCC{{\underline{\CC\!}\,}}

\def\ulG{{\underline{G}}}
\def\ulM{{\underline{M\!}\,}}
\def\ulN{{\underline{N\!}\,}}
\def\ulHM{{\underline{\hat M\!}\,}}
\def\ulHN{{\underline{\hat N\!}\,}}

\newcommand{\tminus}[1]{\ell^{\SSC -}_{#1}}
\newcommand{\tplus}[1]{\ell^{\SSC +}_{#1}}
\newcommand{\ttplus}[1]{\tilde\ell^{\SSC +}_{#1}}
\newcommand{\tplusminus}{\ell}

\newcommand{\AChar}{A\text{\rm-char}}

\renewcommand{\phi}{\varphi}
\renewcommand{\epsilon}{\varepsilon}

\begin{document}
\author{Urs Hartl and Rajneesh Kumar Singh}
\title{Product Formulas for Periods of CM Abelian Varieties and the Function Field Analog}
\maketitle

\begin{abstract}
We survey Colmez's theory and conjecture about the Faltings height and a product formula for the periods of abelian varieties with complex multiplication, along with the function field analog developed by the authors. In this analog, abelian varieties are replaced by Drinfeld modules and $A$-motives. We also explain the necessary background on abelian varieties, Drinfeld modules and $A$-motives, including their cohomology theories and comparison isomorphisms and their theory of complex multiplication. 

\noindent
{\it Mathematics Subject Classification (2000)\/}: 
11G09,  
(11G15, 
11R42)  
\end{abstract}
\section{Introduction}\label{SectIntroduction}

One purpose of this survey is to give a brief introduction to abelian varieties with complex multiplication over number fields, some of their cohomology theories with comparison isomorphisms, and to explain Colmez's conjectures~\cite{ColmezPeriods} on a product formula for the periods and on the Faltings height of these abelian varieties. The second purpose is to explain the function field analog of this theory. There abelian varieties are replaced by Drinfeld modules \cite{Drinfeld,Goss} and their higher dimensional generalizations, so-called $A$-motives. So we give a brief introduction to Drinfeld modules and $A$-motives with complex multiplication, some of their cohomology theories with comparison isomorphisms, and explain the conjecture~\cite{HartlSingh} of the authors on periods of these $A$-motives. We point out that recently other surveys on Colmez's conjectures were written by Gross~\cite{GrossSurvey}, by Yuan~\cite{YuanSurvey}, and by Gao, van K{\"a}nel and Mocz \cite{Zhang20} based on a lecture of Shou-Wu Zhang. However, these do not discuss the function field analog that we are discussing in Part~\ref{Amot}. In \cite{GrossSurvey} it is explained how Colmez's conjectures generalize the Chowla-Selberg formula. And in \cite{YuanSurvey} the consequences of the recently proved averaged Colmez Conjecture for the Andr\'e-Oort Conjecture are explained. In \cite{Zhang20} in addition to these aspects, the proof of Yuan and Shou-Wu Zhang \cite{YuanZhang15} of the averaged Colmez conjecture, and the work of Yun and Wei Zhang \cite{YunZhang,YunZhang2} on the Gross-Zagier formula for intersection numbers in the Chow group of moduli spaces of $\PGL_2$-shtukas is discussed.

\begin{Point}
We begin with a review of product formulas for global fields. For a rational number  $\alpha\in\BQ\mal$, all of its absolute values $|\alpha|_v$ are linked by the \emph{product formula} $\prod_v |\alpha|_v = 1$ where only finitely many factors are different from $1$. Here $v$ runs through the set $\CP$ of \emph{places} of $\BQ$ consisting of all prime numbers $p$ together with $\infty$, and the \emph{$p$-adic absolute values} $|\,.\,|_p$ are normalized such that $|p|_p=p^{-1}$. This product formula extends to \emph{number fields}, i.e.\ finite extensions of $\BQ$, as follows. Let $\BQ^\alg$ be the algebraic closure of $\BQ$ in $\BC$, and if $p$ is a prime number let $\BQ_p$ be the completion of $\BQ$ with respect to $|\,.\,|_p$ and let $\BQ_p^\alg$ be an algebraic closure of $\BQ_p$. The $p$-adic absolute value $|\,.\,|_p$ extends canonically to $\BQ_p^\alg$. We denote by $|\,.\,|_\infty$ the usual absolute value on $\BC$. In addition to the embedding $\BQ^\alg\subset\BC$ we fix once and for all an embedding of $\BQ^\alg$ in $\BQ_p^\alg$ for every $p$ and consider the induced absolute value $|\,.\,|_p$ on $\BQ^\alg$. For a finite field extension $K$ of $\BQ$ we set $H_K:=\Hom_\BQ(K, \BQ^\alg)$. Then the product formula \cite[Chapter~V, \S\,1, bottom of page~99]{LangANT} for $0\ne\alpha\in K$ can be written  as 
\begin{equation}\label{EqProdFormulaAbVar}
\prod_{p\in \CP}\prod_{\eta\in H_K}|\eta(\alpha)|_p = 1.
\end{equation}
\end{Point}

\begin{Point}\label{Point1.2}
The product formula also holds for \emph{function fields}. More precisely, let $Q$ be a finitely generated field of transcendence degree one over the finite field $\BF_p=\BZ/p\BZ$. Let $\BF_q:=\{a\in Q\colon a\text{ is algebraic over }\BF_p\}\subset Q$ be the \emph{field of constants}, see \cite[Definition~2.1.3]{VillaSalvador}, which is a finite field with $q$ elements. Then $Q$ is the field of rational functions on a smooth, projective curve $C$ over $\BF_q$ by \cite[Chapter~7.3, Proposition~3.13]{Liu_AlgGeom} which is geometrically irreducible by \cite[IV$_2$, 4.3.1 and Proposition~4.5.9c)]{EGA_IV2}. Every closed point $v$ of $C$ is called a \emph{place}. We denote its residue field by $\BF_v$ and set $q_v:=\#\BF_v=q^{[\BF_v:\BF_q]}$. The local ring $\CO_{C,v}$ is a discrete valuation ring by \cite[Proposition~1.1]{Silverman86}. We denote the corresponding valuation also by $v$ and the corresponding absolute value on $Q$ by $|\,.\,|_v$. Both are normalized such that $v(z_v)=1$ and $|z_v|_v=q_v^{-1}$ for a uniformizing parameter $z_v\in Q$ at $v$. Then every $a\in Q\setminus\{0\}$ satisfies $\prod_v|a|_v=1$ where again only finitely many factors are different from $1$, see \cite[Chapter~II, \S\,12, Theorem]{CasselsFroehlich}. This can be reinterpreted in terms of \emph{divisors} on $C$. Namely, since $|a|_v= q_v^{-v(a)}$ we have $-\log\prod_v|a|_v=\sum_v v(a)\cdot[\BF_v:\BF_q]\cdot\log q=0$, because $\sum_v v(a)\cdot[\BF_v:\BF_q]$ is the degree of the principal divisor of $a$ which is zero, see \cite[Corollary~3.2.9]{VillaSalvador}.

Let $Q^\alg$ be a fixed algebraic closure of $Q$. For every place $v$ of $Q$ let $Q_v$ be the completion of $Q$ with respect to $|\,.\,|_v$ and let $Q_v^\alg$ be an algebraic closure of $Q_v$. The $v$-adic absolute value $|\,.\,|_v$ extends canonically to $Q_v^\alg$. We fix once and for all an embedding of $Q^\alg$ in $Q_v^\alg$ for every $v$ and consider the induced absolute value $|\,.\,|_v$ on $Q^\alg$. For a finite field extension $K$ of $Q$ we set $H_K:=\Hom_Q(K, Q^\alg)$. Then by transformations of equations as in \cite[Chapter~V, \S\,1, bottom of page~99]{LangANT} the product formula \cite[Chapter~II, \S\,12, Theorem]{CasselsFroehlich} for $0\ne a\in K$ can be written  as 
\begin{equation}\label{EqProdFormulaAMot}
\prod_{\text{all }v}\prod_{\;\eta\in H_K}|\eta(a)|_v = 1.
\end{equation}
\end{Point}

\begin{Point}\label{Point1.3}
In \cite{ColmezPeriods} P.~Colmez considers product formulas for periods of abelian varieties. Let $X$ be an abelian variety defined over a number field $K$ with complex multiplication by the ring of integers in a CM-field $E$ and of CM-type $\Phi$, see Section~\ref{SectCMAbVar} for explanations. Assume that $K$ contains $\psi(E)$ for every $\psi\in H_E$. For a $\psi\in H_E$ let $\omega_\psi\in\Koh^1_{\dR}(X,K)$ be a non-zero cohomology class such that $b^*\omega_\psi=\psi(b)\cdot\omega_\psi$ for all $b\in E$, see Section~\ref{SectDRAbVar}. For every embedding $\eta\colon K\into\BQ^\alg$, let $X^\eta:=X\times_{\Spec K,\Spec\eta}\Spec\eta(K)$ and $\omega_\psi^\eta\in\Koh^1_{\dR}(X^\eta,\eta(K))$ be deduced from $X$ and $\omega_\psi$ by base extension. Let $(u_\eta)_\eta\in\prod_{\eta\in H_K}\Koh_1(X^\eta(\BC),\BZ)$ be a family of cycles compatible with complex conjugation, see Section~\ref{SectBettiAbVar}. Let $v$ be a place of $\BQ$. If $v=\infty$ the de Rham isomorphism between Betti and de Rham cohomology (Theorem~\ref{ThmDeRhamIsom}) yields a complex number $\int_{u_\eta}\omega_\psi^\eta$ and its absolute value $\bigl|\int_{u_\eta}\omega_\psi^\eta\bigr|_\infty\in\BR$. If $v$ corresponds to a prime number $p\in\BZ$, Colmez~\cite{ColmezPeriods} associates a period $\int_{u_\eta}\omega_\psi^\eta$ in Fontaine's $p$-adic period field $\BB_{p,\dR}$, see Notation~\ref{NotAinf}, and an absolute value $\bigl|\int_{u_\eta}\omega_\psi^\eta\bigr|_v\in\BR$. He considers the product $\prod_v\prod_{\eta\in H_K}\bigl|\int_{u_\eta}\omega_\psi^\eta\bigr|_v$ and (after some modifications which we explain in Section~\ref{SectColmezConjAbVar}) conjectures that this product evaluates to~$1$; see Conjecture~\ref{ConjColmezAbVar} for the precise formulation. This conjecture implies a conjectural formula for the Faltings height of a CM abelian variety in terms of the logarithmic derivatives at $s=0$ of certain Artin $L$-functions. Colmez proves the conjectures when $E$ is an abelian extension of $\BQ$, see Theorem~\ref{ThmColmezAbelianE}. On the way, he computes $\prod_{\eta\in H_K}\bigl|\int_{u_\eta}\omega_\psi^\eta\bigr|_v$ at a finite place $v$ in terms of the local factor at $v$ of the Artin $L$-series associated with an Artin character $a^0_{E,\psi,\Phi}\colon\Gal(\BQ^\alg/\BQ)\to\BC$ that only depends on $E$, $\psi$ and $\Phi$ but not on $X$ and $v$; see Theorem~\ref{ThmColmezLocal}. There has been further progress on Colmez's conjecture on which we report in Section~\ref{SectColmezConjAbVar}.

We point out that Colmez's formulation generalizes various previous results. Namely, when $[E:\BQ]=2$ his Theorem~\ref{ThmColmezAbelianE} is equivalent to the formula proved by Lerch~\cite{Lerch1897} and rediscovered by Chowla-Selberg~\cite{ChowlaSelberg67} 
\begin{equation}\label{EqLerch}
\frac{\zeta'_E(0)}{\zeta_E(0)}\;=\;\frac{1}{12\,\#\Pic(\CO_E)}\sum_{[I]\in\Pic(\CO_E)}\log\bigl(\Delta(I)\Delta(I^{-1})\bigr)\,,
\end{equation}
where $\Delta(I)$ is the modular discriminant of the lattice $I\subset E\subset\BC$. A new geometric proof of \eqref{EqLerch} was given by Gross~\cite{Gross78}, who together with Deligne conjectured a generalization to a formula for the archimedean periods of certain CM motives up to multiplication by algebraic numbers. Anderson~\cite{AndersonLogarithmic} reformulated the Gross-Deligne conjecture in terms of the logarithmic derivative of an $L$-function at $s = 0$ and proved it when the CM field $E$ is abelian over $\BQ$. Colmez added the consideration of the non-archimedean periods and thus removed the ambiguity of the algebraic factors in Anderson's theorem.
\end{Point}

\begin{Point}
There is a beautiful analog to the theory of elliptic curves and abelian varieties in the ``Arithmetic of function fields''. Namely, Drinfeld~\cite{Drinfeld} invented the analog of elliptic curves under the name ``elliptic modules''. These are today called \emph{Drinfeld modules}, see Section~\ref{SectBasicDefAMot}. Since then, the arithmetic of function fields has evolved into an equally rich parallel world to the arithmetic of number fields. As higher dimensional generalizations of Drinfeld modules and analogs of abelian varieties, Anderson~\cite{Anderson} has defined \emph{abelian $t$-modules} and the dual notion of \emph{$t$-motives}, which are a kind of ``global Dieudonn\'e-modules'' for abelian $t$-modules, see Remark~\ref{RemAndersonAModule}. They can be slightly generalized to \emph{$A$-motives} as follows. In the notation of \S\,\ref{Point1.2} let $\infty$ be a fixed closed point on $C$ and let $A=\Gamma(C\setminus\{\infty\},\CO_C)=\{a\in A\colon v(a)\ge0\text{ for all }v\ne\infty\}$. Let $K\subset Q^\alg$ be a finite field extension of $Q$. We write $A_K:=A\otimes_{\BF_q}K$ and consider the endomorphism $\sigma^*:=\id_A\otimes\Frob_{q,K}$ of $A_K$, where $\Frob_{q,K}(b)=b^q$ for $b\in K$. For an $A_K$-module $M$ we set $\sigma^*M:=M\otimes_{A_K,\sigma^*}A_K$ and for a homomorphism $f\colon M\to N$ of $A_K$-modules we set $\sigma^*f:=f\otimes\id_{A_K}\colon\sigma^*M\to\sigma^*N$. Let $\gamma\colon A\to K$ be the inclusion $A\subset Q\subset K$, and set $\CJ:=(a\otimes1-1\otimes\gamma(a)\colon a\in A)\subset A_K$. Then $\gamma$ can be recovered as the homomorphism $A\to A_K/\CJ=K$. 
\end{Point}

\begin{Definition}
An  \emph{(effective) $A$-motive of rank $r$ and dimension $d$ over $K$} is a pair $\ulM=(M,\tau_M)$ consisting of a locally free $A_K$-module $M$ of rank $r$ and an $A_K$-homomorphism $\tau_M\colon\sigma^*M \to M$ such that
\begin{enumerate} 
\item $\dim_K(\coker \tau_M)=d$.
\item $(a-\gamma(a))^d \cdot \coker \tau_M =0$ for all $a\in A$.
\end{enumerate}
 We write $\rk\ulM:=r$ and $\dim\ulM:=d$. 
\end{Definition}

$A$-motives possess cohomology realizations in analogy with abelian varieties, see Section~\ref{UniCoh}. 

\begin{Point}
Let us now explain the analog of Colmez's theory from \S\,\ref{Point1.3} which was developed by the authors in \cite{HartlSingh}. Let $\ulM$ be a uniformizable $A$-motive defined over a finite extension $K\subset Q^\alg$ of $Q$ with complex multiplication by the ring of integers in a CM-algebra $E$ and of CM-type $\Phi$, see Sections~\ref{SectBettiAMot} and \ref{SectCMAMot} for explanations. Assume that $K$ contains $\psi(E)$ for every $\psi\in H_E:=\Hom_Q(E,Q^\alg)$. For a $\psi\in H_E$ let $\omega_\psi\in\Koh^1_{\dR}(\ulM,K\dbl z-\zeta\dbr)$ be a non-zero cohomology class such that $b^*\omega_\psi=\psi(b)\cdot\omega_\psi$ for all $b\in E$, see Section~\ref{SectDRAMot}. For every embedding $\eta\colon K\into Q^\alg$, let $\ulM^\eta:=\ulM\otimes_{K,\eta}\eta(K)$ and $\omega_\psi^\eta\in\Koh^1_{\dR}(\ulM^\eta,\eta(K)\dbl z-\zeta\dbr)$ be deduced from $\ulM$ and $\omega_\psi$ by base extension. Let $(u_\eta)_\eta\in\prod_{\eta\in H_K}\Koh_{1,\Betti}(\ulM^\eta,A)$ be a family of cycles, see Section~\ref{SectBettiAMot}. Let $v$ be a place of $Q$. If $v=\infty$ the comparison isomorphism between Betti and de Rham cohomology (Theorem~\ref{PeriodIso}) yields an element $\int_{u_\eta}\omega_\psi^\eta$ in the completion $\BC_\infty$ of $Q_\infty^\alg$ with respect to $|\,.\,|_\infty$ and its absolute value $\bigl|\int_{u_\eta}\omega_\psi^\eta\bigr|_\infty\in\BR$. If $v$ corresponds to a maximal ideal of $A$, the period isomorphism between $v$-adic and de Rham cohomology (Theorem~\ref{ThmHv,dR}) gives a period $\int_{u_\eta}\omega_\psi^\eta$ in the analog $\BC_v\dpl z_v-\zeta_v\dpr$ of Fontaine's $p$-adic period field $\BB_{p,\dR}$ and an absolute value $\bigl|\int_{u_\eta}\omega_\psi^\eta\bigr|_v\in\BR$, see Definition~\ref{DefNorm}. We consider the product $\prod_v\prod_{\eta\in H_K}\bigl|\int_{u_\eta}\omega_\psi^\eta\bigr|_v$ and (after some modifications analogous to Colmez's which we explain in Section~\ref{SectColmezConjAMot}) we conjecture that this product evaluates to~$1$; see Conjecture~\ref{ConjColmezAMot} for the precise formulation. In \cite{HartlSingh} we have computed $\prod_{\eta\in H_K}\bigl|\int_{u_\eta}\omega_\psi^\eta\bigr|_v$ at all finite places $v\ne\infty$ in terms of the local factor at $v$ of the Artin $L$-series associated with an Artin character $a^0_{E,\psi,\Phi}\colon\Gal(Q^\alg/Q)\to\BC$ that only depends on $E$, $\psi$ and $\Phi$ but not on $\ulM$ and $v$; see Theorem~\ref{ThmValueAtV}. 

If $\ulM$ is the $A$-motive associated with a Drinfeld module $\ulG$, then Conjecture~\ref{ConjColmezAMot} is equivalent to a formula for the Taguchi height (Definition~\ref{DefTagHeight}) of $\ulG$ in terms of the logarithmic derivatives at $s=0$ of an Artin $L$-function. This formula was established by Fu-Tsun Wei~\cite{Wei20} by first proving the function field analogs of Kronecker's limit theorem and Lerch's formula \ref{EqLerch}, see Theorem~\ref{ThmWei} below. Previously, formulas of Chowla-Selberg type expressing the periods at $\infty$ of CM Drinfeld modules in terms of $\Gamma$-values were obtained by Thakur~\cite{Thakur91} for certain CM-fields. Also when proving his results in \cite{AndersonLogarithmic} Anderson had considered the analogous case of $A$-motives, but without publishing his results. 

\end{Point}

This survey contains no new results, except for Theorems~\ref{ThmDriModIntegral} and \ref{ThmTagHeight} which give a formula for the Taguchi height of a Drinfeld module with complex multiplication. Our presentation summarizes material from various sources. But all shortcomings of the exposition are solely due to the authors. We describe the content of the individual sections of this survey. In Part~\ref{PartAbVar} we first define elliptic curves and abelian varieties and discuss their torsion points in Section~\ref{SectAVBasicDef}. Section~\ref{SectEmdonAbVar} is concerned with simple and semi-simple abelian varieties and their endomorphism rings. In Section~\ref{SectCohomAbVar} we review the singular (co-)homology, Tate modules and the $\ell$-adic (co-)homology, and the de Rham (co-)homology of abelian varieties and period isomorphisms between these (co-)homologies. The period isomorphism between $\ell$-adic and de Rham (co-)homology is explained in Section~\ref{SectPAdicPeriodIsom}. It is based on the concept of $p$-divisible groups, which we also review in this section. The definition of complex multiplication of abelian varieties, of CM-fields, CM-algebras and CM-types is explained in Section~\ref{SectCMAbVar}. A short review of the Faltings height of an abelian variety fills Section~\ref{SectFaltingsHeight}. Finally, in Section~\ref{SectColmezConjAbVar} we discuss Colmez's conjecture alluded to in \S\,\ref{Point1.3} above.

In Part~\ref{Amot} we discuss the analog of Colmez's theory in the ``Arithmetic of function fields''. We define Drinfeld modules and $A$-motives in Section~\ref{SectBasicDefAMot}, and isogenies and semi-simplicity in Section~\ref{SectEndomAMot}, where we also describe the endomorphism rings of semi-simple $A$-motives. The analytic theory of Drinfeld modules via lattices is explained in Section~\ref{SectAnalytTh}. Section~\ref{SectTorsDriMod} is devoted to torsion points and Tate modules of Drinfeld modules. In Section~\ref{UniCoh} we review the singular (co-)homology, Tate modules and the $v$-adic (co-)homology, and the de Rham (co-)homology of $A$-motives and period isomorphisms between these (co-)homologies. The period isomorphism between $v$-adic and de Rham (co-)homology is explained in Section~\ref{SectLocalShtukas}. It is based on the concept of $z$-divisible local Anderson modules and local shtukas, which we also review in this section. In Section~\ref{SectCMAMot} we introduce the concept of complex multiplication of $A$-motives and of their CM-types. Section~\ref{SectTagHeight} contains a brief review of the Taguchi height of a Drinfeld module. Then in Section~\ref{SectColmezConjAMot} we present the theory of the authors on the product formula for periods of $A$-motives analogous to Colmez's conjecture. In the last Section~\ref{SectExample} we compute an interesting example for this product formula where $Q$ and $C$ have genus $1$.

\tableofcontents

\part{ Abelian Varieties and Elliptic Curves}\label{PartAbVar}

Our exposition of the theory of abelian varieties and elliptic curves follows \cite{Mumford70,Milne84,MilneAbVar,Silverman86,DiamondShurman}, which serve as background material for this article.

\section{Basic Definitions}\label{SectAVBasicDef}
\setcounter{equation}{0}

\begin{Notation}\label{NotNumberFld}
As usual we denote by $\BQ$ and $\BR$ the fields of rational and real numbers, respectively, by $\BZ$ the ring of integers and by $\BN_0$, respectively $\BN_{>0}$ the set of non-negative, respectively positive integers. By a \emph{place} of $\BQ$ we mean either $\infty$ or a maximal ideal $v=(p)\subset\BZ$ for a prime number $p\in\BN_{>0}$. It defines a normalized absolute value $|\,.\,|_v\colon\BQ\to\BR_{\ge0}$ given for $v=\infty$ by the usual absolute value $|x|_\infty=x$ if $x\ge0$ and $|x|_\infty=-x$ if $x\le0$, and for $v=(p)$ by the $p$-adic absolute value $|x|_v:=|x|_p=p^{-v_p(x)}$ where $v_p(x)=n$ if $x=p^n\tfrac{a}{b}$ with $a,b\in\BZ$ and $p\notdiv ab$. Let $\BQ_v$ be the completion of $\BQ$ with respect to the valuation $v$, that is $\BQ_\infty=\BR$ and $\BQ_v=\BQ_p$ for $v=(p)$. Let $\BQ_v^\alg$ be a fixed algebraic closure of $\BQ_v$ and let $\BC_v$ be the completion of $\BQ_v^\alg$ with respect to the canonical extension of the absolute value $|\,.\,|_v$ to $\BQ_v^\alg$. Note that $\BC_v$ is algebraically closed. It equals the field of complex numbers $\BC$ when $v=\infty$, and is usually denoted $\BC_p$ when $v=(p)$. We also fix an algebraic closure $\BQ^\alg$ of $\BQ$ and an embedding $\BQ^\alg\into \BQ_v^\alg$ for every place $v$ of $\BQ$. We let $\CO_{\BC_p}$ be the ring of integers of $\BC_p$.
\end{Notation}

\begin{Definition}
Let $K$ be an arbitrary field, let $K^\alg$ be a fixed algebraic closure and let $K^\sep$ be the separable closure of $K$ in $K^\alg$, and $\sG_K : = \Gal(K^\sep/K)$.  We mean by a (smooth) \emph{group variety} over $K$ an irreducible smooth separated scheme $G$ of finite type over $K$ with a group law $mult: G\times_K G \to G$, an inverse map $inv:G\to G$ and a $K$-rational point $0\in G(K)$, the identity element, such that $mult$ and $inv$ are morphisms of varieties satisfying the usual axioms, see \cite[Chapter~III, \S\,11]{Mumford70}. A morphism of group varieties  is a morphism of varieties which is also a homomorphism of groups.  

For a group variety $G$ over $K$, let $\Lie(G)=\T_0G$ be the tangent space to $G$ at the identity element $0$. It is also called the \emph{Lie algebra} of $G$. For every endomorphism $f$ of $G$ we let $\Lie(f)$ be the induced endomorphism of $\Lie G$.
\end{Definition}

\begin{Definition}
An {\emph{elliptic curve}} over a field $K$ is a smooth projective curve $E$ of genus 1, together with a distinguished point $0\in E(K)$. Every such can be written as a smooth projective plane curve which is the zero locus of an equation
\begin{equation}\label{EqWeierstrass}
Y^2 Z + a_1 XYZ + a_3 YZ^2= X^3+a_2 X^2Z+a_4XZ^2+a_6Z^3\quad \text{with } a_i\in K
\end{equation}
and with distinguished point $0=(0:1:0)$. It carries a group law making it into a commutative group variety  with identity element $0$  (see \cite{Silverman86, Husemoller04}).  
\end{Definition}

Let $E$ be an elliptic curve over $\BC$. Then $E(\BC)$    inherits a complex structure as a sub-manifold of $\BP^2(\BC)$.  It is a complex manifold (because $E$ is nonsingular) and compact (because it is closed in the compact space $\BP^2(\BC)$). It is connected and carries a commutative group structure. Therefore, $E$ is  a compact connected complex Lie group of dimension 1. Let $\T_0 E(\BC)$ be the tangent space of $E(\BC)$ at the identity element 0. It is also called the Lie algebra of $E(\BC)$ and denoted $\Lie E$. Then there is a unique homomorphism 
\[
\exp : \T_0 E(\BC) \to E(\BC)
\]
of complex Lie groups such that, for each $v \in \T_0 E(\BC), \ z \mapsto \exp(zv)$  is the one parameter subgroup\footnote{ For a complex Lie group  $G$, a one parameter subgroup of $G$  is a holomorphic homomorphism
$f: \BC\to G$. In complex analysis one proves that for every tangent vector $v$ to $G$ at $e$, there is a unique one-parameter subgroup $f_v: \BC \to G$ such that $f_v(0) = e$ and $(df_v)(1) =v$, see \cite[pp.~79 and 195]{Hochschild}.} $f_v: \BC \to E(\BC)$  corresponding to $v$. The differential of $\exp$ at $0$ is the identity map
\[ 
\T_0 E(\BC) \to \T_0 E(\BC),
\]
and the map $\exp$ is surjective, and its kernel is a  lattice $\Lambda=\Lambda(E)$ in the complex vector space $\T_0 E(\BC)$. So $E(\BC) \cong \BC/ \Lambda$ as a complex Lie group (for more details see \cite[Chapter~I, \S\,1]{Mumford70}). 

Now we explain how one associates an elliptic curve with a lattice. Let $\Lambda$ be  a lattice in $\BC$, that is, a discrete $\BZ$-module $\Lambda\subset \BC$ which is free of rank $2$. With $\Lambda$, we associate its  Weierstrass $\wp$-function 
\begin{align}\label{ElEq1}
\wp_{\Lambda}(z) = \frac{1}{z^2}+\sum_{\omega\in \Lambda\setminus\{0\}} \frac{1}{(z-\omega)^2}- \frac{1}{\omega^2}.
\end{align}
Then $\wp_{\Lambda}(z)$ is $\Lambda$-invariant and meromorphic on $\BC$ with poles of order $2$ at all $\omega\in\Lambda$. It satisfies the equation 
\begin{align}\label{ElEq2}
{\wp'_{\Lambda}(z)}^2 = 4\wp^3_{\Lambda}(z)-g_2 (\Lambda)\wp_{\Lambda}(z)-g_3(\Lambda)
\end{align}
where $g_2(\Lambda) = 60 G_4(\Lambda)$ and $g_3(\Lambda)  = 140 G_6(\Lambda)$, and 
\[
G_k(\Lambda) = \sum_{\omega\in {\Lambda-\{ 0\} }}\frac{1}{\omega^k},
\]
is the Eisenstein series of the lattice $\Lambda$ for $k>2$ even. $g_2$ and $g_3$ satisfy the relation 
\begin{align}\label{ElEq3}
\Delta : = g_2^3- 27g_3^2\neq 0.
\end{align}
This means $\big( \wp_{\Lambda}(z), \wp'_{\Lambda}(z)\big)\in \BC^2$ for $z\notin\Lambda$ is a point on the smooth affine curve $E^{\aff}_\Lambda$ (since $\Delta \neq 0$) with equation
\begin{align}
Y^2 = 4X^3- g_2X-g_3
\end{align}
and $\big( \wp_{\Lambda}(z): \wp'_{\Lambda}(z): 1\big) \in \BP^2(\BC)$ for all $z\in \BC$ is a point on the projective model of the above curve with equation
\begin{align}
Y^2 Z= 4X^3- g_2XZ^2-g_3Z^3.
\end{align}
The above yields a biholomorphic isomorphism of the complex torus $\BC/\Lambda$  with $E_\Lambda(\BC)$, well-defined through its restriction to $(\BC\setminus\Lambda)/\Lambda$  by $z \mapsto  \big( \wp_{\Lambda}(z): \wp'_{\Lambda}(z): 1\big)$. Note that $E_\Lambda(\BC)$ inherits a group structure from $\BC/\Lambda$, which may however be defined in purely algebraic terms on the algebraic curve $E_\Lambda$, and which turns $E_\Lambda$ into an elliptic curve. This is the elliptic curve associated with the lattice $\Lambda$. In fact, each elliptic curve $E$ over $\BC$ has the form $E = E_\Lambda$ for some lattice $\Lambda$ as above, and two such, $E_\Lambda$ and $E_{\Lambda'}$, are isomorphic as elliptic curves (i.e., as algebraic curves through some isomorphism preserving the group structures) if and only if $\Lambda'$ and $\Lambda$ are homothetic, that is, $\Lambda' = c\Lambda$ for some $c \in \BC\mal$.

\medskip 

\begin{Definition}\label{DefAbVar}
An \emph{abelian variety} over a field $K$ is a smooth projective connected group variety. The group law is automatically commutative; see \cite[Chapter~II, \S\,4, Corollary~2]{Mumford70}. Abelian varieties are higher-dimensional generalizations of elliptic curves, which in turn are abelian varieties of dimension $1$. 

A \emph{homomorphism} $f\colon X\to Y$ between abelian varieties over $K$ is a morphism of varieties over $K$ which is compatible with the group structure. The abelian group of homomorphisms $f\colon X\to Y$ over $K$ is denoted $\Hom_K(X,Y)$ and we write $\End_K(X)=\Hom_K(X,X)$. We also write $\QHom_K(X,Y)= \Hom_K(X,Y)\otimes_\BZ \BQ$ and $\QEnd_K(X) =\QHom_K(X,X)= \End_K(X)\otimes_\BZ \BQ$. For an abelian variety $X$ over $K$ and an integer $m\in \BZ$, there is an endomorphism $[m]\in\End_K(X)$ given as the multiplication by $m$ on the points. Thus  if $m > 0$, then
\[
[m](P) = P + P + \cdots + P \ \ (m\ \text{times})
\]
For $m< 0$, we set $[m](P) = [-m](-P)$, and we define  $[0](P) = 0$.

A morphism $f\colon X\to Y$ between abelian varieties is an \emph{isogeny} if it is surjective with finite kernel. Every isogeny is finite, flat, surjective, see \cite[Proposition~8.2]{Milne84}. The \emph{degree} of an isogeny  $ f: X\to Y$ is its degree as a regular map, i.e., the degree of the field extension $[K(X): f^\ast K(Y)]$. If there exists an isogeny $X \to Y$ defined over $K$ we will say that $X$ and $Y$ are \emph{isogenous over $K$} and write $X \approx_K Y$. Note that $\approx_K$ is an equivalence relation. In fact, for every isogeny $f:X\to Y$ there is an isogeny $g : Y \to X$ such that $g\circ f = [n]$ on $X$ for some $n\in  \BZ$, see \cite[Remark~6.5]{MilneAbVar}. This means that $f$ becomes invertible in $\QHom_K(X,Y)$, in the sense that $f^{-1}:=g\otimes\tfrac{1}{n}\in\QHom_K(Y,X)$ is its inverse.
\end{Definition}

\begin{Remark}\label{RemQHomAbVar}
\begin{enumerate}
\item 
Let $X$ and $Y$ be abelian varieties over $K$. If $X$ and $Y$ are isogenous over $K$ via an isogeny $f$, then
\[
\QEnd_K (X) \cong \QHom_K(X,Y) \cong \QEnd_K(Y),\ \ h\mapsto f\circ h \mapsto f\circ h\circ f^{-1}.
\]
More precisely, $\QHom_K(X,Y)$ is a free right $\QEnd_K(X)$-module of rank $1$ and a free left $\QEnd_K(Y)$-module of rank $1$. If $X$ and $Y$ are not isogenous then $\QHom_K(X,Y)=(0)$.
\item 
The homomorphism $[m]\in \End_K(X)$ is an isogeny of degree $m^{2g}$, where $g=\dim X$. It is always \'etale when  $K$ has characteristic zero, and when $ K$  has characteristic $ p > 0$ it is \'etale  if and only if $p$ does not divide $m$, see \cite[Chapter~II, \S\,6]{Mumford70}.
\item 
The kernel $X[m]:=\ker([m]\colon X\to X)$ is a finite group scheme over $K$ of order $m^{2g}$.
\end{enumerate}
\end{Remark}

\begin{Definition} Let $X$ be an abelian variety  and let $m \in Z$ with $m \geq 1$. The {\emph{$m$-torsion subgroup}} of $X$, denoted by $X[m](K^\alg)$, is the subgroup of points of $X(K^\alg)$ of order $m$,
\[
X[m](K^\alg) =\{ P \in  X(K^\alg) : [m]P = 0\}.
\]
It equals the group of $K^\alg$-valued points of the finite group scheme $X[m]$.
\end{Definition}

\begin{Remark}\label{RemRkTateModAbVar}
 For any $m$ not divisible by the characteristic of $K, \ X[m](K^\alg)$ has order $m^{2g}$ and is contained in $X(K^\sep)$.  Since this is also true for any $n$ dividing $m, \  X[m](K^\alg)$ must be a free $\BZ/m\BZ$-module of rank  $2g$.
\end{Remark}

Finally, if $X$ is an abelian variety over $\BC$ of dimension $g$, then $X(\BC)$ is isomorphic to a complex torus $\BC^g/\Lambda$,
\[
X(\BC) \cong \BC^g/\Lambda
\]
for some lattice  $\Lambda=\Lambda(X)$ in $\BC^g$ under an isomorphism of complex manifolds which preserves the group structures. Here $\Lambda\subset\BC^g$ is a discrete $\BZ$-submodule which is free of rank $2g$. However, when $g > 1$, not every  lattice $\Lambda\subset\BC^g$ arises from an abelian variety, that is, the quotient $\BC^g/\Lambda$ of $\BC^g$  by an arbitrary lattice $\Lambda$  does not always arise from an abelian variety.  There is a criterion on $\Lambda$ for when $\BC^g/\Lambda$ is an algebraic  (hence abelian) variety, namely, that $(\BC^g, \Lambda)$  admits a Riemannian form\footnote{For a complex torus $V/\Lambda$ where $V$ is a complex vector space and
$\Lambda$ is a full lattice in $V$, a skew-symmetric form $F:\Lambda \times \Lambda \to \BZ$, that is $F(w,v)=-F(v,w)$, extended to  a skew-symmetric $\BR$-bilinear form $F_{\BR}: V\times V\to \BR$  is  a \emph{Riemannian form} if $F_\BR(iv,iw) =F_\BR(v,w)$ and the associated Hermitian form $H\colon V\times V\to\BC$ with $H(v,w):=F_\BR(iv,w)+i\, F_\BR(v,w)$ and $F_\BR(v,w)=\Im (H(v,w))$ is positive definite.}, see \cite[Chapter~I, \S\,3]{Mumford70}.

\section{Semi-simple Abelian Varieties}\label{SectEmdonAbVar}
\setcounter{equation}{0}

\begin{Theorem}\label{ThmHomAbVar}
For two abelian varieties $X$ and $Y$ over a field $K$ the $\BZ$-module $\Hom_K(X,Y)$ is finite projective of rank $\le (2\dim X)\cdot(2\dim Y)$.
\end{Theorem}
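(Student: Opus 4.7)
The plan is to establish three properties of $M:=\Hom_K(X,Y)$ which together are equivalent to the assertion: $M$ is torsion-free, has $\BZ$-rank at most $(2\dim X)(2\dim Y)$, and is finitely generated over $\BZ$. Over the PID $\BZ$, this is the same as being finite free of the asserted rank, hence finite projective. Torsion-freeness is immediate: if $n\cdot f=0$ for $f\in M$ and $n\ge 1$, then $f(X)$ is contained in the finite group subscheme $Y[n]\subset Y$; since $X$ is irreducible and $f(0)=0$, the scheme-theoretic image equals $\{0\}$, so $f=0$.

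For the rank bound I would fix an auxiliary prime $\ell\ne\charakt K$ and study the action on $\ell$-power torsion. The crucial input is that for every $n\ge 1$ the reduction
\[
M/\ell^n M\;\longinto\;\Hom\bigl(X[\ell^n],\,Y[\ell^n]\bigr),\qquad f\mapsto f|_{X[\ell^n]},
\]
is injective. This follows because $[\ell^n]\colon X\to X$ is a faithfully flat isogeny with kernel $X[\ell^n]$ (Remark~\ref{RemQHomAbVar}), so any $f\in M$ killing $X[\ell^n]$ factors through the quotient $X\to X/X[\ell^n]=X$ as $f=[\ell^n]\circ g$ with $g\in M$, the factor being automatically a homomorphism. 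By Remark~\ref{RemRkTateModAbVar} the target is a free $\BZ/\ell^n\BZ$-module of rank $(2\dim X)(2\dim Y)$. Passing to the inverse limit in $n$ yields an injection of $M\otimes_\BZ\BZ_\ell$ into $\Hom_{\BZ_\ell}\bigl(\varprojlim_n X[\ell^n](K^\sep),\;\varprojlim_n Y[\ell^n](K^\sep)\bigr)$, a free $\BZ_\ell$-module of rank $(2\dim X)(2\dim Y)$. Consequently any $\BZ$-linearly independent elements of $M$ remain $\BZ_\ell$-linearly independent in this Tate module Hom, so $\rank_\BZ M\le (2\dim X)(2\dim Y)$.

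Finite generation is the main obstacle, since torsion-freeness plus a rank bound alone do not suffice (for instance $\BZ[\tfrac{1}{p}]$ is torsion-free of rank $1$ but not finitely generated). To bridge this gap I would invoke polarizations: since $X$ and $Y$ are projective abelian varieties, each admits a polarization, i.e.\ a symmetric isogeny to its dual abelian variety $\lambda_X\colon X\to X^\vee$ and $\lambda_Y\colon Y\to Y^\vee$. Via these one constructs on $\QHom_K(X,Y)\otimes_\BQ\BR$ a symmetric bilinear pairing essentially of the form $\langle f,g\rangle=\Tr\bigl(\lambda_X^{-1}\circ g^\vee\circ\lambda_Y\circ f\bigr)$, which by the classical positivity of the Rosati involution associated with $\lambda_X$ is positive definite and takes rational values with bounded denominators on the integral lattice $M$. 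This realizes $M$ as a discrete subgroup of a Euclidean space of dimension at most $(2\dim X)(2\dim Y)$, forcing finite generation. The positivity of the Rosati involution is the technical heart of this final step.
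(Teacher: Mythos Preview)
The paper gives no argument of its own; it simply cites Mumford, \emph{Abelian Varieties}, IV~\S19. Your outline is exactly the classical proof found there: torsion-freeness is immediate, the rank bound comes from the action on $\ell$-adic Tate modules, and finite generation is deduced from the positivity of the Rosati trace form. So your approach agrees with the cited reference.

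One small imprecision worth flagging: you write that the inverse limit of the injections $M/\ell^n M\hookrightarrow\Hom(X[\ell^n],Y[\ell^n])$ yields an injection of $M\otimes_\BZ\BZ_\ell$. The inverse limit $\varprojlim_n M/\ell^n M$ is the $\ell$-adic completion of $M$, which coincides with $M\otimes_\BZ\BZ_\ell$ only once you already know $M$ is finitely generated --- precisely what you are trying to prove. The cleaner way to get the rank bound is to observe directly that the functor $T_\ell$ is faithful on $\Hom_K(X,Y)$: if $f\ne 0$ then its image is a positive-dimensional abelian subvariety of $Y$, hence has nonzero Tate module, so $T_\ell(f)\ne 0$. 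This gives an injection $M\hookrightarrow\Hom_{\BZ_\ell}(T_\ell X,T_\ell Y)$ of abelian groups, and linear independence over $\BZ$ implies linear independence over $\BZ_\ell$ in the target. Your Rosati argument for finite generation is fine (and is one of the standard routes; Mumford uses a related but slightly different argument via the degree polynomial).
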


\begin{proof}
See for example \cite[Chapter~IV, \S\,19, Corollary~1]{Mumford70}.
\end{proof}

\begin{Definition}\label{DefSimpleAbVar}
Let $X$ be an abelian variety over $K$. Then $X$ is called
\begin{enumerate}
\item \emph{simple over $K$} if $X$ is non-trivial and there does not exist an abelian subvariety $Y \subset X$ over $K$ other than $(0)$ and $X$. 
\item \emph{semi-simple over $K$} if $X$ is isogenous over $K$ to a direct product of simple abelian varieties, i.e. $X \approx_K X_1\times_K\ldots\times_K X_n$ with $X_i$ simple.
\end{enumerate}
\end{Definition}

\begin{Remark}\label{RemPoincareWeil}
The Theorem of Poincar\'e and Weil \cite[Proposition~9.1]{MilneAbVar} states that any abelian variety is semi-simple over $K$. More precisely, for any abelian variety $X$ over $K$,  there are simple abelian subvarieties $X_1, \cdots, X_n \subset X$ such that the map $X_1 \times_K \cdots \times_K X_n \to X, \ (a_1, \cdots, a_n) \to a_1+\cdots+a_n$ is an isogeny. The  proof of this is analogous with a standard proof for the semi-simplicity of a representation of a finite group $G$ on a finite-dimensional vector space over $\BQ$, see \cite[Remark~9.2]{MilneAbVar}.
\end{Remark}

Let $X$ be a simple abelian variety, and let $0 \neq f\in  \End_K(X)$. Then $f$  is an  isogeny, because by the simplicity of $X$, the image of $f$ equals $X$ and the connected component of $\ker f$ equals $\{0\}$, as both are abelian subvarieties. So $f$ is surjective with finite kernel. From this it follows
that $\QEnd_K(X)$  is a division algebra or equivalently a skew-field, i.e., a ring, possibly non commutative, in which every nonzero element has an inverse. 

\bigskip

\begin{Remark} \label{RemEndOfSimpleAbVar}
Let $X$ be a simple abelian variety over $K$, and let $D = \QEnd_K(X)$. Then $\QEnd_K(X^n) =  M_n(D)$ is the ring of $n\times  n$  matrices with coefficients in $D$.

Now consider an arbitrary abelian variety $X$. Then $X$ is isogenous over $K$ to a product $X_1^{n_1}\times_K \cdots \times_K X_r^{n_r} $, where each $X_i$ is simple, and $X_i$ is not isogenous to $X_j$ for $ i\neq j$ over $K$. The above remarks show that
\[
\QEnd_K (X) \cong \prod  M_{n_i}(D_i), \ \  D_i = \QEnd_K (X_i).
\]
Since $\End_K(X)$ is a free $\BZ$-module of finite rank $\le (2\dim X)^2$ we know that  $\QEnd_K (X)$ is a finite dimensional $\BQ$-algebra.
\end{Remark}

In the following we recall a few facts about semi-simple algebras. Let $Q$ be a field, let $B$ be a semi-simple $Q$-algebra of finite dimension, and let $B = \prod B_i$  be its decomposition into a product of simple algebras $B_i$. A simple $Q$-algebra is isomorphic to a matrix algebra over a division $Q$-algebra. The center of each $B_i$ is a field $F_i$, and each degree $[B_i:F_i]$ is a square. The \emph{reduced degree} of $B$ over $Q$ is defined to be 
\[
[B:Q]_\red = \sum_i[B_i:F_i]^{1/2}[F_i:Q].
\]
For any field $Q'$ containing $Q$,
\[
[B:Q] = [B\otimes_Q Q':Q'] \quad \text{and} \quad [B:Q]_\red = [B\otimes_Q Q':Q']_\red.
\]

\begin{Proposition}[{\cite[Proposition~1.2]{MilneCM}}]\label{PropSemi-Simple}
Let $B$ be a semi-simple $Q$-algebra which is finite dimensional over $Q$. For any faithful $B$-module $M$,
\[
\dim_Q M\geq  [B:Q]_\red;
\]
and there exists a faithful module for which equality holds if and only if the simple factors of $B$ are matrix algebras over their centers. 
\end{Proposition}

\begin{Proposition}[{\cite[Proposition~1.3]{MilneCM}}]\label{PropEtSubalg}
Let $\charakt(Q)=0$ and let $B$ be a semi-simple $Q$-algebra. Every maximal \'etale $ Q$-subalgebra of $B$ has degree $[B:Q]_\red$ over $Q$. Here we mean by an \emph{\'etale $Q$-algebra}  a finite product of finite separable field extensions of $Q$.  
\end{Proposition}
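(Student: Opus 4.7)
The plan is to induct on $\dim_Q B$, reducing first to the central simple case and then exploiting the Peirce decomposition of $B$ relative to orthogonal idempotents in $L$.

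First I would reduce to $B$ simple. Writing $B=\prod_{i=1}^r B_i$ with central idempotents $e_i\in B$, the subalgebra $\bigoplus_i L\cdot e_i\subseteq B$ is a quotient of the étale $Q$-algebra $L\otimes_Q Q[e_1,\dots,e_r]\cong L^{\,r}$, hence étale; and it contains $L$ via the identity decomposition $\ell=\sum_i\ell e_i$. Maximality of $L$ then forces $L=\prod_i L_i$ with $L_i:=L\cdot e_i\subseteq B_i$ maximal étale in $B_i$. Since $[B:Q]_\red=\sum_i[B_i:Q]_\red$, the problem reduces to $B$ simple.

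Next, for $B$ simple with center $F$, I would show $F\subseteq L$: the subalgebra $L\cdot F\subseteq B$ is a quotient of the étale $Q$-algebra $L\otimes_Q F$ (in characteristic zero, tensor products and quotients of étale algebras remain étale), hence étale, so maximality gives $L\cdot F=L$. Writing $B=M_n(D)$ with $[D:F]=d^2$ yields $[B:Q]_\red=nd\cdot[F:Q]$ and $\dim_Q L=\dim_F L\cdot[F:Q]$, so it suffices to prove $\dim_F L=nd$ for maximal étale $F$-subalgebras $L\supseteq F$ of $B$.

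The main induction runs on $\dim_F B$; the base case $B=F$ is trivial. Otherwise decompose $L=L_1\times\cdots\times L_k$ into field factors with associated orthogonal idempotents $f_i\in L$. If $k=1$, then $L$ is itself a subfield of $B$ containing $F$; maximality among étale subalgebras forces $L$ to be a maximal subfield, and the classical double-centralizer theorem for central simple algebras gives $[L:F]=nd$. If $k\geq 2$, each corner $f_i B f_i\cong M_{n_i}(D)$ is central simple of degree $n_i d$ over $F$ (where $n_i$ is the rank of $f_i\in M_n(D)$, with $\sum_i n_i=n$), of strictly smaller $F$-dimension than $B$; and $L_i=L\cdot f_i\subseteq f_i B f_i$ is maximal étale, since any strictly larger étale $L_i'\subset f_i B f_i$ would yield the strictly larger étale subalgebra $L_1\times\cdots\times L_i'\times\cdots\times L_k\subseteq B$, contradicting maximality of $L$. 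The inductive hypothesis then gives $\dim_F L_i=n_i d$, and summing yields $\dim_F L=\sum_i n_i d=nd$, as required.

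The main obstacle is the case $k=1$, which relies on the classical structure theory of central simple algebras: a maximal subfield of a CSA of degree $nd$ has degree exactly $nd$ over the center (via $[E:F]\cdot[Z_B(E):F]=[B:F]$ together with $E\subseteq Z_B(E)$). Everything else is bookkeeping with orthogonal idempotents and the stability of the class of étale algebras under tensor products and quotients in characteristic zero.
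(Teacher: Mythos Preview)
The paper does not give its own proof of this proposition; it simply cites \cite[Proposition~1.3]{MilneCM}. Your argument is correct and follows the standard line (reduction to the central simple case via idempotents, absorption of the center into $L$, then the double-centralizer theorem for the field case combined with an induction on the Peirce corners $f_iBf_i\cong M_{n_i}(D)$ when $L$ has several field factors), which is essentially the approach one finds in Milne's notes as well.
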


\section{Cohomology}\label{SectCohomAbVar}
\setcounter{equation}{0}

\subsection{Singular Cohomology}\label{SectBettiAbVar}

Let $X$ be an abelian variety of dimension $g$ over $\BC$. Let $V$ be the tangent space  of $X$ at the identity element  and let  $\Lambda$ be the kernel of the exponential map $\exp : V \to X$. Now the space $V \cong \BC^g$ is simply connected, and $\exp : V \to X$ is a covering map, therefore it realizes $V$ as the universal covering space of $X$, and so $\pi_1(X)$ is its group of covering transformations, which is $\Lambda$. In particular, it is abelian. As for any good topological space we obtain for the \emph{singular cohomology} of $X$ 
\begin{align*}
 \Koh^1(X, \BZ) \cong \Hom_{\text{groups}}(\pi_1(X), \BZ) = \Hom_\BZ(\Lambda, \BZ).
\end{align*}

Since we have seen that $X$ is a complex torus of dimension $g$, it is isomorphic to $(\BR/\BZ)^{2g} = (S^1)^{2g}$ as a real Lie group, where $S^1$ is the circle group.  We claim that for all $r\in \BN_{>0}$
\[
 {\bigwedge}^{r}  \Koh^1(X, \BZ) \isoto \Koh^r(X, \BZ) 
\]
under the natural map defined by the cup product. Indeed, by the K\"unneth formula if the above map is an isomorphism for spaces $X_1$ and $X_2$ with finitely generated cohomologies, then it is an isomorphism for $X_1 \times_K X_2$. Since it is an isomorphism for $S^1$ for all $r\geq 0$, where the module is $(0)$ for $r\ge2$, the result for $X$ follows.

Since $X$ is compact and orientable and $\Koh^r(X, \BZ) $ is torsion free, the duality theorems gives  us for the \emph{singular homology} of $X$
\[
\Koh_r(X, \BZ)  \cong \Koh^r(X, \BZ)^{\vee}\qquad\text{and in particular}\qquad\Koh_1(X, \BZ)= \Lambda\,.
\] 

\subsection{$\ell$-adic Cohomology} \label{SectEtCohAbVar}

We follow \cite[\S\,15]{Milne84}. Let $X$ be an abelian variety of dimension $g$ over a field $K$, and let $\ell$  be a prime different from $\charakt(K)$.  Recall that, for any $m$ not divisible by the characteristic of $K$, $X[m](K^\sep)$ has order $m^{2g}$. Define  the \emph{$\ell$-adic Tate module} of $X$ as
\[
T_\ell(X) = \invlim \bigl(X[\ell^n](K^\sep), [\ell]\bigr).
\]
It follows that $T_\ell(X)$  is a free $\BZ_\ell$-module of rank $2g$. There is a continuous action of $\sG_K$ on this module.

Let $X$ and $Y$ be two abelian varieties over $K$. A homomorphism $f : X \to Y$ induces a homomorphism $X[\ell^n] \to Y[\ell^n]$, and hence a homomorphism 
\[
T_\ell(f): T_\ell(X)\to T_\ell(Y),  \quad (a_1, a_2, \cdots )\mapsto \big(f(a_1), f(a_2), \cdots\big).
\]
Therefore, $T_\ell$ is a functor from abelian varieties to $\BZ_\ell$-modules. It is easy to see that for any prime $\ell \neq \charakt(K)$, the natural map
\[
\Hom_K(X, Y)\to \Hom_{\BZ_\ell} ( T_\ell(X), T_\ell(Y))
\]
is injective. From this one obtains that the $\BZ$-algebra $\Hom_K(X, Y)$ of morphisms $X\to Y$ of group varieties is torsion-free. The following theorem was conjectured by Tate~\cite{TateEndoms} and proved by him for finite fields $K$. It was proved by Zarhin~\cite{Zarhin75} for fields of positive characteristic and by Faltings~\cite{FaltingsEndlichkeit,FaltingsComplements} for fields of characteristic zero. 

\begin{Theorem}[Tate conjecture for abelian varieties] \label{ThmTateConjAbVar}
Let $X$ and $Y$ be two abelian varieties over a finitely generated field $K$ and let $\ell$ be a prime different from the characteristic of $K$. Then the natural map 
\[
\Hom_{K}(X, Y) \otimes_\BZ\BZ_\ell \to \Hom_{\BZ_\ell[\sG_K]}( T_\ell X, T_\ell Y),\quad f\otimes a \mapsto a\cdot T_\ell(f)
\]
is an isomorphism of $\BZ_\ell$-modules.
\end{Theorem}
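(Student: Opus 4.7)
The plan is Tate's classical strategy. Injectivity is established in the preceding remark, so only surjectivity is at stake. Since $\Hom_K(X,Y)$ and $\Hom_{\BZ_\ell[\sG_K]}(T_\ell X,T_\ell Y)$ each embed as direct summands of the analogous groups for $X\times Y$, I may reduce to the case $Y=X$. A further standard reduction shows it suffices to prove the following \emph{Tate property} for all $n\ge 1$: every $\sG_K$-stable $\BQ_\ell$-subspace $W\subset V_\ell(X^n):=T_\ell(X^n)\otimes_{\BZ_\ell}\BQ_\ell$ has the form $V_\ell Y$ for some abelian subvariety $Y\subset X^n$ over $K$. Granted this, applying the property to the graph inside $V_\ell(X^2)$ of any $\varphi\in\End_{\BQ_\ell[\sG_K]}(V_\ell X)$ yields an abelian subvariety whose first projection to $X$ is an isogeny; inverting that isogeny in $\QEnd_K(X)\otimes\BQ_\ell$ and composing with the second projection recovers $\varphi$. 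Integrality is a formal consequence, and $V_\ell X$ is semisimple by Poincar\'e reducibility applied to complementary subvarieties produced by the property itself.

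To prove the Tate property I would run Tate's lattice argument. Fix a $\sG_K$-stable lattice $T_W:=W\cap T_\ell(X^n)$ and set $T_m:=T_W+\ell^m T_\ell(X^n)$ for each $m\ge 0$. Under the Galois-equivariant correspondence between sublattices of finite index in $T_\ell(X^n)$ and $\ell$-power $K$-isogenies with source $X^n$, each $T_m$ corresponds to an abelian variety $X_m$ over $K$ together with a $K$-isogeny $X_m\to X^n$ whose kernel is killed by $\ell^m$. The essential input is a \emph{finiteness theorem}: up to $K$-isomorphism, only finitely many abelian varieties over $K$ are $K$-isogenous to $X^n$ and admit a polarization of bounded degree. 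To obtain polarizations uniformly across the isogeny class one invokes Zarhin's trick that $(Z\times Z^\vee)^4$ is always principally polarized. Pigeonhole then forces $X_m\cong_K X_{m'}$ for some $m<m'$; chasing such an isomorphism through the diagram produces an element $u\in\QEnd_K(X^n)\otimes\BQ_\ell$ stabilizing $W$, and from a limiting procedure one extracts a projector onto $W$ whose image is the desired abelian subvariety $Y$.

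The main obstacle is the finiteness theorem, and this is where the proof genuinely splits according to the nature of $K$. For $K$ a finite field, Tate deduces finiteness from the Weil bounds on the eigenvalues of Frobenius acting on $T_\ell X$, which constrain the possible characteristic polynomials and hence the $K$-isogeny class. For $K$ a function field of positive characteristic, Zarhin descends to the finite-field case by exploiting Dieudonn\'e-module data at places of good reduction. For $K$ a number field, finiteness is the central theorem of Faltings' proof of the Mordell conjecture, obtained by introducing a theory of heights on moduli of principally polarized abelian varieties, invoking the semistable reduction theorem, and showing that the Faltings height varies in a controlled way under isogeny. Finally, for a general finitely generated field $K$ one bootstraps to these cases via a specialization argument: spread $X$ out to a smooth model over a finitely generated subring of $K$ and specialize at a well-chosen closed point, reducing transcendence degree while preserving the $\ell$-adic representation up to a subgroup of finite index in $\sG_K$.
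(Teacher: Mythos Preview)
The paper does not prove this theorem at all: it merely states the result and, in the sentence immediately preceding it, attributes the proof to Tate~\cite{TateEndoms} for finite fields, Zarhin~\cite{Zarhin75} for fields of positive characteristic, and Faltings~\cite{FaltingsEndlichkeit,FaltingsComplements} for fields of characteristic zero. So there is nothing in the paper to compare your argument against beyond these citations.

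Your sketch is a faithful outline of the classical Tate--Zarhin--Faltings strategy and is consistent with those references. A couple of minor remarks: the reduction from $\Hom(X,Y)$ to $\End(X\times Y)$ is fine, but the subsequent claim that it suffices to verify the Tate property for \emph{all} $n$ is slightly misleading---once one has it for a single abelian variety $Z$ (here $Z=X\times Y$ or $X$ after reduction), the argument with the graph of $\varphi$ inside $V_\ell(Z^2)$ already yields surjectivity for $\End$; one does not separately need arbitrary $n$. Also, for finitely generated fields in characteristic zero that are not number fields, the specialization step you describe is indeed what is done in \cite{FaltingsComplements}, so your attribution is accurate. None of this affects correctness; your proposal simply supplies a proof where the paper supplies only citations.
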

The theorem is known to fail for some classes of fields which are not finitely generated (e.g. local fields and of course algebraically closed fields).

Now we write $X_{K^\alg}:=X\times_K\Spec K^\alg$ and denote by $\pi^\et_1(X_{K^\alg},0)$ the \'etale fundamental group, then 
\[
 \Koh^1_{ \et}(X_{K^\alg}, \BZ_\ell) \cong \Hom^{\rm cont}(\pi^\et_1(X_{K^\alg},0),\BZ_\ell).
\]
For each $n$ the map $[\ell^n] : X\to X$  is a finite \'etale covering of  $X$ with group of covering transformations $X[\ell^n](K^\sep)$.  By definition $\pi^\et_1(X_{K^\alg},0)$  classifies such coverings, and therefore there is a canonical epimorphism $\pi^\et_1(X_{K^\alg},0) \twoheadrightarrow X[\ell^n]$. On passing to the inverse limit, we get an epimorphism $\pi^\et_1(X_{K^\alg},0) \twoheadrightarrow T_\ell(X)$, and consequently an injection
\[
\Hom_{\BZ_\ell}(T_\ell(X), \BZ_\ell)\into  \Koh^1_{ \et}(X_{K^\alg}, \BZ_\ell),
\]
which actually is an isomorphism, see \cite[Theorem~15.1]{Milne84}. So we obtain for  the  first $\ell$-adic homology group of $ X$
\begin{align*}
 \Koh_{1, \et}(X_{K^\alg}, \BZ_\ell) = T_\ell(X)  \  \text{and}\  \Koh_{1, \et}(X_{K^\alg}, \BQ_\ell) = T_\ell(X)\otimes_{\BZ_\ell}\BQ_\ell\,,
\end{align*}
and for the first $\ell$-adic cohomology group of $ X$
\begin{align*}
 \Koh^1_{ \et}(X_{K^\alg}, \BZ_\ell) =  \Koh_{1, \et}(X_{K^\alg}, \BZ_\ell)^\vee \ \text{and} \  \Koh^1_{\et}(X_{K^\alg}, \BQ_\ell) =  \Koh_{1, \et}(X_{K^\alg}, \BQ_\ell)^\vee.
\end{align*}
By \cite[Theorem~15.1]{Milne84} the cup product pairings define isomorphisms
\begin{align*}
 \Koh_{ r,\et}(X_{K^\alg}, \BZ_\ell)\cong {\bigwedge}^{r}\Koh_{1, \et}(X_{K^\alg}, \BZ_\ell) \ \text{and} \  \Koh^r_{\et}(X_{K^\alg}, \BQ_\ell) \cong {\bigwedge}^{r} \Koh^1_{ \et}(X_{K^\alg}, \BQ_\ell).
\end{align*}
Now, over the field $K=\BC$ the choice of an isomorphism $X(\BC) \cong \BC^g/\Lambda$ determines $X[m](\BC) \cong m^{-1}\Lambda/\Lambda$. Then
\begin{align*}
T_\ell(X) & = \invlim \bigl(X[\ell^n](\BC), [\ell]\bigr)  \cong\invlim  \bigl(\ell^{-n}\Lambda/\Lambda , \text{multiplication with }\ell\bigr) \\
              & \cong \invlim \bigl(\Lambda \otimes_\BZ (\BZ/\ell^n\BZ),\mod \ell^n\bigr)\\
              & \cong \Lambda \otimes_\BZ  \invlim(\BZ/\ell^n\BZ), \ \text{because}\ \Lambda\ \text{is a free}\ \BZ\text{-module}\\
             & \cong \Lambda \otimes_\BZ \BZ_\ell.
\end{align*}

Taking duals and exterior powers, we can summarize the results as a

\begin{Theorem} \label{ThmCompIsomBEt}
For every abelian variety $X$ over $\BC$ there are canonical \emph{comparison isomorphisms between singular and $\ell$-adic (co-)homology} 
\[
\Koh^r_{\et}(X, \BZ_\ell) \cong \Koh^r(X, \BZ) \otimes_\BZ \BZ_\ell\qquad \text{and} \qquad \Koh_{r, \et}(X, \BZ_\ell) \cong \Koh_r(X, \BZ) \otimes_\BZ \BZ_\ell\,. 
\]
\end{Theorem}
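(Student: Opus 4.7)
The plan is to assemble the result from the ingredients already listed in the excerpt, starting in degree $1$ and then bootstrapping to arbitrary $r$ by cup product, finally dualizing between homology and cohomology. The core observation, which is already essentially proved in the text, is that singular $\Koh_1$ and $\ell$-adic $\Koh_{1,\et}$ can both be identified with the lattice $\Lambda$ up to tensoring, so there is a natural arrow between them.

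First I would handle degree $r=1$ on homology. Fixing an isomorphism $X(\BC)\cong\BC^g/\Lambda$, the computation preceding the theorem shows $T_\ell(X)\cong\Lambda\otimes_\BZ\BZ_\ell$, and the identifications $\Koh_{1,\et}(X,\BZ_\ell)=T_\ell(X)$ and $\Koh_1(X,\BZ)=\Lambda$ give the canonical comparison isomorphism
\[
\Koh_{1,\et}(X,\BZ_\ell)\;\cong\;\Koh_1(X,\BZ)\otimes_\BZ\BZ_\ell
\]
in degree one. Dualizing, and using that $\BZ_\ell$ is flat over $\BZ$ so $\Hom_\BZ(\Lambda,\BZ)\otimes_\BZ\BZ_\ell\cong\Hom_{\BZ_\ell}(\Lambda\otimes_\BZ\BZ_\ell,\BZ_\ell)$ for the free $\BZ$-module $\Lambda$, the identities $\Koh^1(X,\BZ)=\Hom_\BZ(\Lambda,\BZ)$ and $\Koh^1_\et(X,\BZ_\ell)=\Hom_{\BZ_\ell}(T_\ell(X),\BZ_\ell)$ yield the degree-one cohomological comparison
\[
\Koh^1_\et(X,\BZ_\ell)\;\cong\;\Koh^1(X,\BZ)\otimes_\BZ\BZ_\ell.
\]

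Next I would pass to arbitrary $r$ using the cup product. The excerpt already records that ${\bigwedge}^r\Koh^1(X,\BZ)\isoto\Koh^r(X,\BZ)$ on the Betti side and the analogous statement ${\bigwedge}^r\Koh^1_\et(X,\BZ_\ell)\cong\Koh^r_\et(X,\BZ_\ell)$ on the \'etale side. Since exterior powers commute with flat base change along $\BZ\to\BZ_\ell$, applying ${\bigwedge}^r$ to the degree-one comparison gives
\[
\Koh^r_\et(X,\BZ_\ell)\;\cong\;{\bigwedge}^r\bigl(\Koh^1(X,\BZ)\otimes_\BZ\BZ_\ell\bigr)\;\cong\;\Koh^r(X,\BZ)\otimes_\BZ\BZ_\ell.
\]
The dual statement for homology follows from $\Koh_{r,\et}(X,\BZ_\ell)\cong\Koh^r_\et(X,\BZ_\ell)^\vee$ and $\Koh_r(X,\BZ)\cong\Koh^r(X,\BZ)^\vee$ (which the excerpt established using that $X$ is compact, orientable and has torsion-free integral cohomology), together with flat base change of $\Hom$ for finitely generated free $\BZ$-modules, or equivalently by directly applying ${\bigwedge}^r$ to the degree-one homological comparison.

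There is no serious obstacle here: the content has already been distilled to the identification $T_\ell(X)\cong\Lambda\otimes_\BZ\BZ_\ell$ and to the two separately-proved cup product decompositions. The only point that needs a little care, and which I would emphasize when writing it out, is that all the intermediate modules are free of finite rank over $\BZ$ or $\BZ_\ell$, so that duals, exterior powers and the base change $-\otimes_\BZ\BZ_\ell$ all commute with one another and the displayed arrows are canonical (i.e.\ independent of the choice of complex uniformization $X(\BC)\cong\BC^g/\Lambda$, as one checks by naturality in $X$).
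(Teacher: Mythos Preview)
Your proposal is correct and follows essentially the same approach as the paper: the paper's argument is simply the computation $T_\ell(X)\cong\Lambda\otimes_\BZ\BZ_\ell$ displayed immediately before the theorem, after which it says ``Taking also duals and exterior powers, we can summarize the results as a [Theorem].'' You have spelled out exactly those steps (degree one via the lattice, then duals and exterior powers using the previously recorded cup-product decompositions), with some added care about freeness and flat base change that the paper leaves implicit.
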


\begin{Example}\label{ExGmBettiEt}
Also for the multiplicative group scheme $\BG_m:=\BG_{m,\BQ}=\Spec\BQ[x,x^{-1}]$ there is a period isomorphism between $\Koh_1(\BG_m(\BC), \BZ)\otimes_\BZ \BZ_\ell$ and $\Koh_{1,\et}(\BG_{m,\BC}, \BZ_\ell)$. Namely, $\Koh_1(\BG_m(\BC), \BZ)=\BZ\cdot u$, where $u\colon[0,1]\to\BG_m(\BC)=\BC\mal$ is the cycle given by $u(s)=\exp(2\pi is)$. Also let $\epsilon_\ell^{(n)}:=\exp(2\pi i/\ell^n)\in\BQ^\alg\subset\BC$. It is a primitive $\ell^n$-th root of unity with $(\epsilon_\ell^{(n+1)})^\ell=\epsilon_\ell^{(n)}$ for all $n$. Let $\epsilon_\ell:=(\epsilon_\ell^{(n)})_{n\in\BN}\in T_\ell\BG_m$. Then $\Koh_{1,\et}(\BG_{m,\BC}, \BZ_\ell)=T_\ell\BG_m=\epsilon_\ell^{\BZ_\ell}$ and the comparison isomorphism 
\[
\Koh_1(\BG_m(\BC), \BZ)\otimes_\BZ \BZ_\ell \isoto \Koh_{1,\et}(\BG_{m,\BC}, \BZ_\ell)
\]
sends $u$ to $\epsilon_\ell$. This can be seen from the exact sequence $0\to\BZ=\pi_1(\BC\mal)\to\BC\xrightarrow{\exp(2\pi i\,\fdot\,)}\BC\mal\to0$ and the induced comparison isomorphism $\pi_1(\BC\mal)\otimes_\BZ\BZ_\ell\isoto T_\ell\BG_m,\,1 \mapsto\bigl(\exp(2\pi i/\ell^n)\bigr)_{n\in\BN}$.
 \end{Example}

\subsection{De Rham Cohomology}\label{SectDRAbVar}

We will now explain the construction  of the Dolbeault complex associated with $X$ which is an analog of the de Rham complex for complex manifolds. Let $X$ be an abelian variety over $\BC$. 

Let $ \sC^n =\DS \oplus_{p+q= r}\sC^{p,q}$ be the sheaf of $C^\infty$ complex valued  differential $n$-forms, where $\sC^{p,q}$ is the sheaf of $C^\infty$ complex valued differential forms of type $(p,q)$. In terms of local coordinates, let $(z_1,\cdots, z_g)$ be a holomorphic coordinate system.  First  we decompose the complex coordinates into their real and imaginary parts: $z_j=x_j+iy_j$ for each $j$. Letting  $\displaystyle dz_{j}=dx_{j}+idy_{j}, \  d{\bar {z}}_{j}=dx_{j}-idy_{j}, $  one sees that any differential $1$-form with complex coefficients can be written uniquely as a sum
\[
\displaystyle \sum _{j=1}^{n}\left(f_{j}dz_{j}+g_{j}d{\bar {z}}_{j}\right), 
\]
for $\BC$-valued $C^\infty$-functions $f_j$ and $g_j$. Let $\sC^{1,0}$ be the sheaf of $C^\infty$ complex valued differential $1$-forms where all $g_j$ are zero, and let $\sC^{0,1}$ be the sheaf of $C^\infty$ complex valued differential $1$-forms where all $f_j$ are zero. Then the space $\sC^{p,q}$ of type  $(p,q)$-forms is defined by taking linear combinations of the wedge products of $p$ elements from $\sC^{1,0}$ and $q$ elements from $\sC^{0,1}$. Symbolically, 
\[
\sC^{p,q} = \bigwedge^p\sC^{1,0}\wedge\bigwedge^q \sC^{0,1}
\]
In particular for each $n$ and each $p$ and $q$ with $p+q = n$,  there are canonical projection maps which we denote by 
\[
\pi^{(p,q)}: \sC^n \to \sC^{p,q}.
\]
The exterior derivative defines a map $d : \sC^n \to \sC^{n+1}$ i.e. if $\phi \in \sC^{p,q}$ then $d(\phi) \in \sC^{p+1,q}\oplus \sC^{p,q+1}$. Using $d$ and the projections maps, it is possible to define the operators:
\[
\partial =\pi ^{p+1,q}\circ d: \sC^{p,q} \to \sC^{p+1,q}, \quad  \bar \partial =\pi ^{p,q+1}\circ d : \sC^{p,q} \to \sC^{p,q+1}
\]
In terms of local coordinates $z =(z_1, \cdots z_g)$ we can write $\phi \in  \sC^{p,q}$ as
\[
{\displaystyle \phi =\sum _{\#I=p,\,\#J=q}\ f_{IJ}\,dz_{I}\wedge d{\bar {z}}_{J}\in  \sC^{p,q}} 
\]
where $I$ and $J$ are multi-indices and $dz_I=\bigwedge_{i\in I}dz_i$ and $d\bar z_I=\bigwedge_{i\in I}d\bar z_i$. Then
\begin{align*}
 \partial\phi & =\sum _{\#I=p,\,\#J=q}\sum _{i }{\frac {\partial f_{IJ}}{\partial z_{i }}}\,dz_{i }\wedge dz_{I}\wedge d{\bar {z}}_{J} \qquad\text{and}\qquad
\bar {\partial }\phi & =\sum _{\#I=p,\,\#J=q}\sum _{i }{\frac {\partial f_{IJ}}{\partial {\bar {z}}_{i }}}d{\bar {z}}_{i }\wedge dz_{I}\wedge d{\bar {z}}^{J}.
\end{align*}
It is not difficult to see the following properties:
\begin{align*}
 d& =\partial +{\bar {\partial }} \\
 \partial ^{2}& ={\bar {\partial }}^{2}=\partial {\bar {\partial }}+{\bar {\partial }}\partial =0.
\end{align*}
Then the Poincar\'e lemma gives that the complex
\[
0 \to \BC \to \sC^0 \xrightarrow{d} \sC^1   \xrightarrow{d} \cdots
\]
is a fine resolution of the constant sheaf $\BC$. It is called the \emph{de Rham resolution}. We define the  de Rham cohomology as the cohomology of this complex i.e. 
\[
\Koh^n_{\dR}(X,\BC) = \frac{\{\text{global $n$-forms }\phi\in\sC^n(X)\text{ on $X$ which are $d$-closed, i.e. }d\phi=0\}}{\{d\psi: \text{where }\psi\in\sC^{n-1}(X) \text{ is a global }(n-1)\text{-form on $X$}\}}.
\]

Let $V = \T_0 X$ be the tangent space to $ X$ at 0 (regarded as a complex vector space). Let $T\dual = \Hom_{\BC}(V, \BC)$ be the complex cotangent space to $X$ at 0 and $ \ol T\dual = \Hom_{\Cant}(V, \BC)$. Then from linear algebra 
\[ 
 \Hom_{\BR}(V, \BC) \cong \Hom_{\BC}(V, \BC) \oplus \Hom_{\Cant}(V, \BC)\ \text{ i.e.} \   \Hom_{\BR}(V, \BC)  \cong T\dual \oplus \ol T\dual,
\]
and 
\begin{align*}
 {\bigwedge}^r \Hom_{\BR}(V, \BC) \cong  \bigoplus_{p+q= r} {\bigwedge}^p T\dual \otimes {\bigwedge}^q \ol T\dual
\end{align*}
By translation under the group law on $X$  every complex co-vector $\phi \in {\wedge}^p T\dual \otimes {\wedge}^q \ol T\dual$  extends to a unique translation invariant  $\omega_{\phi}\in \sC^{p,q}$, and therefore  every complex co-vector $\phi\in\wedge^r\Hom_\BR(V,\BC)$ extends to a unique translation invariant form $\omega_\phi$ belonging to $\sC^n$. For all $d$-closed $n$-forms $\omega$, there is unique translation invariant $\omega_\phi$ for $\phi \in  {\wedge}^n \Hom_{\BR}(V, \BC)$, such that
\[
\omega-\omega_\phi = d\eta, \ \text{for some}\ (n-1)  \text{-form}\ \eta.
\]
 Therefore, $\Koh^r_{\dR}(X,\BC) \cong  {\bigwedge}^r \Hom_{\BR}(V, \BC)$, and has the decomposition 
\[
\Koh^r_{\dR}(X,\BC) \cong  \bigoplus_{p+q= r} {\bigwedge}^p T\dual \otimes {\bigwedge}^q \ol T\dual.
\]
For the sheaf $\Omega^p:=\ker(\bar\partial\colon\sC^{p,0}\to\sC^{p,1})$ of holomorphic $p$-forms on $X$ we know from \cite[Chapter~I, \S\,1, Theorem]{Mumford70} that 
\[
\Koh^q(X, \CO_X) \cong {\bigwedge}^{q}\ol T\dual \quad \text{and} \  \Koh^q(X, \Omega^p) \cong {\bigwedge}^{p} T\dual\otimes {\bigwedge}^{q}\ol T\dual,
\]
so 
\[
\Koh^r_{\dR}(X,\BC) \cong  \bigoplus_{p+q= r} \Koh^{p,q}(X), \quad \text{where}\quad\Koh^{p,q}(X) := \Koh^q(X, \Omega^p).
\]
This is the famous Hodge decomposition.

\bigskip
\noindent
Now we obtain the \emph{de Rham isomorphism}
\[
\Koh^1(X,\BC) =  \Koh^1(X,\BZ)\otimes_{\BZ} \BC = \Hom_\BZ(\Lambda, \BZ) \otimes_{\BZ} \BC = \Hom_{\BR}(V, \BC) \cong \Koh^1_{\dR}(X,\BC). 
\]

Then, $\Koh^n_{\dR}(X,\BC) \cong  \Koh^n(X,\BQ)\otimes_{\BQ} \BC$. Note that complex conjugation on the right tensor factor  of the target defines a conjugate-linear automorphism of $\Koh^n_{\dR}(X,\BC)$.  For more details see \cite[Chapter~I, \S\,1]{Mumford70}. Taking also exterior powers, we can summarize the results as a

\begin{Theorem}[De Rham isomorphism] \label{ThmDeRhamIsom}
For every abelian variety $X$ over $\BC$ there are canonical \emph{comparison isomorphisms between singular and de Rham cohomology}  
\[
\Koh^r_{\dR}(X, \BC) \cong \Koh^r(X, \BZ) \otimes_\BZ \BC\,. 
\]
\end{Theorem}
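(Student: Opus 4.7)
The plan is to reduce the statement to degree $r=1$ and then pass to exterior powers, using ingredients that the preceding discussion has already assembled.

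In degree $r=1$, I would identify both sides with $\Hom_\BR(V,\BC)$, where $V = \T_0 X$. On the singular side, writing $X(\BC) = V/\Lambda$ with $\Lambda = \pi_1(X,0)$ a free $\BZ$-module of rank $2g$ satisfying $\Lambda\otimes_\BZ\BR = V$, one has
\[
\Koh^1(X,\BZ)\otimes_\BZ\BC \;\cong\; \Hom_\BZ(\Lambda,\BZ)\otimes_\BZ\BC \;\cong\; \Hom_\BR(V,\BC).
\]
On the de Rham side, the Poincar\'e lemma provides the de Rham resolution $\BC \to \sC^\bullet$, so $\Koh^1_{\dR}(X,\BC)$ is computed by global $d$-closed $1$-forms modulo exact ones. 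The key analytic input is that every such class has a unique translation-invariant representative: averaging a form under the compact torus action (Haar measure on $V/\Lambda$) commutes with $d$ and is the identity on invariant forms, giving existence; a translation-invariant exact form must vanish by integration over the fundamental cycle, giving uniqueness. Thus $\Koh^1_{\dR}(X,\BC) \cong \Hom_\BR(V,\BC)$, matching the singular side.

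For arbitrary $r$, the cup product yields $\bigwedge^r\Koh^1(X,\BZ)\isoto\Koh^r(X,\BZ)$ on the singular side by the K\"unneth-plus-$S^1$ argument already recorded in Section~\ref{SectBettiAbVar}. On the de Rham side, the space of translation-invariant $n$-forms on $V/\Lambda$ is tautologically $\bigwedge^n\Hom_\BR(V,\BC)$, and the averaging argument from the previous paragraph goes through verbatim in every degree to show that it maps isomorphically onto $\Koh^n_{\dR}(X,\BC)$. Since both identifications are induced by wedge of forms, respectively cup of cocycles, they are compatible with the multiplicative structures, and taking the $r$-fold exterior power of the degree-one isomorphism yields the claim in degree $r$.

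The main obstacle is the analytic input used in degree $1$: the existence and uniqueness of translation-invariant representatives. Both parts rely essentially on the compactness of the real torus $V/\Lambda$---existence via averaging over the group action, uniqueness via integration of an invariant top-degree form over a fundamental cycle (equivalently, Hodge theory on the flat torus). Once this is in place, the rest of the argument is formal bookkeeping with exterior powers and cup products, and in particular no further input specific to $X$ being algebraic (as opposed to merely a compact complex Lie group) is needed.
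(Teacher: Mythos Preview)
Your proposal is correct and follows essentially the same route as the paper: identify both $\Koh^1_{\dR}(X,\BC)$ and $\Koh^1(X,\BZ)\otimes_\BZ\BC$ with $\Hom_\BR(V,\BC)$ via translation-invariant forms and the lattice $\Lambda$, respectively, and then pass to exterior powers using the K\"unneth/$S^1$ argument already recorded in Section~\ref{SectBettiAbVar}. The only difference is that you supply explicit analytic justifications (Haar averaging for existence, pairing against the standard cycles for uniqueness) where the paper simply asserts the existence of a unique translation-invariant representative and refers to \cite[Chapter~I, \S\,1]{Mumford70}.
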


\begin{Example}\label{ExGmBettiDR}
Also for the multiplicative group scheme $\BG_m:=\BG_{m,\BC}=\Spec\BC[x,x^{-1}]$ there is a de Rham isomorphism between $\Koh^1(\BG_m(\BC), \BZ)$ and $\Koh^1_{\dR}(\BG_m,\BC)=\BC\tfrac{dx}{x}$. As in Example~\ref{ExGmBettiEt}, the singular homology $\Koh_1(\BG_m(\BC), \BZ)=\BZ\cdot u$, where $u\colon[0,1]\to\BG_m(\BC)=\BC\mal$ is the cycle given by $u(s)=\exp(2\pi is)$. The de Rham isomorphism is given as the pairing
\[
\Koh_1(\BG_m, \BZ)\times \Koh^1_{\dR}(\BG_m,\BC)\longto\BC\,,\qquad (nu,\omega)\longmapsto n{\TS\int_u}\omega\,,\qquad (u,\tfrac{dx}{x})\longmapsto {\TS\int_u}\tfrac{dx}{x}\;=\;2\pi i.
\]
The corresponding isomorphism $\Koh^1(\BG_m, \BZ)\otimes_\BZ\BC\isoto \Koh^1_{\dR}(\BG_m,\BC)$ sends the generator of $\Koh^1(\BG_m, \BZ)$, which is dual to $u$, to $(2\pi i)^{-1}\cdot\tfrac{dx}{x}$.
\end{Example}

\section{$p$-divisible Groups and the $p$-adic Period Isomorphism}\label{SectPAdicPeriodIsom}
\setcounter{equation}{0}

Let $R$ be a commutative ring. Let $p$ be a prime number, and $h$ an integer $\geq 0$. A $p$-divisible group $G$ over $R$ of height $h$ is an inductive system
\[
(G_n, i_n), \quad n\geq 0
\]
where
\begin{enumerate}
\item $G_n$ is a finite flat commutative group scheme of finite presentation over $R$ of order $p^{nh}$,

\item for each  $n\geq 0$,
\[
0 \to G_n \xrightarrow{i_n} G_{n+1}\xrightarrow{[p^n]} G_{n+1}
\]
is exact (i.e., $G_n$ can be identified via $i_n$ with the kernel of multiplication by $p^n$  in $G_{n+1}$).
\end{enumerate}

These axioms for ordinary abelian groups would imply
\[
G_n \cong  (\BZ/ p^n\BZ)^h \quad \text{and} \quad G = \varinjlim G_n = (\BQ_p/\BZ_p)^h.
\]
A homomorphism  $f: G\to  H$  of $p$-divisible groups is defined in the obvious way: if $G=(G_n, i_n), \ H=(H_n,  i_n)$ then $f$ is a system of homomorphisms $f_n: G_n \to H_n$ of group schemes over $R$, satisfying  $i_n\circ f_n=f_{n+1}\circ i_n$ for all $n\geq 1$.

\begin{Example}
Let $G$ be a commutative group variety over a field $K$, which is either an abelian variety or $\BG_m$. We can associate a $p$-divisible group with $G$:

Define $G[m]$ as the kernel of multiplication by $m$. Then $(G[p^n], i_n)$ is a $p$-divisible group, where $i_n$ denotes the obvious inclusion. This $p$-divisible group is sometimes denoted $G[p^\infty]$.
\begin{enumerate}
\item  If $G=X$ is an abelian variety, then the height of $G[p^\infty]$ is $2\dim X$.
\item If $G=\BG_m$ is the multiplicative group scheme, then $\BG_m[p^\infty] = \Bmu_{p^\infty} := (\Bmu_{p^n}, i_n)$ with height 1. Here $\Bmu_{p^n}=\Spec K[x]/(x^{p^n}-1)$ is the group scheme of $p^n$-th roots of unity.
\end{enumerate}
\end{Example}

Let us see how $p$-divisible groups generalize Tate modules. Suppose $p \neq \charakt(K)$. Then for a $p$-divisible group $(G_n, i_n)$ of height $h$ over $K$ each $G_n$ is a finite \'etale group scheme over $K$ and each $M_n: =G_n(K^\sep)$ is a discrete $\sG_K$-module of size $p^{nh}$ annihilated by $p^n$ and $M_{n+1}[p^n] = M_n$. It follows that $M_n=(\BZ/p^n\BZ)^h$. We can form two kinds of limits: \\
(i) the direct limit $M_\infty =\varinjlim M_n$ is $(\BQ_p/\BZ_p)^h$  with a continuous $\sG_K$-action for the discrete topology, and \\
(ii) multiplication by $p$ on $M_{n+1}$ provides a quotient map $M_{n+1}\twoheadrightarrow M_n$ of discrete $\sG_K$-modules yielding an inverse limit $T_p(M) = \invlim M_n$ that is a finite free $\BZ_p$-module of rank $h$ equipped with a continuous action of $\sG_K$ for the $p$-adic topology.

We can recover the direct system $(M_n,i_n)$ from both limits, namely $M_n = M_\infty[p^n]$ and $M_n = T_p(M)/(p^n)$. The viewpoint of $M_\infty$ explains the “$p$-divisible” aspect of the situation (since multiplication by $p$ is surjective on $(\BQ_p/\BZ_p)^h)$, whereas $T_p(M)$ has a nicer $\BZ_p$-module structure. Since the \'etale group scheme $G_n$ is uniquely determined by the $\sG_K$-module $M_n$, this proves:

\begin{Proposition}If $K$ is a field with $p \neq  \charakt(K)$, then the functor $G \to T_p(G)$ is an equivalence from the category of $p$-divisible groups over $K$ to the category of finite free $\BZ_p$-modules with continuous $\sG_K$-action.
\qed
\end{Proposition}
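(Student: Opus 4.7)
The plan is to construct an explicit quasi-inverse to $G \mapsto T_p(G)$ and to verify that both compositions are naturally isomorphic to the identity, using the already noted equivalence between finite étale group schemes over $K$ and finite discrete $\sG_K$-sets (which applies precisely because $p \neq \charakt(K)$, making each $G_n$ étale).

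First I would check full faithfulness. A morphism $f = (f_n) \colon G \to H$ of $p$-divisible groups is a compatible system of $K$-group scheme morphisms $f_n \colon G_n \to H_n$. Because $G_n$ and $H_n$ are finite étale over $K$, the functor $(-)(K^{\sep})$ identifies $\Hom_K(G_n, H_n)$ with $\Hom_{\sG_K}(M_n, N_n)$, where $N_n := H_n(K^{\sep})$. Passing to the inverse limit over $n$, the compatibilities $i_n \circ f_n = f_{n+1} \circ i_n$ assemble these into a $\sG_K$-equivariant $\BZ_p$-linear map $T_p(G) \to T_p(H)$; conversely, any such map, reduced modulo $p^n$, yields a compatible system of $\sG_K$-maps $M_n \to N_n$ and hence the desired morphism of $p$-divisible groups. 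This gives a natural bijection $\Hom(G, H) \isoto \Hom_{\BZ_p[\sG_K]}(T_p(G), T_p(H))$.

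Next I would prove essential surjectivity by constructing the quasi-inverse. Given a finite free $\BZ_p$-module $T$ of rank $h$ with continuous $\sG_K$-action, set $M_n := T/p^n T$. This is a finite discrete $\sG_K$-module of cardinality $p^{nh}$ annihilated by $p^n$, and by the equivalence between finite étale $K$-group schemes and finite discrete $\sG_K$-modules there is, uniquely up to unique isomorphism, a finite étale commutative $K$-group scheme $G_n$ of order $p^{nh}$ with $G_n(K^{\sep}) = M_n$. The quotient maps $T/p^n T \into T/p^{n+1}T$ are $\sG_K$-equivariant inclusions, so they come from closed immersions $i_n \colon G_n \into G_{n+1}$. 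To see that $(G_n, i_n)$ is a $p$-divisible group of height $h$, it remains to verify exactness of
\[
0 \longto G_n \stackrel{i_n}{\longto} G_{n+1} \stackrel{[p^n]}{\longto} G_{n+1}.
\]
On $K^{\sep}$-points this is the elementary identity $T/p^nT = (T/p^{n+1}T)[p^n]$, which holds because $T$ is finite free over $\BZ_p$; since all the schemes involved are étale, the exactness of the sequence of $\sG_K$-modules is equivalent to the exactness of the sequence of group schemes.

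Finally, the two natural transformations $T \isoto \invlim T/p^nT = T_p(G)$ and $(G_n) \isoto (G_n')$, where $G_n'$ is built from $T_p(G)/p^n$, are isomorphisms by construction, so the two functors are quasi-inverse. The only real point requiring care is the compatibility between the scheme-theoretic kernel $G_{n+1}[p^n]$ and the group-theoretic kernel of $[p^n]$ on $M_{n+1}$, which is the main obstacle but is harmless in the étale setting where taking $K^{\sep}$-points is exact; everything else is formal from the Galois-module description of finite étale group schemes.
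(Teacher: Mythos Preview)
Your argument is correct and is precisely the approach implicit in the paper: the proposition is stated there with a bare \qed, meant to follow from the preceding paragraph, which records that each $G_n$ is finite \'etale (since $p\neq\charakt(K)$), that the $\sG_K$-modules $M_n=G_n(K^{\sep})$ satisfy $M_{n+1}[p^n]=M_n$, and that an \'etale group scheme is determined by its Galois module. You have simply spelled out these details, including the construction of the quasi-inverse $T\mapsto(G_n)$ with $G_n(K^{\sep})=T/p^nT$, so there is nothing to add.

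One small wording point: the maps $T/p^nT\hookrightarrow T/p^{n+1}T$ you use for $i_n$ are not ``quotient maps'' but rather the inclusions identifying $T/p^nT$ with the $p^n$-torsion $pT/p^{n+1}T\subset T/p^{n+1}T$ via multiplication by $p$; the argument is unaffected.
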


On the other hand let $K$ be a finite extension of $\BQ_p$ and let $X$ be an abelian variety over $K$. Assume that $X$ has \emph{good reduction}, i.e. there exists a smooth projective commutative group scheme $\CX$ over $\CO_K$ with $X \cong \CX\times_{\CO_K}\Spec K$. Then $X[p^n]$ admits an integral model $\CG_n:=\CX[p^n]$ with $\CG_n = \CG_{n+1}[p^n]$ for all $n \geq 1$ and $\CG=(\CG_n,i_n)$ is a $p$-divisible group over $\CO_K$ with $\CG_K:=\CG\times_{\CO_K}\Spec K\cong X[p^\infty]$.

Now  due to Tate \cite{TateP-DivisibleGroups} we know that if $\CG$ and $\CH$ are $p$-divisible groups over $\CO_K$ then 
\[
\Hom_{\CO_K}(\CG, \CH)\isoto \Hom_K(\CG_K, \CH_K).
\]

\begin{Remark}\label{RemDieudonneMod}
$p$-divisible groups over a perfect field $k$ of characteristic $p$ have a description via semi-linear algebra by their \emph{Dieudonn\'e module}. The latter is a finite free module $M$ over the ring $W(k)$ of $p$-typical Witt-vectors over $k$, equipped with a $\Frob_{p}$-semi-linear morphism $F:M\to M$, called \emph{Frobenius}, and a $\Frob_{p}^{-1}$-semi-linear morphism $V:M\to M$, called \emph{Verschiebung}, satisfying $FV=p=VF$.

This was generalized by Fontaine~\cite{Fon77} to $p$-divisible groups $G$ over the ring of integers $\CO_K$ of a finite field extension $K$ of $\BQ_p$. Those $p$-divisible groups are described by the Dieudonn\'e module $D$ of the special fiber $G\times_{\CO_K}\Spec k$ together with a decreasing exhaustive and separated filtration $\Fil^\bullet$ on $D_K = D\otimes_{K_0}K$ satisfying $\Fil^0(D_K) = D_K, \ \Fil^2(D_K) = (0)$, where $K_0=W(k)[p^{-1}]$ is the maximal unramified subextension of $K$.
\end{Remark}

\begin{Notation}\label{NotAinf}
Let $\CO_{\BC_p}^\flat:={\invlim}(\CO_{\BC_p},\Frob_p)=\{x=(x^{(n)})_{n\in\BN_0}\in(\CO_{\BC_p})^{\BN_0}\colon (x^{(n+1)})^p=x^{(n)}\}$ and $A_{\inf}:=W(\CO_{\BC_p}^\flat)$ be the ring of Witt vectors. Every element of $A_{\inf}$ can be written in the form $\sum_{i=0}^\infty[x_i]p^i$ where $[x]$ denotes the Teichm\"uller lift of the element $x=(x^{(n)})_n\in\CO_{\BC_p}^\flat$. Let $\Theta: A_{\inf}[\tfrac{1}{p}] \twoheadrightarrow \BC_p$ be the morphism sending $\sum_i[x_i]p^i$ to $\sum_i x_i^{(0)}p^i$. The \emph{de Rham period ring} $\BB^+_{p, \dR}$ is the completion of $A_{\inf}[\tfrac{1}{p}]$ at the maximal ideal $\ker\Theta$ and $\BB_{p,\dR}: = \Frac(\BB^+_{p,\dR})$ is \emph{the field of $p$-adic periods}. The de Rham period ring $\BB^+_{p,\dR}$ is a complete discrete valuation ring with residue field $\BC_p$ and maximal ideal $\ker \Theta$. One can show that the ideal $\ker \Theta\subset A_{\inf}$ is principal and generated by an element $[p^\flat] - p \in A_{\inf}$, where  $p^\flat = (p, p^{1/p}, p^{1/p^2}, \cdots)\in \CO_{\BC_p}^\flat$. Any other generator is of the form $([p^\flat]-p)\cdot u$ for $u\in A_{\inf}\mal$. For more details see \cite{Fon77}. There is a filtration on $\BB_{p,\dR}$ defined by putting $\Fil^i(\BB_{p,\dR}) = ([p^\flat]-p)^i\cdot\BB_{p,\dR}^+$ for $i\in \BZ$, and we define $\hat{v}_p(x)$ for $x\in \BB_{p,\dR}\setminus\{0\}$ by $\hat{v}_p(x)=i$ if $x \in \Fil^i(\BB_{p,\dR}) \setminus\Fil^{i+1}(\BB_{p,\dR})$. For $x \in \BB_{p,\dR}\setminus\{ 0\}$, the quantity 
\begin{equation}\label{EqNormBdR}
v_p(x)\;:=\;v_p\big(\Theta(x\cdot ([p^\flat]-p)^{-\hat{v}_p(x)})\big)\in\BQ
\end{equation}
does not depend on the choice of the generator $[p^\flat]-p$ of $A_{\inf}\cap \ker \Theta$. Indeed, if we replace the generator $[p^\flat]-p$ of $\ker\Theta\subset A_{\inf}$ by another generator $([p^\flat]-p)\cdot u$ with $u\in A_{inf}\mal$, because then $v_p\bigl(\Theta(x\cdot(([p^\flat]-p)\cdot u)^{-\hat v_p(x)}\bigr)=v_p\bigl(\Theta(x\cdot([p^\flat]-p)^{-\hat v_p(x)}\bigr)+ v_p(\Theta(u))^{-\hat v_p(x)}=v_p\bigl(\Theta(x\cdot([p^\flat]-p)^{-\hat v_p(x)}\bigr)$ as $\Theta (u) \in \CO_{\BC_p}\mal$. If $x$ and $y$ are two elements of $\BB_{p,\dR}$, then $\hat v_p(xy) = \hat v_p(x) + \hat v_p(y)$, and hence $v_p(xy) = v_p(x) + v_p(y)$. But note that $v_p$ does not satisfy the triangle inequality. 

Finally, if $K\subset\BC_p$ is a finite field extension of $\BQ_p$, then there is an action of $\sG_K$ on $\BB_{p,\dR}$ which respects the filtration, and $(\BB_{p,\dR})^{\sG_K} =K$. Also note that  there does not exist a lift of the absolute  Frobenius $\phi_p$ on $\BB_{p,\dR}$.
\end{Notation}

The \emph{$p$-adic period isomorphism} is provided by the following theorem which was proved by Fontaine and Messing~\cite{Fontaine-Messing} using the associated $p$-divisible group.

\begin{Theorem}\label{ThmPAdicPeriodIso}
Let $K_p\subset\BQ_p^\alg$ be a finite extension of $\BQ_p$ and let $X$ be an abelian variety over $K_p$. Then for every $n\ge0$ there is a period isomorphism from $p$-adic Hodge theory  
\[
h_{p,\dR}: \Koh^n_\et(X\times_{K_p}\Spec\BQ_p^\alg, \BZ_p)\otimes_{\BZ_p} \BB_{p,\dR} \isoto \Koh^n_{\dR}(X,K_p)\otimes_{K_p} \BB_{p,\dR},
\]
which is $\sG_{K_p}$-equivariant and compatible with the filtrations, where on the left hand side, $\sG_{K_p}$ acts on both factors and the filtration is induced only by $\BB_{p,\dR}$, and on the right hand side $\sG_{K_p}$ acts only on $\BB_{p,\dR}$ and the filtration is induced by the Hodge filtration on $\Koh^1_{\dR}(X,K_p)$ and the filtration on $\BB_{p,\dR}$, i.e.\ $\Fil^k\bigl(\Koh^1_{\dR}(X,K_p)\otimes_{K_p} \BB_{p,\dR}\bigr):=\sum\limits_{i+j=k}\Fil^i\Koh^1_{\dR}(X,K_p)\otimes_{K_p} \Fil^j\BB_{p,\dR}$.
\end{Theorem}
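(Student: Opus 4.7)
The plan is to reduce the theorem to the $p$-adic comparison isomorphism for $p$-divisible groups established in~\cite{Fontaine-Messing}. After replacing $K_p$ by a finite extension if necessary---which is harmless since both sides of the desired isomorphism are compatible with base change at the level of $\BB_{p,\dR}$-modules---we may assume that $X$ has good reduction. Let $\CX$ be the corresponding smooth proper commutative group scheme over $\CO_{K_p}$ with generic fibre $X$, and let $\CG:=\CX[p^\infty]$ be the associated $p$-divisible group over $\CO_{K_p}$ of height $2\dim X$, as described just before Remark~\ref{RemDieudonneMod}.

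The next step is to translate both sides of the desired comparison into invariants of $\CG$. On the \'etale side, the Galois-equivariant identification $T_p(\CG)=T_p(X)=\invlim X[p^n](K_p^\sep)$ combined with Section~\ref{SectEtCohAbVar} yields $\Koh^1_\et(X,\BZ_p)\cong\Hom_{\BZ_p}(T_p(\CG),\BZ_p)$. On the de Rham side, the crystalline Dieudonn\'e functor of Remark~\ref{RemDieudonneMod} attaches to $\CG$ a filtered $\phi$-module $D(\CG)$ over the maximal unramified subextension $K_p^0\subset K_p$, and a theorem of Berthelot--Breen--Messing identifies $D(\CG)\otimes_{K_p^0}K_p$ with the $K_p$-linear dual of $\Koh^1_{\dR}(X,K_p)$, matching the Hodge filtration with the subspace $\Lie(\CG^\vee)$, where $\CG^\vee$ denotes the Serre dual. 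It therefore suffices to produce, for every $p$-divisible group $\CG$ over $\CO_{K_p}$, a canonical $\sG_{K_p}$-equivariant and filtration-compatible isomorphism
\[
T_p(\CG)\otimes_{\BZ_p}\BB_{p,\dR}\;\isoto\;D(\CG)\otimes_{K_p^0}\BB_{p,\dR};
\]
dualising and combining with the two identifications above then yields the stated map $h_{p,\dR}$.

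The main obstacle is the construction of the displayed isomorphism. Its definition rests on Grothendieck--Messing deformation theory applied to the thickenings $A_{\inf}/(\ker\Theta)^n$: lifts of $\CG$ along these surjections correspond to lifts of the Hodge filtration on $D(\CG)\otimes A_{\inf}/(\ker\Theta)^n$, and the resulting tower of liftings is controlled by the $A_{\inf}$-valued points of $\CG$, from which $T_p(\CG)$ is recovered after inverting the generator $[p^\flat]-p\in\ker\Theta$ described in Notation~\ref{NotAinf}. Verifying that the resulting map respects the two filtrations---that coming from $\BB_{p,\dR}$ alone on the source matches the combined Hodge and $\BB_{p,\dR}$-filtration on the target---is the delicate technical point and constitutes the principal content of Fontaine--Messing's work, while $\sG_{K_p}$-equivariance follows automatically from the naturality of every construction appearing.
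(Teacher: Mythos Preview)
The paper itself gives no proof of this theorem; it simply records that Fontaine and Messing established it via the associated $p$-divisible group, and your sketch follows exactly that route, filling in the outline of the Fontaine--Messing argument.

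One step deserves more care. Your reduction to the good-reduction case by ``replacing $K_p$ by a finite extension'' is not justified by base-change compatibility alone, since that runs in the wrong direction: compatibility with base change lets you pass from $K_p$ to $L_p$, not back. What you actually need is Galois descent. Having constructed the comparison isomorphism over a finite Galois extension $L_p/K_p$ where $X$ acquires good reduction, you must observe that the isomorphism $D_{\dR,L_p}(V)\cong\Koh^1_{\dR}(X_{L_p},L_p)$ is $\Gal(L_p/K_p)$-equivariant (by naturality of the entire construction, since $X$ is defined over $K_p$); taking invariants and using $(\BB_{p,\dR})^{\sG_{L_p}}=L_p$ together with Hilbert~90 then yields $D_{\dR,K_p}(V)\cong\Koh^1_{\dR}(X,K_p)$ with the correct dimension, hence the comparison over $K_p$. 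With this standard descent argument inserted, your proof goes through.
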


It was conjectured by Fontaine~\cite[A.6]{Fon82} and proved by Faltings~\cite[Theorem~8.1]{FaltingsCrystCohom}, Niziol~\cite{Niziol98} and Tsuji~\cite{Tsuji99}, that the theorem also holds for arbitrary smooth proper schemes over $K_p$.

\begin{Example}\label{ExGm}
Also for the multiplicative group scheme $\BG_m:=\BG_{m,\BQ_p}=\Spec\BQ_p[x,x^{-1}]$ there is a period isomorphism between $\Koh^1_\et(\BG_{m,\BQ_p^\alg}, \BZ_p)$ and $\Koh^1_{\dR}(\BG_m,\BQ_p)=\BQ_p\tfrac{dx}{x}$, see Example~\ref{ExGmBettiDR}. As in Example~\ref{ExGmBettiEt} let $\epsilon_p^{(n)}\in\BQ^\alg\subset\BQ_p^\alg$ be a primitive $p^n$-th root of unity with $(\epsilon_p^{(n+1)})^p=\epsilon_p^{(n)}$, such that $\epsilon_p=(\epsilon_p^{(n)})_n\in\CO_{\BC_p}^\flat$. Then $\Koh_{1,\et}(\BG_{m,\BQ_p^\alg}, \BZ_p)=T_p\BG_m=\epsilon_p^{\BZ_p}$ and $\Koh^1_\et(\BG_{m,\BQ_p^\alg}, \BZ_p)=(T_p\BG_m)\dual=(\epsilon_p^{-1})^{\BZ_p}$. On the latter $\Gal(\BQ_p^\alg/\BQ_p)$ acts through the inverse of the cyclotomic character. The series $t_p:=\log[\epsilon_p]:=-\sum_{n>0}\tfrac{1}{n}(1-[\epsilon_p])^n$ converges in $\BB_{p,\dR}$. Under the period isomorphism 
\[
h_{p,\dR}: \Koh^1_\et(\BG_{m,\BQ_p^\alg}, \BZ_p)\otimes_{\BZ_p} \BB_{p,\dR} \isoto \Koh^1_{\dR}(\BG_m,\BQ_p)\otimes_{\BQ_p} \BB_{p,\dR},
\]
of $\BG_m$ the element $\tfrac{dx}{x}\otimes 1$ is mapped to $\epsilon_p^{-1}\otimes t_p$. Therefore $t_p$ can be viewed as the $p$-adic analog of the complex period $2\pi i$ from Example~\ref{ExGmBettiDR}. It satisfies $\hat{v}_p(t_p)=1$ and $v_p(t_p)=v_p\bigl(\Theta(t_p\cdot ([p^\flat]-p)^{-1})\bigr)=\tfrac{1}{p-1}$, see \cite[\S\,0.2]{ColmezPeriods}.
\end{Example}

\section{Complex Multiplication}\label{SectCMAbVar}
\setcounter{equation}{0}

We follow \cite{MilneCM}. Complex conjugation on $\BC$ (or a subfield) is denoted by $c$ or simply by $a\to  \bar a$. A complex conjugation on a field $K$ is an involution induced by an embedding of $K$ into $\BC$ and by complex conjugation on $\BC$.  

\bigskip
\noindent
A number field $E$ is a \emph{CM-field} if it is a quadratic extension $E/F$ where the base field $F$ is totally real but $E$ is totally imaginary. i.e., every embedding of $F \into  \BC$  lies entirely within $\BR$, but there is no embedding of $E \into  \BR$ or equivalently there exists an automorphism $c_E\neq \id$ of $E$ such that  $\rho \circ c_E = c\circ \rho$ for all homomorphisms $\rho: E\into \BC$. In other words, there is a subfield $F$ of $E$ such that $E = F[\sqrt \alpha]$,\ $F$ totally real, $\alpha \in F$ and $\rho(\alpha) < 0$ for all homomorphisms $\rho: F \into \BC$.

\begin{Remark}A finite composite of CM-subfields of a field is CM. In particular, the Galois closure of a CM-field in any larger field is CM.
\end{Remark}
A \emph{CM-algebra} is a finite product of CM-fields. Equivalently, it is a finite product of number fields admitting an automorphism $c_E$ that is of order $2$ on each factor and such that $\rho \circ c_E = c\circ \rho$ for all homomorphisms $\rho: E\to \BC$. The fixed algebra of $c_E$ is a product of the largest totally real subfields of the factors.

\bigskip
\noindent
Let $E$ be a CM-algebra. The set $\Hom_\BQ(E, \BC)$ of $\BQ$-homomorphisms $E \to \BC$ is a union in complex conjugate pairs $\{\phi, c\circ \phi\}$. A \emph{CM-type} on $E$ is the choice of one element from each such pair. More formally:
\begin{Definition}\label{DefAbVarCMType}
A \emph{CM-type} on a CM-algebra $E$ is a subset $\Phi \subset \Hom_\BQ(E, \BC)$ such that
\[
\Hom_\BQ(E, \BC) = \Phi \sqcup c \Phi \quad (\text{disjoint union}).
\]
Here $c \Phi : = \{ c \circ \phi\ |\  \phi \in \Phi\})$.

\end{Definition}

Let $X$ be an abelian variety over the complex numbers $\BC$. We have seen that $\QEnd_\BC (X)$ is a semi-simple $\BQ$-algebra which acts faithfully on the $(2 \dim X)$-dimensional $\BQ$-vector space $\Koh_1(X,\BQ)$. Therefore, by Proposition~\ref{PropSemi-Simple}
\[
2 \dim X \geq [\QEnd_\BC(X) : \BQ]_\red
\]
and when equality holds, $\QEnd_\BC (X)$ is a product of matrix algebras over fields.

\begin{Definition} An abelian variety $X$ over a subfield $K\subset\BC$ is said to have \emph{complex multiplication} (or be of \emph{CM-type}, or be a \emph{CM abelian variety}) over $K$ if
\[
2 \dim X = [\QEnd_K(X) : \BQ]_\red.
\]
By Proposition~\ref{PropEtSubalg} this definition is equivalent to the statement that $\QEnd_K (X)$ contains an \'etale $\BQ$-subalgebra of degree  $2 \dim X$ over $\BQ$. Indeed, if the latter holds then $2\dim X$ is less or equal to the degree of a maximal \'etale $\BQ$-subalgebra. By Proposition~\ref{PropEtSubalg} the latter degree equals $[\QEnd_K(X) : \BQ]_\red$. And the inequality $[\QEnd_K(X) : \BQ]_\red\le 2\dim X$ proves the claim.
\end{Definition}

Note that when $X$ is a CM abelian variety over a field $K\subset \BC$ then $\QEnd_K(X)\subset\QEnd_\BC(X)$ implies that this inclusion is an equality.

\begin{Remark}\label{RemCM}
 Let $X \approx_K \prod_i X_i^{n_i}$ be the decomposition of $X$ (up to isogeny) into a product of isotypic abelian varieties over $K$. Then $D_i = \QEnd_K (X_i)$  is a division algebra, and  $\QEnd_K (X) \cong  \prod  M_{n_i}(D_i)$ is the decomposition of $\QEnd_K (X)$  into a product of simple $\BQ$-algebras. From the above definition and Proposition~\ref{PropSemi-Simple} we see that $X$ has complex multiplication if and only if $D_i$ is a commutative field of degree $2 \dim X_i$ for all $i$. In particular, a simple abelian variety $X$ has complex multiplication if and only if $\QEnd_K (X)$ is a field of degree $2\dim X$ over $\BQ$, and an arbitrary abelian variety has complex multiplication if and only if each simple isogeny factor does.
\end{Remark}

Let $X$ be an abelian variety over $\BC$.  An endomorphism $\alpha$ of $X$ defines an endomorphism of the vector space $\Koh_1(X, \BQ)$ of dimension $2\dim X$ over $\BQ$. Therefore, the characteristic polynomial $P_\alpha$ of $\alpha$ is defined as
\[
P_\alpha(T)  : = \det\big (\alpha - T{\mathrel |\Koh_1(X, \BQ)}\big).
\]
It is monic, of degree $2\dim X$, and has coefficients in $\BZ$. More generally, we define the characteristic polynomial of any element of $\QEnd(X)$ by the same formula.

\noindent 
Consider an endomorphism $\alpha$ of an abelian variety $X$ over $\BC$, and write $X = \BC^g/\Lambda$ with $\Lambda = \Koh_1(X,  \BZ)$. If $\alpha$ is an isogeny, then $\alpha: \Lambda \to \Lambda$ is injective,
and it defines an isomorphism 
\[
\ker(\alpha) =\alpha^{-1}\Lambda/ \Lambda \isoto \Lambda/\alpha \Lambda.
\]
Therefore, for an isogeny $\alpha : X\to X$
\[
\deg \alpha =\#\ker(\alpha)= \bigl| \det\big (\alpha  {\mathrel |\Koh_1(X, \BQ)}\big)\bigr| =  |P_\alpha(0)|.
\]
More generally, for any integer $r$ we have $\deg (\alpha-r)  =  \bigl| \det\big (\alpha -r {\mathrel |\Koh_1(X, \BQ)}\big)\bigr|= |P_\alpha(r)|$; compare \cite[Chap~5 \S~12]{Cornell86}.

For the convenience of the reader we reproduce the proof from \cite{MilneCM} of the following results.

\begin{Lemma}[{\cite[Lemma~3.7]{MilneCM}}]\label{LemmaCM}
Let $F$ be a subfield of $\QEnd(X)$, where $X$ is an abelian variety over $\BC$. If $F$ has a real prime, then $[F:\BQ]$ divides $\dim X$.
\end{Lemma}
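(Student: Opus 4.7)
The plan is to exploit the $F$-action on both the Lie algebra $V = \T_0 X$ and the rational homology $\Koh_1(X,\BQ)$, and then use the real prime hypothesis to force a parity.

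First, since $F \subset \QEnd(X)$, the differential $\Lie$ makes $V = \T_0 X$ into a faithful $F \otimes_\BQ \BC$-module. Because $F$ is a number field, $F\otimes_\BQ\BC \cong \prod_{\phi\in\Hom_\BQ(F,\BC)}\BC_\phi$, so $V$ decomposes into $\BC$-eigenspaces $V = \bigoplus_\phi V_\phi$ on which $\alpha \in F$ acts as multiplication by $\phi(\alpha)$. Writing $n_\phi := \dim_\BC V_\phi$, we obtain
\[
g \;=\; \dim_\BC V \;=\; \sum_{\phi\in\Hom_\BQ(F,\BC)} n_\phi.
\]

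Next, $F$ acts faithfully $\BQ$-linearly on $\Koh_1(X,\BQ)$, making the latter a (finite-dimensional) $F$-vector space of some dimension $m$, hence $[F:\BQ]\cdot m = 2g$. Now identify $\Koh_1(X,\BR) \cong V$ via the exponential map, so that $\Koh_1(X,\BC) = V\oplus\ol V$, where $\ol V$ is the complex conjugate space. Because $\alpha\in F$ acts on $\ol V_\phi$ by $\ol{\phi(\alpha)}=\bar\phi(\alpha)$, the $\phi$-eigenspace of $\Koh_1(X,\BC)$ under $F\otimes_\BQ\BC$ equals $V_\phi \oplus \ol V_{\bar\phi}$ and has $\BC$-dimension $n_\phi + n_{\bar\phi}$. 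On the other hand, since $\Koh_1(X,\BQ)$ is $F$-free of dimension $m$, this same eigenspace has dimension $m$, so
\[
n_\phi + n_{\bar\phi} \;=\; m \qquad\text{for every } \phi\in\Hom_\BQ(F,\BC).
\]

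Finally, invoke the real prime: if $\phi_0\colon F\into\BR$ is a real embedding, then $\bar\phi_0=\phi_0$, and the displayed identity yields $2n_{\phi_0}=m$, so $m$ is even, say $m=2m'$. Grouping $\Hom_\BQ(F,\BC)$ into the $r\ge 1$ real embeddings and the $s$ pairs of complex conjugate embeddings (so $r+2s=[F:\BQ]$) gives
\[
g \;=\; \sum_\phi n_\phi \;=\; r\cdot\tfrac{m}{2} + s\cdot m \;=\; m\cdot\tfrac{[F:\BQ]}{2} \;=\; m'\cdot[F:\BQ],
\]
which is the desired divisibility $[F:\BQ]\mid\dim X$. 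The main content of the argument is the bookkeeping of eigenspaces together with the isomorphism $\Koh_1(X,\BR)\cong V$; no single step presents a serious obstacle, but it is essential to use \emph{both} the $\BC$-linear action on $V$ (which accounts for $g$) and the faithful $\BQ$-linear action on $\Koh_1(X,\BQ)$ (which accounts for $2g$), since neither alone yields the factor of $2$ that the real prime hypothesis removes.
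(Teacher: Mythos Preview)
Your proof is correct and takes a genuinely different route from the paper's. The paper argues as follows: writing $m=2\dim X/[F:\BQ]$ for the $F$-dimension of $\Koh_1(X,\BQ)$, one has $\det\bigl(\alpha\mid\Koh_1(X,\BQ)\bigr)=\Nm_{F/\BQ}(\alpha)^m$ for any $\alpha\in F\cap\End(X)$; since this determinant equals $|\det_\BC(\alpha\mid V)|^2\ge 0$, the norm $\Nm_{F/\BQ}(\alpha)^m$ is non-negative. The paper then uses weak approximation to produce $\alpha\in F$ with negative norm (large and negative at the given real place, close to $1$ at the others), forcing $m$ to be even.

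Your argument reaches the same conclusion ($m$ even) via the Hodge decomposition $\Koh_1(X,\BC)\cong V\oplus\ol V$ and the resulting constraint $n_\phi+n_{\bar\phi}=m$, which at a real embedding $\phi_0=\bar\phi_0$ gives $m=2n_{\phi_0}$. This is more structural and avoids the arithmetic input of weak approximation; in effect you are extracting the same positivity (the $\BC$-linearity of the action on $V$) at the level of eigenspace multiplicities rather than determinants. One minor remark: the final display recomputing $g=\sum_\phi n_\phi=m[F:\BQ]/2$ is redundant, since you already established $[F:\BQ]\cdot m=2g$; the whole content is that $m$ is even.
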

\begin{proof} First note that $ \Koh_1(X, \BQ)$ is a vector space over $F$ of dimension $m : = 2 \dim X/ [F:\BQ]$. So for any $\alpha \in \End(X)\cap F$, the characteristic polynomial $P_\alpha(T)$  is the $m$-th-power of the characteristic polynomial of $\alpha$ in $F/\BQ$. In particular,
\[
\Nm_{F/\BQ}(\alpha)^m = \deg \alpha \geq 0.
\]
However, if $F$ has a real prime, then from the weak approximation theorem $\alpha$ can be chosen to be large and negative at that prime and close to 1 at the remaining primes so that $\Nm_{F/\BQ}(\alpha)<0$.  This gives a contradiction unless $m$ is even. 
\end{proof}

For the next proposition recall the definition of a Rosati involution on $\QEnd_K(X)$. By \cite[Chapter~III, \S\,13, Corollary~5]{Mumford70} there exist \emph{polarizations} on $X$, that is, isogenies $\lambda\colon X \to  X^\vee=\Pic^0(X)$ which over $K^\alg$ are of the form $\lambda(x)=x^*\CL\otimes\CL^{-1}$ for an ample line bundle $\CL$ on $X_{K^\alg}$. Every polarization $\lambda$ has an inverse $\lambda^{-1}\in \QHom_K(X^\vee, X)$. The \emph{Rosati involution} on $\QEnd_K(X)$ corresponding to $\lambda$ is
\begin{equation}\label{EqRosati}
\alpha\mapsto \alpha^\dag = \lambda^{-1}\circ \alpha^\vee \circ \lambda
\end{equation}

\begin{Proposition}[{\cite[Proposition~3.6]{MilneCM}}]\label{PropCM}
\begin{enumerate}
\item \label{PropCM_A}
A simple abelian variety $X$ has complex multiplication if and only if $\QEnd(X)$ is a CM-field of degree $2 \dim X$ over $\BQ$.
\item \label{PropCM_B}
An isotypic abelian variety $X$ has complex multiplication if and only if $\QEnd(X)$ contains a  field  of degree $2 \dim X$ over $\BQ$ (which can be chosen to be a CM-field invariant under some Rosati involution).
\item \label{PropCM_C}
An abelian variety $ X$ has complex multiplication if and only if $\QEnd(X) $ contains an \'etale $\BQ$-algebra $E$ (which can be chosen to be a CM-algebra invariant under some Rosati involution)  of degree $2 \dim X$ over $\BQ$. In this case $\Koh_1(X, \BQ)$ is free over $E$ of rank 1.
\end{enumerate}
\end{Proposition}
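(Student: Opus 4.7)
The three parts are organized in increasing generality: (a) treats the simple case, (b) the isotypic case, and (c) reduces to (b) via the isotypic decomposition of Remark~\ref{RemCM}. My plan is to prove (a) first, then bootstrap to (b), then assemble (c) by taking products. For each direction, I will use Proposition~\ref{PropSemiSimple} and Proposition~\ref{PropEtSubalg} to convert between the equality $2\dim X = [\QEnd(X):\BQ]_\red$ and the existence of large commutative subalgebras of $\QEnd(X)$ acting on the $2\dim X$-dimensional $\BQ$-vector space $\Koh_1(X,\BQ)$.

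\emph{Part (a).} In the forward direction, since $X$ is simple, Remark~\ref{RemCM} already gives that $D:=\QEnd(X)$ is a division $\BQ$-algebra of degree $2\dim X$, which upon combining with the CM assumption and Proposition~\ref{PropSemiSimple} forces $D$ to be a \emph{field}. I then must upgrade "number field of degree $2\dim X$" to "CM-field". First, Lemma~\ref{LemmaCM} rules out any real place of $D$: a real place would make $[D:\BQ]=2\dim X$ divide $\dim X$, which is impossible. So $D$ is totally imaginary. Second, I pick a polarization $\lambda$ (which exists since $X$ is projective) and consider the Rosati involution $\iota=\dag$ on $D$ defined by \eqref{EqRosati}. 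This is a $\BQ$-algebra involution on the field $D$, and the key input is its \emph{positivity}: $\Tr_{D/\BQ}(\alpha\cdot\iota(\alpha))>0$ for every nonzero $\alpha\in D$. If $\iota$ were the identity, positivity of the trace form on $D$ would force $D$ to be totally real, contradicting total imaginarity. Hence $F:=D^{\iota}$ has $[D:F]=2$, and the positivity of the trace form on $F$ forces $F$ totally real; then $D$ is a CM-field with CM-involution $\iota$. The converse direction is immediate: if $D=\QEnd(X)$ is a CM-field of degree $2\dim X$, then $[\QEnd(X):\BQ]_\red=[D:\BQ]=2\dim X$, so $X$ has CM.

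\emph{Part (b).} If $X$ is isotypic with CM, write $X\approx_K X_1^n$ with $X_1$ simple. Then $X_1$ also has CM (CM passes to isotypic factors since $\QEnd(X)\cong M_n(\QEnd(X_1))$ has the same reduced degree per factor as $\QEnd(X_1)$ scaled by $n$), and (a) gives $E_1:=\QEnd(X_1)$ a CM-field of degree $2\dim X_1=2\dim X/n$. To produce a CM-field $E\subset\QEnd(X)\cong M_n(E_1)$ of degree $2\dim X$, let $F_1\subset E_1$ be the maximal totally real subfield, choose any totally real field extension $F'/F_1$ of degree $n$, and set $E:=E_1\otimes_{F_1}F'$. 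Then $E$ is a CM-field of degree $n\cdot[E_1:\BQ]=2\dim X$ containing $E_1$, and it embeds in $M_n(E_1)$ via the regular representation of $F'$ on $M_n(F_1)$. For Rosati-invariance, I will show that once $E$ is chosen we can modify the polarization (e.g.\ by composing with a suitable element of $E$) so that its Rosati involution restricts to the CM-involution on $E$; this is a standard adjustment using the action of $E^\times$ on the $\BQ$-vector space of polarizations up to isogeny. The converse is absorbed into part (c) below.

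\emph{Part (c).} In the forward direction, apply (b) to each isotypic piece of $X\approx_K\prod X_i^{n_i}$ to get CM-fields $E_i\subset\QEnd(X_i^{n_i})$ of degree $2\dim X_i^{n_i}$ invariant under a suitable Rosati involution; their product $E:=\prod_i E_i\subset\prod_i\QEnd(X_i^{n_i})\subset\QEnd(X)$ is an étale CM-algebra of degree $\sum_i 2\dim X_i^{n_i}=2\dim X$, invariant under the product Rosati involution. Conversely, suppose $E\subset\QEnd(X)$ is étale of degree $2\dim X$. Then $V:=\Koh_1(X,\BQ)$ is a faithful $E$-module of $\BQ$-dimension $2\dim X=[E:\BQ]_\red$ (the reduced degree equals the degree for étale algebras). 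Proposition~\ref{PropSemiSimple} gives the inequality $\dim_\BQ V\ge[E:\BQ]_\red$, and in our case it is an equality, so the second assertion of Proposition~\ref{PropSemiSimple} forces $V$ to be a free $E$-module of rank $1$. Using Proposition~\ref{PropSemiSimple} once more, $[\QEnd(X):\BQ]_\red\le\dim_\BQ V=2\dim X$, while the inclusion $E\hookrightarrow\QEnd(X)$ gives $[\QEnd(X):\BQ]_\red\ge[E:\BQ]_\red=2\dim X$. Hence equality holds, and $X$ has CM.

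\emph{Main obstacle.} The most delicate step is promoting the field $D=\QEnd(X)$ in part (a) from "totally imaginary number field of degree $2\dim X$" to a genuine CM-field, which requires invoking the positivity of the Rosati form and carefully checking that its fixed subfield $F$ is totally real. A secondary subtle point is the Rosati-invariance of the chosen $E$ in (b) and (c), which is not automatic for an arbitrary polarization and requires adjusting $\lambda$ within its isogeny class.
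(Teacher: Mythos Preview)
Your argument is correct and, for part~\ref{PropCM_A}, uses the same two ingredients as the paper---Lemma~\ref{LemmaCM} and the Rosati involution---though in reverse order: you first use Lemma~\ref{LemmaCM} to rule out real places and then invoke Rosati positivity to exhibit the totally real subfield, whereas the paper first cites the standard dichotomy that a number field stable under a Rosati involution is either totally real or CM, and then applies Lemma~\ref{LemmaCM} to exclude the totally real case. Both orderings are fine; the paper's is slightly shorter because it packages the positivity argument into the quoted dichotomy rather than unpacking it.

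For parts~\ref{PropCM_B} and~\ref{PropCM_C} the paper gives no proof at all and simply refers to \cite[Proposition~3.6]{MilneCM}, so your self-contained construction (building $E=E_1\otimes_{F_1}F'$ in the isotypic case and taking products for the general case) goes beyond what the paper provides. Two small remarks: in~\ref{PropCM_B}, your tensor product $E_1\otimes_{F_1}F'$ is indeed a field because the totally negative element defining $E_1/F_1$ stays a non-square in the totally real $F'$, which you might state explicitly; and in~\ref{PropCM_C}, the freeness of $\Koh_1(X,\BQ)$ over $E$ follows not from Proposition~\ref{PropSemiSimple} directly but from the elementary count that a faithful module over $\prod_i E_i$ of total $\BQ$-dimension $\sum_i[E_i:\BQ]$ must be one-dimensional over each $E_i$.
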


\begin{proof}
\ref{PropCM_A} $\QEnd_K(X)$ is a field extension of $\BQ$ of degree $2 \dim X$ by Remark~\ref{RemCM}. We know that it is either totally real or CM because it is stable under the Rosati involutions \eqref{EqRosati}. Now Lemma~\ref{LemmaCM} shows that $\QEnd_K(X)$ is a CM-field.

\medskip\noindent
For \ref{PropCM_B} and \ref{PropCM_C} see \cite[Proposition~3.6]{MilneCM}.
\end{proof}

\begin{Definition}\label{DefCMTypeAbVar}
Let $X$ be an abelian variety with complex multiplication, so that $\QEnd(X) $  contains a CM-algebra $E$ for which $\Koh_1(X, \BQ)$ is a free $E$-module of rank 1, and let $\Phi$  be the set of homomorphisms $E \to  \BC$ occurring in the representation of $E$ on $ \T_0(X)$, i.e.,  $ \T_0(X)\cong  \bigoplus_{\phi \in \Phi} \BC_\phi$ where $\BC_\phi$ is the one-dimensional $\BC$-vector space on which $a \in E$ acts as $\phi(a)$. Then, because
\begin{equation}\label{EqHodgeAbVar}
\Koh_1(X, \BR) \cong \T_0(X) \oplus \ol {\T_0(X)},
\end{equation}
$\Phi $ is  a CM-type on $E$, and we say that, $X$ together with the injective homomorphism $E \to \QEnd(X)$ is \emph{of CM-type $(E, \Phi)$}.
\end{Definition}

Let $e$ be a basis vector for $\Koh_1(X, \BQ)$ as an $E$-module, and let $\Fa$ be the $\CO_E$-lattice in $E$ such that $\Fa e = \Koh_1(X, \BZ)$. Under the above  isomorphism
\begin{alignat}{3}\label{EqHodgeCMAbVar}
& \Koh_1(X, \BR) & \;\isoto\;  & \bigoplus_{\phi \in \Phi} \BC_\phi \oplus  \bigoplus_{\phi \in c\Phi} \BC_\phi,\\
& \quad e\otimes 1 &\;\longmapsto\; & (\cdots, e_\phi, \cdots;\cdots,e_{c\circ \phi}, \cdots)\nonumber
\end{alignat}
where each $e_\phi$ is a $\BC$-basis for $\BC_\phi$. The $e_\phi$ determine an isomorphism
\[
 \T_0(X)\cong  \bigoplus_{\phi \in \Phi} \BC_\phi.
\]

Next we state two important results on abelian varieties  with complex multiplication from   \cite{ShimuraTaniyama} and \cite{SerreTate} which we will need later.

\begin{Proposition} \cite[Prop~26 \S12.4]{ShimuraTaniyama} Let $X$ be an abelian variety over $K = K^\sep\subset\BC$ with complex multiplication, then there exists an abelian variety isogenous to $X$ defined over a  field which is a finite extension of $\BQ$.
\end{Proposition}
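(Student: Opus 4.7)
The plan is to treat characteristic $0$ and positive characteristic separately, and in each case first reduce to the isotypic (or even simple) case. Indeed, by the theorem of Poincar\'e and Weil (Remark~\ref{RemPoincareWeil}), $X$ is isogenous over $K$ to a product of simple abelian varieties; if each factor is isogenous to one defined over a finite extension of $\BQ$ or of a finite field, then so is $X$ after combining over a common finite extension. By Proposition~\ref{PropCM}\ref{PropCM_C} we may then assume $E:=\QEnd_K(X)$ contains a CM-algebra for which $\Koh_1(X,\BQ)$ is free of rank one, with CM-type $\Phi$.

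\textbf{Characteristic zero.} Fix an embedding $K\into\BC$; this is possible since $K$ is separably, hence algebraically, closed of characteristic zero. Choose an $E$-basis vector $e_0$ of $\Koh_1(X,\BQ)$, and let $\Fa\subset E$ be the fractional $\CO_E$-ideal with $\Koh_1(X,\BZ)=\Fa\cdot e_0$. Using the identification \eqref{EqHodgeCMAbVar} one recovers $X(\BC)\cong\BC^{\Phi}/\Phi(\Fa)$, where $\Phi(\Fa):=\{(\phi(a))_{\phi\in\Phi}:a\in\Fa\}$. The right-hand side is canonically built from $(E,\Phi,\Fa)$, data defined over $\ol\BQ$. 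Equipping it with the Riemann form derived from a totally imaginary element $\xi\in E$ whose values $\phi(\xi)$ for $\phi\in\Phi$ all have positive imaginary part yields a polarizable abelian variety whose theta functions provide a projective embedding with algebraic Fourier coefficients; equivalently, the associated point of the Siegel moduli space is a CM point and is therefore algebraic over $\BQ$. This produces an abelian variety $X_0$ over a number field $K_0\subset\ol\BQ$ with $X_0\otimes_{K_0}\BC\cong X\otimes_K\BC$, and a standard descent shows $X$ is isogenous to $X_0\otimes_{K_0}K$.

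\textbf{Positive characteristic $p$.} Descend $X$ together with the action of a chosen CM-subalgebra $E\subset\QEnd_K(X)$ to a finitely generated subfield $L\subset K$, and spread out to an abelian scheme $\CX\to S$ over a smooth geometrically integral $\BF_q$-variety $S$ with function field $L$. If $\dim S=0$ we are already done. Otherwise pick a closed point $s\in S$; the fiber $X_s$ is a CM abelian variety over the finite field $\kappa(s)$. The key point is that because $X$ has CM, for any prime $\ell\ne p$ the continuous $\ell$-adic monodromy representation of $\pi_1(S)$ on $T_\ell(X_L)$ commutes with the large abelian ring $E\otimes_\BQ\BQ_\ell$, hence lands in the commutative (in fact toric) group $(E\otimes_\BQ\BQ_\ell)\mal$. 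By rigidity, its image is determined up to finite index by any single Frobenius conjugacy class, such as the one at $s$. Combined with Tate's theorem (Theorem~\ref{ThmTateConjAbVar}) applied to the specialization map $\QEnd_L(X_L)\into\QEnd_{\kappa(s)}(X_s)$, this forces $X_L$, after a further finite extension of $L$, to be isogenous to the constant family $X_s\times_{\kappa(s)}L$. Therefore $X$ is isogenous to the base change to $K$ of the abelian variety $X_s$ defined over the finite field $\kappa(s)$.

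The main obstacle is the positive-characteristic descent: one must exclude genuine variation of $X$ in a positive-dimensional family over $S$. The CM hypothesis is essential here, as it forces the $\ell$-adic monodromy to be abelian and thereby makes the isogeny class rigid; carrying out this rigidity argument cleanly, without circularly invoking the existence of a Shimura-style CM moduli, is where the real subtlety lies. By contrast, the characteristic-zero case is essentially a direct algebraization of the analytic uniformization $\BC^\Phi/\Phi(\Fa)$ over $\ol\BQ$.
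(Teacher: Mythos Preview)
The paper does not give its own proof of this proposition; it simply cites \cite{ShimuraTaniyama}. So there is no in-paper argument to compare your approach against, and the question is only whether your sketch stands on its own.

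Your characteristic-zero argument is essentially the classical one and is correct in outline. One minor slip: an arbitrary algebraically closed field $K$ of characteristic zero need not embed into $\BC$ (for cardinality reasons); you should first descend $X$ together with its CM-structure to a subfield finitely generated over $\BQ$ and replace $K$ by the algebraic closure of that, which does embed in $\BC$.

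Your positive-characteristic argument has a genuine gap. You correctly observe that the $\ell$-adic monodromy of $\CX/S$ lands in the commutative group $(E\otimes_\BQ\BQ_\ell)^\times$, but the next sentence---``by rigidity, its image is determined up to finite index by any single Frobenius conjugacy class''---is not a standard statement and is not clearly true: a single element of an abelian group does not determine (even up to finite index) the image of $\pi_1(S)$. What you actually need is that the \emph{geometric} monodromy, i.e.\ the image of $\pi_1(S_{\ol{\BF_q}})$, is finite. Abelian monodromy alone does not give this, since the geometric fundamental group of a positive-dimensional variety over $\ol{\BF_p}$ has enormous abelian $\ell$-adic quotients. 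Without this finiteness, the appeal to the Tate conjecture cannot identify $V_\ell X_L$ with $V_\ell(X_s\times_{\kappa(s)}L)$ as $\sG_L$-modules, and the conclusion ``isogenous to the constant family'' does not follow. The standard ways to close this gap are either Serre--Tate deformation theory (the action of $E\otimes_\BQ\BQ_p$ on $X[p^\infty]$ leaves no nontrivial deformations, so the moduli point of a CM abelian variety in characteristic $p$ is zero-dimensional) or Grothendieck's theorem that an abelian variety in characteristic $p$ with sufficiently many complex multiplications is isogenous to one defined over $\ol{\BF_p}$. You yourself flag this passage as ``where the real subtlety lies'', and indeed it is: as written, the argument assumes precisely the rigidity it is meant to establish.
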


\begin{Theorem}\cite[Thm~6]{SerreTate}\label{ThmPotGoodRed}
Let $X$ be an abelian variety over a finite extension $K/\BQ$ with complex multiplication, then there exists a finite extension $L/K$ such that $X$ has good reduction at every place of $\CO_L$.
\end{Theorem}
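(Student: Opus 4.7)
The plan is to use the Néron--Ogg--Shafarevich criterion: an abelian variety over a local field has good reduction if and only if for some prime $\ell$ different from the residue characteristic, the inertia subgroup acts trivially on the Tate module $T_\ell X$. It therefore suffices to produce a finite extension $L/K$ such that at every place $w$ of $L$ the image of inertia in $\Aut(T_\ell X)$ is trivial for some $\ell$ coprime to the residue characteristic of $w$.

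First I would replace $K$ by a finite extension so that the CM algebra $E$ of Proposition~\ref{PropCM}(c) embeds into $\QEnd_K(X)$. Then the $\sG_K$-action on $V_\ell X:=T_\ell X\otimes_{\BZ_\ell}\BQ_\ell$ commutes with the action of $E$, since $E$-endomorphisms defined over $K$ are Galois-equivariant. Because $\Koh_1(X,\BQ)$ is free of rank one over $E$ by Proposition~\ref{PropCM}(c), the module $V_\ell X$ is free of rank one over $E\otimes_\BQ\BQ_\ell=\prod_{\lambda\mid\ell}E_\lambda$, and the representation decomposes as a product of continuous characters $\chi_\lambda\colon\sG_K\to E_\lambda^\times$.

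The main step is to prove that for every finite place $v$ of $K$ with residue characteristic $p$ and every prime $\lambda$ of $E$ with $\lambda\nmid p$, the image $\chi_\lambda(I_v)\subset\CO_{E_\lambda}^\times$ is finite. Granting this, one notes that $X$ has bad reduction at only finitely many places of $K$ (it spreads out to an abelian scheme over a non-empty open of $\Spec\CO_K$ via its Néron model), so the finitely many finite images $\chi_\lambda(I_v)$ can be simultaneously trivialized by taking $L$ to be the fixed field of the intersection of their kernels. The Néron--Ogg--Shafarevich criterion then yields good reduction at every place of $L$.

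The principal obstacle is the finiteness of $\chi_\lambda(I_v)$. For the wild inertia $P_v$, which is pro-$p$, this is elementary: the only pro-$p$ subgroups of $\CO_{E_\lambda}^\times$ with $\lambda\nmid p$ are finite, since $\CO_{E_\lambda}^\times$ is an extension of the finite group $k_\lambda^\times$ by a pro-$\ell$ group. Tame inertia is more delicate because $\CO_{E_\lambda}^\times$ genuinely contains infinite pro-$\ell$ subgroups. Here one exploits the compatibility of the system $(\chi_\lambda)_\lambda$ arising from a single abelian variety: comparing the actions on $V_\ell X$ and $V_{\ell'}X$ for two primes $\ell\ne\ell'$ both coprime to $p$ forces the tame image to take values in the torsion part of $E_\lambda^\times$, which is the finite group of roots of unity in $E_\lambda$. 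Equivalently, via global class field theory one reinterprets the $\chi_\lambda$ as arising from a single algebraic Hecke character of $E$, whose restriction to inertia at any place away from $\ell$ is automatically of finite order. This $\ell$-independence input is the substantive content beyond the local compactness arguments.
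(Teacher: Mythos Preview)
The paper does not supply a proof of this theorem; it simply records the statement with a citation to Serre--Tate. So there is no ``paper's own proof'' to compare against, and the relevant question is whether your sketch is a faithful rendering of the Serre--Tate argument.

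Your outline is correct and is essentially the Serre--Tate strategy: N\'eron--Ogg--Shafarevich reduces the problem to showing that inertia acts through a finite quotient on $T_\ell X$, and the CM hypothesis makes $V_\ell X$ free of rank one over $E\otimes_\BQ\BQ_\ell$, so the Galois image lands in the abelian group $(E\otimes_\BQ\BQ_\ell)^\times$. One point can be streamlined. You split inertia into wild and tame parts and then invoke $\ell$-independence or algebraic Hecke characters to control the tame part. This works, but it is heavier than necessary. Once you know the representation $\sG_K\to(E\otimes_\BQ\BQ_\ell)^\times$ is abelian, local class field theory identifies the image of the full inertia group $I_v$ with a quotient of $\CO_{K_v}^\times$. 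Since $\CO_{K_v}^\times$ is an extension of a finite group by a pro-$p$ group (the $1$-units), the same pro-$p$ versus pro-$\ell$ argument you used for wild inertia disposes of \emph{all} of $I_v$ at once when $\ell\ne p$. No comparison between different primes $\ell$ is needed, and the Hecke-character interpretation, while illuminating, is not required for the finiteness statement.
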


\section{The Faltings Height of an Abelian Variety}\label{SectFaltingsHeight}
\setcounter{equation}{0}

We recall the definition of the \emph{Faltings height} of an abelian variety. It was introduced by Faltings in his proof of the Mordell Conjecture and the Tate Conjecture~\ref{ThmTateConjAbVar} for abelian varieties; see \cite{FaltingsEndlichkeit} or \cite[Chapter~2, \S~3]{Cornell86} for the English translation. Let $K$ be a number field, $\CO_K$ the ring of integers in $K$. We define a metrized line bundle on $\Spec(\CO_K)$ to be a projective $\CO_K$-module $P$ of  rank 1, together with norms $\|\,.\,\|_v$  on $P \otimes_{\CO_K}  K_v$ for all infinite places $v$ of $K$, where $K_v$ denotes the completion of $K$ at $v$. We define $\epsilon_v = 1$ or $2$ according to whether $K_v \cong \BR$ or $K_v \cong\BC$. The \emph{degree} of the metrized line bundle is 
defined as  
\[
\deg (P, \|\,.\, \|) = \log (\#(P/\CO_K\cdot x)) - \sum_{v|\infty} \epsilon_v \log \|x\|_v,
\]
where $x$ is a nonzero element of $P$ and the sum runs over all infinite places of $K$. The right-hand side is of course independent of $x$ because of the product formula~\eqref{EqProdFormulaAbVar}.

Let now $X$ be an abelian variety of dimension $g$ over  $K$, and let $\CX$ be the relative identity component of the N\'eron model of $X$ over $\CO_K$. Assume that $\CX$ is semi-abelian, i.e.\ a smooth algebraic group $q : \CX \to \Spec \CO_K$, whose   fibers are connected of dimension $g$, and are extensions of an abelian variety  by a torus. Let  $s : \CX \to \Spec \CO_K$  be the zero section.  Let $\omega_{X/\CO_K} = s^\ast(\Omega^g_{\CX/\CO_K}),\ \omega_{X/\CO_K}$ is a line bundle on $\CO_K$. The metrics at the  infinite places $v$ of $K$ are given by 
\[
\|\alpha\|_v^2 : = \frac{1}{(2\pi)^g}\int_{X_v(\BC)}|\alpha \wedge \bar \alpha| \qquad\text{for}\qquad \alpha\in \omega(X_v) = \Gamma(X_v, \Omega^g_{X_v})\,,
\]
where $X_v$ denotes the base change of $X$ under the map $K\to K_v$. Then Faltings~\cite[Chapter~2, \S~3]{Cornell86} defines a moduli-theoretic height as follows.

\begin{Definition}\label{DefFaltingsHeight}
The (\emph{stable}) \emph{Faltings height} $ht_{\rm Fal}^{\rm st}(X)$ of $X$ is defined as
\begin{equation}
ht_{\rm Fal}^{\rm st}(X)\;:=\; \frac{1}{[K:\BQ]}\deg (\omega_{X/\CO_K},\|\,.\,\|).
\end{equation}
\end{Definition}
It is easy to check that $ht_{\rm Fal}^{\rm st}(X)$ is invariant under extension of the ground  field. Since every abelian variety is potentially semi-stable by Grothendieck~\cite[Expos\'e~IX, Th\'eor\`eme~3.6]{SGA7}, the Faltings height is defined for every abelian variety over a number field. It measures the arithmetic complexity of the abelian variety and is ``not far'' from an actual height on the moduli space of principally polarized abelian varieties.

\section{Colmez's Conjecture on Periods of CM Abelian Varieties}\label{SectColmezConjAbVar}
\setcounter{equation}{0}

In \cite{ColmezPeriods} P.~Colmez considers product formulas for periods of abelian varieties in the following

\begin{Situation}\label{SitCMAbVar}
Let $X$ be an abelian variety defined over a number field $K$ with complex multiplication by the ring of integers $\CO_E$ in a CM-field $E$ and of CM-type $(E,\Phi)$. Let $H_E:=\Hom_\BQ(E,\BQ^\alg)$ be the set of all ring homomorphisms $E\into\BQ^\alg$ and assume that $K$ contains $\psi(E)$ for every $\psi\in H_E$. By Theorem~\ref{ThmPotGoodRed} we may assume moreover, that $K$ is a finite Galois extension of $\BQ$ and that $X$ has good reduction at every prime of $\CO_K$. For a fixed $\psi\in H_E$ let $\omega_\psi\in\Koh^1_{\dR}(X,K)$ be a non-zero cohomology class such that $b^*\omega_\psi=\psi(b)\cdot\omega_\psi$ for all $b\in E$. For every embedding $\eta\colon K\into\BQ^\alg$, let $X^\eta:=X\times_{\Spec K,\Spec\eta}\Spec K$ and $\omega_\psi^\eta\in\Koh^1_{\dR}(X^\eta,K)$ be deduced from $X$ and $\omega_\psi$ by base extension. Let $(u_\eta)_\eta\in\prod_{\eta\in H_K}\Koh_1(X^\eta(\BC),\BZ)$ be a family of cycles compatible with complex conjugation $c$, that is $u_{c\eta}=c(u_\eta)$. Let $v$ be a place of $\BQ$. 

If $v=\infty$ the de Rham isomorphism (Theorem~\ref{ThmDeRhamIsom}) between Betti and de Rham cohomology yields a pairing
\[
\langle\,.\;,\,.\,\rangle_\infty\colon \Koh_1(X^\eta(\BC),\BZ) \times \Koh^1_{\dR}(X^\eta,K) \longto \BC\,,\quad (u_\eta,\omega_\psi^\eta)\longmapsto\langle u_\eta,\omega_\psi^\eta\rangle_\infty\,.
\]
We define the complex absolute value $\bigl|\int_{u_\eta}\omega_\psi^\eta\bigr|_\infty:=|\langle u_\eta,\omega_\psi^\eta\rangle_\infty|_\infty\in\BR$. 

If $v$ corresponds to a prime number $p\in\BZ$, the comparison isomorphism $\Koh^1(X^\eta(\BC),\BZ)\otimes_\BZ \BZ_p\isoto\Koh^1_\et(X^\eta_{\BQ_p^\alg},\BZ_p)$ together with the comparison isomorphism from $p$-adic Hodge theory (Theorems~\ref{ThmCompIsomBEt} and \ref{ThmPAdicPeriodIso}) yield a pairing
\[
\langle\,.\;,\,.\,\rangle_p\colon \Koh_1(X^\eta(\BC),\BZ) \times \Koh^1_{\dR}(X^\eta,K) \longto \BB_{p,\dR}\,,\quad (u_\eta,\omega_\psi^\eta)\longmapsto\langle u_\eta,\omega_\psi^\eta\rangle_p\,.
\]
We define the absolute value $\bigl|\int_{u_\eta}\omega_\psi^\eta\bigr|_p:=|\langle u_\eta,\omega_\psi^\eta\rangle_p|_p:=p^{-v_p\left(\langle u_\eta,\omega_\psi^\eta\rangle_p\right)}\in\BR$, where the ``valuation'' $v_p$ on $\BB_{p,\dR}$ was defined in \eqref{EqNormBdR} in Notation~\ref{NotAinf}. 
\end{Situation}

Colmez~\cite{ColmezPeriods} now considers the product $\prod\limits_v\prod\limits_{\eta\in H_K}\bigl|\int_{u_\eta}\omega_\psi^\eta\bigr|_v$, or equivalently $\tfrac{1}{\#H_K}$ times its logarithm 
\begin{equation}\label{EqSumAbVar}
\TS\tfrac{1}{\#H_K}\sum\limits_v\sum\limits_{\eta\in H_K}\log\bigl|\int_{u_\eta}\omega_\psi^\eta\bigr|_v\;=\;\tfrac{1}{\#H_K}\sum\limits_{\eta\in H_K}\log\bigl|\langle u_\eta,\omega_\psi^\eta\rangle_\infty\bigr|_\infty-\tfrac{1}{\#H_K}\sum\limits_{\;v=v_p\ne\infty\;}\sum\limits_{\eta\in H_K}v_p\bigl(\langle u_\eta,\omega_\psi^\eta\rangle_p\bigr)\log p\,.
\end{equation}
The right sum over all $v=v_p$ does not converge. Namely, Colmez~\cite[Theorem~II.1.1]{ColmezPeriods} proves the following Theorem~\ref{ThmColmezLocal} below. To formulate the theorem we need a

\begin{Definition}\label{DefArtinMeasure}
In this definition we denote by $Q$ the function field from the introduction or the field $\BQ$, and by $Q_v$ the completion of $Q$ at a place $v\ne\infty$. The case $Q=\BQ$ is relevant in the present section, and the other case will be relevant in Section~\ref{SectColmezConjAMot}. For $F=Q$ or $F=Q_v$ let $F^\sep$ be the separable closure of $F$ in $F^\alg$ and let $\sG_F:=\Gal(F^\sep/F)$. Let $\CC(\sG_F,\BQ)$ be the $\BQ$-vector space of locally constant functions $a\colon\sG_F\to\BQ$ and let $\CC^0(\sG_F,\BQ)$ be the subspace of those functions which are constant on conjugacy classes, that is, which satisfy  $a(h^{-1}gh)=a(g)$ for all $g,h\in\sG_F$. Then the $\BC$-vector space $\CC^0(\sG_F,\BQ)\otimes_\BQ\BC$ is spanned by the traces of representations $\rho\colon\sG_F\to\GL_n(\BC)$ with open kernel for varying $n$ by \cite[\S\,2.5, Theorem~6]{SerreLinRep}. Via the fixed embedding $Q^\sep\into Q_v^\sep$ we consider the induced inclusion $\sG_{Q_v}\subset\sG_Q$ and morphism $\CC(\sG_Q,\BQ)\to\CC(\sG_{Q_v},\BQ)$. If $\chi$ is the trace of a representation $\rho\colon\sG_Q\to\GL_n(\BC)$ with open kernel we let $L(\chi,s):=\prod_{\text{all }v}L_v(\chi,s)$, respectively $L^\infty(\chi,s):=\prod_{v\ne\infty}L_v(\chi,s)$ be the Artin $L$-function of $\rho$ with, respectively without the factor at $\infty$. Note that the latter factor involves the Gamma-function if $Q=\BQ$. These $L$-functions only depend on $\chi$ and converge for all $s \in \BC$ with $\CR e(s)>1$; see \cite[Chapter~XII, \S\,2]{LangANT} for $Q=\BQ$ and \cite[pp.~126ff]{Rosen02} for the function field case. We also let $q_v$ be the cardinality of the residue field of $Q_v$ (this means $q_v=p$ if $Q=\BQ$ and $Q_v=\BQ_p$) and we set
\begin{align}
\label{EqZFct1}
Z^\infty(\chi,s)\;:=\; & \frac{\tfrac{d}{ds}L^\infty(\chi,s)}{L^\infty(\chi,s)}\;=\;-\sum_{v\ne\infty}Z_v(\chi,s)\log q_v\qquad\text{with}\\
\label{EqZFct2}
Z_v(\chi,s)\;:=\; & \frac{\tfrac{d}{ds}L_v(\chi,s)}{-L_v(\chi,s)\cdot\log q_v}\;=\;\frac{\tfrac{d}{dq_v^{-s}}L_v(\chi,s)}{q_v^s\cdot L_v(\chi,s)}\;. 
\end{align}
Moreover, we let $\Ff_\chi$ be the Artin conductor of $\chi$. If $Q=\BQ$, it is a positive integer $\Ff_\chi=\prod_p p^{\,\mu_{\Art,p}(\chi)}\in\BZ$, and if $Q$ is the function field of the curve $C$ it is an effective divisor $\Ff_\chi=\sum_v \mu_{\Art,v}(\chi)\cdot[v]$ on $C$; see \cite[Chapter~VI, \S\S\,2,3]{SerreLF}, where $\mu_{\Art,v}(\chi)$ is denoted $f(\chi,v)$. In particular, only finitely many values $\mu_{\Art,v}(\chi)$ are non-zero. We set 
\begin{alignat}{3}\label{EqMuArt}
& \mu^\infty_\Art(\chi) \; := \; && \log(\Ff_\chi)\;=\;\sum_{v\ne\infty}\mu_{\Art,v}(\chi) \log q_v\qquad &\text{if }Q=\BQ\,,\text{ respectively}\\
& \mu_\Art(\chi) \; := \; && \deg(\Ff_\chi)\log q \; := \; \sum_{\text{all }v}\mu_{\Art,v}(\chi)[\BF_v:\BF_q] \log q\; = \; & \sum_{\text{all }v}\mu_{\Art,v}(\chi) \log q_v\qquad \text{and} \nonumber \\
\label{EqMuArtFF} & \mu^\infty_\Art(\chi) \; := \; && \sum_{v\ne\infty}\mu_{\Art,v}(\chi) \log q_v\qquad &\text{if $Q$ is a function field}\,.
\end{alignat}
By linearity we extend $Z^\infty(\,.\,,s)$ and $\mu^\infty_\Art$ to all $a\in\CC^0(\sG_Q,\BQ)$ and $Z_v(\,.\,,s)$ and $\mu_{\Art,v}$ to all $a\in\CC^0(\sG_{Q_v},\BQ)$. The map $Z_v(\,.\,,s)$ takes values in $\BQ(q_v^{-s})$. 
\end{Definition}

For our CM-type $(E,\Phi)$ and for every $\psi\in H_E$ we define the functions
\begin{align}
a_{E,\psi,\Phi}\colon\;\sG_\BQ\to\BZ, & \quad g\mapsto \begin{cases} 
1 & \text{when }g\psi\in\Phi \\
0 & \text{when }g\psi\notin\Phi             
\end{cases}
\qquad\text{and} \nonumber \\[2mm]
a^0_{E,\psi,\Phi}\colon\;\sG_\BQ\to\BQ, & \quad g\mapsto \tfrac{1}{\#H_K}\TS\sum\limits_{\eta\in H_K}a_{E,\eta\psi,\eta\Phi}(g)\;=\;\dfrac{\#\{\eta\in H_K\colon\eta^{-1}g\eta\psi\in\Phi\}}{\#H_K}\label{EqAbVar:a0_E}
\end{align}
which factor through $\Gal(K/\BQ)$ by our assumption that $\psi(E)\subset K$ for all $\psi\in H_E$. In particular, $a_{E,\psi,\Phi}\in\CC(\sG_\BQ,\BQ)$ and $a^0_{E,\psi,\Phi}\in\CC^0(\sG_\BQ,\BQ)$ is independent of $K$. 

We also define integers $v_p(\omega_\psi^\eta)$ which are all zero except for finitely many. Let $K_p$ be the $p$-adic completion of $K\subset \BQ^\alg\subset \BQ_p^\alg\subset\BC_p$ and let $\CX^\eta$ be an abelian scheme over $\CO_{K_p}$ with $\CX^\eta\times_{\CO_{K_p}}\Spec K_p\cong X^\eta\times_K\Spec K_p$. Then there is an element $x\in K_p\mal$, unique up to multiplication by $\CO_{K_p}\mal$, such that $x^{-1}\omega_\psi^\eta$ is an $\CO_{K_p}$-generator of the free $\CO_{K_p}$-module of rank one 
\[
\Koh^{\eta\psi}(\CX^\eta,\CO_{K_p})\;:=\;\bigl\{\omega\in\Koh^1_\dR(\CX^\eta,\CO_{K_p})\colon b^*\omega=\eta\psi(b)\cdot\omega\es\forall\;b\in\CO_E\bigr\}\,,
\]
and we set
\begin{equation}\label{EqValuationOfOmegaPsi}
v_p(\omega_\psi^\eta)\;:=\;v_p(x)\;\in\;\BZ\,.
\end{equation}
This value does not depend on the choice of the model $\CX^\eta$ with good reduction, because all such models are isomorphic over $\CO_{K_p}$. Now Colmez~\cite[Theorem~II.1.1]{ColmezPeriods} computed the terms in \eqref{EqSumAbVar} as follows.

\begin{Theorem}\label{ThmColmezLocal}
If the image of $u_\eta$ in $\Koh_1(X^\eta(\BC),\BQ_p)=\Koh_{1,\et}(X^\eta_{\BQ_p^\alg},\BZ_p)$ is a generator of the $\CO_E\otimes_\BZ \BZ_p$-module $\Koh_{1,\et}(X^\eta_{\BQ_p^\alg},\BZ_p)=T_pX^\eta$, then
\begin{alignat}{2}
&& \TS\tfrac{1}{\#H_K}\sum\limits_{\eta\in H_K}v_p(\langle \omega_\psi^\eta,u_\eta\rangle_v) & \;=\; Z_p(a^0_{E,\psi,\Phi},1)-\mu_{\Art,p}(a^0_{E,\psi,\Phi})
+\tfrac{1}{\#H_K}\sum\limits_{\eta\in H_K}v_p(\omega_\psi^\eta)\,.
\end{alignat}
\end{Theorem}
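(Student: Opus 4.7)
The strategy is to reduce the statement to a local computation at $p$ via $p$-divisible groups and Lubin--Tate theory, and then average the local answer over $\eta$ to recover the Artin-theoretic right-hand side.

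Since $X$ has good reduction at $p$, for each $\eta$ fix an abelian scheme $\CX^\eta$ over $\CO_{K_p}$ with generic fibre $X^\eta\otimes_K K_p$. The $\CO_E$-action extends uniquely, and the associated $p$-divisible group decomposes as $\CX^\eta[p^\infty]\cong\bigoplus_{\Fp\mid p}\CX^\eta[\Fp^\infty]$, indexed by primes $\Fp$ of $\CO_E$ above $p$. Each summand is a formal $\CO_{E_\Fp}$-module of height $[E_\Fp:\BQ_p]$, whose Hodge filtration is controlled by the subset of $\Phi$ factoring through $\Fp$ under the fixed embedding $\BQ^\alg\into\BQ_p^\alg$. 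Consequently the eigen-class $\omega_\psi^\eta$ is supported on the single summand $\CX^\eta[\Fp(\eta\psi)^\infty]$, where $\Fp(\eta\psi)$ is the prime induced by $\eta\psi\colon E\into\BQ_p^\alg$. Because $u_\eta$ generates $T_pX^\eta$ over $\CO_E\otimes_\BZ\BZ_p$, it projects to an $\CO_{E_\Fp}$-generator of $T_p(\CX^\eta[\Fp^\infty])$ on each summand.

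Applying the Fontaine--Messing period isomorphism (Theorem~\ref{ThmPAdicPeriodIso}), one identifies $\langle\omega_\psi^\eta,u_\eta\rangle_p\in\BB_{p,\dR}$ with a CM period of the form $x_{\eta,\psi}\cdot t_{\eta,\psi}$. Here $x_{\eta,\psi}\in K_p\mal$ records the normalization of $\omega_\psi^\eta$ against an $\CO_{K_p}$-basis of $\Koh^{\eta\psi}(\CX^\eta,\CO_{K_p})$, so that $v_p(x_{\eta,\psi})=v_p(\omega_\psi^\eta)$ as in \eqref{EqValuationOfOmegaPsi}. The factor $t_{\eta,\psi}\in\BB_{p,\dR}$ is the analogue, for the formal $\CO_{E_{\Fp(\eta\psi)}}$-module $\CX^\eta[\Fp(\eta\psi)^\infty]$, of the Lubin--Tate logarithm of a Tate-module generator; when $E=\BQ$ and $X=\BG_m$ it degenerates to the element $t_p$ of Example~\ref{ExGm}. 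The local $p$-adic Hodge theory of CM $p$-divisible groups then expresses $v_p(t_{\eta,\psi})$ as a sum of a tamely ramified piece, depending only on the residue degree and ramification of $\Fp(\eta\psi)/p$, and a wildly ramified piece equal to the Artin conductor at $p$ of the local character $\chi_{\eta,\psi}\colon\sG_{\BQ_p}\to\CO_{E_{\Fp(\eta\psi)}}\mal$ by which Galois acts on $T_p(\CX^\eta[\Fp(\eta\psi)^\infty])$. By local class field theory, $\chi_{\eta,\psi}$ is determined by $\eta\psi$ together with the local CM type, and its Euler factor and conductor at $p$ are precisely $Z_p$ and $\mu_{\Art,p}$ applied to the indicator character $a_{E,\eta\psi,\eta\Phi}$ of \eqref{EqAbVar:a0_E}.

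Averaging over $\eta\in H_K$ then turns the sum of indicator functions into the class function $a^0_{E,\psi,\Phi}$ by its very definition. Since $Z_p(\,.\,,1)$ and $\mu_{\Art,p}$ are linear in their character argument, one obtains the claimed identity up to the correction term $\tfrac{1}{\#H_K}\sum_\eta v_p(\omega_\psi^\eta)$ coming from the normalizations $x_{\eta,\psi}$. The main obstacle is the third step, namely the explicit evaluation of $v_p(t_{\eta,\psi})$ in $\BB_{p,\dR}$: because $v_p$ does not satisfy a triangle inequality and is defined through the residue map $\Theta$ of Notation~\ref{NotAinf}, extracting a closed-form expression requires a careful analysis of the Hodge--Tate decomposition for CM $p$-divisible groups and of the Fontaine--Messing period map on their crystalline cohomology; this analysis is the technical heart of \cite[\S\,II]{ColmezPeriods}.
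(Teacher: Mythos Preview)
The paper does not give its own proof of this theorem; it merely states the result and attributes it to Colmez \cite[Theorem~II.1.1]{ColmezPeriods}. Your outline is a faithful sketch of Colmez's strategy: decompose the $p$-divisible group of the good model $\CX^\eta$ along the primes $\Fp\mid p$ of $\CO_E$, identify each factor as a formal $\CO_{E_\Fp}$-module whose Lubin--Tate period in $\BB_{p,\dR}$ carries the relevant valuation, and then average over $\eta$ to pass from $a_{E,\eta\psi,\eta\Phi}$ to the class function $a^0_{E,\psi,\Phi}$. Since you explicitly defer the crucial step---the evaluation of $v_p(t_{\eta,\psi})$ via the Hodge--Tate analysis of CM $p$-divisible groups---to \cite[\S\,II]{ColmezPeriods}, your proposal is best read as a correct roadmap to Colmez's argument rather than an independent proof, and in that sense it agrees with what the paper does.
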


Since $-\mu_{\Art,p}(a^0_{E,\psi,\Phi})+\tfrac{1}{\#H_K}\sum\limits_{\eta\in H_K}v_p(\omega_\psi^\eta)$ vanishes for all but finitely many primes $p$ and $\sum\limits_{p}Z_p(a^0_{E,\psi,\Phi},1)$ diverges, the sum \eqref{EqSumAbVar} diverges. Colmez~\cite[Convention~0]{ColmezPeriods} assigns to this divergent sum a value by the following

\begin{Convention}\label{ConventionAbVar} Let $(x_p)_{p\ne\infty}$ be a tuple of complex numbers indexed by the prime numbers $p$ in $\BZ$. We will give a sense to the (divergent) series $\Sigma \stackrel{?}{=} \sum_{p<\infty} x_p$ in the following situation. We suppose that there exists an element $a\in\CC^0(\sG_\BQ,\BQ)$ such that $x_p = - Z_p(a,1) \log p$  for all $p$ except for finitely many. Then we let $a^*\in\CC^0(\sG_\BQ,\BQ)$ be defined by $a^*(g):=a(g^{-1})$. We further assume that $Z^\infty(a^*,s)$ does not have a pole at $s=0$, and we define the limit of the series $\sum_{p<\infty} x_p$ as
\begin{equation}\label{EqConvention}
\Sigma\;:=\;-Z^\infty(a^*,0) - \mu^\infty_\Art(a) +\sum_{p<\infty} \bigl(x_p + Z_p(a,1)\log p\bigr)
\end{equation}
inspired by the functional equation relating $L(a,s)$ with $L(a^*,1-s)$ deprived of the terms at $\infty$.
\end{Convention}

\begin{Example}\label{ExGmPeriods}
The convention allows to prove the product formula for the multiplicative group $\BG_m:=\BG_{m,\BQ}=\Spec\BQ[x,x^{-1}]$. Namely, for the generator $\omega=\frac{dx}{x}$ of $\Koh^1_\dR(\BG_m,\BQ)=\BQ\cdot\omega$ and for the cycle $u\colon[0,1]\to\BG_m(\BC)=\BC\mal$ given by $u(s)=\exp(2\pi is)$ with $\Koh_1(\BG_m(\BC), \BZ)=\BZ\cdot u$, we have computed in Examples~\ref{ExGmBettiEt}, \ref{ExGmBettiDR} and \ref{ExGm}
\begin{alignat*}{5}
& \langle \omega,u\rangle_\infty && \;=\; 2\pi i && \qquad \text{and}\qquad && \log\bigl|\langle \omega,u\rangle_\infty\bigr|_\infty && \;=\; \log(2\pi)\,,\\
& \langle \omega,u\rangle_p && \;=\; t_p && \qquad \text{and}\qquad && \log\bigl|\langle \omega,u\rangle_p\bigr|_p && \;=\; \log|t_p|_p \;=\; -\tfrac{\log p}{p-1} \;=\; -Z_p(\BOne,1)\log p\,,
\end{alignat*}
where $\BOne(g)=1$ for every $g\in\sG_\BQ$. So Convention~\ref{ConventionAbVar} implies $\sum_{p<\infty}\log|\langle \omega,u\rangle_p|_p=-\frac{\zeta'_\BZ(0)}{\zeta_\BZ(0)}=-\log(2\pi)$ for the Riemann Zeta-function $\zeta_\BZ$ and $\sum_v\log|\langle \omega,u\rangle_v|_v=0$. Therefore $\prod\limits_v|\langle \omega,u\rangle_v|_v=1$.
\end{Example}

The Convention~\ref{ConventionAbVar} and the Theorem~\ref{ThmColmezLocal} allow us to give to the divergent sum \eqref{EqSumAbVar} a convergent interpretation. In order to remove the dependency on the chosen cycles $(u_\eta)_\eta\in\prod_{\eta\in H_K}\Koh_1(X^\eta(\BC),\BZ)$, Colmez considers the value
\begin{equation}\label{EqDoubleOmega}
\langle \omega_\psi^\eta,\omega_{c\psi}^\eta,u_\eta\rangle_v \;:=\;\left(t_v\cdot\frac{\langle \omega_\psi^\eta,u_\eta\rangle_v}{\langle \omega_{c\psi}^\eta,u_\eta\rangle_v}\right)^{\frac{1}{2}},
\end{equation}
where $t_\infty=2\pi i$ and for $v=v_p\ne\infty$, $t_v=t_p$ is the $p$-adic analog of $2\pi i$ from Examples~\ref{ExGm} and \ref{ExGmPeriods}. Note that $\Phi\sqcup c\Phi=H_E$ implies $a^0_{E,\psi,\Phi}+a^0_{E,c\psi,\Phi}=\BOne$, and hence $Z_p(a^0_{E,\psi,\Phi},1)+Z_p(a^0_{E,c\psi,\Phi},1)=Z_p(\BOne,1)$ and $\mu_{\Art,p}(a^0_{E,\psi,\Phi})+\mu_{\Art,p}(a^0_{E,c\psi,\Phi})=\mu_{\Art,p}(\BOne)=0$. Therefore, Theorem~\ref{ThmColmezLocal} implies
\begin{alignat*}{2}
&& \TS\tfrac{1}{\#H_K}\sum\limits_{\eta\in H_K}v_p(\langle \omega_\psi^\eta,\omega_{c\psi}^\eta,u_\eta\rangle_v) \;= & \; \frac{1}{2}\Bigl(Z_p(\BOne,1)+Z_p(a^0_{E,\psi,\Phi},1)-\mu_{\Art,p}(a^0_{E,\psi,\Phi})+\tfrac{1}{\#H_K}{\TS\sum\limits_{\eta\in H_K}}v_p(\omega_\psi^\eta)\\
&& & \qquad\qquad\quad-Z_p(a^0_{E,c\psi,\Phi},1)+\mu_{\Art,p}(a^0_{E,c\psi,\Phi})-\tfrac{1}{\#H_K}{\TS\sum\limits_{\eta\in H_K}}v_p(\omega_{c\psi}^\eta)\Bigr)\\
&& \;= & \; Z_p(a^0_{E,\psi,\Phi},1) -\mu_{\Art,p}(a^0_{E,\psi,\Phi}) + \frac{1}{2}\Bigl(\tfrac{1}{\#H_K}{\TS\sum\limits_{\eta\in H_K}}v_p(\omega_\psi^\eta)-v_p(\omega_{c\psi}^\eta)\Bigr)\,.
\end{alignat*}
Using Convention~\ref{ConventionAbVar} one thus obtains
\begin{alignat}{1}\label{EqSumAbVarDouble}
& \TS\tfrac{1}{\#H_K}\sum\limits_v\sum\limits_{\eta\in H_K}\log\bigl|\langle \omega_\psi^\eta,\omega_{c\psi}^\eta,u_\eta\rangle_v\bigr|_v\\
& \qquad\;=\;-Z^\infty((a^0_{E,\psi,\Phi})^*,0)+\tfrac{1}{\#H_K}\sum\limits_{\eta\in H_K}\Bigl(\log\bigl|\langle \omega_\psi^\eta,\omega_{c\psi}^\eta,u_\eta\rangle_\infty\bigr|_\infty-\frac{1}{2}\sum_{p<\infty}\bigl(v_p(\omega_\psi^\eta)-v_p(\omega_{c\psi}^\eta)\bigr)\log p\Bigr) \nonumber
\end{alignat}
which is independent of the chosen $u_\eta$. Colmez formulated the following

\begin{Conjecture}[{\cite[Conjecture~0.1]{ColmezPeriods}}]\label{ConjColmezAbVar} 
The sum \eqref{EqSumAbVarDouble} is zero, or equivalently the \emph{product formula} holds:
\[
\prod\limits_v\prod\limits_{\eta\in H_K}\bigl|\langle \omega_\psi^\eta,\omega_{c\psi}^\eta,u_\eta\rangle_v\bigr|_v=1\,.
\]
\end{Conjecture}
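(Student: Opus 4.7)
My plan is to reduce Conjecture~\ref{ConjColmezAbVar} to Colmez's equivalent reformulation as a formula expressing the Faltings height $ht_{\rm Fal}(X)$ in terms of the logarithmic derivative at $s=0$ of the Artin $L$-function attached to $a^0_{E,\psi,\Phi}$. First I would reorganize the sum \eqref{EqSumAbVarDouble}: the finite-place contribution $\tfrac{1}{2\#H_K}\sum_{\eta,p}(v_p(\omega_\psi^\eta)-v_p(\omega_{c\psi}^\eta))\log p$ arises from comparing the chosen $\omega_\psi^\eta$ against generators of the eigenspaces of $\Koh^1_\dR(\CX^\eta,\CO_{K_p})$ under the $\CO_E$-action, where $\CX^\eta$ is the N\'eron model of $X^\eta$ (which has good reduction, by Theorem~\ref{ThmPotGoodRed} and our assumption in Situation~\ref{SitCMAbVar}). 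Summing the eigenspace generators over all $\psi\in H_E$ reconstructs the top differential of $\CX^\eta$, so that the aggregate finite-place terms, combined with the archimedean $L^2$-norms from Definition~\ref{DefFaltingsHeight}, recover (up to explicit elementary correction) the quantity $[K:\BQ]\cdot ht_{\rm Fal}(X)$.

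With this reformulation, Conjecture~\ref{ConjColmezAbVar} becomes an identity expressing $ht_{\rm Fal}(X)$ as an explicit $\BQ$-linear combination of $Z^\infty((a^0_{E,\psi,\Phi})^*,0)$ and $\mu^\infty_\Art(a^0_{E,\psi,\Phi})$. The next step is to prove this in the special case where $E$ is an abelian extension of $\BQ$. Then $a^0_{E,\psi,\Phi}$ decomposes as a rational linear combination of one-dimensional Dirichlet characters $\chi$, and $Z^\infty((a^0_{E,\psi,\Phi})^*,0)$ unfolds into a sum of logarithmic derivatives $L'(\chi,0)/L(\chi,0)$. On the archimedean side, the Chowla--Selberg--Lerch formula expresses the periods $\langle\omega_\psi^\eta,u_\eta\rangle_\infty$ of a CM abelian variety with abelian $E$ as explicit products of Gamma values $\Gamma(r)$ at rational arguments. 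Matching the two sides via Lerch's identity $\log\Gamma(r)=-\zeta'(0,r)+\tfrac{1}{2}\log(2\pi)$ for the Hurwitz zeta function, and organizing the result by characters of $(\BZ/N\BZ)^\times$ for a conductor $N$ supporting all the $\chi$ appearing, yields the abelian case of the conjecture, namely Theorem~\ref{ThmColmezAbelianE}.

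The main obstacle is the non-abelian case: neither side of the would-be identity is accessible by classical methods. On the $L$-function side, logarithmic derivatives at $s=0$ of Artin $L$-functions attached to irreducible non-abelian Galois representations are conjecturally period-like but admit no closed evaluation in terms of known transcendentals. On the period side, non-abelian CM abelian varieties need not have periods expressible via Gamma values, so the Chowla--Selberg route is simply unavailable. The deepest progress known to me is the theorem of Yuan--Zhang and of Andreatta--Goren--Howard--Madapusi Pera establishing the \emph{averaged} Colmez conjecture (summed over all CM-types $\Phi$ on a fixed $E$) by relating the right-hand side to arithmetic degrees of special cycles on unitary Shimura varieties via an arithmetic Siegel--Weil formula. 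Extending this from the averaged statement to the individual CM-type level, which is what Conjecture~\ref{ConjColmezAbVar} demands, appears to require a genuinely new geometric or automorphic input and lies beyond the reach of the reduction strategy outlined above.
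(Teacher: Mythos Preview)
The statement is a \emph{conjecture}, and the paper does not prove it. What the paper does after stating Conjecture~\ref{ConjColmezAbVar} is exactly parallel to your outline: it records Colmez's Lemma~\ref{LemmaHeightEAbVar} that the archimedean-plus-finite expression defines an invariant $ht(E,\psi,\Phi)$ independent of the auxiliary choices; it quotes Theorem~\ref{ThmFaltHeight} relating $\sum_{\psi\in\Phi}ht(E,\psi,\Phi)$ to the Faltings height; it observes (Corollary~\ref{CorHeightE}) that the conjecture is equivalent to $ht(E,\psi,\Phi)=Z^\infty((a^0_{E,\psi,\Phi})^*,0)$; and it then surveys the known partial results --- the abelian case (Theorem~\ref{ThmColmezAbelianE}, Colmez--Obus) and the averaged case (Theorem~\ref{ThmColmezAveragedE}, Andreatta--Goren--Howard--Madapusi Pera and Yuan--Zhang).

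Your proposal is therefore not a proof but an accurate status report, and it matches the paper's own discussion. Your reduction to the Faltings height is Theorem~\ref{ThmFaltHeight}; your sketch of the abelian case via Chowla--Selberg--Lerch is the content behind Theorem~\ref{ThmColmezAbelianE}; and your identification of the obstruction in the non-abelian, non-averaged case is correct and is precisely why the paper leaves Conjecture~\ref{ConjColmezAbVar} as a conjecture. There is nothing to correct --- you have correctly recognised that no complete proof exists, and your summary of the landscape agrees with the paper's.
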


He then proved

\begin{Lemma}[{\cite[Lemme~II.2.9]{ColmezPeriods}}]\label{LemmaHeightEAbVar}  
In Situation~\ref{SitCMAbVar} the value
\begin{equation}\label{EqHeightE}
ht(E,\psi,\Phi)\;:=\;\tfrac{1}{\#H_K}\sum\limits_{\eta\in H_K}\Bigl(\log\bigl|\langle \omega_\psi^\eta,\omega_{c\psi}^\eta,u_\eta\rangle_\infty\bigr|_\infty-\frac{1}{2}\sum_{p<\infty}\bigl(v_p(\omega_\psi^\eta)-v_p(\omega_{c\psi}^\eta)\bigr)\log p\Bigr)
\end{equation}
only depends on $E,\psi$ and $\Phi$ and not on the choice of $X,\omega_\psi,u_\eta$ and $K$.
\end{Lemma}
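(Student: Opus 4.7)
The plan is to verify independence under each of the four parametric choices separately. In every case, the change in $ht(E,\psi,\Phi)$ will reduce to a double sum of the form $\sum_v\sum_{\eta\in H_K}\log|\eta(\xi)|_v$ for some element $\xi\in K^\times$, which vanishes by the global product formula~\eqref{EqProdFormulaAbVar}.

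First I would treat the choice of $\omega_\psi$. Because $K$ contains $\psi(E)$ for every $\psi\in H_E$, the algebra $E\otimes_\BQ K$ decomposes as $\prod_{\psi\in H_E}K$, and Proposition~\ref{PropCM}\ref{PropCM_C} shows that $\Koh^1_\dR(X,K)$ is free of rank $1$ over $E\otimes_\BQ K$. Thus each $\psi$-eigenspace is one-dimensional over $K$, and any alternative eigenform has the form $\lambda\omega_\psi$ with $\lambda\in K^\times$. Such a replacement scales $\langle\omega_\psi^\eta,u_\eta\rangle_v$ by $\eta(\lambda)$ at every $v$ and shifts $v_p(\omega_\psi^\eta)$ by $v_p(\eta(\lambda))$ at every finite $p$. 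A direct bookkeeping on~\eqref{EqHeightE} and~\eqref{EqDoubleOmega} shows that the net change in $ht(E,\psi,\Phi)$ equals $\tfrac{1}{2\#H_K}\sum_v\sum_{\eta\in H_K}\log|\eta(\lambda)|_v$, which vanishes by the product formula; the same computation handles $\omega_{c\psi}$.

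Second, and this is the main technical step, I would treat the choice of $(u_\eta)_\eta$. By Proposition~\ref{PropCM}\ref{PropCM_C} again, $\Koh_1(X^\eta,\BQ)$ is free of rank $1$ over $E$, so any two complex-conjugation-compatible families differ by a tuple $(a_\eta)_{\eta\in H_K}$ in $E^\times$ satisfying $a_{c\eta}=\bar a_\eta$ (with $\bar{\,\cdot\,}=c_E$), this relation being forced by $u_{c\eta}=c_\ast u_\eta$. The eigenvalue identity $b^\ast\omega_\psi=\psi(b)\,\omega_\psi$ yields $\langle\omega_\psi^\eta,(a_\eta)_\ast u_\eta\rangle_v=\eta\psi(a_\eta)\cdot\langle\omega_\psi^\eta,u_\eta\rangle_v$ at every $v$, so the change in $\tfrac{1}{\#H_K}\sum_v\sum_\eta\log\bigl|\langle\omega_\psi^\eta,\omega_{c\psi}^\eta,u_\eta\rangle_v\bigr|_v$ equals
\[
\tfrac{1}{2\#H_K}\sum_v\sum_{\eta\in H_K}\log\bigl|\eta\psi(a_\eta)/\eta c\psi(a_\eta)\bigr|_v.
\]
The CM-identity $c\psi=\psi\circ c_E$ gives $\eta c\psi(a_\eta)=\eta\psi(\bar a_\eta)=\eta\psi(a_{c\eta})$, and at every place $v$ one has $|\eta(x)|_v=|c\eta(x)|_v$ for $x\in K$: at $v=\infty$ because $|\bar z|_\infty=|z|_\infty$, and at $v=p$ because the $p$-adic absolute value on $\BQ_p^\alg$ is $\Gal(\BQ_p^\alg/\BQ_p)$-invariant. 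Substituting $\eta\leftrightarrow c\eta$ in the $c\psi$-summand therefore identifies it with the $\psi$-summand, yielding cancellation. I expect this to be the hardest step, since it requires interlacing the CM conjugation on $E$, the complex-conjugation pairing on $H_K$, and the Galois invariance of $v$-adic norms.

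Third, the independence of $K$ is bookkeeping: replacing $K$ by a finite extension $K'$ multiplies both $\#H_{K'}$ and each summand by the factor $[K':K]$, leaving the normalized average unchanged. Fourth, two CM abelian varieties $X$ and $X'$ of type $(E,\Phi)$ are $\CO_E$-equivariantly isogenous over some finite extension of $K$ (which we may absorb by the previous step), and for such an isogeny $f\colon X\to X'$ the choices $\omega_\psi:=f^\ast\omega_\psi'$ and $u_\eta':=f_\ast u_\eta$ satisfy $\langle\omega_\psi^\eta,u_\eta\rangle_v=\langle\omega_\psi'^\eta,u_\eta'\rangle_v$ at every $v$, while N\'eron model functoriality identifies the valuations $v_p(\omega_\psi^\eta)$ with $v_p(\omega_\psi'^\eta)$. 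Combining the four steps shows that $ht(E,\psi,\Phi)$ depends only on $(E,\psi,\Phi)$.
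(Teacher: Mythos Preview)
The paper does not supply its own proof of this lemma; it simply records the statement and cites Colmez \cite[Lemme~II.2.9]{ColmezPeriods}. So there is no in-paper argument to compare against, and your proposal must be judged on its own merits.

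Steps~1 and~3 are fine. Step~4, however, contains a genuine gap. The assertion that ``N\'eron model functoriality identifies the valuations $v_p(\omega_\psi^\eta)$ with $v_p(\omega_\psi'^\eta)$'' is not correct. An $\CO_E$-isogeny $f\colon X\to X'$ does extend to the good integral models $\CX\to\CX'$, but the induced map $f^*\colon\Koh^1_\dR(\CX',\CO_{K_p})\to\Koh^1_\dR(\CX,\CO_{K_p})$ is in general \emph{not} an isomorphism of $\CO_{K_p}$-lattices: its cokernel is controlled by $\ker f$. Consequently $v_p(f^*\omega_\psi')$ computed relative to $\CX$ and $v_p(\omega_\psi')$ computed relative to $\CX'$ typically differ. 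You must actually track this discrepancy on each $\psi$-eigencomponent and show that its contribution to the specific combination~\eqref{EqHeightE} cancels --- it does not come for free.

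Step~2 also needs repair. Since the $E$-action on $X$ is defined over $K$, the map $c_\ast\colon\Koh_1(X^\eta,\BZ)\to\Koh_1(X^{c\eta},\BZ)$ induced by complex conjugation on $\BC$-points is $E$-\emph{linear}, not $E$-conjugate-linear. The compatibility $u_{c\eta}=c_\ast u_\eta$ therefore forces $a_{c\eta}=a_\eta$, not $a_{c\eta}=\bar a_\eta$. With the correct relation your substitution $\eta\leftrightarrow c\eta$ no longer yields the claimed cancellation, because $\eta\circ c|_K$ and $c|_K\circ\eta$ generally differ when $c|_K$ is not central in $\Gal(K/\BQ)$. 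One route to a correct argument uses the CM identity $\psi\circ c_E=c\circ\psi$ to rewrite the change as $\tfrac{1}{2}\sum_\eta\bigl(\log|\eta\psi(a_\eta)|_\infty-\log|\eta\psi(\bar a_\eta)|_\infty\bigr)$, and then exploits constraints on the $a_\eta$ coming from integrality of the cycles (e.g.\ the unit structure $[\CO_E^\times:\mu(E)\cdot\CO_F^\times]<\infty$ for the totally real subfield $F$) to force the vanishing; but this is work you have not done.
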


Colmez also relates the product formula to the Faltings height, see Definition~\ref{DefFaltingsHeight}.

\begin{Theorem}[{\cite[Th\'eor\`eme~II.2.10(ii)]{ColmezPeriods}}]\label{ThmFaltHeight}
In Situation~\ref{SitCMAbVar} the Faltings height $ht_{\rm Fal}^{\rm st}(X)$ of $X$ satisfies
\begin{equation}\label{EqFaltHeight}
ht_{\rm Fal}^{\rm st}(X)\;=\;-\sum_{\psi\in\Phi}\Bigl(ht(E,\psi,\Phi)+\tfrac{1}{2}\mu^\infty_{\Art}(a^0_{E,\psi,\Phi}) \Bigr)
\end{equation}
\end{Theorem}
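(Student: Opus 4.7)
The plan is to realise both sides of~\eqref{EqFaltHeight} as sums of local contributions over places of $K$ and to match them via the CM decomposition of the top invariant differential. By Definition~\ref{DefFaltingsHeight}, $ht_{\rm Fal}(X)=\tfrac{1}{\#H_K}\deg(\omega_{X/\CO_K},\|\,.\,\|)$. Since $X$ has CM of type $(E,\Phi)$ with $K\supset\psi(E)$ for every $\psi\in H_E$, the $K$-line of invariant top differentials is generated by $\omega:=\bigwedge_{\psi\in\Phi}\omega_\psi$. I will compute $-\log\|\omega^\eta\|_\infty$ at each archimedean place and the integral discrepancy of $\omega$ at each finite place separately, then reassemble.

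At each embedding $\eta\colon K\into\BC$, I use the uniformisation $X^\eta(\BC)=\T_0 X^\eta/\Koh_1(X^\eta,\BZ)$ and the CM decompositions~\eqref{EqHodgeAbVar}--\eqref{EqHodgeCMAbVar} giving $\T_0 X^\eta\cong\bigoplus_{\psi\in\Phi}\BC_{\eta\psi}$ dual to $\{\omega_\psi^\eta\}$. Since $\Koh_1(X^\eta,\BQ)$ is free of rank one over $E$ by Proposition~\ref{PropCM}\ref{PropCM_C}, the cycle $u_\eta$ (chosen as in Theorem~\ref{ThmColmezLocal} to be an $\CO_E$-generator) has coordinates $\langle\omega_\psi^\eta,u_\eta\rangle_\infty$ for $\psi\in H_E$ in $\bigoplus_{\psi\in H_E}\BC_{\eta\psi}$. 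A Gram-determinant computation evaluates $\|\omega^\eta\|_\infty^2=(2\pi)^{-g}\int_{X^\eta(\BC)}\omega^\eta\wedge\overline{\omega^\eta}$ to
\begin{equation*}
-\log\|\omega^\eta\|_\infty\;=\;\tfrac{g}{2}\log(2\pi)-\tfrac{1}{2}\log\bigl|\text{disc}(\CO_E/\BZ)\bigr|-\tfrac{1}{2}\sum_{\psi\in\Phi}\log\bigl|\langle\omega_\psi^\eta,u_\eta\rangle_\infty\cdot\langle\omega_{c\psi}^\eta,u_\eta\rangle_\infty\bigr|_\infty\,,
\end{equation*}
the middle term being the log-covolume of $\CO_E\subset E\otimes_\BQ\BR$.

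At each finite prime $\Fp$ of $\CO_K$ above $p$, good reduction gives an abelian scheme $\CX^\eta$ over $\CO_{K_\Fp}$ with an extended $\CO_E$-action. By~\eqref{EqValuationOfOmegaPsi}, $\omega_\psi^\eta$ differs by a factor of valuation $v_p(\omega_\psi^\eta)$ from an $\CO_{K_\Fp}$-generator of the $\eta\psi$-eigenspace $\Koh^{\eta\psi}(\CX^\eta,\CO_{K_\Fp})$; hence $\omega$ has valuation $\sum_{\psi\in\Phi}v_p(\omega_\psi^\eta)$ relative to any generator of $\omega_{\CX^\eta/\CO_{K_\Fp}}$. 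Averaging over $\eta\in H_K$ and collecting archimedean and finite parts yields
\begin{equation*}
ht_{\rm Fal}(X)\;=\;\tfrac{1}{\#H_K}\sum_{\eta\in H_K}\Bigl(-\log\|\omega^\eta\|_\infty+\sum_{p<\infty}\sum_{\psi\in\Phi}v_p(\omega_\psi^\eta)\log p\Bigr)\,.
\end{equation*}
Using $|\langle\omega_\psi^\eta,\omega_{c\psi}^\eta,u_\eta\rangle_\infty|_\infty^2=2\pi\,|\langle\omega_\psi^\eta,u_\eta\rangle_\infty/\langle\omega_{c\psi}^\eta,u_\eta\rangle_\infty|_\infty$ from~\eqref{EqDoubleOmega}, the antisymmetric part in $\psi\leftrightarrow c\psi$ combines with the antisymmetric finite-place contribution $\tfrac{1}{2}(v_p(\omega_\psi^\eta)-v_p(\omega_{c\psi}^\eta))\log p$ to produce exactly $-\sum_{\psi\in\Phi}ht(E,\psi,\Phi)$ via Definition~\eqref{EqHeightE}; the $\tfrac{g}{2}\log(2\pi)$-factors cancel since $|\Phi|=g$.

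The hardest step is identifying the remaining symmetric residue, which combines $-\tfrac{1}{2}\log|\text{disc}(\CO_E/\BZ)|$ with $\tfrac{1}{2}\sum_{\eta,p,\psi}(v_p(\omega_\psi^\eta)+v_p(\omega_{c\psi}^\eta))\log p$, as $-\tfrac{1}{2}\sum_{\psi\in\Phi}\mu_\Art(a^0_{E,\psi,\Phi})$. The argument proceeds from two inputs: Artin's conductor-discriminant formula rewrites $\log|\text{disc}(\CO_E/\BZ)|$ as a sum of local Artin conductors of the characters of $\sG_\BQ$ occurring in the permutation representation on $H_E$, which decomposes via $\Phi\sqcup c\Phi=H_E$ into the $a^0_{E,\psi,\Phi}$ with $\psi\in\Phi$; and local $p$-adic Hodge theory shows that the symmetric finite-place sum measures the mismatch between the naive $\CO_E$-eigenspace integral structure on $\omega_{\CX^\eta/\CO_{K_\Fp}}$ and the true integral structure, an obstruction that Colmez~\cite{ColmezPeriods} computes to be precisely the wild part of $\mu_{\Art,p}(a^0_{E,\psi,\Phi})$ (trivial at primes where $\CO_E\otimes\BZ_p$ is maximal, where only tame different contributions survive). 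Summed over $p$, these two inputs collapse to the predicted Artin conductor correction.
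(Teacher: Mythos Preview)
The paper does not supply a proof of this theorem; it is stated as a citation of Colmez~\cite[Th\'eor\`eme~II.2.10(ii)]{ColmezPeriods} and used as a black box, consistent with the survey nature of the article. Your sketch is therefore being compared to Colmez's original argument rather than to anything in the present paper.

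Your outline is broadly the right one and tracks Colmez's strategy: realise $ht_{\rm Fal}(X)$ via the metrised line $\omega_{X/\CO_K}$, decompose the generator as $\bigwedge_{\psi\in\Phi}\omega_\psi$, and split local contributions into parts symmetric and antisymmetric under $\psi\leftrightarrow c\psi$. The antisymmetric part indeed reproduces $-\sum_{\psi\in\Phi}ht(E,\psi,\Phi)$ by the definition~\eqref{EqHeightE}, and the $\tfrac{g}{2}\log(2\pi)$ bookkeeping is correct.

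The weakness is your final paragraph. You correctly flag it as ``the hardest step'', but what you write there is not a proof: it is an assertion that the symmetric remainder equals $-\tfrac{1}{2}\sum_{\psi\in\Phi}\mu_\Art(a^0_{E,\psi,\Phi})$, justified only by naming two ingredients (conductor--discriminant and a $p$-adic Hodge computation) without carrying out either. In particular, your claim that ``the symmetric finite-place sum measures the mismatch \ldots\ an obstruction that Colmez computes to be precisely the wild part of $\mu_{\Art,p}(a^0_{E,\psi,\Phi})$'' is the entire content of the theorem at bad primes, and simply citing Colmez for it makes the proposal circular. If you want a self-contained argument you must actually analyse, at primes $p$ ramified in $E$, how the $\CO_E\otimes\BZ_p$-module structure on $\Koh^1_\dR(\CX^\eta,\CO_{K_p})$ deviates from being free; this is where Colmez's work lies and it is not short.
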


This immediately implies the following

\begin{Corollary}\label{CorHeightEAbVar}
In Situation~\ref{SitCMAbVar} the following assertions are equivalent.
\begin{enumerate}
\item \label{CorHeightE_A}
$ht(E,\psi,\Phi)=Z^\infty((a^0_{E,\psi,\Phi})^*,0)$.
\item \label{CorHeightE_B}
The product formula holds, that is, the expression~\eqref{EqSumAbVarDouble} is zero and $\prod\limits_v\prod\limits_{\eta\in H_K}\bigl|\langle \omega_\psi^\eta,\omega_{c\psi}^\eta,u_\eta\rangle_v\bigr|_v=1$.
\end{enumerate}
If \ref{CorHeightE_A} and \ref{CorHeightE_B} hold for all $\psi\in\Phi$ then $ht_{\rm Fal}^{\rm st}(X)\;=\;-\sum_{\psi\in\Phi} \bigl( Z^\infty((a^0_{E,\psi,\Phi})^*,0)+\tfrac{1}{2}\mu^\infty_{\Art}(a^0_{E,\psi,\Phi})\bigr)$. \qed
\end{Corollary}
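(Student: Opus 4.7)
The proof will be a direct bookkeeping argument combining the three ingredients already assembled in the preceding lemmas: the identity~\eqref{EqSumAbVarDouble}, the definition~\eqref{EqHeightE} of $ht(E,\psi,\Phi)$, and the Faltings-height formula of Theorem~\ref{ThmFaltHeight}. No further input from $p$-adic Hodge theory or the CM theory of abelian varieties is needed beyond what has been recorded.

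For the equivalence of \ref{CorHeightE_A} and \ref{CorHeightE_B}, I would start from \eqref{EqSumAbVarDouble} and observe that its right-hand side is, by the very definition~\eqref{EqHeightE} of $ht(E,\psi,\Phi)$, equal to
\[
-Z^\infty\bigl((a^0_{E,\psi,\Phi})^*,0\bigr) + ht(E,\psi,\Phi).
\]
Therefore the left-hand side of \eqref{EqSumAbVarDouble} vanishes if and only if $ht(E,\psi,\Phi)=Z^\infty((a^0_{E,\psi,\Phi})^*,0)$. Since the vanishing of that left-hand side is precisely the logarithmic form of the product formula
\[
\prod_v\prod_{\eta\in H_K}\bigl|\langle \omega_\psi^\eta,\omega_{c\psi}^\eta,u_\eta\rangle_v\bigr|_v \;=\; 1,
\]
this gives \ref{CorHeightE_A}$\iff$\ref{CorHeightE_B}. (One should briefly remark that although Lemma~\ref{LemmaHeightEAbVar} guarantees that $ht(E,\psi,\Phi)$ is well defined, the identity~\eqref{EqSumAbVarDouble} is an identity of real numbers obtained by applying Convention~\ref{ConventionAbVar} to the divergent $p$-series $\sum_p Z_p(a^0_{E,\psi,\Phi},1)\log p$; this is the only subtle point, and the derivation of \eqref{EqSumAbVarDouble} from Theorem~\ref{ThmColmezLocal} and Convention~\ref{ConventionAbVar} has already been carried out in the main text.)

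For the concluding assertion on the Faltings height, suppose \ref{CorHeightE_A} and \ref{CorHeightE_B} hold for every $\psi\in\Phi$. Then $ht(E,\psi,\Phi)=Z^\infty((a^0_{E,\psi,\Phi})^*,0)$ for every such $\psi$, and substituting these equalities into the formula
\[
ht_{\rm Fal}(X)\;=\;-\sum_{\psi\in\Phi}\Bigl(ht(E,\psi,\Phi)+\tfrac12\mu_{\Art}(a^0_{E,\psi,\Phi})\Bigr)
\]
of Theorem~\ref{ThmFaltHeight} yields exactly
\[
ht_{\rm Fal}(X)\;=\;-\sum_{\psi\in\Phi}\Bigl(Z^\infty((a^0_{E,\psi,\Phi})^*,0)+\tfrac12\mu_{\Art}(a^0_{E,\psi,\Phi})\Bigr).
\]
The whole proof therefore reduces to recognising $ht(E,\psi,\Phi)$ as the non-$Z^\infty$ contribution on the right of \eqref{EqSumAbVarDouble} and then quoting Theorem~\ref{ThmFaltHeight}. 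The ``main obstacle'' is thus not in this corollary at all but rather lies upstream, in the genuine work needed to establish Theorem~\ref{ThmColmezLocal}, to verify the hypotheses of Convention~\ref{ConventionAbVar} for $a=a^0_{E,\psi,\Phi}$ (namely that $Z^\infty((a^0_{E,\psi,\Phi})^*,s)$ is regular at $s=0$), and to prove Theorem~\ref{ThmFaltHeight}.
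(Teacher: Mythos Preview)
Your proposal is correct and matches the paper's approach exactly: the paper marks this corollary with a bare \qed{} and prefaces it with ``This immediately implies the following,'' so the intended argument is precisely the bookkeeping you describe --- recognise the right-hand side of~\eqref{EqSumAbVarDouble} as $-Z^\infty((a^0_{E,\psi,\Phi})^*,0)+ht(E,\psi,\Phi)$ via~\eqref{EqHeightE}, and then feed \ref{CorHeightE_A} into Theorem~\ref{ThmFaltHeight}.
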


Colmez~\cite[Conjecture~II.2.11]{ColmezPeriods} conjectures that statements \ref{CorHeightE_A} and \ref{CorHeightE_B} of Corollary~\ref{CorHeightEAbVar} hold for all $E,\psi,\Phi$. There are various partial results in this direction. The first is due to Colmez himself who was able to prove the following theorem up to a rational multiple of $\log 2$, which was then removed by Obus:

\begin{Theorem}[{\cite[Th\'eor\`eme~0.5]{ColmezPeriods}, \cite[Theorem~4.9]{Obus13}}] \label{ThmColmezAbelianE}
If $E$ is abelian over $\BQ$, then the product formula holds true for every $\psi,\Phi$, and hence
\begin{equation}\label{EqFaltingsHeightWithZ}
ht_{\rm Fal}^{\rm st}(X)\;=\;-\sum_{\psi\in\Phi} \Bigl( Z^\infty((a^0_{E,\psi,\Phi})^*,0)+\tfrac{1}{2}\mu^\infty_{\Art}(a^0_{E,\psi,\Phi}) \Bigr)\,.
\end{equation}
\end{Theorem}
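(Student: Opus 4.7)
The plan is to establish the product formula via Corollary~\ref{CorHeightEAbVar}, from which the Faltings height formula~\eqref{EqFaltingsHeightWithZ} follows at once. By part~\ref{CorHeightE_A} of that corollary, it suffices to prove
\[
ht(E,\psi,\Phi)\;=\;Z^\infty((a^0_{E,\psi,\Phi})^*,0)
\]
for every CM-type $(E,\Phi)$ on an abelian CM-field $E$ and every $\psi\in\Phi$.

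First, I would reduce to the case of Dirichlet characters. Since $E/\BQ$ is abelian (hence Galois), after enlarging $K$ we may assume $K\supseteq E$. Then $\eta^{-1}g\eta=g$ for all $\eta,g\in\Gal(K/\BQ)$, so $a^0_{E,\psi,\Phi}$ coincides with $a_{E,\psi,\Phi}$, a $\BQ$-valued class function on the abelian group $\Gal(E/\BQ)$. It therefore decomposes as a $\BC$-linear combination $\sum_\chi c_\chi\cdot\chi$ of one-dimensional characters of $\Gal(E/\BQ)$, which by Kronecker-Weber are Dirichlet characters of finite order. Both sides of the desired identity are $\BC$-linear in $a^0_{E,\psi,\Phi}$ (the left-hand side after extending $ht$ by linearity via Lemma~\ref{LemmaHeightEAbVar}), so the problem reduces to proving $ht(\chi)=Z^\infty(\chi^*,0)$ for each primitive Dirichlet character $\chi$.

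Next, I would input the explicit period computations. When $E\subseteq\BQ(\zeta_N)$, every CM abelian variety with CM by $E$ is, up to isogeny, a factor of a Jacobian of a Fermat curve $x^N+y^N=1$, and its $\chi$-eigenperiods are known explicitly. On the Archimedean side, the Chowla-Selberg formula and its generalizations (due to Weil, Anderson, Deligne and Gross) express the eigenperiods as products of values $\Gamma(a/N)$ for suitable rationals $a/N$, so the Archimedean contribution to~\eqref{EqHeightE} becomes an explicit $\chi$-weighted sum of $\log\Gamma$-values plus a multiple of $\log(2\pi)$. On the non-Archimedean side, the Gross-Koblitz formula expresses the integrality defects $v_p(\omega_\psi^\eta)-v_p(\omega_{c\psi}^\eta)$ in terms of values of Morita's $p$-adic Gamma function $\Gamma_p(a/N)$, matched via the comparison isomorphism Theorem~\ref{ThmPAdicPeriodIso}.

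Finally, I would match these period data with the $L$-function side. For a primitive Dirichlet character $\chi$, the Lerch/Hurwitz formula expresses $L'(\chi,0)/L(\chi,0)$ as a $\chi$-weighted sum $\sum_a \chi(a)\log\Gamma(a/N)$; subtracting the Archimedean Gamma factor of the completed $L$-function (as dictated by the definition~\eqref{EqZFct} of $Z^\infty$) produces precisely the combination obtained in the previous step, and the local factors $Z_p(\chi,1)\log p$ at finite primes are absorbed into the Gross-Koblitz contributions. The main obstacle is the delicate global bookkeeping that reconciles the Archimedean and non-Archimedean contributions: it is exactly here that Colmez's original argument~\cite{ColmezPeriods} left an ambiguity of a rational multiple of $\log 2$, which was later removed by Obus~\cite{Obus13} through a refined analysis of $2$-adic periods of CM abelian varieties in the presence of wild ramification at~$2$.
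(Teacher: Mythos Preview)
The paper does not give a proof of this theorem; it is stated purely as a citation to \cite{ColmezPeriods} and \cite{Obus13}, with no argument provided in the paper itself. So there is no ``paper's own proof'' to compare your proposal against.

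That said, your outline is a reasonable high-level summary of Colmez's actual strategy in \cite{ColmezPeriods}: reduction to Dirichlet characters via the abelian hypothesis, explicit identification of the archimedean periods with products of $\Gamma$-values (Lerch--Hurwitz on the $L$-function side, periods of Fermat-type Jacobians on the geometric side), and the Gross--Koblitz formula for the $p$-adic contributions. Two small comments. First, your justification that $a^0_{E,\psi,\Phi}=a_{E,\psi,\Phi}$ is slightly loose: it is not that $\eta^{-1}g\eta=g$ in $\Gal(K/\BQ)$, but rather that their restrictions to the normal abelian subfield $E$ agree, which is all that $a_{E,\psi,\Phi}$ sees. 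Second, and more substantively, the linearity step is not free: Lemma~\ref{LemmaHeightEAbVar} only says that $ht(E,\psi,\Phi)$ depends on $(E,\psi,\Phi)$ alone, not that it is a linear functional on $\CC^0(\sG_\BQ,\BQ)$. Colmez establishes separately that there is a well-defined $\BQ$-linear map $ht\colon\CC^0(\sG_\BQ,\BQ)\to\BR$ extending $a^0_{E,\psi,\Phi}\mapsto ht(E,\psi,\Phi)$, and this requires knowing that CM-types span and that the assignment is consistent on overlaps (cf.\ \cite[Th\'eor\`eme~II.2.10]{ColmezPeriods}). Without this, the reduction to individual Dirichlet characters does not go through.
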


There has been much further work and progress on Colmez's conjecture by many people. For example, Yang~\cite{Yang13} proved it for a large class of CM-fields $E$ of degree $[E:\BQ]=4$, including the first known cases when $E/\BQ$ is non-abelian. Let us also mention the most recent results by Andreatta, Goren, Howard, Madapusi Pera \cite{AGHM}, Yuan, Shou-Wu Zhang~\cite{YuanZhang15} and Barquero-Sanchez, Masri \cite{BSM}. 

\begin{Theorem}[{\cite[Theorem~A]{AGHM}, \cite[Theorem~1.1]{YuanZhang15}}] \label{ThmColmezAveragedE}
For every CM-field $E$ Colmez's conjecture holds true on average over all CM-types $\Phi$, that is
\[
\sum_\Phi\sum_{\psi\in\Phi} ht(E,\psi,\Phi)\;=\;\sum_\Phi\sum_{\psi\in\Phi} Z^\infty((a^0_{E,\psi,\Phi})^*,0)\,.
\]
\end{Theorem}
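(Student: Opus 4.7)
The plan is to reduce the averaged identity to a statement about arithmetic degrees on a Shimura variety, and then to evaluate those via a Kudla-type arithmetic intersection formula.

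\textbf{Step 1 (Reformulation).} By Theorem~\ref{ThmFaltHeight}, for each CM-type $\Phi$ on $E$, with $X_\Phi$ any CM abelian variety of type $(E,\Phi)$ defined over a large enough number field,
\[
\sum_{\psi\in\Phi} ht(E,\psi,\Phi)\;=\;-ht_{\rm Fal}(X_\Phi)\;-\;\tfrac{1}{2}\sum_{\psi\in\Phi}\mu_{\Art}(a^0_{E,\psi,\Phi}).
\]
Summing over all $2^n$ CM-types $\Phi$, where $n=[F:\BQ]$ for $F\subset E$ the maximal totally real subfield, the claim becomes an identity between the averaged Faltings height $\sum_\Phi ht_{\rm Fal}(X_\Phi)$ and an averaged logarithmic derivative of Artin $L$-functions, up to an explicit Artin-conductor correction.

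\textbf{Step 2 (Simplifying the averaged character).} A direct combinatorial computation using \eqref{EqAbVar:a0_E} shows that the class function $\sum_\Phi\sum_{\psi\in\Phi}a^0_{E,\psi,\Phi}$ on $\sG_\BQ$ is, up to a rational multiple of the trivial character, the induced character $\Ind^{\sG_\BQ}_{\sG_F}\chi_{E/F}$, where $\chi_{E/F}\colon\sG_F\to\{\pm 1\}$ cuts out the quadratic extension $E/F$. Consequently, the right-hand side of the target identity collapses to a combination of $L'(0,\chi_{E/F})/L(0,\chi_{E/F})$, the logarithm of the relative discriminant $\log|d_{E/F}|$, and boundary terms that depend only on $F$ (and cancel against analogous boundary terms on the geometric side).

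\textbf{Step 3 (Geometric realization).} Embed the points $\{X_\Phi\}$ simultaneously as a CM $0$-cycle $Y$ on an integral model of the orthogonal (GSpin) Shimura variety $\CM$ of signature $(2n-2,2)$ attached to the rational quadratic space $(F,\Tr_{F/\BQ}(a\cdot x\bar x))$ for a well-chosen totally negative $a\in F$; equivalently, one can use the associated unitary Shimura variety over $F$. Standard Hodge-theoretic identities identify the averaged Faltings height $\sum_\Phi ht_{\rm Fal}(X_\Phi)$ with the arithmetic degree $\widehat{\deg}(\hat\omega|_Y)$ of the metrized tautological (Hodge) line bundle $\hat\omega$ on $\CM$ restricted to $Y$. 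Kudla's arithmetic Siegel–Weil program, concretely realized through the Bruinier–Kudla–Yang formula and its refinements, computes $\widehat{\deg}(\hat\omega|_Y)$ as the derivative at $s=0$ of a suitable incoherent Eisenstein series on a unitary group over $F$, whose Fourier expansion matches precisely the Artin $L$-derivative produced in Step~2.

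\textbf{Main obstacle.} The serious difficulty is controlling integral models and special cycles at bad primes, in particular at primes ramified in $E/F$ or dividing the discriminant of $(F,\Tr_{F/\BQ})$. One needs: (i)~a regular integral model of $\CM$ with a good theory of Kudla–Rapoport special divisors; (ii)~a comparison of the arithmetic intersection numbers of the CM cycle $Y$ with these divisors against local Whittaker derivatives; and (iii)~a proof that the discrepancy at ramified primes matches the Artin-conductor term $\tfrac12\mu_\Art$ from Step~1. The Andreatta–Goren–Howard–Madapusi~Pera route handles this via a direct analysis of GSpin integral models and a global comparison using the modularity of the generating series of special divisors, while Yuan–Zhang bypass the worst of the arithmetic-geometry input by reducing, through a quaternionic comparison, to their earlier Gross–Zagier formula. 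Either way, assembling the local contributions and showing that the non-trivial boundary and ramification terms cancel is the technical heart of the proof.
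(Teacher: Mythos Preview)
The paper does not give its own proof of this theorem: it is a survey, and the statement is simply attributed to \cite{AGHM} and \cite{YuanZhang15} with no further argument. So there is nothing in the paper to compare your sketch against beyond the bare citation.

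That said, your outline is a fair high-level summary of what those two references actually do. Step~1 is the standard reduction via Theorem~\ref{ThmFaltHeight}. Step~2 is the crucial simplification specific to the averaged setting: summing over all $\Phi$ collapses the class functions $a^0_{E,\psi,\Phi}$ to (a multiple of) the induction to $\sG_\BQ$ of the quadratic character $\chi_{E/F}$, so only $L'(0,\chi_{E/F})/L(0,\chi_{E/F})$ and discriminant terms survive on the analytic side. Step~3 correctly distinguishes the two approaches: \cite{AGHM} realizes the averaged Faltings height as an arithmetic degree on an integral model of an orthogonal/GSpin Shimura variety and computes it via Borcherds products and the Bruinier--Kudla--Yang machinery, while \cite{YuanZhang15} goes through quaternionic Shimura curves and their general Gross--Zagier formula. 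Your description of the ``main obstacle'' (integral models and ramified primes) is accurate and is indeed where the bulk of the work in \cite{AGHM} lies.

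One caution: as written, your Step~3 is only a plan, not a proof. The specific claims about signatures, the precise Shimura datum, and the exact matching of Whittaker derivatives with local heights are each substantial theorems, and getting the constants to match (so that the conductor term from Step~1 really cancels the ramified contributions) is delicate. So your proposal is a correct roadmap, but it defers essentially all of the content to the cited works --- which is exactly what the present survey paper does as well.
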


\begin{Remark}\label{RemChowla}
In \cite{YuanZhang15} the averaged Colmez conjecture (Theorem~\ref{ThmColmezAveragedE}) follows from a generalized Chowla-Selberg formula \cite[Theorem~1.7]{YuanZhang15}. Moreover, (generalized) Chowla-Selberg formulas are special cases of generalized Gross-Zagier formulas. In the case when $[E:\BQ]=2$, the generalized Chowla-Selberg formula \cite[Theorem~1.7]{YuanZhang15} is actually equivalent to the classical Lerch-Chowla-Selberg formula \eqref{EqLerch}, and it is also equivalent to the Colmez conjecture for $E$, by using a result of Faltings \cite[Theorem~7.b)]{Faltings84}. See \cite[\S\,0.6]{ColmezPeriods} and \cite[\S\,4.3]{Zhang20} for additional explanations.
\end{Remark}

As a consequence of Theorem~\ref{ThmColmezAveragedE}, Barquero-Sanchez and Masri~\cite[Theorem~1.1]{BSM} proved that for any fixed totally real number field $F$ of degree $[F:\BQ]\ge3$ there are infinitely many effective, ``positive density'' sets of CM extensions $E/F$ such that $E/\BQ$ is non-abelian and Colmez's conjecture \eqref{EqFaltingsHeightWithZ} on the Faltings height holds true for $E$ and any $\Phi$. Moreover, they prove

\begin{Theorem}[{\cite[Theorem~1.4]{BSM}}]
In Situation~\ref{SitCMAbVar} if the Galois closure of $E$ has degree $2^{\dim X}\cdot(\dim X)!$ over $\BQ$, then
\[
ht_{\rm Fal}^{\rm st}(X)\;=\;-\sum_{\psi\in\Phi}  Z^\infty((a^0_{E,\psi,\Phi})^*,0)-\tfrac{1}{2}\mu^\infty_{\Art}(a^0_{E,\psi,\Phi})\,.
\]
\end{Theorem}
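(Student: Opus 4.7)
The plan is to deduce this from the averaged Colmez formula (Theorem~\ref{ThmColmezAveragedE}) by exploiting a transitivity property of the Galois action on CM-types which holds precisely when the Galois closure is as large as possible. Setting $g=\dim X=\tfrac{1}{2}[E:\BQ]$, the Galois group $\Gal(\tilde E/\BQ)$ acts faithfully on $H_E$ preserving the involution $\Fi_E$, and therefore embeds into the group of signed permutations $(\BZ/2\BZ)^g\rtimes S_g$, which has order $2^g\cdot g!$. The hypothesis $[\tilde E:\BQ]=2^g\cdot g!$ is exactly the statement that this embedding is an isomorphism.

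Next I would observe that the group $(\BZ/2\BZ)^g\rtimes S_g$ acts transitively on the $2^g$ CM-types of $E$. Indeed, any two CM-types $\Phi,\Phi'\subset H_E$ are both systems of representatives for $H_E/\langle\Fi_E\rangle$, so there is a unique $\Fi_E$-equivariant bijection $H_E\to H_E$ carrying $\Phi$ to $\Phi'$; such a bijection is exactly a signed permutation, hence is realized by some $\sigma\in\Gal(\tilde E/\BQ)$. Consequently $\sigma\Phi=\Phi'$.

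The third step is to check that both sides of Colmez's conjecture depend only on the $\sG_\BQ$-orbit of $\Phi$. On the geometric side, if $\sigma\in\sG_\BQ$, then $X^\sigma$ is CM of type $(E,\sigma\Phi)$, and $ht_{\rm Fal}(X^\sigma)=ht_{\rm Fal}(X)$ because the Faltings height is invariant under extension of the ground field (and in particular under Galois conjugation over $\bar\BQ$). On the arithmetic side, from the definition~\eqref{EqAbVar:a0_E} a direct substitution $\eta\mapsto\eta\sigma$ in the counting formula gives
\[
a^0_{E,\sigma\psi,\sigma\Phi}(g)\;=\;a^0_{E,\psi,\Phi}(\sigma^{-1}g\sigma),
\]
so $a^0_{E,\sigma\psi,\sigma\Phi}$ is the conjugate of $a^0_{E,\psi,\Phi}$ as a class function on $\sG_\BQ$. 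Since $Z^\infty$ and $\mu_\Art$ depend only on the underlying virtual representation (equivalently, only on the class function up to conjugation), the quantity $Z^\infty((a^0_{E,\psi,\Phi})^*,0)+\tfrac{1}{2}\mu_\Art(a^0_{E,\psi,\Phi})$ summed over $\psi\in\Phi$ is invariant under replacing $\Phi$ by $\sigma\Phi$.

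Combining these, every summand indexed by $\Phi$ in the identity of Theorem~\ref{ThmColmezAveragedE} is equal to the corresponding summand for any fixed CM-type, so the averaged equality collapses to
\[
\sum_{\psi\in\Phi}ht(E,\psi,\Phi)\;=\;\sum_{\psi\in\Phi}Z^\infty((a^0_{E,\psi,\Phi})^*,0),
\]
and invoking Theorem~\ref{ThmFaltHeight} yields the desired formula. The main obstacle is the bookkeeping in the third step: pinning down the Galois equivariance of $a^0_{E,\psi,\Phi}$ and of the Faltings height with enough care that both sides really are functions of the Galois orbit of $\Phi$, so that transitivity allows the averaged identity to be rewritten as an individual one. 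Once that is in place, the argument is purely formal.
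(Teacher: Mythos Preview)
The paper itself does not prove this theorem; it merely quotes it from \cite{BSM}. Your proposal is correct and is essentially the argument given by Barquero-Sanchez and Masri: the hypothesis forces $\Gal(\tilde E/\BQ)\cong(\BZ/2\BZ)^g\rtimes S_g$, hence transitivity on CM-types, so the averaged identity of Theorem~\ref{ThmColmezAveragedE} collapses to the identity for a single $\Phi$, and Theorem~\ref{ThmFaltHeight} converts this into the stated Faltings height formula.

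One small simplification in your third step: after the substitution $\eta\mapsto\eta\sigma$ in \eqref{EqAbVar:a0_E} you actually get $a^0_{E,\sigma\psi,\sigma\Phi}(g)=a^0_{E,\psi,\Phi}(g)$ on the nose, not merely up to conjugation (the averaging over $H_K$ already absorbs the conjugation). So $Z^\infty$ and $\mu_\Art$ are literally unchanged, and no appeal to conjugation-invariance of class functions is needed. On the geometric side, the cleanest justification that $\sum_{\psi\in\Phi}ht(E,\psi,\Phi)$ is constant on the Galois orbit is exactly the one you give: $X^\sigma$ has CM-type $(E,\sigma\Phi)$ and $ht_{\rm Fal}(X^\sigma)=ht_{\rm Fal}(X)$, so Theorem~\ref{ThmFaltHeight} together with the equality of the $\mu_\Art$-terms forces $\sum_{\psi\in\Phi}ht(E,\psi,\Phi)=\sum_{\psi\in\Phi}ht(E,\sigma\psi,\sigma\Phi)$.
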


As another consequence of Theorem~\ref{ThmColmezAveragedE} and of previous work by Edixhoven~\cite[Problem~14]{EdixhovenTexel2001}, Pila, Wilkie, Yafaev, Zannier and many others \cite{EdixhovenYafaev,PilaTsimerman_Ax-Lindemann, PilaWilkie06,PilaZannier}, Tsimerman~\cite{Tsimerman18} proved the \emph{Andr\'e-Oort-Conjecture} for the Siegel modular varieties:

\begin{Theorem}[{\cite[Theorem~1.3]{Tsimerman18}}] \label{ThmAO}
Let $\CA_g$ be the Siegel modular variety parameterizing principally polarized abelian varieties of dimension $g$ over $\BC$. Let $X\subset\CA_g$ be an irreducible closed subvariety which contains a Zariski dense subset of special points of $\CA_g$. Then $X$ is a special subvariety.
\end{Theorem}

The averaged Colmez conjecture (Theorem~\ref{ThmColmezAveragedE}) enters in this result by implying that the Galois orbit of a special point, that is a CM abelian variety, is large. This result and the Andr\'e-Oort-Conjecture were previously obtained in several cases conditionally under assumption of the generalized Riemann Hypothesis.

\part{Drinfeld Modules and $A$-motives}\label{Amot}
\section{Basic Definitions}\label{SectBasicDefAMot}
\setcounter{equation}{0}

Following the general philosophy about similarities between number fields and function fields, we now transfer the contents of Part~\ref{PartAbVar} to characteristic $p$. Here Drinfeld modules replace elliptic curves and $A$-motives replace abelian varieties. We follow the expositions in \cite[Ch.4]{Goss}, \cite[Ch.2]{Thakur04} and begin with the analog of Notation~\ref{NotNumberFld}

\begin{Notation}\label{NotFcnFld}
Let $\BF_q$ be a finite field with $q$ elements and  characteristic $p$.  Let $C$ be a smooth projective, geometrically irreducible curve over $\BF_q$ with  function field $Q = \BF_q(C)$. Let $\infty\in C$ be a fixed closed point and let $A:=\Gamma(C\setminus\{\infty\},\CO_C)$ be the $\BF_q$-algebra of those rational functions on $C$ which are regular outside $\infty$. Let $v_\infty$ be the valuation associated with the prime $\infty$.

By a \emph{place} of $C$ we mean a closed point $v\in C$. So either $v=\infty$ or $v$ is a maximal ideal of $A$. It defines a normalized valuation on $Q$ which we also denote by $v$, respectively by $v_\infty$ and which takes the value $v(z_v)=1$ on a uniformizing parameter $z_v\in Q$ at $v$. We now fix such a uniformizer $z_v$ at every $v$ and if $v=\infty$ we abbreviate $z_\infty$ to $z$. We denote the residue field of $v$ by $\BF_v$, its degree over $\BF_q$ by $d_v=[\BF_v:\BF_q]$ and its cardinality by $q_v:=\#\BF_v=q^{d_v}$. Thus if $a\in A\setminus\BF_q$ then $v_\infty(a)<0$, because $\BF_q$ is the field of constants in $Q$ as $C$ is geometrically irreducible, see \cite[IV$_2$, 4.3.1 and Proposition~4.5.9c)]{EGA_IV2}. The ring $A$ and its fraction field $Q$ play the role of $\BZ$ and $\BQ$ in the arithmetic of function fields.

 Let $Q_v$ be the completion of $Q$ with respect to the valuation $v$ and let $A_v\subset Q_v$ be the valuation ring of $v$. Then there is a canonical isomorphism $A_v \cong \BF_v\dbl z_v \dbr$. Let $Q_v^\alg$ be a fixed algebraic closure of $Q_v$ and let $\BC_v$ be the completion of $Q_v^\alg$ with respect to the canonical extension of $v$. We also use $v$ to denote this extension to  $Q_v^\alg$ and thus to $\BC_v$. However, we denote the image of $z_v$ in $\BC_v$ by $\zeta_v$ and abbreviate $\zeta_\infty$ to $\zeta$. Note that $\BC_v$ is algebraically closed. On $\BC_v$ and all its subrings we consider the normalized absolute value $|\,.\,|_v\colon\BC_v\to\BR_{\ge0}$ given by $|x|_v=q_v^{-v(x)}$. We let $\CO_{\BC_v}=\{x\in\BC_v\colon|x|_v\le1\}$ be the valuation ring of $\BC_v$. We also fix an algebraic closure $Q^\alg$ of $Q$ and an embedding $Q^\alg\into Q_v^\alg$ for every place $v$ of $Q$.

Let $K$ be a field extension of $\BF_q$ and fix an $\BF_q$-morphism  $ \gamma: A \to K$. We will call the pair  $(K, \gamma: A \to K)$ an \emph{$A$-field}. The prime ideal $\ker(\gamma)\subset A$ is called the \emph{$A$-characteristic} of $K$ and is denoted $\AChar(K,\gamma)$ or simply $\AChar(K)$. If $\AChar(K) = (0)$ we say $K$ has \emph{generic $A$-characteristic}. Then $\gamma$ is injective and $K$ is via $\gamma$ a field extension of $Q$. If $\AChar(K)=v\subset A$ is a maximal ideal, we say that $\AChar(K)$ is \emph{finite} and $K$ has \emph{finite $A$-characteristic $v$}. Then $K$ is via $\gamma$ a field extension of $\BF_v$.
\end{Notation}

Let $\BG_{a,K} = \Spec({K[X]})$ be the additive group scheme over $K$ and let $\tau \in \End_K({\BG_{a,K}})$  be the $q$-th power Frobenius endomorphism given by $\tau^*(X)=X^q$. Also every $b\in K$ induces an endomorphism $\psi_b\in\End_K(\BG_{a,K})$ given by $\psi_b^*(X)=bX$. These endomorphisms satisfy $\tau\circ\psi_b=\psi_{b^q}\circ\tau$. Then the ring $\End_{K, \BF_q}({\BG_{a,K}})$ of $\BF_q$-linear endomorphisms of group schemes over $K$ equals the non-commutative polynomial ring over $K$ in $\tau$:
\[
K\{\tau\}\;:=\;\bigl\{\,\DS\sum\limits_{i=0}^n b_i\tau^i\colon n\in\BN_0, b_i\in K\,\bigr\}\qquad\text{with}\quad \tau b=b^q\tau\,.
\]
For $\sum\limits_{i=0}^n b_i\tau^i\in K\{\tau\}$ we set $\deg_\tau\Bigl(\sum\limits_{i=0}^n b_i\tau^i\Bigr)=\max\{i\colon b_i\ne0\}$.

\begin{Definition}\label{Defdrinfeld} Let $(K, \gamma: A \to K)$ be an $A$-field.  A \emph{Drinfeld $A$-module} over $K$ is a pair $\ulG=(G,\phi)$ with $G\cong\BG_{a,K}$ and $\phi$ is an $\BF_q$-algebra  homomorphism $\phi: A \to \End_{K, \BF_q}(G)\cong K\{\tau\} , \ a \mapsto \phi_a,$  such that
\begin{enumerate} 
\item $\Lie(\phi_a) = \gamma(a)$ i.e. $ (a - \gamma(a))\cdot \Lie(G) = 0$ in $K$ for all $a\in A$.
\item There exists an $a \in A$ such that $\phi_a \in K\{\tau\}\setminus K$ i.e. $\phi_a \neq \gamma(a)\cdot \tau^0$ i.e. $\deg_{\tau}(\phi_a)>0$.
\end{enumerate}
Then there is an integer $r>0$ such that $\deg_\tau(\phi_a) = -rd_\infty v_\infty(a)$ for every $a\in A$, see \cite[\S\,4.5]{Goss}. It is called the \emph{rank} of $(G,\phi)$ and is denoted $\rk\ulG$ or $\rk\phi$. Also sometimes a Drinfeld $A$-module $\ulG=(G,\phi)$ is simply denoted by $\phi$. 

A \emph{morphism} between Drinfeld $A$-modules  $(G,\phi)$ and  $(G',\phi')$ over $K$ is a homomorphism $f : G \to G'$ of group schemes such that $\phi'_a\circ f = f\circ \phi_a$ for every $a \in A$. We denote the set of morphisms between $\ulG$ and $\ulG'$ by $\Hom_K(\ulG,\ulG')$ and we write $\End_K(\ulG):=\Hom_K(\ulG,\ulG)$.

In particular, for every $c\in A$ the commutation $\phi_a\circ\phi_c=\phi_{ac}=\phi_{ca}=\phi_c\circ\phi_a$ implies that $\phi_c\in\End_K(\ulG)$. Thus $\End_K(\ulG)$ is an $A$-algebra via $A\to\End_K(\ulG)$, $c\mapsto\phi_c$ and $\Hom_K(\ulG,\ulG')$ is an $A$-module. So we may also define $\QHom_K(\ulG,\ulG'):=\Hom_K(\ulG,\ulG')\otimes_AQ$ and write $\QEnd_K(\ulG):=\QHom_K(\ulG,\ulG)=\End_K(\ulG)\otimes_AQ$. 
\end{Definition}

\begin{Remark}\label{RemAndersonAModule}
Drinfeld $A$-modules possess higher dimensional generalizations, which are called \emph{abelian Anderson $A$-modules}, see \cite[Definition~1.2]{HA17}. They were originally defined by Anderson~\cite{Anderson} for $A=\BF_q[t]$ under the name \emph{abelian $t$-modules}. These are group schemes which carry an action of the ring $A$ subject to certain conditions. Abelian Anderson $A$-modules are the function field analogs of abelian varieties. Although Anderson worked over a field, abelian Anderson $A$-modules also exist naturally over arbitrary $A$-algebras as base rings. They possess an (anti-)equivalent description by semi-linear algebra objects called \emph{$A$-motives}, which we will define next. Through the work of Drinfeld and Anderson it was realized very early on that a Drinfeld module or abelian Anderson $A$-module over a field is completely described by its $A$-motive. The same is true over an arbitrary $A$-algebra $R$, as is shown for example in \cite{HA17}. So in a way the situation in function field arithmetic is much better than in the arithmetic of abelian varieties (which only have a local $p$-adic semi-linear algebra description via the Dieudonn\'e module of the associated $p$-divisible group, see Remark~\ref{RemDieudonneMod}): the $A$-motive is a ``global'' Dieudonn\'e module which integrates the ``local'' Dieudonn\'e modules for every prime in a single object. We will return to this in Section~\ref{SectLocalShtukas} and Proposition~\ref{PropLocGlobDieudonne}.
\end{Remark}

Before we define $A$-motives we have to fix some

\begin{Notation}
For an $A$-field $(K,\gamma)$ we write $A_K:=A\otimes_{\BF_q}K$ and set $\CJ:=(a\otimes1-1\otimes\gamma(a)\colon a\in A)\subset A_K$. We consider the endomorphism $\sigma^*:=\id_A\otimes\Frob_{q,K}$ of $A_K$, where $\Frob_{q,K}(b)=b^q$ for $b\in K$. For an $A_K$-module $M$ we set $\sigma^*M:=M\otimes_{A_K,\sigma^*}A_K$ and we write $\sigma_M^*:M\to\sigma^*M,\,m\mapsto m\otimes1$ for the natural $\sigma^*$-semilinear map. For a homomorphism $f\colon M\to N$ of $A_K$-modules we set $\sigma^*f:=f\otimes\id_{A_K}\colon\sigma^*M\to\sigma^*N$. Note that the endomorphism $\sigma^*$ corresponds to a morphism of schemes
\begin{equation}\label{EqSigma}
\sigma:=\id_C\times\Spec(\Frob_{q,K})\colon \;C_K:=C\times_{\BF_q}\Spec K \to C_K
\end{equation}
which is the identity on points and on sections of $\CO_C$ and the $q$-Frobenius on $K$. It satisfies $\sigma|_{\Spec A_K}=\Spec(\sigma^*)\colon\Spec A_K \to\Spec A_K$.
\end{Notation}

\begin{Example}\label{ExAMotOfDriMod}
Before we give the general definition of $A$-motives, we define the $A$-motive associated to a Drinfeld $A$-module $\ulG=(G,\phi)$ over $K$ as in \cite{Anderson}. Namely, we set 
\[
 M:= M( \ulG):= M( \phi): = \Hom_{K, \BF_q}(G, \BG_{a,K}),
\]
where $\Hom_{K, \BF_q}(-,-)$ is the group of $\BF_q$-linear homomorphisms of group schemes over $K$. Every choice of an isomorphism $G\cong\BG_{a,K}$ induces an isomorphism $M(\ulG)\cong K\{\tau\}$. We make  $M( \ulG)$ into an $A_K\{\tau\} = A \otimes_{\BF_q}K \{\tau\}$ module in the fashion given below:
\begin{alignat}{2}
&(a,m)\mapsto m\circ \phi_a &&\qquad\text{for}\quad\ m \in M, \ a \in A;\\
&(b,m)\mapsto \psi_b\circ m  &&\qquad\text{for}\quad \ m \in M, \ b \in K; \\
\label{EqTauM} &(\tau,m)\mapsto \tau m = \Frob_{q, \BG_a}\circ\,  m &&\qquad\text{for}\quad \BG_{a,K} \to \BG_{a,K}: \ m \in M.
\end{alignat}
Since the actions of $a \in A$ and of $b\in K$ commute, i.e. $a( b\cdot m)= \psi_b\circ m\circ\phi_a = b(a\cdot m)$, this makes $M$ into a module over $A_K: = A\otimes_{\BF_q}K$. It is not difficult to see that $M$ is a locally free $A_K$-module of rank $r: = \rk \ulG$, see \cite[Lemma~5.4.1]{Goss}. Now for $a\in A$ and $b\in K$ we have
\[
\tau\circ(a\otimes b)(m)= \tau\circ (\psi_b\circ m\circ\phi_a) = \psi_{b^q}\circ \tau\circ m\circ \phi_a = (a\otimes b^q)\circ \tau m\,.
\]
Since  the action of $\tau$ is not $A_K$-linear but $\sigma^*$-semi linear, it induces an $A_K$-linear map $\tau_M: \sigma^\ast M\to M$ defined by $\tau_M(m\otimes 1) = \tau m$. Sending $m\in M:=\Hom_{K, \BF_q}(G, \BG_{a,K})$ to $\Lie m\in\Hom_K(\Lie G,\Lie \BG_{a,K})=\Hom_K(\Lie G,K)$ defines a canonical isomorphism of $A_K$-modules
\begin{equation}\label{EqLieG}
\coker \tau_M \;=\;M/\tau_M(\sigma^*M)\;\isoto\;\Hom_K(\Lie G,K),\quad m\mod\tau_M(\sigma^*M)\;\longmapsto\; \Lie m\,,
\end{equation}
where $a\in A$ acts on $\Lie E$ via $\Lie\phi_a$; see \cite[Lemma~1.3.4]{Anderson}. This implies $\dim_K(\coker \tau_M) =1$, which can also be seen directly from $M\cong K\{\tau\}$ and $\tau_M(\sigma^*M)\cong K\{\tau\}\cdot\tau$. 
\end{Example}

The above construction motivates the  definition of $A$-motives:

\begin{Definition}\label{DefAMotive}
An  \emph{(effective) $A$-motive of rank $r$ and dimension $d$ over $K$} is a pair $\ulM=(M,\tau_M)$ consisting of a locally free $A_K$-module $M$ of rank $r$ and an $A_K$-homomorphism $\tau_M\colon\sigma^*M \to M$ such that
\begin{enumerate} 
\item $\dim_K(\coker \tau_M)=d$.
\item $(a-\gamma(a))^d \cdot \coker \tau_M =0$ for all $a\in A$.
\end{enumerate}
 We write $\rk\ulM:=r$ and $\dim\ulM:=d$. 

A \emph{morphism} between $A$-motives $f\colon(M,\tau_M)\to(N,\tau_N)$ over $K$ is an $A_K$-homomorphism $f\colon M\to N$ with $f\circ\tau_M=\tau_N\circ\sigma^*f$. We denote the set of morphisms between $\ulM$ and $\ulN$ by $\Hom_K(\ulM,\ulN)$ and we write $\End_K(\ulM):=\Hom_K(\ulM,\ulM)$. Since $\sigma^*(a)=a$ for all $a\in A$ and $\tau_M$ is $A_K$-linear, we have $a\cdot\id_M\in\End_K(\ulM)$. Thus $\End_K(\ulM)$ is an $A$-algebra via $A\to\End_K(\ulM)$, $a\mapsto a\cdot\id_M$ and $\Hom_K(\ulM,\ulN)$ is an $A$-module. So we may also define $\QHom_K(\ulM,\ulN):=\Hom_K(\ulM,\ulN)\otimes_AQ$ and write $\QEnd_K(\ulM):=\QHom_K(\ulM,\ulM)=\End_K(\ulM)\otimes_AQ$. 
\end{Definition}

On the relation with Drinfeld $A$-modules we have the following theorem, see \cite{Anderson} or \cite[\S 5.4]{Goss}.

\begin{Theorem}\label{ThmEquivDriMod}
The contravariant functor $\ulG \mapsto \ulM(\ulG)$  from Drinfeld $A$-modules to $A$-motives  over $K$ is fully faithful. Its essential image consists of all $\ulM=(M, \tau_M)$ such that $M$ is free over $K\{\tau\}$ of rank $1$. The latter implies that $\dim\ulM=1$.
\end{Theorem}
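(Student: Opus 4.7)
The plan is to handle full faithfulness and the essential image description separately, in both cases making systematic use of the identification $M(\ulG) \cong K\{\tau\}$ (as a left $K\{\tau\}$-module, with $A$ acting from the right via $a\cdot m = m\circ\phi_a$) induced by any chosen isomorphism $G \cong \BG_{a,K}$.

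For full faithfulness, I would fix identifications $G,G' \cong \BG_{a,K}$ so that $\Hom_{K,\BF_q}(G,G') \cong K\{\tau\}$ and $M(\ulG), M(\ulG') \cong K\{\tau\}$. Under these, the functor sends $f \in K\{\tau\}$ to the map $f^* = (-)\circ f : M(\ulG') \to M(\ulG)$, which is just right multiplication by $f$ in $K\{\tau\}$. Right multiplication is automatically left $K\{\tau\}$-linear (hence commutes with the $\tau$-structure), and a direct bookkeeping using \eqref{EqTauM} shows that $f^*$ is also $A_K$-linear precisely when $\phi'_a\circ f = f\circ\phi_a$ for all $a\in A$, which is exactly the condition that $f$ be a morphism of Drinfeld modules. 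Conversely, any morphism $g\colon M(\ulG') \to M(\ulG)$ of $A$-motives is left $K\{\tau\}$-linear, hence of the form $g(m') = m' \cdot g(1)$, and the $A$-equivariance of $g$ translates into the same commutation relation for $f := g(1)$. This gives a two-sided inverse to $f \mapsto f^*$.

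For the essential image, necessity is shown by the computation in Example~\ref{ExAMotOfDriMod}. For sufficiency, suppose $M$ is free of rank one over $K\{\tau\}$ with generator $m_0$. The key observation is that for each $a \in A$, the action of $a$ on $M$ commutes with the left $K\{\tau\}$-action (this uses $A_K$-linearity of $\tau_M$ together with $\sigma^*(a\otimes 1) = a\otimes 1$), so $a \cdot m_0 = \phi_a\cdot m_0$ for a unique $\phi_a \in K\{\tau\}$, and $a \mapsto \phi_a$ is an $\BF_q$-algebra homomorphism since $A$ is commutative. I would then verify the two axioms of Definition~\ref{Defdrinfeld}. Condition (i), namely $\Lie(\phi_a) = \gamma(a)$, drops out immediately from the $A$-motive axiom $(a - \gamma(a))\cdot\coker\tau_M = 0$, which forces the constant term of $\phi_a - \gamma(a)$ to vanish. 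Finally, the tautological map $M \to M(G,\phi)$ sending $p\cdot m_0 \mapsto p$ is manifestly an isomorphism of $A$-motives, and $\dim\ulM = 1$ is automatic from $\coker\tau_M \cong K\{\tau\}/\tau K\{\tau\} \cong K$.

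The main obstacle I expect is the verification of condition (ii) of Definition~\ref{Defdrinfeld} (existence of $a$ with $\phi_a \notin K$): it cannot be read off from the $K\{\tau\}$-freeness alone and requires the finite-rank hypothesis. The argument I would give is that if $\phi_a = \gamma(a)$ for every $a$, then the $A_K$-action on $M$ factors through the surjection $A_K \twoheadrightarrow A_K/\CJ \cong K$, so $\CJ\cdot M = 0$; but $\CJ$ is a nonzero ideal in the Dedekind domain $A_K = A \otimes_{\BF_q} K$ (nonzero because the quotient map $A_K \to K$ cannot be injective on a ring of Krull dimension one), while $M$ is locally free of positive rank over $A_K$, so $\CJ\cdot M \ne 0$, a contradiction.
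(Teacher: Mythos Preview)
The paper itself does not give a proof of this theorem; it simply refers to Anderson and to Goss~\S5.4. So there is no ``paper's own proof'' to compare against, and your outline stands or falls on its own merits.

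Your treatment of full faithfulness and the construction of $\phi$ from a rank-one $K\{\tau\}$-module is correct and standard. The claim $\dim\ulM=1$ via $\coker\tau_M\cong K\{\tau\}/K\{\tau\}\tau\cong K$ is fine, and so is your derivation of condition~(a) of Definition~\ref{Defdrinfeld} from the $A$-motive axiom.

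However, your argument for condition~(b) contains a genuine error. You assert that if $\phi_a=\gamma(a)$ for all $a$, then $\CJ\cdot M=0$. This is false: the element $a\otimes1$ acts on $m\in K\{\tau\}$ as right multiplication $m\mapsto m\,\gamma(a)$, whereas $1\otimes\gamma(a)$ acts as left multiplication $m\mapsto\gamma(a)\,m$. These do not agree on $\tau\in M$ unless $\gamma(a)\in\BF_q$, since $\tau\,\gamma(a)=\gamma(a)^q\tau\ne\gamma(a)\tau$ in general. So $\CJ$ does not annihilate $M$.

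The fix is close by. Observe that under the hypothesis $\phi_a=\gamma(a)$ for all $a$, the $A_K$-action preserves each line $K\tau^n$, and for any $a\in A\setminus\BF_q$ the nonzero element $a\otimes1-1\otimes\gamma(a)^{q^n}\in A_K$ annihilates $\tau^n$ (the computation is $\tau^n\gamma(a)-\gamma(a)^{q^n}\tau^n=0$). Hence every element of $M=\bigoplus_n K\tau^n$ is $A_K$-torsion. But $A_K$ is an integral domain (as $C_K$ is geometrically irreducible) and $M$ is locally free of positive rank over $A_K$, so $M$ is torsion-free. This gives the required contradiction.
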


In this sense we view $A$-motives as higher dimensional generalizations of Drinfeld $A$-modules. As an illustration of the claim that $A$-motives (and abelian Anderson $A$-modules) play the role of abelian varieties, see for example \cite{BoHa09} where the theory of $A$-motives over finite fields is developed in analogy with \cite{TateEndoms}.

\begin{Example} \label{CarMod} Let $C = \BP^1_{\BF_q}$, and set  $ A = \BF_q[t]$. Then $A_K = K[t]$. Let $K = \BF_q(\theta)$ be the rational function field in the variable $\theta$ and let  $\gamma : A\to K$ be given by $\gamma(t) = \theta$. The \emph{Carlitz module} over $K$ is given by $\ulG=(\BG_{a,K},\phi)$ with $\phi\colon \BF_q[t]\to K\{\tau\}$ defined by $\phi_t = \theta+\tau$. The $A$-motive associated with the Carlitz module is given by $\ulCC = (\CC = K[t], \tau_\CC = t-\theta)$ and is called the \emph{Carlitz motive}. Both $\ulG$ and $\ulCC$ have rank $1$. As we will see in Examples~\ref{ExCarlitzTorsion} and \ref{ExCarlitzVAdic} below, the Carlitz module is the function field analog of the multiplicative group $\BG_{m,\BQ}$ from Example~\ref{ExGmBettiEt}.
\end{Example}

\section{Isogenies and Semi-simple $A$-Motives}\label{SectEndomAMot}
\setcounter{equation}{0}

If we define the \emph{rank} of an abelian variety $X$ ad $\rk X:=2\cdot\dim X$, see Remark~\ref{RemRkAbVar} below, the analog of Theorem~\ref{ThmHomAbVar} is the following

\begin{Theorem}\label{ThmHomAMot}
For two $A$-motives $\ulM$ and $\ulN$ over an $A$-field $K$ the $A$-module $\Hom_k(\ulM,\ulN)$ is finite projective of rank $\le(\rk\ulM)\cdot(\rk\ulN)$. The same is true for Drinfeld $A$-modules over $K$.
\end{Theorem}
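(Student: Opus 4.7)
The plan is to adapt the abelian-variety proof of Theorem~\ref{ThmHomAbVar} to the function-field setting, with the $v$-adic Tate module of Section~\ref{UniCoh} playing the role of the $\ell$-adic Tate module. I focus on the $A$-motive statement, since the Drinfeld module statement then follows from the full faithfulness of $\ulG\mapsto\ulM(\ulG)$ in Theorem~\ref{ThmEquivDriMod}, which also preserves ranks.

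First I would establish torsion-freeness of $H := \Hom_K(\ulM,\ulN)$ as an $A$-module. If $0\ne a\in A$ and $a\cdot f=0$ for some $f\in H$, then $f$ is annihilated by the non-zerodivisor $a\otimes 1\in A_K$ acting on the locally free $A_K$-module $N$, forcing $f=0$. Since $A$ is a Dedekind domain, it will suffice next to show that $H$ is finitely generated and of rank at most $rs$, where $r=\rk\ulM$ and $s=\rk\ulN$, as torsion-free finitely generated modules over a Dedekind domain are projective.

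For the rank bound I would pick a maximal ideal $v\subset A$ distinct from $\AChar(K)$, which exists because $A$ has infinitely many maximal ideals. The $v$-adic Tate module $T_v\ulM$ of Section~\ref{UniCoh} is a finite free $A_v$-module of rank $r$ carrying a continuous $\sG_K$-action and depending functorially on $\ulM$. I would then show that the natural map
\[
H\otimes_A A_v\;\longinto\;\Hom_{A_v[\sG_K]}(T_v\ulM,T_v\ulN)
\]
is injective, because a nonzero morphism of $A$-motives induces nonzero maps on $v^n$-torsion for $n\gg 0$, reflecting the $v$-adic separatedness $\bigcap_n v^nM=0$. Since the target is $A_v$-free of rank at most $rs$, this yields $\rk_A H\le rs$.

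For finite generation, the key point is that the image of $H$ inside $\Hom_{A_v}(T_v\ulM,T_v\ulN)$ should be discrete for the $v$-adic topology: if $f\in H$ kills $\ulM[v^n]$, then by $v$-divisibility arguments $f$ lies in $v^n H$, and $\bigcap_n v^nH=0$ by torsion-freeness and separatedness. A discrete $A$-submodule of a finitely generated $A_v$-module is then finitely generated over $A$ with the same rank bound. The main obstacle I anticipate is precisely this discreteness-to-finite-generation step: in the abelian variety case of \cite[Chapter~IV, \S\,19]{Mumford70} this is where the positive-definite degree form from polarizations enters, and in our function-field setting one would want either an analogous notion of polarization for $A$-motives, or a descent argument reducing to the case where $K$ is finitely generated over $\BF_q$ followed by an appeal to the Tate conjecture for $A$-motives.
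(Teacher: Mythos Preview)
The paper does not actually prove this theorem; it simply cites Anderson~\cite[Corollary~1.7.2]{Anderson} for $A$-motives and Goss~\cite[Theorem~4.7.8]{Goss} for Drinfeld modules. Your torsion-freeness argument is fine, and the rank bound via the injection $H\otimes_A A_v\hookrightarrow\Hom_{A_v}(\check T_v\ulM,\check T_v\ulN)$ would work. The genuine gap is in finite generation, and it is not where you placed it. The step ``a discrete $A$-submodule of a finite free $A_v$-module is finitely generated'' is actually correct (such a submodule injects into the finite set $(A_v/v^N)^{rs}$). What fails is your proof of discreteness: the properties you establish---``$f$ lands in $v^n\Hom_{A_v}$ implies $f\in v^nH$'' together with $\bigcap_n v^nH=0$---only say that the subspace topology on $H$ agrees with its intrinsic $v$-adic topology and is Hausdorff, not that it is discrete (which would require $H\cap v^N\Hom_{A_v}=0$ for a single $N$). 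Hausdorff is not enough: for any $0\ne a\in A$ with $v\nmid a$, the module $A[\tfrac{1}{a}]\subset A_v$ satisfies all of these properties yet is not finitely generated. So a single-prime $v$-adic argument of this shape cannot succeed, and your proposed fixes do not help either: there is no polarization or degree-form theory for general $A$-motives, and the Tate conjecture (Theorem~\ref{ThmTateConj}) is a much deeper result which in any case only controls $H\otimes_A A_v$, which the same counterexample shows does not force $H$ to be finitely generated over $A$.

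Anderson's actual argument exploits the Frobenius-semilinear structure globally rather than one prime at a time: a Lang-type descent shows that $A$-linearly independent elements of $H$ remain $A_K$-linearly independent inside the locally free $A_K$-module $\Hom_{A_K}(M,N)$, because any $A_K$-linear relation among $\tau$-equivariant maps is $\sigma^*$-stable and hence descends to the $\sigma^*$-invariants $A\subset A_K$. This yields the rank bound directly, and since a maximal independent family in $H$ then spans a free $A_K$-submodule of $\Hom_{A_K}(M,N)$ whose saturation has bounded index, the $A$-denominators of elements of $H$ are bounded uniformly across all primes of $A$.
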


\begin{proof}
For $A$-motives this was proved by Anderson~\cite[Corollary~1.7.2]{Anderson} and for Drinfeld $A$-modules it can be found in \cite[Theorem~4.7.8]{Goss}.
\end{proof}

\begin{Definition} Let $\ulG=(G,\phi)$ and $\ulG'=(G',\phi')$ be two Drinfeld A-modules over $K$. A non zero morphism $f \in \Hom_K(\ulG, \ulG')$ is called an \emph{isogeny}.  If there is an isogeny $f: \ulG \to \ulG'$, then $\ulG$ and $\ulG'$ are \emph{isogenous}.
\end{Definition}
From \cite[4.7.13]{Goss}, we know that if there is an isogeny $f: \ulG \to \ulG'$, then there exists a some nonzero $a \in A$ and an isogeny $\hat f : \ulG' \to \ulG$ such that 
\[
\hat f f = \phi_a\quad\text{and}\quad f \hat f = \phi'_a .
\]
In particular, if  $0\ne f\in\End_K(\ulG)$, then $f$  is invertible in $ \QEnd(\ulG): =\End_K(\ulG) \otimes_A Q$, so $\QEnd(\ulG)$ is  a finite dimensional division algebra over $Q$.

\begin{Definition}Let $\ulM$ and $\ulN$ be two $A$-motives over $K$.  A morphism $f \in \Hom_K(\ulM, \ulN)$ is called an \emph{isogeny} if $f$ is injective and $\coker f$ is a finite dimensional $K$-vector space. If there exists an isogeny $f\in\Hom_K(\ulM,\ulN)$  then $\ulM$ and $\ulN$  are said to be \emph{isogenous} over $K$ and we write $\ulM\approx_K\ulN$. This defines an equivalence relation by Remark~\ref{RemIsogDriMod}(d) below. 
\end{Definition}

\begin{Remark}\label{RemIsogDriMod}
\begin{enumerate}
\item \label{RemIsogDriMod_A}
Two Drinfeld $A$-modules are isogenous if and only if their associated $A$-motives are isogenous, see \cite[Theorem~5.9 and Proposition~5.4]{HA17}.
\item \label{RemIsogDriMod_B}
If two $A$-motives $\ulM$ and $\ulN$ are isogenous then $\rk \ulM = \rk \ulN$ and $\dim \ulM = \dim \ulN$, see \cite[Proposition~5.8]{HA17}. 
\item \label{RemIsogDriMod_C}
Conversely, let $f\colon\ulM\to\ulN$ be a morphism of $A$-motives with $\rk\ulM=\rk\ulN$. Then $f$ is injective if and only if $\coker f$ is a finite dimensional $K$-vector space, and in this case $f$ is an isogeny. Indeed, since $M$ is locally free over $A_K$, it is contained in $M\otimes_{A_K}\Quot(A_K)$ where $\Quot(A_K)$ denotes the fraction field of $A_K$. Since $\rk\ulM=\rk\ulN$ the injectivity of $f$ is equivalent to $f$ inducing an isomorphism $M\otimes_{A_K}\Quot(A_K)\to N\otimes_{A_K}\Quot(A_K)$, and this in turn is equivalent to $\coker f$ being torsion, and hence finite.
\item \label{RemIsogDriMod_D}
If $f\colon\ulM\to\ulN$ is an isogeny between $A$-motives, then there exists non-canonically an isogeny $\hat f\colon\ulN\to\ulM$ and a non-zero element $a\in A$ with $\hat ff=a\cdot\id_\ulM$ and $f\hat f=a\cdot\id_\ulN$ by \cite[Corollary~5.15]{HA17}
\item \label{RemIsogDriMod_E}
Let $\ulM$ and $\ulN$ be $A$-motives over $K$. If $\ulM$ and $\ulN$ are isogenous over $K$ via an isogeny $f$, then
\[
\QEnd_K (\ulM) \cong \QHom_K(\ulM,\ulN) \cong \QEnd_K(\ulN),\ \ h\mapsto f\circ h \mapsto f\circ h\circ f^{-1}.
\]
More precisely, $\QHom_K(\ulM,\ulN)$ is a free right $\QEnd_K(\ulM)$-module of rank $1$ and a free left $\QEnd_K(\ulN)$-module of rank $1$. If $\ulM$ and $\ulN$ are not isogenous then $\QHom_K(\ulM,\ulN)=(0)$.
\end{enumerate}
\end{Remark}

\begin{Definition} Let $\ulM$ be an  $A$-motive over $K$. 
\begin{enumerate}
\item An \emph{$A$-factor-motive over $K$} of $\ulM$ is an $A$-motive $\ulM'$ together with a surjective morphism $\ulM \onto \ulM'$ of $A$-motives over $K$.
\item $\ulM$ is called \emph{simple over $K$} if $\ulM$ is non trivial and $\ulM$ has no $A$-factor-motives over $K$ other than $(0)$ and $\ulM$. 
\item $\ulM$ is called \emph{semi-simple over $K$} if $\ulM$ is isogenous to a direct sum of simple $A$-motives over $K$, i.e.\ $\ulM\approx_K \oplus_{i}\ulM_i$ with $\ulM_i$ simple.
\end{enumerate}
\end{Definition}

\begin{Remark}\label{RemEndOfSimpleAMot}
(a) In comparison to the analogous Definition~\ref{DefSimpleAbVar} for abelian varieties, $A$-motives behave dually. This is due to the fact that the functor from Drinfeld $A$-modules to $A$-motives is contravariant.

\medskip\noindent
(b) For any Drinfeld $A$-module $\phi$ over $K$ the $A$-motive $\ulM(\phi)$ is simple by \cite[Corollary~7.5]{BoHa11}.

\medskip\noindent
(c) But in contrast to abelian varieties (Remark~\ref{RemPoincareWeil}) not every $A$-motive is semi-simple up to isogeny. This was observed in \cite[Examples~6.1 and 6.13]{BoHa09}.

\medskip\noindent
(d) Let $\ulM$ and $\ulN$ be two $A$-motives over $K$ of same rank and let $\ulM$ be simple over $K$. Then every non-zero morphism $f\in\Hom_K(\ulM,\ulN)$ is an isogeny. Namely, the image of $f$ is a non-zero $A$-factor-motive of $\ulM$, and hence isomorphic to $\ulM$ via $f$, because $\ulM$ is simple. So $f$ is injective and hence an isogeny by Remark~\ref{RemIsogDriMod}\ref{RemIsogDriMod_C}.  

In particular, if $\ulM$ is simple over $K$ then every non-zero endomorphism $0\ne f\in\End_K(\ulM)$ is an isogeny and therefore invertible in $\QEnd_K(\ulM)$ by Remark~\ref{RemIsogDriMod}\ref{RemIsogDriMod_D}. This implies that $\QEnd_K(\ulM)$ is a division algebra over $Q$. 

Moreover, if  $\ulM$ is semi-simple over $K$ with decomposition $\ulM \approx_K \ulM_1 \oplus \cdots \oplus \ulM_n$ up to isogeny into simple $A$-motives $\ulM_i$ over $K$, then   $\QEnd_K(\ulM)$ decomposes into a finite direct product of full matrix algebras over the division algebras  $\QEnd_K(\ulM_i)$ over $Q$, compare Remark~\ref{RemEndOfSimpleAbVar}.
\end{Remark}

\section{Analytic Theory of Drinfeld Modules}\label{SectAnalytTh}
\setcounter{equation}{0}

In this section we consider Drinfeld $A$-modules over $\BC_\infty$, which is an $A$-field via the natural inclusion $A\subset Q\subset Q_\infty\subset \BC_\infty$ denoted by $\gamma$.

If $\ulG=(\BG_{a,\BC_\infty},\phi)$ with $\phi\colon A\to\BC_\infty\{\tau\}$ is a Drinfeld $A$-module over $\BC_\infty$ then there is a uniquely determined power series $\exp_\ulG(z)=\sum_{i=0}^\infty e_i z^{q^i}$ with $e_i\in\BC_\infty$, $e_0=1$ satisfying 
\[
\phi_a( \exp_\ulG(z)) = \exp_\ulG (\gamma(a)\cdot z)
\]
for all $a \in A$, see \cite[4.6.7]{Goss}. It is called the \emph{exponential function of $\ulG$}. The power series $\exp_\ulG$ converges for every $z\in\BC_\infty$ and its kernel $\Lambda(\ulG)$ is an \emph{$A$-lattice} in $\BC_\infty$ (that is, a finitely generated projective, discrete $A$-submodule) of the same rank as the Drinfeld $A$-module $\ulG$. Note that $\BC_\infty$ is infinite dimensional over $Q_\infty$ and therefore contains $A$-lattices of arbitrarily high rank. 

\medskip

Conversely, let $\Lambda\subset\BC_\infty$ be an $A$-lattice of rank $r$. Then the function
\begin{equation}\label{exp}
\exp_{\Lambda}(z) = z \prod_{0\ne\lambda \in \Lambda} (1-\tfrac{z}{\lambda})
\end{equation}
converges for every $z\in\BC_\infty$ and can be written as an everywhere convergent power series in $z$. Moreover $\exp_\Lambda\colon\BC_\infty\to\BC_\infty$ is a surjective $\BF_q$-linear map whose zeroes are simple and located at $\Lambda$. For more details see \cite[\S 4.2]{Goss}. For $a \in A\setminus\{0\}$ we can now define the polynomial 
\begin{equation}\label{conddefdri}
\phi_a^{\Lambda}(x)\;:=\;\gamma(a)\cdot x\cdot\prod_{0\ne\lambda\in \gamma(a)^{-1}\Lambda/\Lambda}\bigl(1-\tfrac{x}{\exp_\Lambda(\lambda)}\bigr)\;\in\;\BC_\infty[x]\,.
\end{equation}
It satisfies 
\begin{equation}\label{EqFunctEqPhi}
\exp_\Lambda(\gamma(a)\cdot z) =  \phi_a^{\Lambda}(\exp_\Lambda(z))
\end{equation}
and makes the following diagram with exact rows commutative
\begin{align}\label{commdefdri}
\xymatrix @C+1pc {
0 \ar[r]& \Lambda \ar[d] \ar[r] &\BC_\infty \ar[r]^{\exp_\Lambda}\ar[d]_{\gamma(a)} & \BC_\infty \ar[d]^{\phi_a^{\Lambda}}\ar[r] &0\\
0 \ar[r]& \Lambda \ar[r]& \BC_\infty \ar[r]^{\exp_\Lambda} & \BC_\infty\ar[r] & 0
}.
\end{align}
It is easy to see that
\begin{enumerate}
\item  $\phi_a^\Lambda(x)$ is an $\BF_q$-linear polynomial, i.e. $\phi_a^{\Lambda} \in \BC_\infty\{\tau\}$, of $\tau$-degree $\deg_\tau(\phi_a^\Lambda) = -rd_\infty v_\infty(a)$;
\item $\phi^\Lambda: a \mapsto \phi_a^{\Lambda} $ defines a ring homomorphism $\phi^\Lambda\colon A \to \BC_\infty\{\tau\}$.
\end{enumerate}
The additive group $\BC_\infty$, considered as the quotient $\BC_\infty/\Lambda$ via $\exp_\Lambda$, thus carries a new structure as an $A$-module given by $z\mapsto \phi_a^{\Lambda}(z)$ for $a\in A$. Therefore, for every $A$-lattice $\Lambda\subset\BC_\infty$ of rank $r$ we get a Drinfeld $A$-module $\ulG^\Lambda:=(\BG_{a,\BC_\infty}, \phi^\Lambda)$ of rank $r$ over $\BC_\infty$.

\begin{Definition}
Let $\Lambda_1,\  \Lambda_2$  be two $A$-lattices of the same rank. A morphism from $\Lambda_1 \to  \Lambda_2$ is an element $c \in \BC_\infty$, with $c\Lambda_1\subseteq  \Lambda_2$. If the ranks of $\Lambda_1$ and  $ \Lambda_2$ are different, then we only allow $0 \in \BC_\infty$ to be a morphism. 
\end{Definition}

\begin{Theorem}[{\cite[Proposition~3.1]{Drinfeld}}] \label{ThmDriModLattice}
The functors $\ulG\mapsto\Lambda(\ulG)$ and $\Lambda\mapsto\ulG^\Lambda$ give an equivalence of categories between the category of Drinfeld $A$-modules over $\BC_\infty$ and the category of $A$-lattices in $\BC_\infty$.
\end{Theorem}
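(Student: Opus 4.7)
The plan is to verify that the two assignments $\ulG\mapsto\Lambda(\ulG)$ and $\Lambda\mapsto\ulG^\Lambda$ described above are mutually quasi-inverse functors. The central technical device will be a \emph{uniqueness principle}: for $c\in\BC_\infty$ transcendental over $\BF_q$ and any $P(x)=\sum_{i\ge0}c_ix^{q^i}\in\BC_\infty\{\tau\}$ with $c_0=c$, there is at most one $\BF_q$-linear power series $e(z)=\sum_{i\ge0}e_iz^{q^i}$ with $e_0=1$ satisfying $e(cz)=P(e(z))$, since comparing coefficients yields a recursion $e_i\cdot(c^{q^i}-c)=(\text{polynomial in }e_0,\ldots,e_{i-1},c_1,\ldots,c_i)$ whose unique solution exists as long as $c^{q^i}\ne c$ for all $i\ge 1$.

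For the forward functor, I first produce $\exp_\ulG$. Picking any $a\in A\setminus\BF_q$, so that $\gamma(a)$ is transcendental over $\BF_q$, the recursion above with $c=\gamma(a)$ and $P=\phi_a$ gives a formal series $\exp_\ulG(z)$, and an $A$-equivariance argument combined with the uniqueness principle shows the same series satisfies $\exp_\ulG(\gamma(a')z)=\phi_{a'}(\exp_\ulG(z))$ for every $a'\in A$. Estimates on $|e_i|_\infty$ extracted from the recursion give convergence of $\exp_\ulG$ on all of $\BC_\infty$. The kernel $\Lambda(\ulG)$ is then stable under $\gamma(A)$-multiplication by the functional equation, and discreteness follows from the Newton polygon of $\exp_\ulG-w$ admitting only finitely many slopes of any bounded magnitude. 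To see $\rk_A\Lambda(\ulG)=r$, I invoke the commutative diagram (\ref{commdefdri}) for any nonzero $a\in A$ together with surjectivity of $\exp_\ulG$ (via the non-archimedean Weierstrass factorization of entire functions on $\BC_\infty$): this yields $\Lambda(\ulG)/\gamma(a)\Lambda(\ulG)\cong\ker\phi_a$, a finite group of cardinality $q^{\deg_\tau\phi_a}=q^{-rd_\infty v_\infty(a)}=|(A/(a))^r|$, which both pins down finite generation and identifies the rank as $r$.

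For the reverse functor, discreteness of $\Lambda$ ensures that the Weierstrass product (\ref{exp}) converges on all of $\BC_\infty$ to an entire $\BF_q$-linear function $\exp_\Lambda$ with simple zeros exactly at $\Lambda$. For each $a\in A$, both $\exp_\Lambda(\gamma(a)z)$ and $\phi_a^\Lambda(\exp_\Lambda(z))$ defined in (\ref{conddefdri}) are entire $\BF_q$-linear functions with the same simple zero set $\gamma(a)^{-1}\Lambda$ and the same derivative $\gamma(a)$ at the origin, hence they coincide, which is precisely (\ref{EqFunctEqPhi}); counting zeros shows that $\phi_a^\Lambda\in\BC_\infty\{\tau\}$ has $\tau$-degree $-rd_\infty v_\infty(a)$, and the ring-homomorphism property $\phi_{ab}^\Lambda=\phi_a^\Lambda\circ\phi_b^\Lambda$ follows by iterating the functional equation and invoking the uniqueness principle.

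The two compositions are then readily seen to be identities. $\Lambda(\ulG^\Lambda)=\Lambda$ is built into the construction, while $\ulG^{\Lambda(\ulG)}\cong\ulG$ because both Drinfeld modules admit $\exp_\ulG$ as their exponential by the uniqueness principle, and the diagram (\ref{commdefdri}) determines the Drinfeld structure from the exponential. On morphisms, an $f\in\Hom_{\BC_\infty}(\ulG_1,\ulG_2)$ with $c:=\Lie(f)\in\BC_\infty$ satisfies $f(\exp_{\ulG_1}(z))=\exp_{\ulG_2}(cz)$ (again by applying uniqueness to both sides), which forces $c\cdot\Lambda(\ulG_1)\subseteq\Lambda(\ulG_2)$; conversely, given such a $c$, the $\BF_q$-linear function $\exp_{\ulG_2}(cz)$ vanishes on $\Lambda(\ulG_1)$ and hence factors as $f\circ\exp_{\ulG_1}$ for a unique $f\in\BC_\infty\{\tau\}$, which commutes with the Drinfeld actions again by the uniqueness principle. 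The main obstacle will be the surjectivity of $\exp_\ulG$ that enters the rank calculation, since it depends on non-trivial non-archimedean function theory on $\BC_\infty$; once this input is available, the rest of the proof is bookkeeping driven by the uniqueness principle.
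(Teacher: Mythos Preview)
The paper does not supply its own proof of this theorem; it simply cites \cite[Theorem~4.6.9]{Goss} and moves on. Your sketch is the standard argument one finds in Goss's book (and in Drinfeld's original paper): construct $\exp_\ulG$ by the recursion forced by the functional equation, use the non-archimedean Weierstrass theory to get surjectivity and the rank computation, build $\phi^\Lambda$ from the product expansion, and glue everything via the uniqueness principle. So your approach is correct and coincides with the reference the paper invokes.
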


\begin{Corollary}\label{CorEndDriMod}
If $\ulG$ is a Drinfeld $A$-module over a field $K$ of generic $A$-characteristic, then $\QEnd_K(\ulG)$ is a commutative field whose degree over $Q$ divides $\rk\ulG$.
\end{Corollary}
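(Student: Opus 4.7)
The plan is to reduce to the analytic situation over $\BC_\infty$, where Theorem~\ref{ThmDriModLattice} supplies complete control over the endomorphism ring.

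First I would note, using Theorem~\ref{ThmHomAMot}, that $\End_K(\ulG)$ is a finitely generated $A$-module. Consequently $\ulG$ together with finitely many generators of $\End_K(\ulG)$ is defined over a subfield $K_0\subseteq K$ that is finitely generated over $\BF_q$; after enlarging $K_0$ I may assume $\End_{K_0}(\ulG_0)=\End_K(\ulG)$ for the $K_0$-model $\ulG_0$ of $\ulG$. Since $\gamma$ is injective in generic $A$-characteristic, $K_0$ contains $\gamma(Q)$ and $K_0/Q$ has finite transcendence degree. Because $\BC_\infty$ is algebraically closed of infinite transcendence degree over $Q$, I can choose an embedding $\iota\colon K_0\hookrightarrow\BC_\infty$ extending the inclusion $Q\hookrightarrow\BC_\infty$ (sending a transcendence basis of $K_0/Q$ to algebraically independent elements of $\BC_\infty$ and extending along the algebraic closure). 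This produces a Drinfeld $A$-module $\ulG_\infty:=\ulG_0\otimes_{K_0,\iota}\BC_\infty$ and an injection of $A$-algebras $\QEnd_K(\ulG)\hookrightarrow\QEnd_{\BC_\infty}(\ulG_\infty)$.

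Next I would apply Theorem~\ref{ThmDriModLattice}: let $\Lambda\subset\BC_\infty$ be the $A$-lattice of rank $r:=\rk\ulG$ with $\ulG_\infty\cong\ulG^\Lambda$. The theorem identifies $\End_{\BC_\infty}(\ulG_\infty)$ with the commutative subring $R:=\{c\in\BC_\infty\colon c\Lambda\subseteq\Lambda\}$ of $\BC_\infty$, which contains $\gamma(A)$. Because $\Lambda$ is a faithful finitely generated projective $A$-module of rank $r$ and $A$ is noetherian, the inclusion $R\hookrightarrow\End_A(\Lambda)$ realises $R$ as a finitely generated $A$-module, so $R\otimes_A Q$ is a finite-dimensional commutative $Q$-subalgebra of the field $\BC_\infty$, hence an integral domain and thus a field.

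Finally, the $Q$-vector space $V:=\Lambda\otimes_A Q$ has dimension $r$ and is naturally a module over the field $R\otimes_A Q$, so $[R\otimes_A Q:Q]$ divides $r$. The injection $\QEnd_K(\ulG)\hookrightarrow R\otimes_A Q$ then exhibits $\QEnd_K(\ulG)$ as a commutative integral subring of a finite field extension of $Q$, hence itself a subfield; restricting the action on $V$ to $\QEnd_K(\ulG)$ shows that $[\QEnd_K(\ulG):Q]$ divides $r$ as well. The step requiring the most care is the descent to a finitely generated $K_0$ and the construction of a compatible embedding $K_0\hookrightarrow\BC_\infty$; once we are over $\BC_\infty$, the lattice correspondence makes the rest of the argument formal.
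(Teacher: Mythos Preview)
Your proof is correct and follows essentially the same route as the paper's: descend to a finitely generated subfield, embed it into $\BC_\infty$, and then use Theorem~\ref{ThmDriModLattice} to identify endomorphisms with scalars $c\in\BC_\infty$ preserving the lattice, from which commutativity, the field property, and the divisibility all follow from the action on $\Lambda\otimes_A Q$. The paper is terser (it simply asserts ``it suffices to prove the corollary when $K=\BC_\infty$'' and identifies $\QEnd_K(\ulG)$ directly with $\{c\in\BC_\infty\colon c(Q\cdot\Lambda)\subset Q\cdot\Lambda\}$), whereas you spell out the descent and the injection $\QEnd_K(\ulG)\hookrightarrow\QEnd_{\BC_\infty}(\ulG_\infty)$ more carefully, but the substance is the same.
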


\begin{proof}
Since $\ulG$ and all elements of $\QEnd_K(\ulG)$ are defined over a finitely generated subfield $K_0$ of $K$, we can choose a $Q$-embedding $K_0\into \BC_\infty$ and it suffices to prove the corollary when $K=\BC_\infty$. In this case $\ulG\cong \ulG^\Lambda$ for an $A$-lattice $\Lambda\subset \BC_\infty$ of rank equal to $\rk\ulG$. By Theorem~\ref{ThmDriModLattice} we have isomorphisms $\End_K(\ulG)\isoto\{c\in\BC_\infty\colon c\Lambda\subset\Lambda\}$, $f\mapsto\Lie(f)$ and $\QEnd_K(\ulG)\isoto\{c\in\BC_\infty\colon c(Q\cdot\Lambda)\subset Q\cdot\Lambda\}$. In particular $\QEnd_K(\ulG)\subset\BC_\infty$ is a commutative field. Since $Q\cdot\Lambda\subset\BC_\infty$ is a $Q$-vector space of dimension $\rk\ulG$ and also a $\QEnd_K(\ulG)$-vector space, the formula $\rk\ulG=\dim_Q(Q\cdot\Lambda)=[\QEnd_K(\ulG):Q]\cdot\dim_{\QEnd_K(\ulG)}(Q\cdot\Lambda)$ tells us that $[\QEnd_K(\ulG):Q]$ divides $\rk\ulG$.
\end{proof}

We regard Drinfeld $A$-modules and particularly those of rank two as analogs of elliptic curves, where the functional equation \eqref{EqFunctEqPhi} for $\exp_\Lambda(z)$ corresponds to the group law derived from \eqref{ElEq2}. The point is that \eqref{ElEq2} defines a $\BZ$-module structure on the elliptic curve $\BC/\Lambda \isoto E_\Lambda(\BC)$,  while \eqref{conddefdri} and \eqref{EqFunctEqPhi} define the above $A$-module structure on the additive group scheme $\BG_{a_K}$.

\begin{Definition}\label{DefDriModBetti}
Let $\ulG$ be a Drinfeld $A$-module of rank $r$ over $\BC_\infty$. The \emph{Betti (co-)homology realization}  of $\ulG$ is defined by
\[
\Koh^1_\Betti(\ulG, R): = \Lambda (\ulG)\otimes_A R \qquad 
\text{and} \qquad \Koh_{1,\Betti}(\ulG, R): = \Hom_A(\Lambda (\ulG), R)
\]
for any $A$-algebra $R$. Both are free $R$-modules of rank $r$.
\end{Definition}

\section{Torsion Points and $v$-adic Cohomology of Drinfeld Modules}\label{SectTorsDriMod}
\setcounter{equation}{0}

\begin{Definition}
Let $\ulG=(G,\phi)$ be a Drinfeld  $A$-module over an $A$-field $K$ and let $G(K^\alg)$ be the set of $K^\alg$-valued points of $G$. For an element $ a \in A$, we set
\[
\ulG[a](K^\alg):=\phi[a](K^\alg): = \{ P \in G(K^\alg)\ |\ \phi_a(P) = 0 \},
\]
and we call $\ulG[a](K^\alg)$ the \emph{module of $a$-torsion points} of $\ulG=(G,\phi)$. If $\Fa\subseteq A$ is an ideal, we set 
\[
\ulG[\Fa](K^\alg):=\phi[\Fa](K^\alg): = \{ P \in G(K^\alg)\ |\ \phi_a(P) = 0 \ \text{for all} \ a \in \Fa\}.
\]
The latter are the $K^\alg$-valued points of a closed subgroup scheme $\ulG[\Fa]$ of $G$, which is an $A/\Fa$-module scheme via $\ol a \mapsto {\phi_a} |_{\ulG[\Fa]}$. If $\Fa=(a)$ then $\ulG[\Fa](K^\alg)=\ulG[a](K^\alg)$.
\end{Definition}

\begin{Remark}
We have the following observation, see \cite[\S\,4.5]{Goss}, where we denote the $A$-characteristic of $K$ by $\Fp=\AChar(K):=\ker(\gamma:A\to K)$:
\begin{enumerate} \item If $a \in A$ is prime to $\AChar(K)$, we see that the polynomial  $\phi_a$ is separable and $\# \ulG[a](K^\alg) =  (\#A/(a))^{\rk \ulG}$. Since this holds for every $a\in A$ and $\ulG[\Fa](K^\alg)$ is an $A/\Fa$-module, one obtains $\ulG[\Fa](K^\alg) \cong  (A/\Fa)^{\rk \ulG}$ as $A$-modules.

\item $\# \ulG[\Fp](K^\alg) =  (\#A/(\Fp))^{\rk \ulG - h}$ and $\ulG[\Fp](K^\alg) \cong  (A/(\Fp))^{\rk \ulG - h}$, where $h$ is the height of the Drinfeld $A$-module defined by $h: = \frac{w(a)}{v_{\Fp}(a) \cdot [\BF_\Fp:\BF_q]}$ for every $a\in A$, where $w(a)$ is the smallest integer $ i\geq 0$ with $\tau^i$ occurring in $\phi_a$, with nonzero coefficient.
\end{enumerate}
\end{Remark}

\begin{Example}\label{ExCarlitzTorsion}
The Carlitz module $\ulG=(\BG_{a,K},\phi)$ over $K=\BF_q(\theta)$ with $\phi_t = \theta+\tau$ from Example~\ref{CarMod} has rank $1$. For every $a=\sum_{i=0}^n a_i t^i$ with $a_i\in\BF_q$ and $a_n\ne0$, we have $\phi_a=\sum_{i=0}^n a_i\phi_t^n=\sum_{i=0}^n a_i(\theta+\tau)^n=(\sum_{i=0}^n a_i\theta^n)\cdot \tau^0+\ldots+a_n\tau^n=\gamma(a) \tau^0+\ldots+a_n\tau^n$. Therefore, the polynomial $\phi_a(x)=\gamma(a)\,x+\ldots+a_n\,x^{q^n}$ has degree $q^n$ and is separable, because $\gamma(a)\ne0$. From this it follows that $\#\ulG[a](K^\alg)=q^n=\#(A/(a))$ and that $\ulG[a](K^\alg)\cong A/(a)$ for every $a\in A$. This illustrates that the Carlitz module is the function field analog of the multiplicative group $\BG_m=\BG_{m,\BQ}$ from Example~\ref{ExGmBettiEt}, which for $a\in\BN_{>0}$ satisfies $\BG_m[a](\BQ^\alg):=\ker[a](\BQ^\alg)=\{x\in\BQ^\alg\colon x^a=1\}\cong\BZ/(a)$.
\end{Example}

\begin{Definition}
Let $v$ be a prime ideal of $A$. Let $\ulG=(G,\phi)$ be a Drinfeld $A$-module over $K$ of fixed rank $r$ and define the $A_v$-module $\ulG[v^\infty](K^\alg): = \cup_{n\geq1}\ulG[v^n](K^\alg)$. The $A_v$-module
\begin{equation}\label{EqTateModOfDriMod}
\Koh_{1,v}(\ulG,A_v):=T_v(\ulG) = \Hom_{A_v}\bigl(Q_v/A_v,\, G(K^\alg)\bigr)= \Hom_{A_v}\bigl(Q_v/A_v,\, \ulG[v^\infty](K^\alg)\bigr).
\end{equation}
is called the \emph{$v$-adic homology realization} or the \emph{$v$-adic Tate module} of $\ulG$. It carries a continuous $\sG_K$-action. Note that when $z=\frac{a}{c}\in Q$ is a uniformizing parameter of $A_v$ then the map $\phi_z:=\phi_c^{-1}\circ\phi_a\colon\ulG[v^n](K^\alg)\to\ulG[v^{n-1}](K^\alg)$ is well defined and
\[
T_v(\ulG) \cong \invlim \bigl(\ulG[v^n](K^\alg),\phi_z\bigr)\,;
\]
see for example \cite[after Definition~4.8]{HartlKim}. A morphism $f :\ulG \to \ulG' $ of Drinfeld $A$-modules gives a morphism $T_v(f): T_v(\ulG)\to  T_v(\ulG')$ of $A_v[\sG_K]$-modules. If $v$ is different from the $A$-characteristic $\AChar(K)$ of $K$, then $T_v(\ulG)$ is isomorphic to $A_v^{\oplus r}$. 
\end{Definition}

\begin{Remark}\label{RemRkAbVar}
The results of this section parallel Remark~\ref{RemRkTateModAbVar} for abelian varieties. Since the $\ell$-adic Tate module of an abelian variety $X$ is isomorphic to $(\BZ_\ell)^{2\dim X}$, while the $v$-adic Tate module of a Drinfeld $A$-module $\ulG$ is isomorphic to $A_v^{\rk\ulG}$ it is natural to call the number $\rk X:=2\dim X$ the \emph{rank} of the abelian variety $X$, compare also Theorems~\ref{ThmHomAbVar} and \ref{ThmHomAMot}.
\end{Remark}

There is a similar theory of Tate modules for $A$-motives which we will explain in the next section.

\section{Cohomology Realizations and Period Maps for $A$-Motives}\label{UniCoh}
\setcounter{equation}{0}

\subsection{Uniformizability and Betti Cohomology}\label{SectBettiAMot}

In this section we discuss the notion of uniformizability, cohomology realizations  and period maps for $A$-motives from \cite{HartlJuschka} and also we generalize the results to the case  $d_\infty = [\BF_\infty: \BF_q]\neq1$. For a field extension $K$ of $\BF_q$ we consider the closed subscheme $\infty_K:=\infty\times_{\BF_q}\Spec K\subset C_K:=C\times_{\BF_q}\Spec K$. If $K$ contains $\BF_\infty$, then $\infty_K$ is the disjoint union of $d_\infty$-many $K$-rational points of $C_K$.

In order to define the notion  of uniformizability for $A$-motives  we have to introduce some notation of rigid analytic geometry as in \cite{HP04}. For a general introduction to rigid analytic geometry see \cite{BGR}.

\begin{Notation}\label{NotRigidDiscs}
With the curve $C_{\BC_\infty}$ and its open affine part $C'_{\BC_\infty}: =C_{\BC_\infty}\setminus {\infty}_{\BC_\infty} $ one can associate by \cite[\S 9.3]{BGR} rigid analytic spaces  $\FC_{\BC_\infty}: = (C_{\BC_\infty})^\rig$ and ${\FC}'_{\BC_\infty}: = (C'_{\BC_\infty})^\rig = \FC_{\BC_\infty}\setminus \infty_{\BC_\infty}$. The underlying sets of $\FC_{\BC_\infty}$ and ${\FC}'_{\BC_\infty}$ are the sets of $\BC_\infty$-valued points of $C_{\BC_\infty}$ and $C_{\BC_\infty}\setminus {\infty}_{\BC_\infty}$, respectively. The endomorphism $\sigma$ of $C_{\BC_\infty}$ from \eqref{EqSigma} induces endomorphisms of $\FC_{\BC_\infty}$ and  $\FC'_{\BC_\infty}$ which we denote by the same symbol $\sigma$.

Let $\CO_{\BC_\infty}$ be the valuation ring of $\BC_\infty$  and let $\kappa_{\BC_\infty}$ be its residue field. By the valuative criterion of properness every point of $\FC_{\BC_\infty} = C_{\BC_\infty}(\BC_\infty) ={C}(\BC_\infty)$ extends uniquely to an $\CO_{\BC_\infty}$-valued point of ${C}$ and in the reduction gives rise to a $\kappa_{\BC_\infty}$-valued point of ${C}$. This gives us a reduction map 
\begin{equation}\label{EqRedMap}
\red : \FC_{\BC_\infty}={C}(\BC_\infty) \longto {C}(\kappa_{\BC_\infty})\,. 
\end{equation}
The subscheme ${\infty}_{\kappa_{\BC_\infty}}\subset C_{\kappa_{\BC_\infty}}$ contains $d_\infty$  points. We denote them by $\{\infty_i\text{ for }i\in\BZ/d_\infty\BZ\}$ in such a way that the map $\sigma$ from \eqref{EqSigma} transports  $\infty_i$ to $\infty_{i+1}$ and $(\sigma^{d_\infty})^\ast$  stabilizes  each $\infty_i$. Since the curve $C_{\kappa_{\BC_\infty}}$ is non-singular, \cite[Proposition 2.2]{Boschab}  implies for each $i$ that the preimage $\FD_{i}$ of $\infty_i \in {\infty}_{\kappa_{\BC_\infty}}$ under  $\red$ is an open rigid analytic unit disc in $\FC_{\BC_\infty}$ around ${\infty}_{i}$. Let $\FD'_{i}: = \FD_{i} \setminus {\infty}_{i}$ be the punctured open unit disc around ${\infty}_{i}$ in $\FC_{\BC_\infty}$. Then $\sigma$ maps $\FD_i$ isomorphically onto $\FD_{i+1}$. We let $\CO(\FD_{i})$ and $\CO(\FC_{\BC_\infty}\setminus \cup_{i} \FD_{i})$ be the coordinate rings of rigid analytic functions on the spaces $\FD_i$ and $\FC_{\BC_\infty}\setminus \cup_{i} \FD_{i}$, respectively. The uniformizer $z\in\CO(\FD_i)$ is a coordinate function on the disc $\FD_i$ for every $i$.
\end{Notation}

\begin{Example}
If $C = \BP^1_{\BF_q}, \ A = \BF_q[t]$,  and $[\BF_\infty: \BF_q]=1$, we can give the following explicit description. $\FD_0\subset\BP^1(\BC_\infty)$ is the open unit disc around $\infty$.
\[
\CO(\FC_{\BC_\infty}\setminus \FD_{0}): =\BC_\infty\langle t\rangle:= \bigg\{\sum_{i=0}^{\infty}a_i t^i,\  a_i \in \BC_\infty, \ a_i \to 0 \ \text{as}\ i\to \infty \bigg\}
\]
and $\FC_{\BC_\infty}\setminus \FD_{0}$ is the closed unit disc inside $C(\BC_\infty) \setminus \infty_{\BC_\infty}  = \BC_\infty$ on which the coordinate $t$ has absolute value less or equal to 1. Also we can take $z = 1/t$ as the coordinate on the disc $\FD_0$, and suggestively write $\FD_0=\{|z|<1\}$.
\end{Example}

\begin{Definition} For an $A$-motive $\ulM$ over $\BC_\infty$, we define the \emph{$\tau$-invariants}
\[
\Lambda (\ulM): = ( {M}\otimes_{A_{\BC_\infty}} \CO(\FC_{\BC_\infty}\setminus \cup_{i} \FD_{i}))^\tau: = \{ m \in {M}\otimes_{A_{\BC_\infty}} \CO(\FC_{\BC_\infty}\setminus\cup_i \FD_{i}): \ \tau_M(\sigma_M^{\ast}m) = m \}.
\]
\end{Definition}
\medskip
Since the ring of $\sigma^*$-invariants in $\CO(\FC_{\BC_\infty}\setminus  \cup_{i} \FD_{i})$ equals $A$, the set $\Lambda (\ulM)$ is an $A$-module. It was shown implicitly by Anderson~\cite[Proof of Lemma~2.10.6]{Anderson} that $\Lambda(\ulM)$ is finite projective of rank at most equal to $\rk \ulM$.

\begin{Definition}\label{Uniformizable}An $A$-motive  $\ulM$ is called \emph{uniformizable} (or \emph{rigid analytically trivial}) if the natural homomorphism 
\[
h_\ulM\colon \Lambda (\ulM) \otimes_A \CO(\FC_{\BC_\infty}\setminus  \cup_{i} \FD_{i}) \;\longto\; M \otimes_{A_{\BC_\infty}} \CO(\FC_{\BC_\infty}\setminus \cup_{i} \FD_{i}), \quad \lambda \otimes f \longmapsto f\cdot \lambda
\]
is an isomorphism. 
\end{Definition}

\begin{Example}\label{Cartau} We keep the notation from Example \ref{CarMod}.  We recall that the  Carlitz  motive over $\BC_\infty$ is given by $\underline \CC = \big( \CC =\BC_\infty[t], \tau_{\CC} = t-\theta \big)$.  We set  $\tminus{}: =  \prod_{i=0}^{\infty}\bigl(1- \frac{t}{\theta^{q^i}}\bigr)  \in \CO({{\FC}'_{\BC_\infty}})\subset  \CO(\FC_{\BC_\infty}\setminus \FD_{0})$  and  choose an $\eta \in \BC_\infty$ with $\eta^{1-q} = -\theta$. Then we see that $\eta \tminus{} \in \Lambda (\underline \CC)$, because 
\[
\tau_\CC(\sigma_\CC^*(\eta\tminus{}))\;=\;(t-\theta)\cdot\eta^q\cdot\sigma^*\prod_{i=0}^{\infty}\bigl(1- \frac{t}{\theta^{q^i}}\bigr)\;=\;\eta\cdot\frac{t-\theta}{-\theta}\cdot\prod_{i=1}^{\infty}\bigl(1- \frac{t}{\theta^{q^i}}\bigr)\;=\;\eta\tminus{}\, .
\]
Since $\eta \tminus{}$ has no zeroes outside $\FD_{0}$ it generates the $\CO(\FC_{\BC_\infty}\setminus \FD_{0})$-module $\CC \otimes_{A_{\BC_\infty}} \CO(\FC_{\BC_\infty}\setminus \FD_{0}) = \CO(\FC_{\BC_\infty}\setminus \FD_{0})$ and so $h_{\underline \CC}$  is an isomorphism and $\ulCC$ is uniformizable.
\end{Example}

Anderson~\cite{Anderson} proved the following criterion for uniformizability.
\begin{Lemma} Let $\ulM$ be an $A$-motive of rank $r$.
\begin{enumerate}
\item\label{LemmaAndCrit_A}
The homomorphism $h_\ulM$ is injective and it satisfies $h_\ulM \circ (\id_{\Lambda (\ulM)}\otimes \id)=  (\tau_M \otimes \id) \circ \sigma^\ast h_\ulM$.
\item\label{LemmaAndCrit_B}
$\ulM$ is uniformizable if and only if $\rk_A\Lambda (\ulM) = r$.
\end{enumerate}
\end{Lemma}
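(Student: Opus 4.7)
Set $R := \CO(\FC_{\BC_\infty}\setminus\cup_i\FD_i)$. I will exploit that $R$ is an integral domain, that it is stable under $\sigma^\ast$, and that the $\sigma^\ast$-invariants of $\Frac(R)$ coincide with $Q$; these last two facts follow from the fact that $\sigma$ permutes the discs $\FD_i$ cyclically and preserves the affinoid, together with the standard identification of Frobenius-fixed rigid functions on a geometrically connected $\BF_q$-affinoid with their base ring of definition.

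For the injectivity in (a), I will run the classical minimal-length-relation argument. Fix an $A$-linearly independent family $\lambda_1,\ldots,\lambda_n\in\Lambda(\ulM)$, and assume toward contradiction that a nonzero $R$-linear relation $\sum c_i\lambda_i=0$ exists in $M\otimes_{A_{\BC_\infty}}R$, chosen with the number of nonzero $c_i$ minimal and $c_1\neq 0$. Because each $\lambda_i$ is a $\tau$-invariant, applying $\sigma^\ast$ to the relation produces a second one $\sum \sigma^\ast(c_i)\lambda_i=0$. Eliminating $\lambda_1$ between the two shortens the relation unless every ratio $c_i/c_1$ is $\sigma^\ast$-fixed in $\Frac(R)$; by the invariants description above these ratios then lie in $Q$, and clearing denominators yields a nontrivial $A$-linear dependence among the $\lambda_i$, contradicting our choice. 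The $\tau$-compatibility identity is then an immediate unwinding of the defining formula $h_{\ulM}(\lambda\otimes f)=f\cdot\lambda$ together with the $\tau$-invariance condition $\tau_M(\sigma_M^\ast\lambda)=\lambda$ for $\lambda\in\Lambda(\ulM)$.

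The ``only if'' direction of (b) is immediate: if $h_{\ulM}$ is an isomorphism then $\Lambda(\ulM)\otimes_AR$ is finite locally free of the same rank $r$ as $M\otimes_{A_{\BC_\infty}}R$, and comparing ranks gives $\rk_A\Lambda(\ulM)=r$. For the ``if'' direction—the main content—assume $\rk_A\Lambda(\ulM)=r$. Then by (a), $h_{\ulM}$ is an injection between finite locally free $R$-modules of the same rank; passing to top exterior powers, its determinant $\delta$ is a nonzero section of an invertible $R$-module, and $h_\ulM$ is an isomorphism precisely when $\delta$ is a unit. The $\tau$-equivariance from (a) passed through top exterior powers yields a functional equation of the shape $\sigma^\ast\delta = u\cdot\det(\tau_M)\cdot\delta$ for some unit $u$, so the zero divisor of $\delta$ on the affinoid satisfies $\sigma^\ast\di(\delta)=\di(\delta)+\di(\det\tau_M)$. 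The axiom $(a-\gamma(a))^d\cdot\coker\tau_M=0$ pins $\di(\det\tau_M)$ to the diagonal $V(\CJ)$, and iterating the functional equation then forces any hypothetical zero of $\delta$ at a point $P$ to pull back to zeros of arbitrarily large order at $\sigma^{-k}(P)$; since the negative $\sigma$-orbit of a point in the affinoid eventually leaves the affinoid through the discs $\FD_i$, this is incompatible with $\delta\in R$ unless $\di(\delta)=0$, i.e.\ $\delta\in R^\times$.

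The main obstacle is precisely this last divisor-bookkeeping step: one has to verify that iterated $\sigma$-pullback of the divisor $\di(\det\tau_M)$ on the affinoid—together with the direction of the $\sigma$-dynamics relative to the boundary discs $\FD_i$—really does prevent any permanent zero of $\delta$ from surviving inside $\FC_{\BC_\infty}\setminus\cup_i\FD_i$.
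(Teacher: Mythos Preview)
Your argument for part (a) is essentially sound: the minimal-relation trick is the standard one, and the $\tau$-compatibility is a direct unwinding. Your claim that the $\sigma^\ast$-invariants of $\Frac(R)$ lie in $Q$ does require an argument (e.g.\ via Weierstrass preparation: zeros and poles of a $\sigma^\ast$-fixed meromorphic function on the affinoid form a finite Frobenius-stable set, hence lie in $\overline{\BF_q}$, and the remaining unit factor is then $\sigma^\ast$-fixed in $R$, so lies in $A$), but it is true. The ``only if'' direction of (b) is fine.

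The gap is in the ``if'' direction of (b). Your divisor-dynamics argument rests on two claims that both fail on the affinoid $\FC_{\BC_\infty}\setminus\cup_i\FD_i$. First, the support of $\coker\tau_M$ is $\Var(\CJ)$, which lies in the disc $\FD_0$, \emph{not} in the affinoid; hence $\det(\tau_M)$ is a unit in $R$, the functional equation reads $\sigma^\ast\delta=u\cdot\delta$ with $u\in R^\times$, and therefore $\di(\sigma^\ast\delta)=\di(\delta)$ exactly---multiplicities do \emph{not} grow under iteration. Second, the $\sigma$-orbit of a point of the affinoid never leaves the affinoid: in the $\BP^1$ picture, $|\alpha^{q^{\pm 1}}|\le 1$ whenever $|\alpha|\le 1$. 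In particular, points of $\overline{\BF_q}$ (equivalently, points lying over closed points of $\Spec A$) have finite $\sigma$-orbits entirely contained in the affinoid, and your argument does not exclude $\delta$ vanishing along such an orbit with constant multiplicity. Extending $h_\ulM$ to $\FC'_{\BC_\infty}$ to make $\det\tau_M$ acquire zeros would be circular, since that extension (Proposition~\ref{PropExt}) already presupposes uniformizability.

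The proof the paper cites (B\"ockle--Hartl, Lemma~4.2) takes a different route: set $C:=\coker h_{\ulM}$, a finitely generated torsion $R$-module on which $\tau$ induces an isomorphism $\sigma^\ast C\isoto C$ (because $\tau_M$ is an isomorphism on the affinoid). As a finite-dimensional $\BC_\infty$-vector space with a bijective $\sigma$-semilinear endomorphism, $C$ has a basis of $\tau$-invariants by Lang's theorem; if $C\neq 0$, pick $0\neq\bar c\in C^\tau$. Any lift $m\in M\otimes R$ satisfies $m-\tau_M(\sigma_M^\ast m)\in h_\ulM(\Lambda(\ulM)\otimes_A R)$, and since $\id_\Lambda\otimes(1-\sigma^\ast)$ is surjective on $\Lambda(\ulM)\otimes_A R$ (again a Lang/Artin--Schreier argument over the algebraically closed field $\BC_\infty$), one can correct $m$ to a $\tau$-invariant element of $M\otimes R$ not lying in $\Lambda(\ulM)$. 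This contradicts the definition of $\Lambda(\ulM)$ as the full module of $\tau$-invariants, so $C=0$ and $h_{\ulM}$ is an isomorphism.
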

\begin{proof}
\ref{LemmaAndCrit_B} was proved by Anderson~\cite[Lemma~2.10.6]{Anderson}.

\medskip\noindent
\ref{LemmaAndCrit_A} is implicitly proved by Anderson~\cite{Anderson}. It is explicitly stated for example in \cite[Lemma~4.2]{BH07}.
\end{proof}

Next we state the  generalization of \cite[Proposition~3.25]{HartlJuschka}, which we will need to define period maps. The point $\Var(\CJ)\in C_{\BC_\infty}(\BC_\infty)$ lies in one of the discs $\FD_i$, because $|\gamma(a)|_\infty>1$ for all $a\in A\setminus\BF_q$. We normalize the indexing of the $\FD_i$ in such a way that $\Var(\CJ)\in\FD_0$. Then for any $i \in \BN_{0}$, we consider the pullbacks $\sigma^{i \ast}\CJ = (a\otimes 1 - 1 \otimes \gamma (a)^{q^i}:\ a \in A) \subset A_{\BC_\infty}$ and the points  $\Var(\sigma^{i \ast}\CJ)$ of $C'_{\BC_\infty}$ and $\FC'_{\BC_\infty}$. They correspond to the point $\Var(z-\zeta^{q^i})\in\FD_i$ and have $\infty_{\BC_\infty} = \{\infty_0,\cdots, \infty_{d_\infty-1}\}$  as accumulation points. More precisely, for each $k= 0,1, \cdots, d_\infty-1$ the point $\infty_k$ is the limit of the sequence $\Var(\sigma^{(k+d_\infty i)*}\CJ)=\Var(z-\zeta^{q^{k+d_\infty i}})$ for $i\in\BN_0$. Therefore, $\FC'_{\BC}\setminus \cup_{i\in\BN_0}  \Var(\sigma^{i \ast}\CJ)$  is an admissible open rigid analytic subspace of  $\FC'_{\BC_\infty}$.

\begin{Proposition} \cite[Proposition 3.25] {HartlJuschka}\label{PropExt} 
Let  $\ulM$ be a uniformizable effective $A$-motive over $\BC_\infty$. Then $\Lambda (\ulM)$ equals $\{ m \in  M \otimes_{A_{\BC_\infty}} \CO(\FC'_{\BC_\infty}): \ \tau_M(\sigma_M^{\ast}m) = m \}$ and the isomorphism $h_\ulM$ extends to an injective homomorphism 
\begin{equation*}\label{Eqh_M}
h_\ulM\colon \Lambda (\ulM) \otimes_A \CO(\FC'_{\BC_\infty}) \;\longto\; M \otimes_{A_{\BC_\infty}} \CO(\FC'_{\BC_\infty}), \ \lambda \otimes f \mapsto f\cdot \lambda
\end{equation*}
with $h_\ulM \circ (\id_{\Lambda (\ulM)}\otimes \id)=  (\tau_M \otimes \id) \circ \sigma^\ast h_\ulM$. At the point $\Var(\CJ)$ its cokernel satisfies $\coker h_\ulM \otimes \BC_\infty\dbl z - \zeta \dbr = M/ {\tau_M(\sigma^\ast M)}$. The morphism $h_\ulM$ is a local isomorphism away from $\cup_{i\in\BN_0}  \Var(\sigma^{i \ast}\CJ)$, and $\sigma^*h_\ulM$ is a local isomorphism away from $\cup_{i\in\BN_{>0}}  \Var(\sigma^{i \ast}\CJ)$.
\end{Proposition}

\begin{proof}
This follows in the same way as \cite[Proposition 3.25]{HartlJuschka}.
\end{proof}

\begin{Definition}\label{DefAMotBetti}
Let $\ulM$ be an $A$-motive of rank $r$ over $\BC_\infty$. Anderson defined the \emph{Betti cohomology realization}  of $\ulM$ by setting 
\[
\Koh^1_\Betti(\ulM, R): = \Lambda (\ulM)\otimes_A R \qquad 
\text{and} \qquad \Koh_{1,\Betti}(\ulM, R): = \Hom_A(\Lambda (\ulM), R)
\]
for any $A$-algebra $R$. This is most useful when $\ulM$ is uniformizable, in which case  both are locally  free $R$-modules of rank equal to $\rk \ulM$.
\end{Definition}

\begin{Example} \label{Cartau1}
We keep the notation from Example~\ref{Cartau}. There we have calculated $\Lambda (\underline \CC)$ as the $A$-module generated by $\eta \tminus{}$, so 
\[
\Koh^1_\Betti(\underline{\CC}, A) =\eta \tminus{}\cdot A \qquad \text{and} \qquad\Koh_{1,\Betti}(\ulM, A) =  (\eta \tminus{})^{-1}  \cdot A.
\]
\end{Example}

\begin{Remark}\label{RemOmegaQ}
To explain the compatibility with Definition~\ref{DefDriModBetti} let $\Omega^1_{A/\BF_q} $ be the module of K\"ahler differentials of $A$ over $\BF_q$. Then $\Omega^1_{A/\BF_q}\otimes_A Q=\Omega^1_{Q/\BF_q}=Q\,dz$ because the field extension $Q/\BF_q(z)$ is separable as it is unramified at $\infty$.
\end{Remark}

\begin{Proposition}[{\cite[Corollary~2.12.1]{Anderson}}]\label{PropCompBettiGandM}
Let $\ulG=(G,\phi)$ be a Drinfeld $A$-module over $\BC_\infty$ and let $\ulM=\ulM(\ulG)$ be the associated $A$-motive. Then $\ulM$ is uniformizable and there is a perfect pairing of $A$-modules
\[
\Koh_{1,\Betti}(\ulG,A)\times \Koh^1_\Betti(\ulM,A)\;\longto\;\Omega^1_{A/\BF_q}\,,\quad(\lambda,m)\longmapsto \omega_{A,\lambda,m}
\]
where $\omega_{A,\lambda,m}$ is determined by the residues $\Res_\infty(a\cdot\omega_{A,\lambda,m})=-m\bigl(\exp_\ulG(\Lie\phi_a(\lambda))\bigr)\in\BF_q$ for all $a\in Q$. The pairing yields a canonical isomorphism
\[
\Koh_{1,\Betti}(\ulM,A)\otimes_A\Omega^1_{A/\BF_q}\;\isoto\;\Koh_{1,\Betti}(\ulG,A)\,,
\]
which is functorial in $\ulG$.
\end{Proposition}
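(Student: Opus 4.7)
The plan is to follow Anderson's strategy \cite[\S\,2]{Anderson}, building the pairing from the analytic uniformization $\exp_\ulG\colon\BC_\infty\to\BC_\infty$ whose kernel is $\Lambda(\ulG)$, a discrete $A$-lattice of rank $r=\rk\ulG=\rk\ulM$.

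\textbf{Step 1 (Uniformizability).} I would first construct an $A$-linear injection $\Lambda(\ulG)\hookrightarrow\Lambda(\ulM)$. For each $\lambda\in\Lambda(\ulG)$ and each $m\in M=\Hom_{\BC_\infty,\BF_q}(G,\BG_{a,\BC_\infty})$, the composition $z\mapsto m(\exp_\ulG(z\lambda))$ is an entire $\BF_q$-linear function on $\BC_\infty$. Anderson's construction attaches to $\lambda$ a canonical element $e_\lambda\in M\otimes_{A_{\BC_\infty}}\CO(\FC_{\BC_\infty}\setminus\cup_i\FD_i)$ built from the Laurent expansions of such functions around the points lying above $\infty$. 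The crucial functional equation $\phi_a\circ\exp_\ulG = \exp_\ulG(\gamma(a)\,\cdot\,)$ for $a\in A$, combined with the description \eqref{EqTauM} of the $\tau$-action on $M$, translates directly into $\tau_M(\sigma^*e_\lambda)=e_\lambda$, so $e_\lambda\in\Lambda(\ulM)$. The map $\lambda\mapsto e_\lambda$ is $A$-linear and visibly injective, and since $\rk_A\Lambda(\ulM)\le r$ by the lemma in Section~\ref{SectBettiAMot}, it is an isomorphism. Hence $\ulM$ is uniformizable.

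\textbf{Step 2 (The pairing).} Given $\lambda\in\Koh_{1,\Betti}(\ulG,A)$ (identified via Step~1 with the dual of $\Lambda(\ulM)$) and $m\in\Lambda(\ulM)$, the assignment $a\mapsto -m(\exp_\ulG(\gamma(a)\lambda))$ defines an $\BF_q$-linear functional on $Q$ (values lying in $\BF_q$ via the $\tau_M$-invariance of $e_\lambda$). This functional vanishes on $A$: indeed, for $a\in A$ one computes $\exp_\ulG(\gamma(a)\lambda)=\phi_a(\exp_\ulG(\lambda))=\phi_a(0)=0$ because $\phi_a\in\BC_\infty\{\tau\}$ has no constant term. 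Hence it descends to a functional on $Q/A$, which by the residue pairing and Serre duality on the curve $C$ is represented by a unique differential $\omega_{A,\lambda,m}\in\Omega^1_{A/\BF_q}$ satisfying the prescribed residue formula. $A$-bilinearity follows from the compatibility of the $A$-actions ($a\cdot m=m\circ\phi_a$ on $M$ versus multiplication on the residue side).

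\textbf{Step 3 (Perfectness).} Both $\Koh_{1,\Betti}(\ulG,A)$ and $\Koh^1_{\Betti}(\ulM,A)$ are finite projective $A$-modules of rank $r$ (for the latter, by Step~1), and $\Omega^1_{A/\BF_q}$ is invertible, so it suffices to verify non-degeneracy. This reduces, using Step~1's identification $\Lambda(\ulG)\isoto\Lambda(\ulM)$, to the non-degeneracy of the residue pairing $\Omega^1_{Q/\BF_q}\times Q/A\to\BF_q$ combined with the fact that for $\lambda\neq 0$ the polynomials $P_m\in\BC_\infty\{\tau\}$ suffice to separate the values $\exp_\ulG(\gamma(a)\lambda)$ as $a$ runs through a system of representatives of $Q/A$. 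The main obstacle will be Step~1, specifically the explicit construction of $e_\lambda$ and verification of its $\tau_M$-invariance: this demands careful bookkeeping of the interplay between the $\sigma^*$-action on $A_{\BC_\infty}$ and the Laurent expansions of $\exp_\ulG$ at the $d_\infty$ points of $\infty_{\kappa_{\BC_\infty}}$, which is the technical heart of Anderson's uniformization theory.
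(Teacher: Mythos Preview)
The paper does not give its own proof of this proposition; it is stated with a bare citation to Anderson~\cite[Corollary~2.12.1]{Anderson} and nothing further. So there is no in-paper argument to compare against. Your sketch is a reasonable outline of Anderson's strategy, and the three-step architecture (build $\Lambda(\ulG)\hookrightarrow\Lambda(\ulM)$ from the exponential, read off the pairing via residues, check perfectness by rank counting) is indeed how the proof goes.

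That said, there is a type mismatch you should repair. By the paper's Definition~\ref{DefDriModBetti}, $\Koh_{1,\Betti}(\ulG,A)=\Hom_A(\Lambda(\ulG),A)$ is the \emph{dual} of the lattice, whereas in Step~2 you treat $\lambda$ as an element of $\Lambda(\ulG)\subset\BC_\infty$ (writing $\gamma(a)\lambda$ and $\exp_\ulG(\gamma(a)\lambda)$). The residue formula in the proposition only makes literal sense for $\lambda\in\Lambda(\ulG)$; so either the paper's statement is tacitly using the natural pairing $\Lambda(\ulG)\times\Lambda(\ulM)\to\Omega^1_{A/\BF_q}$ and then dualizing, or there is a convention swap. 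In Anderson's original the pairing is constructed directly on $\Lambda(\ulG)\times\Lambda(\ulM)$; you should work there and only at the end invoke the paper's labeling. Similarly, in Step~2 the element $m\in\Lambda(\ulM)$ lives in $M\otimes_{A_{\BC_\infty}}\CO(\FC_{\BC_\infty}\setminus\cup_i\FD_i)$, not in $M$ itself, so the expression ``$m(\exp_\ulG(\cdots))$'' needs unpacking: Anderson does this via the generating series of the coefficients of $m$ against a basis of $M$ over $\BC_\infty\{\tau\}$, and the fact that the values land in $\BF_q$ is precisely the content of $\tau_M$-invariance, not a consequence of it that comes for free. Your Step~1 and Step~3 are fine as outlines; the obstacle you flag at the end (bookkeeping the $d_\infty$ points over $\infty$) is real but was handled by Anderson only for $A=\BF_q[t]$, $d_\infty=1$; the general-$A$ case requires the adaptation in \cite[\S\,4]{BH07} or \cite[\S\,5]{HA14}.
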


\subsection{$v$-adic Cohomology}\label{SectEtCohAMot}

\begin{Definition}\label{DefAMotVAdic}
For an $A$-field $K$ consider the $v$-adic completion $A_{v, K}: = \invlim A_{K}/v^n A_{K} $ of $A_{K}$. Let $\ulM$ be an $A$-motive over $K$ and let $v\subset A$ be a maximal ideal with $v\neq \AChar(K)$. Since $(A_{v, K^\sep})^{\tau = \id} = A_v$, we can define the $v$-$adic \ cohomology\ realizations$ of $\ulM$ as the $A_v$-modules 
\begin{alignat}{2}
\label{EqKoh_v}
&\Koh^1_v( \ulM, A_v) & & :=( M \otimes_{A_K}A_{v, K^\sep})^\tau:= \{m \in M\otimes_{A_K}A_{v,K^\sep} \ |\ \tau_M(\sigma_M^* m ) = m\} \qquad\text{and}\\ 
\nonumber
&\Koh_{1,v}( \ulM, A_v) & & :=\Hom_{A_v}( \Koh^1_v(\ulM,A_v), A_v).
\end{alignat}
They are free $A_v$-modules of rank equal to $\rk M$, carrying a continuous action of the Galois group $\sG_K$ by \cite[Proposition 6.1]{Tag96}, and the inclusion  $\Koh^1_v( \ulM, A_v) \subset M\otimes_{A_K}A_{v,K^\sep}$ induces a canonical isomorphism of $A_{v,K^\sep}$-modules 
\[
\Koh^1_v(\ulM,A_v) \otimes_{A_v}A_{v, K^\sep}\isoto M\otimes_{A_K}A_{v,K^\sep}
\]
which is both $\sG_K$ and $\tau$-equivariant, where on the left module $\sG_K$ acts on both factors and $\tau$ is $\id\otimes \sigma^\ast$ and on the right module $\sG_K$ acts  only on $A_{v, K^\sep}$ and $\tau$ is  $(\tau_M\circ \sigma_M^\ast)\otimes \sigma^\ast$. One also sometimes denotes  $\Koh^1_v( \ulM, A_v) $ by  $\check T_v(\ulM)$ and  calls this the \emph{$v$-adic dual Tate module} associated with $\ulM$ at $v$. We also define the $Q_v$-vector spaces with continuous $\sG_K$-action
\begin{alignat*}{2}
&\Koh^1_v( \ulM, Q_v) && := \Koh^1_v(\ulM,A_v) \otimes_{ A_v} Q_v \qquad \text{and}\\
& \Koh_{1,v}( \ulM, Q_v) && := \Hom_{A_v}(\Koh^1_v(\ulM,A_v) , Q_v)= \Koh_{1,v}(\ulM,A_v) \otimes_{ A_v} Q_v \,.
\end{alignat*}
The association  $\ulM \mapsto   \Koh^1_v( \ulM, A_v)$  or $\ulM \mapsto   \Koh^1_v( \ulM, Q_v)$  is a covariant functor which is exact and faithful. 
\end{Definition}

The analog of the Tate conjecture is the following theorem which was proved by Taguchi~\cite{TaguchiTateConj} and Tamagawa~\cite[\S\,2]{TamagawaTateConj}.

\begin{Theorem}[Tate conjecture for $A$-motives]\label{ThmTateConj}
If $K$ is a finitely generated $A$-field and $v\ne\AChar(K)$ then
\begin{align*} 
\Hom(\ulM , \ulM')\otimes_A A_v \isoto  \Hom_{A_v[\sG_K]}(\Koh^1_v( \ulM, A_v), \Koh^1_v( \ulM', A_v))
\end{align*}
is an isomorphism of $A_v$-modules for $A$-motives $\ulM$ and  $\ulM'$.
\end{Theorem}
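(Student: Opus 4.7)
The plan is to adapt to the function-field setting the classical Tate–Faltings strategy used in the abelian variety case (Theorem~\ref{ThmTateConjAbVar}). A standard first reduction lets one focus on endomorphisms: $\Hom(\ulM,\ulM')$ appears as a direct summand of $\End(\ulM\oplus\ulM')$ cut out by the two projectors onto $\ulM$ and $\ulM'$, and the natural map in the theorem respects this decomposition, so it suffices to handle the case $\ulM=\ulM'$.

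For injectivity, I would exploit that $\End(\ulM)$ is a finite projective $A$-module by Theorem~\ref{ThmHomAMot}, so $\End(\ulM)\otimes_A A_v$ sits inside $\End(\ulM)\otimes_A Q_v$ and it is enough to show that an endomorphism $f$ of $\ulM$ with $\Koh^1_v(f,A_v)=0$ must itself vanish. Such an $f$ induces the zero morphism on the dual Tate module at every finite level, and via Anderson's equivalence (Remark~\ref{RemAndersonAModule}) this translates into $f$ killing all $v^n$-torsion of the associated abelian Anderson $A$-module for every $n$. Since $v\ne\AChar(K)$ these torsion subschemes are \'etale of order $(\#A/v^n)^{\rk\ulM}$ and become Zariski dense as $n\to\infty$, forcing $f=0$.

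The serious content lies in the surjectivity. After inverting $v$ one must show that $\End(\ulM)\otimes_A Q_v$ fills up the full commutant $\End_{Q_v[\sG_K]}(\Koh^1_v(\ulM,Q_v))$; the $A_v$-integral statement then follows by a routine saturation argument showing that any $A_v[\sG_K]$-endomorphism of the lattice $\Koh^1_v(\ulM,A_v)$ automatically comes from an element of $\End(\ulM)\otimes_A A_v$ rather than just from $\End(\ulM)\otimes_A Q_v$. The proof rests on two deep function-field analogs of Faltings's theorems that Taguchi and Tamagawa had to establish: (i) the semisimplicity of $\Koh^1_v(\ulM,Q_v)$ as a $Q_v[\sG_K]$-module, and (ii) a Northcott-type finiteness stating that within any $K$-isogeny class of $A$-motives only finitely many isomorphism classes realize a prescribed $\sG_K$-stable $A_v$-lattice in their $v$-adic Tate module.

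Granting (i) and (ii), the Zarhin–Faltings argument goes through in the familiar way: given $\alpha\in\End_{A_v[\sG_K]}(\Koh^1_v(\ulM,A_v))$, the graphs of the truncations $\alpha\bmod v^n$ produce a sequence of Galois-stable $A_v$-sublattices of $\Koh^1_v(\ulM\oplus\ulM,A_v)$, and (ii) ensures they can realize only finitely many sub-$A$-motives of $\ulM\oplus\ulM$; a diagonal subsequence then yields an honest sub-object whose graph equals $\alpha$ in the limit. Combining this with (i) and the double centralizer theorem identifies $\End(\ulM)\otimes_A Q_v$ with the full commutant. The main obstacle is the pair (i)--(ii); both ultimately depend on a theory of heights for Drinfeld modules (and its extension to $A$-motives) together with a bound on the height within an isogeny class, and among them I expect the semisimplicity statement~(i) to be the genuinely hard step, exactly as in the number-field setting.
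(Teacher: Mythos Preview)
The paper does not supply its own proof of this theorem; it simply attributes the result to Taguchi~\cite{TaguchiTateConj} and Tamagawa~\cite[\S\,2]{TamagawaTateConj} and moves on. So there is no ``paper's proof'' to compare against in any detailed sense.

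Your outline is a faithful summary of the Zarhin--Faltings strategy that indeed underlies the cited works. The reduction to endomorphisms and the injectivity argument are fine (for injectivity you can also argue more directly: the canonical isomorphism $\Koh^1_v(\ulM,A_v)\otimes_{A_v}A_{v,K^\sep}\cong M\otimes_{A_K}A_{v,K^\sep}$ from Definition~\ref{DefAMotVAdic} shows that a nonzero $f$ cannot die on the $v$-adic realization, without passing through Anderson modules). For surjectivity you have correctly isolated the two deep inputs --- semisimplicity of the Galois representation and a finiteness-in-isogeny-class statement --- and correctly flagged that these are where the real work lies. One point worth sharpening: the ``saturation'' step passing from the $Q_v$-statement to the $A_v$-statement is not entirely routine in this setting; one must check that a Galois-stable $A_v$-sublattice of $\Koh^1_v(\ulM,A_v)$ actually arises from an $A$-factor-motive, which uses the explicit description of $\Koh^1_v$ via $\tau$-invariants. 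But this is standard once noted.

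In short: your proposal is a correct high-level blueprint, and it matches the philosophy behind the references the paper cites. Just be aware that what you have written is a sketch of a sketch --- the substance of Taguchi's and Tamagawa's papers is precisely in establishing your items (i) and (ii), and each of those is a serious piece of work (Tamagawa in particular develops a height theory for $\phi$-modules and proves the requisite boundedness results).
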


Let us explain the relation between $T_v\ulG$ and $\check T_v\ulM(\ulG):=\Koh^1_v(\ulM(\ulG),A_v)$ for a Drinfeld $A$-module $\ulG$. The $A_v$-module $\Hom_{\BF_v}(Q_v/A_v,\BF_v)$ is canonically isomorphic to the $A_v$-module $\wh\Omega^1_{A_v/\BF_v}=A_v\,dz_v$ of continuous differential forms; see \cite[Equation (4.5)]{HartlKim}, and therefore, it is a free $A_v$-module of rank $1$. If $\ulG$ is a Drinfeld $A$-module over $K$ and $\ulM=\ulM(\ulG)$ is its associated $A$-motive, then there is a natural $\sG_K$-equivariant perfect pairing of $A_v$-modules
\begin{equation}\label{EqPairingTateMod}
\langle\,.\,,\,.\,\rangle\colon \;T_v \ulG\times \check T_v\ulM\;\longto\;\Hom_{\BF_v}(Q_v/A_v,\BF_v)\;\cong\;\wh\Omega^1_{A_v/\BF_v}\,,\quad\langle f, m\rangle:= m\circ f\,,
\end{equation}
which identifies $T_v \ulG$ with the contragredient $\sG_K$-representation $\Hom_{A_v}(\check T_v\ulM,\,\wh\Omega^1_{A_v/\BF_v})$ of $\check T_v\ulM$; see \cite[Proposition~4.9]{HartlKim}. Together with Theorems~\ref{ThmEquivDriMod} and \ref{ThmTateConj} this implies the following

\begin{Corollary}[Tate conjecture for Drinfeld $A$-modules] \label{MorTate}
Let $\ulG$ and $\ulG'$ be two Drinfeld $A$-modules over a finitely generated field $K$. Then the natural map 
\[
\Hom_{K}(\ulG, \ulG') \otimes_A A_v \to \Hom_{A_v[\sG_K]}(T_v\ulG, T_v\ulG'),\quad f\otimes a \mapsto a\cdot T_v(f)
\]
is an isomorphism of $A_v$-modules.
\end{Corollary}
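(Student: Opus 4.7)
The plan is to deduce the corollary by combining three structural results already in hand: the anti-equivalence between Drinfeld modules and a full subcategory of $A$-motives (Theorem~\ref{ThmEquivDriMod}), the Tate conjecture for $A$-motives (Theorem~\ref{ThmTateConj}), and the perfect pairing~\eqref{EqPairingTateMod} relating $T_v\ulG$ to the $v$-adic realization of its $A$-motive.

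First, I would translate from Drinfeld modules to $A$-motives. By Theorem~\ref{ThmEquivDriMod}, the contravariant functor $\ulG\mapsto\ulM(\ulG)$ is fully faithful, so there is a canonical $A$-linear isomorphism
\[
\Hom_K(\ulG,\ulG')\;\isoto\;\Hom_K\bigl(\ulM(\ulG'),\,\ulM(\ulG)\bigr),\qquad f\longmapsto \ulM(f).
\]
Tensoring with $A_v$ and applying Theorem~\ref{ThmTateConj} (which applies because $K$ is finitely generated and $v\ne\AChar(K)$, the latter being automatic since $T_v\ulG$ is free of rank $r$ over $A_v$) yields an isomorphism
\[
\Hom_K(\ulG,\ulG')\otimes_A A_v\;\isoto\;\Hom_{A_v[\sG_K]}\bigl(\check T_v\ulM(\ulG'),\,\check T_v\ulM(\ulG)\bigr).
\]

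Next, I would invoke the pairing~\eqref{EqPairingTateMod}, which is $\sG_K$-equivariant, perfect, and identifies $T_v\ulG$ with the contragredient $\Hom_{A_v}\bigl(\check T_v\ulM(\ulG),\wh\Omega^1_{A_v/\BF_v}\bigr)$, and similarly for $\ulG'$. Since $\check T_v\ulM(\ulG)$ and $\check T_v\ulM(\ulG')$ are finite free $A_v$-modules and $\wh\Omega^1_{A_v/\BF_v}$ is invertible over $A_v$, the contragredient functor $H\mapsto\Hom_{A_v}(H,\wh\Omega^1_{A_v/\BF_v})$ is an equivalence on the category of continuous finite free $A_v[\sG_K]$-modules. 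Applying it gives
\[
\Hom_{A_v[\sG_K]}\bigl(\check T_v\ulM(\ulG'),\,\check T_v\ulM(\ulG)\bigr)\;\isoto\;\Hom_{A_v[\sG_K]}\bigl(T_v\ulG,\,T_v\ulG'\bigr).
\]

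Composing these three isomorphisms produces an $A_v$-linear isomorphism between the two sides of the statement. The remaining and main point is to verify that this composite is indeed the natural map $f\otimes a\mapsto a\cdot T_v(f)$, which amounts to checking naturality at each step: that $\ulM(-)$ is $A$-linear on Hom-sets, that the isomorphism of Theorem~\ref{ThmTateConj} sends $\ulM(f)\otimes 1$ to $\Koh^1_v(\ulM(f),A_v)\colon\check T_v\ulM(\ulG')\to\check T_v\ulM(\ulG)$, and that the pairing~\eqref{EqPairingTateMod} is functorial in $\ulG$ so that the contragredient of $\Koh^1_v(\ulM(f),A_v)$ is $T_v(f)$. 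Each of these is a formal compatibility that follows directly from the constructions, and is the step most susceptible to sign or variance mistakes, so it would deserve the most care; once verified, the corollary is immediate.
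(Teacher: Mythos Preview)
Your proposal is correct and follows exactly the approach the paper indicates: the paper simply states that the corollary follows from Theorems~\ref{ThmEquivDriMod} and \ref{ThmTateConj} together with the pairing~\eqref{EqPairingTateMod}, and you have spelled out precisely how these three ingredients combine. Your additional care in verifying that the composite isomorphism agrees with the natural map $f\otimes a\mapsto a\cdot T_v(f)$ is more detail than the paper provides, but is the right thing to check.
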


\subsection{De Rham Cohomology and Period Isomorphisms}\label{SectDRAMot}

In this subsection let $(K,\gamma)$ be an $A$-field of generic $A$-characteristic. Then $K$ is a field extension of $Q$ via $\gamma$ and we set $\zeta:=\gamma(z)$. There is an identification $\invlim A_K/\CJ^n=K\dbl z-\zeta\dbr$ from \cite[Lemma~1.3]{HartlJuschka}.

\begin{Definition}\label{DefAMotDeRham}
Let $\ulM$ be an $A$-motive over an $A$-field $K$ of generic $A$-characteristic. The \emph{de Rham realization of $\ulM$} is defined as 
\begin{eqnarray*}\label{EqdRAMotive}
\Koh^1_{\dR}\bigl(\ulM,K\dbl z-\zeta\dbr\bigr) & := & \sigma^*M\otimes_{A_K}\invlim A_K/\CJ^n\,,\\[2mm]
\Koh^1_{\dR}\bigl(\ulM,K\dpl z-\zeta\dpr\bigr) & := & \Koh^1_{\dR}\bigl(\ulM,K\dbl z-\zeta\dbr\bigr)\otimes_{K\dbl z-\zeta\dbr}K\dpl z-\zeta\dpr\qquad\text{and}\\[2mm]
\Koh^1_{\dR}(\ulM,K) & := & \sigma^*M\otimes_{A_K}A_K/\CJ \\[1mm]
& = & \Koh^1_{\dR}\bigl(\ulM,K\dbl z-\zeta\dbr\bigr)\otimes_{K\dbl z-\zeta\dbr}K\dbl z-\zeta\dbr/(z-\zeta)\,.\nonumber
\end{eqnarray*}
The \emph{Hodge-Pink lattice of $\ulM$} is defined as $\Fq^\ulM:=\tau_M^{-1}(M\otimes_{A_K}\invlim A_K/\CJ^n)\subset\Koh^1_{\dR}\bigl(\ulM,K\dpl z-\zeta\dpr\bigr)$, and the descending \emph{Hodge-Pink filtration of $\ulM$} is defined via $\Fp^\ulM:=\Koh^1_{\dR}(\ulM,K\dbl z-\zeta\dbr)$ and
\begin{align*}
F^i\Koh^1_{\dR}(\ulM,K) \es := & \es \bigl(\Fp^\ulM\cap(z-\zeta)^i\Fq^\ulM\bigr)\big/\bigl((z-\zeta)\Fp^\ulM\cap(z-\zeta)^i\Fq^\ulM\bigr)\\[2mm]
= & \es \text{image of }\bigl(\sigma^*M\cap\tau_M^{-1}(\CJ^i M)\bigr)\otimes_RK  \es\text{in} \es \Koh^1_{\dR}(\ulM,K)\,;
\end{align*}
compare also with \cite[\S\,2.6]{Goss}. Since $\ulM$ is effective, we have $\Fp^\ulM\subset\Fq^\ulM$ with $\tau_M\colon\Fq^\ulM/\Fp^\ulM\isoto\coker\tau_M$ and $F^0\Koh^1_{\dR}(\ulM,K)=\Koh^1_{\dR}(\ulM,K)$. Note that the de Rham realization with Hodge-Pink lattice and filtration is a covariant functor on the category of $A$-motives over $K$ with quasi-morphisms.
\end{Definition}

\begin{Definition}
If $\ulG$ is a Drinfeld $A$-module over an $A$-field $K$ of generic characteristic, let $\ulM=(M,\tau_M)=\ulM(\ulG)$ be the associated $A$-motive. Then the \emph{de Rham cohomology realization} of $\ulG$ is defined to be
\begin{eqnarray*}
\Koh^1_\dR(\ulG,K) & := & \Hom_A(\Omega^1_{A/\BF_q},\,\sigma^*M/\CJ\cdot\sigma^*M)\,,\\[2mm]
\Koh^1_\dR(\ulG,K\dbl z-\zeta\dbr) & := & \Hom_A\bigl(\Omega^1_{A/\BF_q},\,\sigma^*M\otimes_{A_K}K\dbl z-\zeta\dbr\bigr)\,,\\[2mm]
\Koh_{1,\dR}(\ulG,K\dbl z-\zeta\dbr) & := & \Hom_{A_K}(\sigma^*M,\,\wh\Omega^1_{K\dbl z-\zeta\dbr/K})\quad\text{and}\\[2mm]
\Koh_{1,\dR}(\ulG,K) & := & \Hom_{A_K}(\sigma^*M,\,\wh\Omega^1_{K\dbl z-\zeta\dbr/K})\otimes_{K\dbl z-\zeta\dbr}K\dbl z-\zeta\dbr/(z-\zeta)\,,
\end{eqnarray*}
where $\Omega^1_{A/\BF_q} $ is the module of K\"ahler differentials of $A$ over $\BF_q$ and $\wh\Omega^1_{K\dbl z-\zeta\dbr/K}=K\dbl z-\zeta\dbr dz$ is the $K\dbl z-\zeta\dbr$-module of continuous differentials. We define the \emph{Hodge-Pink lattices} of $\ulG$ as the $K\dbl z-\zeta\dbr$-submodules 
\[
\begin{array}{rcccl}
\Fq^\ulG & := & \Hom_A\bigl(\Omega^1_{A/\BF_q},\,\tau_M^{-1}(M)\otimes_{A_K}K\dbl z-\zeta\dbr\bigr) & \subset & \Koh^1_\dR\bigl(\ulG,K\dpl z-\zeta\dpr\bigr) \quad\text{and}\\[2mm]
\Fq_\ulG & := & (\tau_M\dual\otimes\id_{K\dpl z-\zeta\dpr})\bigl(\Hom_{A_K}(M,\,\wh\Omega^1_{K\dbl z-\zeta\dbr/K})\bigr) & \subset & \Koh_{1,\dR}\bigl(\ulG,K\dpl z-\zeta\dpr\bigr)\,.
\end{array}
\]
In both cases the Hodge-Pink filtrations $F^i \Koh^1_\dR(\ulG,K)$ and $F^i \Koh_{1,\dR}(\ulG,K)$ of $\ulG$ are recovered as the images of $\Koh^1_\dR\bigl(\ulG,K\dbl z-\zeta\dbr\bigr)\cap(z-\zeta)^i\Fq^\ulG$ in $\Koh^1_\dR(\ulG,K)$ and of $\Koh_{1,\dR}\bigl(\ulG,K\dbl z-\zeta\dbr\bigr)\cap(z-\zeta)^i\Fq_\ulG$ in $\Koh_{1,\dR}(\ulG,K)$ like in Definition~\ref{DefAMotDeRham}. All these structures are compatible with the natural duality between $\Koh^1_\dR$ and $\Koh_{1,\dR}$.
\end{Definition}

\begin{Remark}\label{RemHodgeFiltrDriMod}
It was shown in \cite[Remark~4.45 and Lemma~5.46]{HartlJuschka} that this definition coincides with the definitions given by Deligne, Anderson, Gekeler and Jing Yu, see \cite[Definition~2.6.1]{Goss94}, \cite[\S\,2]{Gekeler89} and \cite{Yu90}. Moreover, it was shown in \cite[Diagram~(5.36) in the Proof of Theorem~5.40]{HartlJuschka} that the dual of the sequence of $K\dbl z-\zeta\dbr$-modules $0\to\Fp^\ulM\to\Fq^\ulM\to\coker\tau_M\to0$ is isomorphic to the sequence 
\[
0\longto\Fq_\ulG\longto\Koh_{1,\dR}(\ulG,K\dbl z-\zeta\dbr)\longto\Lie\ulG\longto0\,.
\]
Since $z-\zeta=0$ on $\Lie\ulG$ we obtain modulo $(z-\zeta)\Koh_{1,\dR}(\ulG,K\dbl z-\zeta\dbr)$ the exact sequence of $K$-vector spaces
\begin{equation}\label{EqHodgeSeqDriMod}
0\longto F^0\Koh_{1,\dR}(\ulG,K)\longto\Koh_{1,\dR}(\ulG,K)\longto\Lie\ulG\longto0\,,
\end{equation}
which is the analog of the decomposition \eqref{EqHodgeAbVar}.
\end{Remark}

For a uniformizable $A$-motive $\ulM$ over $\BC_\infty$ the morphism $h_\ulM$ from Proposition \ref{PropExt} induces comparison isomorphisms between the Betti and the $v$-adic, respectively the de Rham realizations as follows.

Since $v\neq \infty$ the points in the closed subscheme $\{v\}\times_{\BF_q}\Spec\BC_\infty\subset C_{\BC_\infty}$ do not specialize to $\infty_{\kappa_\BC}\in C_{\kappa_\BC}$ and so this closed subscheme lies in $C_{\BC_\infty}\setminus \cup_i\FD_i$.  This gives us isomorphisms $\CO(C_{\BC_\infty}\setminus \cup_i \FD_i)/v^n\CO(C_{\BC_\infty}\setminus \cup_i\FD_i) \isoto A_{\BC_\infty}/v^nA_{\BC_\infty}$ for all $n\in \BN$ and $\invlim\CO(C_{\BC_\infty}\setminus \cup_i \FD_i)/v^n\CO(C_{\BC_\infty}\setminus \cup_i\FD_i) \isoto\invlim A_{\BC_\infty}/v^nA_{\BC_\infty}=A_{v,\BC_\infty}$. The isomorphism $h_\ulM$ from Proposition~\ref{PropExt} induces a $\tau$-equivariant isomorphism
\[
 \Koh^1_\Betti(\ulM,A)\otimes_A \invlim \CO(C_{\BC_\infty}\setminus\cup_i\FD_i)/v^n\CO(C_{\BC_\infty}\setminus \cup_i\FD_i) \isoto M\otimes_{A_{\BC_\infty}}A_{v,\BC_\infty}\,.
\]
Taking $\tau$-invariant on both sides provides us with the isomorphism between the Betti and the $v$-adic  realization
\[
h_{\Betti,v}: \Koh^{1}_\Betti(\ulM, A_v) = \Koh^1_\Betti(\ulM,A)\otimes_A A_v \isoto \Koh^1_v(\ulM, A_v), \ \lambda \otimes f \mapsto (f\cdot\lambda \mod v^n)_{n\in \BN}.
\]

On the other hand, Proposition~\ref{PropExt} implies that $\sigma^\ast h_\ulM$ is an isomorphism locally at $\Var(\CJ)$ that is 
\[
\sigma^\ast h_\ulM\otimes \id_{{\BC_\infty}\dbl z-\zeta\dbr}: \Koh^1_\Betti(\ulM,A)\otimes_A {\BC_\infty}\dbl z-\zeta\dbr \isoto \sigma^\ast M\otimes_{A_{\BC_\infty}}{\BC_\infty}\dbl z-\zeta\dbr.
\]
This induces an isomorphism between the Betti and the de Rham realization 
\begin{alignat*}{2}
h_{\Betti,\dR} & := \sigma^\ast h_\ulM\otimes \id_{{\BC_\infty} \dbl z-\zeta\dbr}: \es && \Koh^{1}_\Betti(\ulM, {\BC_\infty}\dbl z-\zeta\dbr) \isoto  \Koh^1_{\dR}(\ulM, {\BC_\infty}\dbl z-\zeta\dbr), \\
h_{\Betti,\dR} & := \sigma^\ast h_\ulM \ \text{mod}\ \CJ: &&\Koh^{1}_\Betti(\ulM, {\BC_\infty}) \isoto  \Koh^1_{\dR}(\ulM, {\BC_\infty}).
\end{alignat*}
We summarize the above result as follows, compare \cite[Theorem 3.39]{HartlJuschka}.

\begin{Theorem}\label{PeriodIso}
If $\ulM$ is a uniformizable $A$-motive over $\BC_\infty$ there are canonical \emph{comparison isomorphisms}, sometimes also called \emph{period isomorphisms}
\begin{equation}\label{Eq1PeriodIso}
h_{\Betti,v}: \Koh^{1}_\Betti(\ulM, A_v) = \Koh^1_\Betti(\ulM,A)\otimes_A A_v \isoto \Koh^1_v(\ulM, A_v), \ \lambda \otimes f \mapsto (f\cdot\lambda \mod v^n)_{n\in \BN}
\end{equation}
and 
\begin{alignat}{2}
h_{\Betti,\dR} &:= \sigma^\ast h_\ulM\otimes \id_{{\BC_\infty}\dbl z-\zeta\dbr}: \es && \Koh^{1}_\Betti(\ulM, {\BC_\infty}\dbl z-\zeta\dbr) \isoto  \Koh^1_{\dR}(\ulM, {\BC_\infty}\dbl z-\zeta\dbr), \nonumber \\[2mm]
\label{Eq2PeriodIso} h_{\Betti,\dR} &:= \sigma^\ast h_\ulM \ \emph{mod}\ \CJ: && \Koh^{1}_\Betti(\ulM, {\BC_\infty}) \isoto  \Koh^1_{\dR}(\ulM, {\BC_\infty}). 
\end{alignat}
The latter yields a pairing
\begin{eqnarray}
\langle\,.\;,\,.\,\rangle_\infty\colon & \Koh_{1,\Betti}(\ulM,\BC_\infty) \times \Koh^1_{\dR}(\ulM,\BC_\infty) & \longto\es\BC_\infty\,, \label{Eq3PeriodIso} \\[1mm]
& \qquad(u\,,\,\omega)\; & \longmapsto\es \langle u,\omega\rangle_\infty\;:=\; u\otimes\id_{\BC_\infty}\bigl(h_{\Betti,\dR}^{-1}(\omega)\bigr)\,. \nonumber
\end{eqnarray}
\end{Theorem}

All these cohomology realizations and period isomorphisms are functorial in $\ulM$ and by \cite[Theorem~5.49]{HartlJuschka} compatible with the functor from Drinfeld $A$-modules to $A$-motives, Proposition~\ref{PropCompBettiGandM} and the pairing \eqref{EqPairingTateMod}.

\begin{Example}\label{ExCarBettiDR}
For the Carlitz motive $\ulCC = (\CC = \BF_q(\theta)[t], \tau_\CC = t-\theta)$ from Example~\ref{CarMod} the period isomorphism $h_{\Betti,\dR}$ is given as follows. By Example~\ref{Cartau} the generator $\eta\tminus{}$ of $\Koh^{1}_\Betti(\ulCC, {\BC_\infty})=A\cdot\eta\tminus{}$ is sent under $h_{\Betti,\dR}$ to the element $\sigma^*(\eta\tminus{})|_{t=\theta}=\eta^q\prod_{i=1}^{\infty}(1- \theta^{1-q^i})\in\BC_\infty$ which has absolute value $\bigl|\eta^q\prod_{i=1}^{\infty}(1- \theta^{1-q^i})\bigr|_\infty=|\eta^q|_\infty=|\theta|_\infty^{q/(1-q)}=q^{-q/(q-1)}$. This element is the analog of the period $(2\pi i)^{-1}$ from Example~\ref{ExGmBettiDR}, because the Carlitz module and Carlitz motive are the analogs of the multiplicative group $\BG_m$, see Example~\ref{ExCarlitzTorsion}.
\end{Example}

\begin{Theorem}\label{ThmDriModIntegral}
Let $\ulG$ be a Drinfeld $A$-module over $\BC_\infty$ and let $\ulM=(M,\tau_M)=\ulM(\ulG):=\Hom_{\BC_\infty, \BF_q}(G, \BG_{a,\BC_\infty})$ be the associated $A$-motive. Let $\Fq^\ulM$ and $\Fp^\ulM:=\Koh^1_\dR(\ulM,\BC_\infty\dbl z-\zeta\dbr)$ be as in Definition~\ref{DefAMotDeRham}. Let $m\in\Fq^\ulM$ be such that its image $\ol m$ under the isomorphism $\tau_M\otimes\id_{\BC_\infty\dbl z-\zeta\dbr}\colon\Fq^\ulM/\Fp^\ulM\isoto\coker\tau_M$ generates the one dimensional $\BC_\infty$-vector space $\coker\tau_M$. Let $\omega:=-(z-\zeta)\cdot m\in (z-\zeta)\Fq^\ulM\subset\Fp^\ulM$. Consider the pairing
\begin{equation}\label{EqThmDriModIntegral0}
\coker\tau_M\times \Lie\ulG\;\longto\;\Lie\BG_{a,\BC_\infty}\;=\;\BC_\infty\,,\qquad (\ol m,\lambda)\;\longmapsto\;\ol m(\lambda)
\end{equation}
induced from \eqref{EqLieG} and the isomorphism
\[
\beta_A\colon\Koh_{1,\Betti}(\ulG,Q)\;\isoto\;\Koh_{1,\Betti}(\ulM,Q)\otimes_Q\Omega^1_{Q/\BF_q}\;=\;\Koh_{1,\Betti}(\ulM,Q)\cdot dz
\]
from Proposition~\ref{PropCompBettiGandM} using Remark~\ref{RemOmegaQ}. Let $\lambda\in\Koh_{1,\Betti}(\ulG,Q)\subset\Lie\ulG$ and let $u\in\Koh_{1,\Betti}(\ulM,Q)$ be such that $\beta_A(\lambda)=u\,dz$. Then the pairing \eqref{Eq3PeriodIso} can be computed as
\begin{equation}\label{EqThmDriModIntegral}
\langle u,\omega\rangle_\infty\;=\;\ol m(\lambda)\,.
\end{equation}
\end{Theorem}

\begin{proof}
As in \cite[Diagram (5.36) in the proof of Theorem~5.39]{HartlJuschka} the isomorphism $\beta_A$ fits into a commutative diagram
\begin{equation}\label{EqEandM2}
\xymatrix @R+1pc @C+3pc { 
\Koh_{1,\Betti}(\ulM,Q)\otimes_Q\Omega^1_{Q/\BF_q} \ar[r]^-{\DS\tilde\gamma_A} & \Hom_{\BC_\infty}(\coker\tau_M,\BC_\infty) \\
\Koh_{1,\Betti}(\ulG,Q) \ar@{^{ (}->}[r] \ar[u]^{\DS\cong}_{\DS\beta_A} & \Lie G \ar[u]^{\DS\cong}_{\DS\alpha}
}
\end{equation}
where the isomorphism $\alpha$ is induced from the pairing \eqref{EqThmDriModIntegral0}, and the map $\tilde\gamma_A$ is given by 
\newcommand{\EltOfPMDual}{u}
\begin{eqnarray*}
& \tilde\gamma_A\colon \Koh_{1,\Betti}(\ulM,Q)\otimes_Q\Omega^1_{Q/\BF_q}\;=\;\Koh_{1,\Betti}(\ulM,Q)\cdot dz & \longto \es \Hom_{\BC_\infty}(\coker\tau_M,\BC_\infty)\,,\\[1mm]
& \mbox{\qquad\qquad\qquad\qquad\qquad\qquad\qquad}u\,dz & \longmapsto \es \bigl[\ol m\mapsto -\Res_{z=\zeta}u(\ol m)dz\bigr]\,.
\end{eqnarray*}
Here $u(\ol m)\in\BC_\infty\dpl z-\zeta\dpr$ is defined as 
\begin{eqnarray*}
u(\ol m) & := & (u\otimes\id_{\BC_\infty\dpl z-\zeta\dpr})\circ(h_\ulM\otimes\id_{\BC_\infty\dpl z-\zeta\dpr})^{-1}\circ(\tau_M\otimes\id_{\BC_\infty\dpl z-\zeta\dpr})(m) \\[2mm]
& = & (u\otimes\id_{\BC_\infty\dpl z-\zeta\dpr})\circ (h_{\Betti,\dR}^{-1}\otimes\id_{\BC_\infty\dpl z-\zeta\dpr})(m)
\end{eqnarray*}
where
\[
h_\ulM\otimes\id_{\BC_\infty\dpl z-\zeta\dpr}\colon \Koh^1_\Betti(\ulM,Q)\otimes_Q\BC_\infty\dpl z-\zeta\dpr\;\isoto\; M\otimes_{A_{\BC_\infty}}\BC_\infty\dpl z-\zeta\dpr
\]
is the isomorphism from Proposition~\ref{PropExt} with $h_\ulM=\tau_M\circ\sigma^*h_\ulM$ and $h_{\Betti,\dR}=\sigma^*h_\ulM\otimes\id_{\BC_\infty\dbl z-\zeta\dbr}$. Note that $u(\ol m)$ is only well defined up to adding elements of $\BC_\infty\dbl z-\zeta\dbr$, because the preimage $m$ of $\ol m$ is only well defined up to $\Fp^\ulM$ and $(u\circ h_{\Betti,\dR}^{-1})(\Fp^\ulM)=u\bigl(\Koh^1_\Betti(\ulM,\BC_\infty\dbl z-\zeta\dbr)\bigr)\subset \BC_\infty\dbl z-\zeta\dbr$. This shows that, nevertheless, the residue $-\Res_{z=\zeta}u(\ol m)dz$ is well defined and independent of the preimage $m$ of $\ol m$. We may thus compute
\[
\ol m(\lambda)\;=\;\alpha(\lambda)(\ol m)\;=\;(\tilde\gamma_A\circ\beta_A)(\lambda)(\ol m)\;=\;\tilde\gamma_A(u\,dz)(\ol m)\;=\;-\Res_{z=\zeta}u(\ol m)dz\,.
\]
Now $m=-(z-\zeta)^{-1}\cdot\omega$ and $u(\ol m)=(u\circ h_{\Betti,\dR}^{-1})(m)=-(z-\zeta)^{-1}\cdot\langle u,\omega\rangle_\infty$ in $\BC_\infty\dpl z-\zeta\dpr\big/\BC_\infty\dbl z-\zeta\dbr$. This yields
\[
\ol m(\lambda)\;=\;-\Res_{z=\zeta}u(\ol m)dz\;=\;\Res_{z=\zeta}\Bigl(\langle u,\omega\rangle_\infty\tfrac{d(z-\zeta)}{z-\zeta}\Bigr)\;=\;\langle u,\omega\rangle_\infty\,.
\]
\end{proof}

\section{Local Shtukas and the $v$-adic Period Isomorphism}\label{SectLocalShtukas}
\setcounter{equation}{0}

We next describe the function field analog of $p$-divisible groups.

\begin{Notation}
We fix a place $v\ne\infty$ of $Q$. Let $K\subset Q^\alg$ be an $A$-field which is a finite extension of $Q$ via $\gamma$. Under the fixed embedding $Q^\alg\into\BC_v$ let $L$ be the $v$-adic completion of $K\subset\BC_v$. Let $R$ be the valuation ring of $L$, let $\pi_L$ be a uniformizing parameter of $R$ and let $\kappa$ be the residue field of $R$. Then $R=\kappa\dbl\pi_L\dbr$ and $L=\kappa\dpl\pi_L\dpr$. The homomorphism $\gamma\colon A\to K$ extends by continuity to $\gamma\colon A_v\to L$ and factors through $\gamma\colon A_v\to R$ with $\zeta_v=\gamma(z_v)\in\pi_L R\setminus\{0\}$. Let $R\dbl z_v\dbr$ be the power series ring in the variable $z_v$ over $R$ and $\hat\sigma^*_{\!v}$ the endomorphism of $R\dbl z_v\dbr$ with $\hat\sigma^*_{\!v}(z_v)=z_v$ and $\hat\sigma^*_{\!v}(b)=b^{q_v}$ for $b\in R$, where $q_v=\#\BF_v$. For an $R\dbl z_v\dbr$-module $\hat M$ we let $ \hat\sigma_{\!v}^* \hat M:= \hat M\otimes_{R\dbl z_v \dbr, \hat\sigma_{\!v}^*}R\dbl z_v\dbr$ as well as  $\hat M[\frac{1}{z_v-\zeta_v}]:=\hat M\otimes_{R\dbl z_v\dbr}R\dbl z_v\dbr[\frac{1}{z_v-\zeta_v}]$ and  $\hat M[\frac{1}{z_v}]:=\hat M\otimes_{R\dbl z_v\dbr}R\dbl z_v\dbr[\frac{1}{z_v}]$. We obtain a canonical embedding $A_R:=A\otimes_{\BF_q}R\into R\dbl z_v\dbr$ by mapping $z_v\otimes1\mapsto z_v$ and $1\otimes\zeta_v\mapsto\zeta_v$.
\end{Notation}

The function field analog of $p$-divisible groups is given by the following

\begin{Definition}\label{DefZDLAM}
A \emph{$z_v$-divisible local Anderson module over $R$} is a sheaf of $\BF_q\dbl z_v\dbr$-modules $G$ on the big \fppf-site of $\Spec R$ such that
\begin{enumerate}
\item
$G$ is \emph{$z_v$-torsion}, that is $G = \dirlim G[z_v^n]$,
\item
$G$ is \emph{$z_v$-divisible}, that is $z_v\colon G \to G$ is an epimorphism,
\item 
for every $n$ the $\BF_q$-module $G[z_v^n]$ is representable by a finite locally free strict $\BF_q$-module scheme over $R$ in the sense of Faltings (see \cite{FaltingsStrict} or \cite[Definition~4.7]{HartlSingh}), and
\item \label{DefZDLAM_D}
locally on $\Spec R$ there exists an integer $d \in \BZ_{\geq 0}$, such that
$(z_v-\zeta_v)^d=0$ on $\omega_ G$  where $\omega_G := \invlim\omega_{G[z_v^n]}$ and $\omega_{G[z_v^n]}:=\epsilon^*\Omega^1_{G[z_v^n]/\Spec R}$ for the unit section $\epsilon$ of $G[z_v^n]$ over $R$.
\end{enumerate}
\end{Definition}

\begin{Example}\label{ExTorsionOfAModule}
Let $\ulG=(G,\phi)$ be a Drinfeld $A$-module over $R$ which is defined as in Definition~\ref{Defdrinfeld} by replacing $K$ by $R$. By \cite[Theorem~6.6]{HA17} the torsion module $\ulG[v^n]$ is a finite locally free strict $\BF_v$-module scheme and the inductive limit $\ulG[v^\infty]:=\dirlim\ulG[v^n]$ is a $z_v$-divisible local Anderson module over $R$ for which one can take $d=1$ in Definition~\ref{DefZDLAM}\ref{DefZDLAM_D}.
\end{Example}

Similarly to Remark~\ref{RemDieudonneMod}, divisible local Anderson modules have a description by semi-linear algebra. It is given by local $\hat\sigma^*_{\!v}$-shtukas.

\begin{Definition} \label{LocalshtDef}
A \emph{local $\hat\sigma^*_{\!v}$-shtuka of rank $r$} over $R$ is a pair $\ulHM = (\hat M,\tau_{\hat M})$ consisting of a free $R\dbl z_v\dbr$-module $\hat M$ of rank $r$, and an isomorphism $\tau_{\hat M}:\hat\sigma_{\!v}^* \hat M[\frac{1}{z_v-\zeta_v}] \isoto \hat M[\frac{1}{z_v-\zeta_v}]$. It is \emph{effective} if $\tau_{\hat M}(\hat\sigma_{\!v}^*\hat M)\subset \hat M$ and \emph{\'etale} if $\tau_{\hat M}(\hat\sigma_{\!v}^*\hat M)= \hat M$. We write $\rk\ulHM$ for the rank of $\ulHM$.

A \emph{morphism} of local shtukas $f:\ulHM=(\hat M,\tau_{\hat M})\to\ulHN=(\hat N,\tau_{\hat N})$ over $R$ is a morphism of the underlying modules $f:\hat M\to \hat N$ which satisfies $\tau_{\hat N}\circ \hat\sigma_{\!v}^* f = f\circ \tau_{\hat M}$. We denote the $A_v$-module of homomorphisms $f\colon\ulHM\to\ulHN$ by $\Hom_R(\ulHM,\ulHN)$ and write $\End_R(\ulHM)=\Hom_R(\ulHM,\ulHM)$.

A \emph{quasi-morphism} between local shtukas $f\colon(\hat M,\tau_{\hat M})\to(\hat N,\tau_{\hat N})$ over $R$ is a morphism of $R\dbl z_v\dbr[\tfrac{1}{z_v}]$-modules $f\colon M[\tfrac{1}{z_v}]\isoto N[\tfrac{1}{z_v}]$ with $\tau_{\hat N}\circ\hat\sigma_{\!v}^*f=f\circ\tau_{\hat M}$. It is called a \emph{quasi-isogeny} if it is an isomorphism of $R\dbl z_v\dbr[\tfrac{1}{z_v}]$-modules. We denote the $Q_v$-vector space of quasi-morphisms from $\ulHM$ to $\ulHN$ by $\QHom_R(\ulHM,\ulHN)$ and write $\QEnd_R(\ulHM)=\QHom_R(\ulHM,\ulHM)$. 
\end{Definition}

Note that $\Hom_R(\ulHM,\ulHN)$ is a finite free $A_v$-module of rank at most $\rk\ulHM\cdot\rk\ulHN$ by \cite[Corollary~4.5]{HartlKim} and $\QHom_R(\ulHM,\ulHN)=\Hom_R(\ulHM,\ulHN)\otimes_{A_v}Q_v$. Also every quasi-isogeny $f\colon\ulHM\to\ulHN$ induces an isomorphism of $Q_v$-algebras $\QEnd_R(\ulHM)\isoto\QEnd_R(\ulHN)$, $g\mapsto fgf^{-1}$, similarly to Remark~\ref{RemQHomAbVar}(a). 

\bigskip

The analog of the (``local'') Dieudonn\'e functor from Remark~\ref{RemDieudonneMod} is given by the following 

\begin{Theorem}[\bfseries {\cite[Theorem~8.3]{HartlSingh}}]
There is an anti-equivalence between the category of $z_v$-divisible local Anderson modules over $R$ and the category of effective local $\hat\sigma^*_{\!v}$-shtukas over $R$ given by the contravariant functor $\ulHM_{q_v}$ defined by $\ulHM_{q_v}(G):=\invlim[n] \ulHM_{q_v}\bigl(G[z_v^n]\bigr)$, where 
\[
\ulHM_{q_v}(G[z_v^n]) \;:=\; \bigl( \Hom_{R\text{\rm-groups},\BF_q\text{\rm-lin}}(G[z_v^n],\BG_{a,R}),\hat\tau_{M_q(G[z_v^n])}\bigr)
\]
and $\hat\tau_{M_q(G[z_v^n])}$ is provided by the relative $q_v$-Frobenius of the additive group scheme $\BG_{a,R}$ over $R$ like in \eqref{EqTauM}.
\end{Theorem}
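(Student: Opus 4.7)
My plan is to reduce everything to finite level $n$, use Faltings' theory of strict $\BF_q$-module schemes level by level, and then pass to the inverse limit. The argument mirrors the classical proof that $p$-divisible groups over a complete local ring are equivalent to their Dieudonné crystals via the limit of the finite-level correspondences.

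First I would set up the functor at finite level. For a $z_v$-divisible local Anderson module $G$ and every $n$, the group scheme $G[z_v^n]$ is a finite locally free strict $\BF_q$-module scheme over $R$ carrying an action of $\BF_q\dbl z_v\dbr/(z_v^n)$. Faltings' theory associates to such a scheme the $R$-module $M_{q_v}(G[z_v^n]) := \Hom_{R\text{-grp},\BF_q\text{-lin}}(G[z_v^n],\BG_{a,R})$ together with the $\hat\sigma^*_{\!v}$-semi-linear map $\hat\tau$ induced by the relative $q_v$-Frobenius on $\BG_{a,R}$; strictness ensures that $M_{q_v}(G[z_v^n])$ is a free $R\dbl z_v\dbr/(z_v^n)$-module whose rank equals the height of $G[z_v^n]$ divided by $d_v$, and that this construction is exact and fully faithful on the category of strict $\BF_q$-module schemes annihilated by $z_v^n$. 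The action of $z_v$ on $G$ corresponds under this dictionary to multiplication by $z_v$ on the Hom-module, making the system $\{M_{q_v}(G[z_v^n])\}_n$ into a compatible family over $R\dbl z_v\dbr/(z_v^n)$.

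Next I would pass to the limit. The $z_v$-divisibility of $G$ forces the closed immersions $G[z_v^n]\hookrightarrow G[z_v^{n+1}]$ to have image equal to the $z_v^n$-torsion of $G[z_v^{n+1}]$, which dualizes to surjectivity of the transition maps $M_{q_v}(G[z_v^{n+1}])\twoheadrightarrow M_{q_v}(G[z_v^n])$. Since $R\dbl z_v\dbr$ is $z_v$-adically complete, the inverse limit $\hat M := \invlim_n M_{q_v}(G[z_v^n])$ is then a free $R\dbl z_v\dbr$-module of the correct rank, and the compatible $\hat\tau$ at each level assemble to $\tau_{\hat M}\colon \hat\sigma_v^*\hat M \to \hat M$. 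The bound on $\omega_G$ in Definition~\ref{DefZDLAM}\ref{DefZDLAM_D} translates, via the standard identification of $\omega_G$ with $\coker\tau_{\hat M}$, into $(z_v-\zeta_v)^d\cdot\coker\tau_{\hat M} = 0$, so $\tau_{\hat M}$ becomes an isomorphism after inverting $z_v-\zeta_v$. This shows $\ulHM_{q_v}(G)$ is an effective local $\hat\sigma^*_{\!v}$-shtuka and gives the functor in one direction.

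For the inverse, given an effective local shtuka $\ulHM = (\hat M,\tau_{\hat M})$ I would recover $G[z_v^n]$ from the truncation $\hat M/z_v^n\hat M$ by inverting Faltings' functor at each finite level: the strict $\BF_q$-module scheme is the spectrum of the symmetric/divided-power algebra attached to the Frobenius datum on $\hat M/z_v^n\hat M$, and the transition system of closed immersions is induced by the surjections $\hat M/z_v^{n+1}\hat M \twoheadrightarrow \hat M/z_v^n\hat M$. Taking $G := \dirlim_n G[z_v^n]$ produces a $z_v$-torsion, $z_v$-divisible sheaf of $\BF_q\dbl z_v\dbr$-modules, and the condition on $\coker\tau_{\hat M}$ guarantees the bound on $\omega_G$. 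Finally, quasi-inverseness is checked level by level: at each finite level this is exactly Faltings' anti-equivalence, and passing to the limit (using $z_v$-adic completeness on one side and $z_v$-divisibility on the other) preserves it.

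The principal obstacle is the level-$n$ anti-equivalence itself, which rests entirely on Faltings' theorem that strict $\BF_q$-module schemes over $R$ are classified by their Frobenius modules; every delicate point — freeness of $M_{q_v}(G[z_v^n])$, compatibility of the $\BF_q\dbl z_v\dbr$-action with the Frobenius-semi-linear structure, and the identification $\omega_G \cong \coker \tau_{\hat M}$ — is inherited from that result. Once that input is granted, the remainder of the argument is a routine inverse/direct limit bookkeeping whose main subtlety is verifying that the $R\dbl z_v\dbr$-module structure on the limit really is free of the expected rank, which follows from Nakayama applied to $\hat M$ together with the freeness at each truncation.
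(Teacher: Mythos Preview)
The paper does not prove this theorem: it is a survey, and the statement is simply quoted with a citation to \cite[Theorem~8.3]{HartlSingh}. There is no argument in the present paper to compare your proposal against.

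That said, your sketch is the natural strategy and is almost certainly the one carried out in the cited reference: establish an anti-equivalence at each finite level $n$ via Faltings' classification of strict $\BF_q$-module schemes by their Frobenius modules, and then pass to the limit using $z_v$-divisibility on one side and $z_v$-adic completeness on the other. One point you should be more careful about is the interplay between $\BF_q$ and $\BF_{q_v}$: the theorem takes $\BF_q$-linear homomorphisms into $\BG_{a,R}$ but equips the result with the $q_v$-Frobenius $\hat\sigma^*_{\!v}$, so the finite-level object is a priori a $\sigma^*$-module (with the $q$-Frobenius) and one must explain why iterating $d_v$ times and using the $\BF_{q_v}$-action packaged in the $z_v$-action yields a $\hat\sigma^*_{\!v}$-shtuka structure; this is the same mechanism as in Example~\ref{AMotSectLocalShtukas}. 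You also assert that Faltings' functor is fully faithful and essentially surjective at each truncation level and that $\omega_G \cong \coker\tau_{\hat M}$; these are genuine inputs that require proof (they are worked out in \cite{HartlSingh}), not formal consequences of the definitions. With those ingredients supplied, the limit argument you outline is indeed routine.
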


It turns out that like with abelian Anderson $A$-modules, one can dispense with the notions of $z_v$-divisible local Anderson modules, because their equivalent description by local $\hat\sigma^*_{\!v}$-shtukas can be obtained purely from $A$-motives as in the following

\begin{Example}\label{AMotSectLocalShtukas} 
Let $\ulM=(M, \tau_M)$ be an $A$-motive over $K$ and assume that it has \emph{good reduction}, that is, there exist a pair $\ul\CM = (\CM, \tau_{\CM})$ consisting of a locally free module $\CM$ over $A_R:=A\otimes_{\BF_q} R$ of finite rank and a morphism $\tau_\CM:\sigma^*\CM\to \CM$ of $A_R$-modules whose cokernel is annihilated by a power of the ideal $\CJ: = (a\otimes1-1\otimes \gamma( a): \ a\in A)\subset A_R$, such that $\ul\CM\otimes_RL\cong\ulM\otimes_K L$. The reduction $\ul\CM \otimes_R \kappa$ is an $A$-motive over $\kappa$ of $A$-characteristic $v= \ker(\gamma: A\to \kappa)$. The pair $\ul\CM$ is called an \emph{$A$-motive over $R$} and a \emph{good model} of $\ulM$.

We consider the $v$-adic completions $A_{v, R}$ of $A_R$ and $\ul\CM \otimes_{A_R}A_{v, R}: = (\CM \otimes_{A_R}A_{v, R}, \tau_\CM\otimes \id)$ of $\ul\CM$. We let $d_v:=[\BF_v:\BF_q]$ and discuss the two cases $d_v = 1$ and $d_v> 1$ separately. If $d_v = 1$, and hence  $q_v = q$ and $\hat\sigma^*_{\!v} =\sigma^*$, we have $A_{v, R} = R\dbl z_v \dbr$, and $\ul\CM \otimes_{A_R}A_{v, R}$ is an effective local $\hat\sigma^*_{\!v}$-shtuka over $\Spec R$ which we denote by  $\ulHM_v(\ul\CM)$ and call the \emph{local $\hat\sigma^*_{\!v}$-shtuka at $v$ associated with $\ul\CM$}.

If $d_v> 1$, the situation is more complicated, because $\BF_v \otimes_{\BF_q}R$ and $A_{v,R}$ fail to be integral domains. Namely,
\[
\BF_v\otimes_{\BF_q} R = \prod_{\Gal(\BF_{v}/\BF_q)}\BF_v\otimes_{\BF_{v}}R = 
\prod_{i\in\BZ/d_v\BZ}\BF_v\otimes_{\BF_q} R\,/\,(a\otimes 1-1\otimes \gamma(a)^{q^i}:a\in \BF_{v})
\]
and $\sigma^*$ transports the $i$-th factor to the $(i+1)$-th factor. In particular $\hat\sigma_{\!v}^*$ stabilizes each factor. Denote by $\Fa_i$ the ideal of $A_{v,R}$ generated by $\{a\otimes 1-1\otimes \gamma(a)^{q^i}:a\in \BF_{v}\}$. Then
\[
A_{v,R} = \prod_{\Gal(\BF_{v}/\BF_q)}A_v\wh\otimes_{\BF_{v}}R = 
\prod_{i\in\BZ/d_v\BZ}A_{v,R}\,/\,\Fa_i.
\]
Note that each factor is isomorphic to $R\dbl z_v \dbr$ and the ideals $\Fa_i$ correspond precisely to the places $v_i$ of $C_{\BF_{v}}$ lying above $v$. The ideal $\CJ$ decomposes as follows $ \CJ\cdot A_{v,R}/\Fa_0 = (z_v-\zeta_v)$  and  $ \CJ\cdot A_{v,R}/\Fa_i = (1)$ for $i  \neq 0$. We define the \emph{local $\hat\sigma^*_{\!v}$-shtuka at $v$ associated with $\ul\CM$} as $\ulHM_v(\ul\CM):=(\hat M,\tau_{\hat M}):=\bigl(\CM\otimes_{A_R}A_{v,R}/\Fa_0\,,\,(\tau_\CM\otimes1)^{d_v}\bigr)$, where $\tau_\CM^{d_v}:=\tau_\CM\circ\sigma^*\tau_\CM\circ\ldots\circ\sigma^{(d_v-1)*}\tau_\CM$. Of course if $d_v=1$ we get back the definition of $\ulHM_v(\ul\CM)$ given above. Also note that $\CM/\tau_\CM(\sigma^*\CM)=\hat \CM/\tau_{\hat \CM}(\hat\sigma_{\!v}^*\hat \CM)$.

The local shtuka $\ulHM_v(\ul\CM)$ allows to recover $\ul\CM\otimes_{A_R}A_{v,R}$ via the isomorphism
\[
\bigoplus_{i=0}^{d_v-1}(\tau_\CM\otimes1)^i\mod\Fa_i\colon\Bigl(\bigoplus_{i=0}^{d_v-1}\sigma^{i*}(\CM\otimes_{A_R}A_{v,R}/\Fa_0),\;(\tau_\CM\otimes1)^{d_v}\oplus\bigoplus_{i\ne0}\id\Bigr)\;\isoto\;\ul\CM\otimes_{A_R}A_{v,R}\,,
\]
because for $i\ne0$ the equality $\CJ\!\cdot\! A_{v,R}/\Fa_i=(1)$ implies that $\tau_\CM\otimes 1$ is an isomorphism modulo $\Fa_i$; see \cite[Propositions~8.8 and 8.5]{BoHa11} for more details.
\end{Example}

\begin{Proposition}[{\cite[Theorem~7.6]{HA17}}]\label{PropLocGlobDieudonne}
Let $\ulG=(G,\phi)$ be a Drinfeld $A$-module over $R$ and let $\ulG[v^\infty]:=\dirlim\ulG[v^n]$ be its $z_v$-divisible local Anderson module over $R$ from Example~\ref{ExTorsionOfAModule}. Let $\ul\CM(\ulG)$ be the associated $A$-motive over $R$ and let $\ulHM_{q_v}(\ulG[v^\infty])$ be the associated local $\hat\sigma^*_{\!v}$-shtuka over $R$. Then $\ulHM_{q_v}(\ulG[v^\infty])$ is canonically isomorphic to the local $\hat\sigma^*_{\!v}$-shtuka $\ulHM_v(\ul\CM)$ from Example~\ref{AMotSectLocalShtukas}.

\end{Proposition}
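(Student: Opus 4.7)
The plan is to construct an explicit $R\dbl z_v\dbr$-linear morphism from $\ulHM_v(\ul\CM(\ulG))$ to $\ulHM_{q_v}(\ulG[v^\infty])$ obtained by restricting $\BF_q$-linear homomorphisms along the closed immersions $\ulG[v^n]\hookrightarrow G$, and then to verify that this morphism is a Frobenius-equivariant isomorphism. The inclusions $\ulG[v^n]\hookrightarrow G$ induce natural restriction maps
\[
\rho_n\colon\; \CM(\ulG)\;=\;\Hom_{R,\BF_q}(G,\BG_{a,R})\;\longto\;\Hom_{R,\BF_q}(\ulG[v^n],\BG_{a,R})\;=\;\ulHM_{q_v}(\ulG[v^n]),
\]
which are $A_R$-linear (both sides carrying the $A$-action by precomposition with $\phi_a$) and compatible with the transition maps as $n$ varies. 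They assemble to an $A_{v,R}$-linear map $\tilde\rho\colon \CM(\ulG)\otimes_{A_R}A_{v,R}\to\invlim[n]\Hom_{R,\BF_q}(\ulG[v^n],\BG_{a,R})$.

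The next step is to project both sides onto their $\Fa_0$-components with respect to the decomposition $\BF_v\otimes_{\BF_q}R=\prod_{i\in\BZ/d_v\BZ}R$ recalled in Example~\ref{AMotSectLocalShtukas}. On the source, projecting yields $\CM(\ulG)\otimes_{A_R}A_{v,R}/\Fa_0=\ulHM_v(\ul\CM(\ulG))$ by definition. On the target, the $\BF_v$-action on $\ulG[v^\infty]$ arises from the embedding $\BF_v=A/v\hookrightarrow R$ via $\gamma$, so exactly the $\Fa_0$-idempotent acts nontrivially and $\invlim[n]\Hom_{R,\BF_q}(\ulG[v^n],\BG_{a,R})$ is naturally an $R\dbl z_v\dbr$-module which is precisely $\ulHM_{q_v}(\ulG[v^\infty])$. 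The map $\tilde\rho$ thus descends to the desired candidate
\[
\bar\rho\colon\;\ulHM_v(\ul\CM(\ulG))\;\longto\;\ulHM_{q_v}(\ulG[v^\infty]).
\]

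To show $\bar\rho$ is an isomorphism I would compare ranks over $R\dbl z_v\dbr$. The source is finite free of rank $r=\rk\ulG$ since $\CM(\ulG)$ is locally free of rank $r$ over $A_R$ and $A_{v,R}/\Fa_0\cong R\dbl z_v\dbr$. The target is finite free of the same rank because, by Example~\ref{ExTorsionOfAModule}, $\ulG[v^\infty]$ is a $z_v$-divisible local Anderson module and its associated local $\hat\sigma^*_{\!v}$-shtuka has rank equal to $\rk\ulG$. Bijectivity would then be reduced by Nakayama's lemma to the residue field $\kappa$ of $R$, where it follows from Anderson's reconstruction of a Drinfeld module from its $A$-motive \cite{Anderson}, namely the statement that the $v$-adic completion of $\CM(\ulG_\kappa)$ coincides with the module of $\BF_q$-linear functions on the $v$-power torsion.

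The main obstacle is verifying the compatibility of Frobenius structures under $\bar\rho$. On $\ulHM_v(\ul\CM(\ulG))$ the Frobenius is $(\tau_\CM\otimes 1)^{d_v}$, corresponding to postcomposition with $\Frob_{q,\BG_{a,R}}^{d_v}=\Frob_{q_v,\BG_{a,R}}$, while on $\ulHM_{q_v}(\ulG[v^\infty])$ the Frobenius $\hat\tau$ is by definition postcomposition with $\Frob_{q_v,\BG_{a,R}}$, so the identification is tautological at the level of the full $\BF_v\otimes_{\BF_q}R$-modules. The subtle point is that $\tau_\CM$ itself cyclically permutes the $d_v$ idempotents of $\BF_v\otimes_{\BF_q}R$ by sending $\Fa_i$ to $\Fa_{i+1}$ (since $\sigma^*$ raises scalars to the $q$-th power), so only the $d_v$-th iterate $\tau_\CM^{d_v}$ stabilizes $\Fa_0$ and passes through the projection. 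Carefully tracking this permutation is the technical heart of the proof and is carried out in \cite[Theorem~7.6]{HA17}.
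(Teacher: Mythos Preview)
The paper gives no proof of this proposition; it simply cites \cite[Theorem~7.6]{HA17}. Your overall strategy---restricting elements of $\CM(\ulG)$ along $\ulG[v^n]\hookrightarrow G$, completing at $v$, and then projecting to the $\Fa_0$-component---is the correct one and is essentially what the cited reference does.

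There is, however, a genuine error in your second paragraph. You assert that ``the $\BF_v$-action on $\ulG[v^\infty]$ arises from the embedding $\BF_v=A/v\hookrightarrow R$ via $\gamma$, so exactly the $\Fa_0$-idempotent acts nontrivially.'' This is false: for $a\in\BF_v\setminus\BF_q$ the endomorphism $\phi_a$ has strictly positive $\tau$-degree and does \emph{not} act on $\ulG[v^n]$ as scalar multiplication by $\gamma(a)$. Hence the $(\BF_v\otimes_{\BF_q}R)$-module $\Hom_{R,\BF_q}(\ulG[v^n],\BG_{a,R})$ is free of rank $r$ over $A_{v,R}/v^n$ and is supported on all $d_v$ components $\Fa_i$, not on $\Fa_0$ alone. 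The correct route is to first prove that $\tilde\rho$ is an isomorphism of $A_{v,R}$-modules compatible with the $q$-Frobenius \emph{before} any projection---this is the substantive step, and it requires identifying $\ker\rho_n$ with $v^n\CM(\ulG)$ and proving surjectivity of $\rho_n$. Your Nakayama reduction to $\kappa$ and appeal to ``Anderson's reconstruction'' does not address this; one argues directly from $G\cong\BG_{a,R}$ and the description of $\ulG[v^n]$ as the zero locus of a monic additive polynomial. Only after $\tilde\rho$ is known to be an isomorphism do both sides decompose compatibly along the idempotents of $\BF_v\otimes_{\BF_q}R$, and the $\Fa_0$-components then match. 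What singles out $\Fa_0$ on the target is not that the other components vanish, but that the $\Fa_0$-component consists precisely of the $\BF_v$-\emph{equivariant} maps (with $\BF_v$ acting on $\BG_{a,R}$ through $\gamma$), and it is this submodule that the local-shtuka functor $\ulHM_{q_v}$ records.
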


\begin{Example}\label{ExCarlitzLocSht}
It was shown in \cite[Example~2.7]{HartlKim} that the local $\hat\sigma^*_{\!v}$-shtuka at $v$ associated with the Carlitz motive $\ulCC = (\CC = \BF_q(\theta)[t], \tau_\CC = t-\theta)$ from Example~\ref{CarMod} equals $\ulHM_v(\ulCC)=\bigl(\BF_v\dbl\zeta_v\dbr\dbl z\dbr,\tau_{\hat M}=(z_v-\zeta_v)\bigr)$. Here $L=\BF_v\dpl\zeta_v\dpr$ and $R=\CO_L=\BF_v\dbl\zeta_v\dbr$.
\end{Example}

Next we define the $v$-adic realization and the de Rham realization of a local shtuka $\ulHM = (\hat M, \tau_{\hat M})$ over $R$. Since $\tau_{\hat M}$ induces an isomorphism   $\tau_{\hat M}:  \hat\sigma_{\!v}^* \hat M  \otimes_{R\dbl z_v\dbr}L\dbl z_v\dbr \isoto \hat M  \otimes_{R\dbl z_v\dbr}L\dbl z_v\dbr$, we can think of $\ulHM\otimes_{R\dbl z_v\dbr}L\dbl z_v\dbr$  as an \'etale local shtuka over $L$.

 \begin{Definition}\label{dualTatemodule} The \emph{$v$-adic realization} $\Koh^1_v(\ulHM, A_v)$ of a local $\hat\sigma^*_{\!v}$-shtuka $\ulHM = (\hat M, \tau_{\hat M})$ is the $\sG_L$-module of $\tau$-invariants
\[
\Koh^1_v(\ulHM, A_v)\;:=\; (\hat M \otimes_{R\dbl z_v\dbr}L^\sep\dbl z_v\dbr)^{ \tau} : = \{m \in \hat M \otimes_{R\dbl z_v\dbr}L^\sep\dbl z_v\dbr:  \tau_{\hat M}( \hat\sigma_{\!\hat M}^* m) = m \} ,
\]
where we set $\hat\sigma_{\!\hat M}^*m:=m\otimes1\in\hat M\otimes_{R\dbl z_v\dbr,\hat\sigma^*_{\!v}}R\dbl z_v\dbr=:\sigma^*M$ for $m\in M$. One also writes sometimes $\check T_v\ulHM=\Koh^1_v(\ulHM, A_v)$ and calls this the \emph{dual Tate module of $\ulHM$}. By \cite[Proposition~4.2]{HartlKim} it is a free $A_v$-module of the same rank as  $\hat M$. We also write $\Koh^1_v(\ulHM,B):=\Koh^1_v(\ulHM,A_v)\otimes_{A_v}B$ for an $A_v$-algebra $B$.

If $\ulM = (M, \tau_M)$ is an $A$-motive over $L$ with good model $\ul\CM$ and $\ulHM = \ulHM_v(\ul\CM)$ is the local shtuka at $v$ associated with $\ul\CM$, then $\Koh^1_v(\ulHM, A_v)$ is by \cite[Proposition~4.6]{HartlKim} canonically isomorphic as a representation of $\sG_L$ to the $v$-adic realization $\Koh^1_v(\ulM, A_v)$ of $\ulM$.
\end{Definition}

\begin{Example}\label{ExCarlitzVAdic}
We describe the $v$-adic realization $\Koh^1_v(\ulCC, A_v)=\Koh^1_v(\ulHM_v(\ulCC), A_v)$ of the Carlitz module from Example~\ref{ExCarlitzLocSht} by using its local shtuka $\ulHM_v(\ulCC)=\bigl(\BF_v\dbl\zeta_v\dbr\dbl z_v\dbr,\tau_{\hat M}=(z_v-\zeta_v)\bigr)$ at $v$ computed there. For all $i\in\BN_0$ let $\tplusminus_i\in L^\sep$ be solutions of the equations $\tplusminus_0^{q_v-1}=-\zeta_v$ and $\tplusminus_i^{q_v}+\zeta_v \tplusminus_i=\tplusminus_{i-1}$. This implies $|\tplusminus_i|=|\zeta_v|^{q_v^{-i}/(q_v-1)}<1$. Define the power series $\tplus{v}=\sum_{i=0}^\infty \tplusminus_iz_v^i\in\CO_{L^\sep}\dbl z_v\dbr$. It satisfies $\hat\sigma^*_{\!v}(\tplus{v})=(z_v-\zeta_v)\cdot\tplus{v}$, but depends on the choice of the $\tplusminus_i$. A different choice yields a different power series $\ttplus{v}$ which satisfies $\ttplus{v}=u\tplus{v}$ for a unit $u\in(L^\sep\dbl z_v\dbr\mal)^{\hat\sigma^*_{\!v}=\id}=\BF_v\dbl z_v\dbr^{\SSC\times}=A_v^{^{\SSC\times}}$, because $\hat\sigma^*_{\!v}(u)=\frac{\hat\sigma^*_{\!v}(\ttplus{v})}{\hat\sigma^*_{\!v}(\tplus{v})}=\frac{\ttplus{v}}{\tplus{v}}=u$. The field extension $\BF_v\dpl\zeta_v\dpr(\tplusminus_i\colon i\in\BN_0)$ of $\BF_v\dpl\zeta_v\dpr$ is the function field analog of the cyclotomic tower $\BQ_p(\sqrt[p^i]{1}\colon i\in\BN_0)$; see \cite[\S\,1.3 and \S\,3.4]{HartlDict}. There is an isomorphism of topological groups called the \emph{$v$-adic cyclotomic character}
\[
\chi_v\colon\Gal\bigl(\BF_v\dpl\zeta_v\dpr(\tplusminus_i\colon i\in\BN_0)\big/\BF_v\dpl\zeta_v\dpr\bigr)\;\isoto\; A_v^{^{\SSC\times}},
\]
which satisfies $g(\tplus{v}):=\sum_{i=0}^\infty g(\tplusminus_i)z_v^i=\chi_v(g)\cdot \tplus{v}$ in $L^\sep\dbl z_v\dbr$ for $g$ in the Galois group. It is independent of the choice of the $\tplusminus_i$. The $v$-adic (dual) Tate module $\check T_v\ulHM=\Koh^1_v(\ulHM_v(\ulCC), A_v)$ of $\ulHM_v(\ulCC)$ and $\ulCC$ is generated by $(\tplus{v})^{-1}$ on which the Galois group acts by the inverse of the $v$-adic cyclotomic character. The reader should compare this to Example~\ref{ExGm}.
\end{Example}

\begin{Definition}\label{DefDeRham}
Let  $\ulHM = (\hat M, \tau_{\hat M})$  be a local $ \hat\sigma^*_{\!v}$-shtuka over $R$. We define the \emph{de Rham realizations} of  $\ulHM$ as
\begin{align*}
\Koh^1_{\dR}(\ulHM, R )\;:= \;& \hat\sigma_{\!v}^* \hat{ M}/(z_v-\zeta_v)\hat M \;=\;\hat\sigma_{\!v}^* \hat{ M} \otimes_{R\dbl z_v\dbr, z_v\mapsto \zeta_v} R\,, \quad\text{as well as}\\[1mm]
\Koh^1_{\dR}(\ulHM, L \dbl z_v -\zeta_v\dbr)\;:=\; & \hat\sigma_{\!v}^* \hat{ M} \otimes_{R\dbl z_v\dbr}  L\dbl z_v -\zeta_v\dbr \quad\text{and}  \\[1mm]
\Koh^1_{\dR}(\ulHM, L )\;:= \;& \hat\sigma_{\!v}^* \hat{ M} \otimes_{R\dbl z_v\dbr, z_v\mapsto \zeta_v} L \; =\;  \Koh^1_{\dR}(\ulHM, L \dbl z_v -\zeta_v\dbr)\otimes_{ L \dbl z_v -\zeta_v\dbr}\, L\dbl z_v -\zeta_v\dbr/(z_v -\zeta_v)\\[1mm]
=\; & \Koh^1_{\dR}(\ulHM, R )\otimes_RL\,.
\end{align*}
It carries the \emph{Hodge-Pink lattice} $\Fq^\ulHM:=\tau_{\hat M}^{-1}(\hat M\otimes_{R\dbl z_v\dbr}L\dbl z_v-\zeta_v\dbr)\subset \Koh^1_{\dR}(\ulHM, L \dbl z_v -\zeta_v\dbr)[\tfrac{1}{z_v-\zeta_v}]$. 

If $\ulM = (M, \tau_M)$ is an $A$-motive over $L$ with good model $\ul\CM$ and $\ulHM = \ulHM_v(\ul\CM)$ is the local shtuka at $v$ associated with $\ul\CM$ and $d_v = [\BF_v: \BF_q]$ is as in Example \ref{AMotSectLocalShtukas}, the map 
\[
\sigma^*\tau_M^{{d_v}-1}=\sigma^*\tau_M\circ\sigma^{2*}\tau_M\circ\cdots\circ\sigma^{({d_v}-1)*}\tau_M\colon\sigma^{{d_v}*}M\otimes_{A_R}A_{v,R}/\Fa_0\isoto\sigma^*M\otimes_{A_R}A_{v,R}/\Fa_0
\]
is an isomorphism, because $\tau_M$ is an isomorphism  over $A_{v, R}/\Fa_i$ for all  $i\neq 0$. Therefore, it defines canonical isomorphisms of the de Rham realizations 
\begin{align*}
\sigma^*\tau_M^{{d_v}-1}\colon & \Koh^1_{\dR}\bigl(\ulHM,L\dbl z_v-\zeta_v\dbr\bigr)\isoto\Koh^1_{\dR}\bigl(\ulM,L\dbl z_v-\zeta_v\dbr\bigr)\quad\text{and}\\
\sigma^*\tau_M^{{d_v}-1}\colon & \Koh^1_{\dR}(\ulHM,L)\isoto\Koh^1_{\dR}(\ulM,L)\,,
\end{align*}
which are compatible with the Hodge-Pink lattices and the Hodge-Pink filtrations.
\end{Definition}

The \emph{$v$-adic period isomorphism} for an $A$-motive $\ulM$ over a field $K\subset Q_v^\alg$ is provided by the following theorem by using the local $\hat\sigma^*_{\!v}$-shtuka $\ulHM:=\ulHM_v(\ulM)$.

\begin{Theorem}[{\cite[Theorem~4.14]{HartlKim}}] \label{ThmHv,dR}
If $\ulHM$ is a local $\hat\sigma^*_{\!v}$-shtuka over $R$ then there is a canonical comparison isomorphism 
\begin{equation*}
h_{v,\dR}\colon\;\Koh^1_v\bigl(\ulHM,Q_v)\otimes_{Q_v}\BC_v\dpl z_v-\zeta_v\dpr\;\isoto\;\Koh^1_{\dR}\bigl(\ulHM,L\dpl z_v-\zeta_v\dpr\bigr)\otimes_{L\dpl z_v-\zeta_v\dpr}\BC_v\dpl z_v-\zeta_v\dpr
\end{equation*}
If $\ulM$ is an $A$-motive over $L$ (which does not need to have good reduction) then there is a canonical comparison isomorphism 
\begin{equation}\label{EqHv,dR}
h_{v,\dR}\colon\;\Koh^1_v\bigl(\ulM,Q_v)\otimes_{Q_v}\BC_v\dpl z_v-\zeta_v\dpr\;\isoto\;\Koh^1_{\dR}\bigl(\ulM,L\dpl z_v-\zeta_v\dpr\bigr)\otimes_{L\dpl z_v-\zeta_v\dpr}\BC_v\dpl z_v-\zeta_v\dpr
\end{equation}
Both isomorphisms are equivariant for the action of $\sG_L$, where on the source this group acts on both factors of the tensor product and on the target it acts only on $\BC_v$. 
\end{Theorem}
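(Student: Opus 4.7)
The plan is to construct $h_{v,\dR}$ explicitly and then reduce its bijectivity to a trivialization statement for étale local $\hat\sigma^*_{\!v}$-shtukas over $L^\sep$, exactly parallel to the Betti--de Rham period map of Theorem~\ref{PeriodIso}.

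First I would treat the case of a local $\hat\sigma^*_{\!v}$-shtuka $\ulHM=(\hat M,\tau_{\hat M})$. By Definition~\ref{dualTatemodule}, $\Koh^1_v(\ulHM,A_v)$ consists of those $m\in\hat M\otimes_{R\dbl z_v\dbr}L^\sep\dbl z_v\dbr$ with $\tau_{\hat M}(\hat\sigma^*_{\!\hat M}m)=m$. Since $z_v-\zeta_v$ is invertible in $\BC_v\dpl z_v-\zeta_v\dpr$, the morphism $\tau_{\hat M}$ induces an $R\dbl z_v\dbr$-linear isomorphism $\hat\sigma^*_{\!v}\hat M\otimes\BC_v\dpl z_v-\zeta_v\dpr\isoto\hat M\otimes\BC_v\dpl z_v-\zeta_v\dpr$. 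I then set
\[
h_{v,\dR}\colon\;\Koh^1_v(\ulHM,A_v)\otimes_{A_v}\BC_v\dpl z_v-\zeta_v\dpr\;\longto\;\hat\sigma^*_{\!v}\hat M\otimes_{R\dbl z_v\dbr}\BC_v\dpl z_v-\zeta_v\dpr,
\]
$m\otimes f\mapsto f\cdot\tau_{\hat M}^{-1}(m)=f\cdot\hat\sigma^*_{\!\hat M}m$, where the last equality uses the $\tau$-invariance of $m$. The target equals $\Koh^1_\dR(\ulHM,L\dpl z_v-\zeta_v\dpr)\otimes_{L\dpl z_v-\zeta_v\dpr}\BC_v\dpl z_v-\zeta_v\dpr$ by Definition~\ref{DefDeRham}.

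To show $h_{v,\dR}$ is bijective I would observe that both source and target are free $\BC_v\dpl z_v-\zeta_v\dpr$-modules of rank $r=\rk\ulHM$. For the target this is immediate; for the source it rests on \cite[Proposition~4.2]{HartlKim}, which asserts that $\Koh^1_v(\ulHM,A_v)$ is free of rank $r$ over $A_v$ and that the natural map
\[
\alpha\colon\; \Koh^1_v(\ulHM,A_v)\otimes_{A_v}L^\sep\dbl z_v\dbr\;\isoto\;\hat M\otimes_{R\dbl z_v\dbr}L^\sep\dbl z_v\dbr
\]
is an isomorphism. Tensoring $\alpha$ up to $\BC_v\dpl z_v-\zeta_v\dpr$ and post-composing with $\tau_{\hat M}^{-1}\otimes 1$ recovers precisely $h_{v,\dR}$, which is therefore an isomorphism. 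The $\sG_L$-equivariance is automatic: $\sG_L$ acts continuously on $\BC_v$ fixing $L$ pointwise, hence fixes $z_v$, $\zeta_v$ and $\tau_{\hat M}^{\pm 1}$ (which is already defined over $R\dbl z_v\dbr[\tfrac{1}{z_v-\zeta_v}]$), while preserving the submodule $\Koh^1_v(\ulHM,A_v)$ by its very definition as a set of $\tau$-invariants.

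Finally, to handle an $A$-motive $\ulM$ over $L$ without assuming good reduction, I would use the decomposition $A_{v,L}=\prod_{i\in\BZ/d_v\BZ}A_{v,L}/\Fa_i$ from Example~\ref{AMotSectLocalShtukas}. On each factor with $i\ne 0$ the ideal $\CJ$ becomes the unit ideal, so $\tau_M\otimes 1$ is already an isomorphism there and contributes trivially after inverting $z_v-\zeta_v$; on the factor at $\Fa_0$ one recovers the local-shtuka picture with $\hat M=M\otimes A_{v,L}/\Fa_0$ and $\tau_{\hat M}=(\tau_M\otimes 1)^{d_v}$, to which the preceding construction applies verbatim. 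Assembling these contributions yields the period isomorphism~\eqref{EqHv,dR}. The main obstacle in the whole argument is the trivialization statement $\alpha$: it expresses the fact that every étale local $\hat\sigma^*_{\!v}$-shtuka over $L$ becomes constant after base change to $L^\sep$, and is proved by a Lang-style descent argument exploiting the surjectivity of $g\mapsto\tau_{\hat M}(\hat\sigma^*_{\!v}g)\cdot g^{-1}$ on $\GL_r$-valued points of the appropriate coefficient rings; once $\alpha$ is in hand, everything else is formal.
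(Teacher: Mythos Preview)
The paper does not give a proof of this statement; it is imported directly from \cite[Theorem~4.14]{HartlKim}. Your outline follows the construction found there: define $h_{v,\dR}$ by $m\otimes f\mapsto f\cdot\hat\sigma^*_{\!\hat M}m$, and deduce bijectivity from the trivialization $\alpha$ of \cite[Proposition~4.2]{HartlKim}, itself a Lang-type result for \'etale local shtukas over $L^\sep$.

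One step deserves more care. You write ``tensoring $\alpha$ up to $\BC_v\dpl z_v-\zeta_v\dpr$'', but $\alpha$ is an isomorphism of $L^\sep\dbl z_v\dbr$-modules and there is no ring homomorphism $L^\sep\dbl z_v\dbr\to\BC_v\dpl z_v-\zeta_v\dpr$: reexpanding $\sum a_iz_v^i$ with unbounded $a_i\in L^\sep$ around $z_v=\zeta_v$ need not converge. What is actually done in \cite{HartlKim} is to work through integral coefficient rings such as $\CO_{L^\sep}\dbl z_v\dbr$ or $\CO_{\BC_v}\dbl z_v\dbr$, from which the embedding into $\BC_v\dbl z_v-\zeta_v\dbr$ recorded in Definition~\ref{DefNorm} is available, and to show that a suitable $\tau$-equivariant trivialization lives there (possibly after inverting $z_v-\zeta_v$). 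In the Carlitz case of Example~\ref{ExCarlitzVAdic} this is precisely the observation that $\tplus{v}\in\CO_{L^\sep}\dbl z_v\dbr$ even though its inverse $(\tplus{v})^{-1}$, which generates the Tate module, does not lie in $\CO_{L^\sep}\dbl z_v\dbr$. Your sketch captures the correct architecture, but this passage between coefficient rings is where the genuine work sits and would need to be made explicit.
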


In comparison with the $p$-adic comparison isomorphism for an abelian variety over a finite extension of $\BQ_p$ from Theorem~\ref{ThmPAdicPeriodIso}, the ring $\BC_v\dpl z_v-\zeta_v\dpr$ is the function field analog of $\BB_{p,\dR}$.

\begin{Example}\label{ExCarVDR}
For the Carlitz motive $\ulCC = (\CC = \BF_q(\theta)[t], \tau_\CC = t-\theta)$ from Example~\ref{CarMod} we have $\Koh^1_v(\ulCC,Q_v)=Q_v\cdot(\tplus{v})^{-1}\cong Q_v$ and \mbox{$\Koh^1_{\dR}(\ulCC,\BF_q(\theta)\dbl z_v-\zeta_v\dbr)=\BF_q(\theta)\dbl z_v-\zeta_v\dbr=:\Fp$}, see Example~\ref{ExCarlitzVAdic}. The Hodge-Pink lattice is $\Fq=(z_v-\zeta_v)^{-1}\Fp$ and the Hodge filtration satisfies $F^1=\Koh^1_{\dR}(\ulCC,\BF_q(\theta))\supset F^2=(0)$. With respect to the bases $(\tplus{v})^{-1}$ of $\Koh^1_v(\ulCC,Q_v)$ and $1$ of \mbox{$\Koh^1_{\dR}(\ulCC,\BF_q(\theta)\dbl z_v-\zeta_v\dbr)$} the comparison isomorphism $h_{v,\dR}$ from Theorem~\ref{ThmHv,dR} is given by the \emph{$v$-adic Carlitz period} \mbox{$(z_v-\zeta_v)^{-1}(\tplus{v})^{-1}=\hat\sigma_{\!v}^*(\tplus{v})^{-1}$}. It has a pole of order one at $z_v=\zeta_v$ because $\tplus{v}\in \BF_v\dpl\zeta_v\dpr^\sep\langle\tfrac{z_v}{\zeta_v}\rangle\mal\subset \BC_v\dbl z_v-\zeta_v\dbr\mal$. So $h_{v,\dR}\bigl(\Koh^1_v(\ulCC,Q_v)\otimes_{Q_v}\BC_v\dbl z_v-\zeta_v\dbr\bigr)=(z_v-\zeta_v)^{-1}\BC_v\dbl z_v-\zeta_v\dbr=\Fq\otimes_{K\dbl z_v-\zeta_v\dbr}\BC_v\dbl z_v-\zeta_v\dbr$.
\end{Example}

\begin{Definition}\label{DefNorm}
On the power series ring $\CO_{\BC_v}\dbl z_v\dbr$ we consider the $\CO_{\BC_v}$-embedding $\CO_{\BC_v}\dbl z_v\dbr\into\BC_v\dbl z_v-\zeta_v\dbr$ given by $z_v\mapsto z_v=\zeta_v+(z_v-\zeta_v)$. Let $\Theta : \BC_v\dbl z_v- \zeta_v\dbr \to \BC_v, \ z_v \mapsto \zeta_v$ be the residue map. Then $\CO_{\BC_v}\dbl z_v\dbr \cap \ker  \Theta$ is a principal ideal of  $\CO_{v}\dbl z_v\dbr$ generated by $z_v-\zeta_v$. Any other generator is of the form $(z_v-\zeta_v)\cdot u$ with $u \in \CO_{\BC_v}\dbl z_v\dbr\mal$. On $\BC_v\dpl z_v- \zeta_v\dpr$ we  define a valuation $\hat{v}$ by
\[
\hat{v}\left( \sum_{i = -N }^{\infty}b_i (z_v- \zeta_v)^i \right) : = \min\{i: \ b_i \neq 0\}.
\]
and we extend the  valuation $v$ on $ \BC_v$ to $\BC_v\dpl z_v- \zeta_v\dpr$ by
\begin{equation}\label{EqNorm}
v(f): = v\big(\Theta(f\cdot (z_v-\zeta_v)^{-\hat v(f)})\big).
\end{equation}
If $f$ and $g$ are two elements of $\BC_v((z_v- \zeta_v))$, then $\hat v(fg) = \hat v(f) + \hat v(g)$, and hence $v(fg) = v(f) + v(g)$. But note that $v$ does not satisfy the triangle inequality. The valuation $v(f)$ is unchanged, if we replace the generator $z_v-\zeta_v$ of $\CO_{\BC_v}\dbl z_v\dbr \cap \ker  \Theta$ by another generator $(z_v-\zeta_v)\cdot u$ with $u\in\CO_{\BC_v}\dbl z_v\dbr\mal$, because then $v\bigl(\Theta(f\cdot((z_v-\zeta_v)\cdot u)^{-\hat v(f)}\bigr)=v\bigl(\Theta(f\cdot(z_v-\zeta_v)^{-\hat v(f)}\bigr)+ v(\Theta(u))^{-\hat v(f)}=v\bigl(\Theta(f\cdot(z_v-\zeta_v)^{-\hat v(f)}\bigr)$ as $\Theta (u) \in \CO_{v}\mal$.
\end{Definition}

\begin{Example}\label{ExCarValuation}
The inverse $(z_v-\zeta_v)(\tplus{v})=\hat\sigma_{\!v}^*(\tplus{v})$ of the $v$-adic Carlitz period $\sigma_{\!v}^*(\tplus{v})^{-1}$ from Example~\ref{ExCarVDR} satisfies $\hat{v}\bigl((z_v-\zeta_v)(\tplus{v})\bigr)=1$ and $v_p(\hat\sigma_{\!v}^*(\tplus{v}))=v_p\bigl((z_v-\zeta_v)(\tplus{v})\bigr)=v_p(\Theta(\tplus{v}))=v_p(\sum_{i=0}^\infty\tplusminus_i\zeta_v^i)=v_p(\tplusminus_0)=\tfrac{1}{q_v-1}$, see Example~\ref{ExCarlitzVAdic}. The reader should compare this to Example~\ref{ExGm}.
\end{Example}

\section{Complex Multiplication}\label{SectCMAMot}
\setcounter{equation}{0}

\begin{Definition}\label{CMmotives} Let $\ulM$ be an $A$-motive over an $A$-field $K$. If $\QEnd_K(\ulM)$ contains a commutative  semi-simple  $Q$-algebra $E$  of dimension $\dim_{Q}E = \rk \ulM$, then we call $\ulM$ a \emph{CM $A$-motive over $K$} and we say that \emph{$\ulM$ has complex multiplication by $E$ over $K$}.
\end{Definition}

Here semi-simple means that $E$ is a product of fields. Note that we do not assume that $E$ is itself a field. By \cite[Theorem 4.2.5]{Schindler} any CM $A$-motive $\ulM$ is semi-simple. We know from \cite[Theorem 4.4.7]{Schindler}  if $\ulM$ is simple, uniformizable  then $\dim_{Q}\QEnd_K{(\ulM)} \leq \rk \ulM$ and  if in addition $\ulM$ has complex multiplication by $E$, then $E = \QEnd_K{(\ulM)}$ is a field.

Let $\ulM$ be an  $A$-motive over $K$ with complex multiplication through $E$ and let $\CO_E$ be  the integral closure of $A$ in $E$. If $E = \prod_i E_i$ is a product of finite field extensions of $Q$, then $\CO_E = \prod_i \CO_{E_i}$, where $\CO_{E_i}$ is the integral closure of $A$ in $E_i$. By \cite[Theorem 3.3.3]{Schindler} there exists an  $A$-motive $\ulM'$ isogenous to $\ulM$ such that $\CO_E \subseteq \End_K(\ulM')$. So for all aspects which only depend on the isogeny class of $\ulM$ we can assume that $\CO_E \subseteq \End_K(\ulM)$. Then  $M$ is a locally free module over the ring $\CO_E\otimes_{\BF_q} K$ and
\[
M = \bigoplus_i (M\otimes_{\CO_E}\CO_{E_i}).
\]
Since  $\CO_E \hookrightarrow \End_K(\ulM)$ is injective, $M\otimes_{\CO_E}\CO_{E_i}$  is a locally free module over the ring  $\CO_{E_i}\otimes_{\BF_q} K$ of rank $\geq 1$, because otherwise $\CO_{E_i}$ acts as $0$ on $\ulM$, which is a contradiction. Now the estimate
\begin{align*}
\rk_{A_K} M =  \sum_i \rk_{A_K} (M\otimes_{\CO_E}\CO_{E_i}) &= \sum_i \rk_{(\CO_{E_i}\otimes_{\BF_q} K)}(M\otimes_{\CO_E}\CO_{E_i})\cdot [E_i : Q]\\ 
&\geq\sum_i [E_i: Q] = [E:Q] = \rk_{A_K} M
\end{align*}
shows that $\rk_{(\CO_{E_i}\otimes_{\BF_q} L)}(M\otimes_{\CO_E}\CO_{E_i}) =1$  for all $i$. Therefore, $M$ is a locally free module over  $\CO_E\otimes_{\BF_q} K$ of rank $1$. Thus we have the following proposition.

\begin{Proposition}\cite[Proposition~3.3.5]{Schindler}\label{Proplocfreerank1}
Let $\ulM = (M, \tau_M)$ be an  $A$-motive over $K$ with complex multiplication $E$ such that $\CO_E \subseteq \End_K(\ulM)$, then 
\begin{enumerate} 
\item  M is a locally free $\CO_E\otimes_{\BF_q}K $-module of rank 1.
\item $\tau_M : \sigma^\ast M \to M$ is an  $\CO_E\otimes_{\BF_q}K $-linear injection.
\end{enumerate}
\end{Proposition}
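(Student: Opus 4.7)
The plan is to leverage the injection $\CO_E \hookrightarrow \End_K(\ulM)$ to endow $M$ with a module structure over $\CO_E \otimes_{\BF_q} K$ and to extract the rank from the identity $\dim_Q E = \rk \ulM$. First, I observe that morphisms of $A$-motives are by definition $A_K$-linear; therefore the composition $\CO_E \hookrightarrow \End_K(\ulM) \to \End_{A_K}(M)$ factors through $\CO_E \otimes_A A_K = \CO_E \otimes_{\BF_q} K$, making $M$ a module over this ring. Using the orthogonal central idempotents $e_i$ arising from $\CO_E = \prod_i \CO_{E_i}$, I obtain a direct sum decomposition $M = \bigoplus_i M_i$ with $M_i := e_i M \cong M \otimes_{\CO_E} \CO_{E_i}$, and each $M_i$ is nonzero since otherwise $e_i$ would act as $0$ on $\ulM$, contradicting the injectivity of $\CO_E \hookrightarrow \End_K(\ulM)$.

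Next, I verify that each $M_i$ is locally free over $\CO_{E_i} \otimes_{\BF_q} K$ of some rank $r_i \geq 1$. Since $\CO_{E_i}$ is a finitely generated projective $A$-module, the ring $\CO_{E_i} \otimes_{\BF_q} K$ is finite locally free over $A_K$; the module $M_i$ is finitely generated over this ring and $A_K$-projective (as a direct summand of $M$), and local freeness over $\CO_{E_i} \otimes_{\BF_q} K$ is checked after localizing at a maximal ideal of $A_K$. The chain
\[
\rk \ulM \,=\, \rk_{A_K} M \,=\, \sum_i r_i \cdot [E_i : Q] \,\geq\, \sum_i [E_i : Q] \,=\, [E:Q] \,=\, \rk \ulM
\]
then forces $r_i = 1$ for all $i$, which proves part~(1).

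For part~(2), the defining relation $f_e \circ \tau_M = \tau_M \circ \sigma^* f_e$, which holds for every endomorphism $f_e$ of $\ulM$ attached to $e \in \CO_E$, combined with the $A_K$-linearity of $\tau_M$, is precisely the asserted $\CO_E \otimes_{\BF_q} K$-linearity (noting that the $\CO_E$-action on $\sigma^*M$ is by construction $\sigma^*f_e$). For the injectivity: both $\sigma^* M$ and $M$ are locally free $A_K$-modules of the same rank $r$, and $\coker \tau_M$ is finite-dimensional over $K$ and hence $A_K$-torsion. Consequently $\tau_M$ becomes an isomorphism after base change to the total ring of fractions of $A_K$ (a surjection between modules of equal rank), so its kernel is a torsion submodule of the torsion-free module $\sigma^* M$ and must vanish.

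The main technical hurdle I foresee is the locally-free claim for $M_i$ over $\CO_{E_i} \otimes_{\BF_q} K$, which requires going beyond the $A_K$-local freeness already in hand. This is settled by the standard criterion that a finitely generated flat module over a Noetherian ring whose fiber dimension is locally constant is locally free, applied at each maximal ideal of $\CO_{E_i} \otimes_{\BF_q} K$, using the finite flatness of $\CO_{E_i} \otimes_{\BF_q} K$ over $A_K$.
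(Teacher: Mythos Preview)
Your proof is correct and follows essentially the same approach as the paper: decompose $\CO_E=\prod_i\CO_{E_i}$, split $M=\bigoplus_i M_i$, use injectivity of $\CO_E\hookrightarrow\End_K(\ulM)$ to get $r_i\ge1$, and force $r_i=1$ via the rank count $\rk\ulM=\sum_i r_i[E_i:Q]\ge[E:Q]=\rk\ulM$. You supply more detail than the paper on the local-freeness over $\CO_{E_i}\otimes_{\BF_q}K$ and on part~(2), which the paper leaves implicit.
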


\begin{Theorem}[{\cite[Theorem 6.3.6]{Schindler}}] \label{ThmSchindler}
Let $\ulM$ be an   $A$-motive over an $A$-field $K$ with complex multiplication $E$ such that $\CO_E \subseteq \End_K(\ulM)$ and $E$ is separable over $Q$. Then $\ulM$ is already defined over a finite separable extension $L$ of the $A$-field $\Quot(A/\AChar(K))$ which is $Q$ or a finite field, i.e.\ $\ulM\cong \ulM_L\otimes_L K$ for an $A$-motive $\ulM_L$ over $L$.
\end{Theorem}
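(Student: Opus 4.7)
The strategy mirrors the classical Shimura-Taniyama argument for CM abelian varieties (cf.\ Theorem~\ref{ThmPotGoodRed}): exploit the rigidity of the CM structure to show that the $K$-isomorphism class of the pair $(\ulM,\iota\colon\CO_E\into\End_K\ulM)$ is essentially determined by finite combinatorial data defined over a finite separable extension of $F_0:=\Quot(A/\AChar(K))$. Without loss of generality I assume $K$ is algebraically closed; if $(\ulM,\iota)$ descends to a finite separable extension of $F_0$ inside $K^\alg$, then by intersecting with any intermediate field one obtains the statement over the original $K$.

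By Proposition~\ref{Proplocfreerank1} the module $M$ is invertible over $R_K:=\CO_E\otimes_{\BF_q}K$ and $\tau_M\colon\sigma^*M\to M$ is $R_K$-linear. Hence the isomorphism class of $\ulM$ with its $\CO_E$-action is controlled by two pieces of data: the class $[M]\in\Pic(R_K)$, and, after locally trivializing $M$, an element $\xi\in\Frac(R_K)$ giving $\tau_M$ as multiplication by $\xi$, well-defined modulo the equivalence $\xi\sim\sigma^*(u)\xi u^{-1}$ for $u\in R_K^\times$ and subject to the divisor conditions imposed by $(a-\gamma(a))^d\cdot\coker\tau_M=0$. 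The crucial rigidity claim is that both invariants take values in a set defined over $F_0^\sep$. In generic characteristic this uses analytic uniformization (Section~\ref{SectAnalytTh}): a uniformizable CM $A$-motive over $\BC_\infty$ is classified by an $\CO_E$-lattice in $E\otimes_Q\BC_\infty$ compatible with its CM-type, and isomorphism classes of such lattices are governed by $\Pic(\CO_E)$, a finite group already defined over $F_0$. In finite characteristic the isogeny class of $\ulM$ is determined by the characteristic polynomial of Frobenius (with coefficients in $A\subseteq F_0$) via Honda-Tate theory for $A$-motives as developed in \cite{BoHa09}, and within each isogeny class the further refinement is again governed by a finite class group.

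Consequently the isomorphism class of $(\ulM,\iota)$ is fixed by an open subgroup of $\Gal(K/F_0)$, so effective Galois descent produces a model over a finite extension of $F_0$. For separability: in the finite characteristic case $F_0$ is perfect, so there is nothing to prove; in the generic characteristic case, one uses that the Tate module $\Koh^1_v(\ulM,A_v)$ at an auxiliary prime $v\ne\AChar(K)$ is a rank-one $\CO_E\otimes_A A_v$-module on which the geometric Galois group acts by a character, and the Tate conjecture (Theorem~\ref{ThmTateConj}) identifies the field of definition of $(\ulM,\iota)$ up to isogeny with the fixed field of the kernel of this character, which is by construction separable over $F_0$. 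A final step upgrades ``defined up to isogeny'' to ``defined'' by spreading out the $\CO_E$-action and choosing an $\CO_E$-stable lattice. The main obstacle is the rigidity step in the generic-characteristic, positive-transcendence-degree setting: here one must first argue isotriviality, namely that $\ulM$ cannot genuinely vary in families over a base transcendental over $F_0$. This is forced by the CM hypothesis, which rigidifies the Hodge-Pink structure so tightly that deformations are obstructed---concretely, any one-parameter family of CM motives would produce a non-constant family of characters of the absolute Galois group with values in $(\CO_E\otimes_A A_v)^\times$, contradicting the finiteness provided by Theorem~\ref{ThmTateConj} once combined with the analytic or Honda-Tate classification above.
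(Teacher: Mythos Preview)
The paper does not supply its own proof of this theorem: it is simply quoted from Schindler's thesis \cite[Theorem~6.3.6]{Schindler} without argument. So there is no in-paper proof to compare against, and your sketch must stand on its own.

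Your outline captures the correct intuition---CM rigidifies the moduli so severely that only finitely many isomorphism classes occur over $F_0^{\alg}$, forcing descent to a finite extension of $F_0$---and the reduction to invertible $(\CO_E\otimes_{\BF_q}K)$-modules via Proposition~\ref{Proplocfreerank1} is the right first move. However, several steps are genuine gaps rather than omitted routine details. First, your generic-characteristic argument invokes analytic uniformization over $\BC_\infty$, but uniformizability is \emph{not} among the hypotheses of the theorem; you would need to establish it for CM $A$-motives or bypass it entirely (Schindler's route does not pass through $\BC_\infty$). Second, you repeatedly appeal to the Tate conjecture (Theorem~\ref{ThmTateConj}) after having replaced $K$ by its algebraic closure, but that theorem is stated only for finitely generated fields; over $K=K^{\alg}$ the Galois group is trivial and the statement degenerates. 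Third, the closing ``isotriviality'' paragraph is where the real content lies and it is not an argument: asserting that a family of CM motives would yield a ``non-constant family of characters'' contradicting Theorem~\ref{ThmTateConj} does not follow from anything you have set up, and in any case the Tate conjecture over a base of positive transcendence degree over $F_0$ says nothing about constancy in the base direction.

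A cleaner line, closer to what Schindler actually does, is to work directly with the pair $(M,\tau_M)$ as an invertible $\CO_E\otimes_{\BF_q}K$-module with prescribed cokernel type: the isomorphism classes of such pairs form a torsor under a group computed from $\Pic$ and units of $\CO_E\otimes_{\BF_q}K$ modulo the $\sigma^*$-twisted action, and one shows this torsor is already trivialized over a finite separable extension of $F_0$ by a Lang-type argument on the $\sigma$-coinvariants, without ever invoking uniformizability or the Tate conjecture.
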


\begin{Theorem}[{\cite[Section 3.6]{Pelzer09}}] \label{ThmPelzer}
If $\ulM$ is an $A$-motive defined over a finite extension $K/Q$ with complex multiplication by a separable $Q$-algebra $E$, then there exists a finite separable extension $L/K$ such that  $\ulM$ has good reduction at every prime of $\CO_L$.
\end{Theorem}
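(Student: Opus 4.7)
The strategy mirrors the proof of Theorem~\ref{ThmPotGoodRed} for CM abelian varieties. The plan is to first reduce to the case where $\CO_E \subseteq \End_K(\ulM)$ and $M$ is locally free of rank $1$ over $\CO_E \otimes_{\BF_q} K$; then to establish good reduction at all but finitely many places by a spreading-out argument; and finally, for each of the remaining ``bad'' places $v$, to use the CM structure together with a local class field theory argument to kill the inertia by passing to a finite extension.

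By Theorem~\ref{ThmSchindler} we may assume $K$ is a finite separable extension of $Q$. Since good reduction (up to finite base change) is stable in an isogeny class, we may replace $\ulM$ by an isogenous $A$-motive so that $\CO_E \subseteq \End_K(\ulM)$; compare Remark~\ref{RemIsogDriMod}\ref{RemIsogDriMod_D}. Then Proposition~\ref{Proplocfreerank1} gives $M$ as a locally free $\CO_E \otimes_{\BF_q} K$-module of rank $1$ with $\CO_E \otimes_{\BF_q} K$-linear $\tau_M$. Spreading $M$ out to a locally free model over $\CO_E \otimes_{\BF_q} \CO_{K,S^{-1}}$ for a suitable finite set $S$ of places, and using that $\tau_M$ is invertible away from $V(\CJ)$, we obtain good reduction of $\ulM$ at every $v \notin S$ automatically.

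For each $v \in S$ choose a place $w$ of $Q$ different from the $A$-characteristic of the residue field at $v$. The CM structure makes $\check T_w \ulM = \Koh^1_w(\ulM, A_w)$ a free rank-one module over $\CO_E \otimes_A A_w$, so the local Galois representation at $v$ is a continuous character
\[
\rho_{v,w}\colon \sG_{K_v} \longto (\CO_E \otimes_A A_w)^\times.
\]
The critical step is to show that the restriction of $\rho_{v,w}$ to the inertia subgroup $I_v \subset \sG_{K_v}$ has \emph{finite} image. This is the function-field analog of the algebraicity of CM Hecke characters: the $E$-action on the Hodge-Pink data of $\ulM$ at $v$ forces $\rho_{v,w}|_{I_v}$ to be of algebraic type with bounded weights, and a Lubin--Tate / local class field theory argument then bounds its image by a finite group; the details are worked out in \cite[\S\,3.6]{Pelzer09}.

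Granting this finiteness, pass to a finite separable extension $L_v/K_v$ whose inertia equals the kernel of $\rho_{v,w}|_{I_v}$. By the N\'eron--Ogg--Shafarevich criterion for $A$-motives---namely, that $\ulM$ has good reduction at $v$ iff the associated local shtuka $\ulHM_w(\ulM)$ is \'etale over $\CO_{L_v}$, equivalently iff $\check T_w \ulM$ is unramified at $v$ (compare Section~\ref{SectLocalShtukas})---the $A$-motive $\ulM$ acquires good reduction over $L_v$. A finite separable extension $L/K$ satisfying $L \otimes_K K_v \supseteq L_v$ for every $v \in S$ exists by weak approximation, and over $L$ the $A$-motive $\ulM$ has good reduction at every place. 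The principal obstacle is the finiteness claim for the inertia image, which constitutes the technical heart of the argument and is the function-field counterpart of the classical proof that CM abelian varieties have potentially good reduction everywhere.
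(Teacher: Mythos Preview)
The paper does not give its own proof of this theorem; it simply cites \cite[Section~3.6]{Pelzer09}. So there is nothing in the paper to compare your argument against.

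Your outline follows the classical Serre--Tate strategy and is structurally sound: reduce to $\CO_E\subseteq\End_K(\ulM)$, spread out to get good reduction away from a finite set, and then use the rank-one nature of the $w$-adic Tate module over $\CO_E\otimes_AA_w$ to reduce the question at each bad place to the finiteness of the inertia image of an abelian character. You correctly identify this finiteness as the technical heart and defer it to Pelzer, which is consistent with what the paper does.

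There is, however, a confusion in your parenthetical formulation of the N\'eron--Ogg--Shafarevich criterion. The local shtuka $\ulHM_w(\ulM)$ is an object attached to the $w$-adic completion of the base (the Tate-module prime), not to $\CO_{L_v}$; the phrase ``$\ulHM_w(\ulM)$ is \'etale over $\CO_{L_v}$'' does not type-check. The correct criterion is precisely the second clause you wrote: $\ulM$ has good reduction at $v$ if and only if $\check T_w\ulM$ is unramified at $v$ for some (equivalently all) $w$ different from the residue $A$-characteristic at $v$. This criterion for $A$-motives is due to Gardeyn and should be cited directly (e.g.\ \cite{Gardeyn4}); Section~\ref{SectLocalShtukas} of the present survey does not state it. Likewise, the claim that good reduction is isogeny-invariant up to finite base change needs a reference (again Gardeyn) or a short argument via the criterion just mentioned.
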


\begin{Remark}\label{RemCMDrinfMod}
If $\ulM=\ulM(\ulG)$ is the $A$-motive of a Drinfeld $A$-module $\ulG$ then both theorems are well known. Namely, in this case there is exactly one place of $E$ above $\infty$ by \cite[Proposition~4.7.17]{Goss}. Then $\ulG$ can be viewed as a Drinfeld $\CO_E$-module of rank $1$. All these are defined over the Hilbert class field of $E$ and have everywhere good reduction by \cite{Hayes79}, see \cite[Theorems~2.6.4 and 3.4.2]{Thakur04}.
\end{Remark}

\begin{Definition}\label{DefCMTypeAMot}
A \emph{CM-type} is a pair $(E,(d_\psi)_{\psi\in H_E})$ consisting of a finite dimensional, semi-simple, commutative $Q$-algebra $E$ and a tuple of integers $(d_\psi)_{\psi\in H_E}$ indexed by $H_E:=\Hom_Q(E,Q^\alg)$.

An \emph{isomorphism} $f\colon(E,(d_\psi)_{\psi\in H_E})\isoto(E',(d'_{\psi'})_{\psi'\in H_{E'}})$ of CM-types is an isomorphism $f\colon E\isoto E'$ of $Q$-algebras with $d_{\psi'\circ f}=d'_{\psi'}$ for all $\psi'\in H_{E'}$.
\end{Definition}

\begin{Remark}
The analog of a classical CM-type $(E,\Phi)$ as in Definition~\ref{DefAbVarCMType} would be a tuple $(d_\psi)_{\psi\in H_E}$ for which $d_\psi\in\{0,1\}$. Then one can set $\Phi:=\{\psi\in H_E\colon d_\psi=1\}$ and has $d_\psi=1$ for all $\psi\in \Phi$ and $d_\psi=0$ for all $\psi\in H_E\setminus\Phi$. But note, that we need a more flexible definition of CM-type here, due to the construction of the CM-type of a CM $A$-motive in Definition~\ref{DefCMTypeOfAMotive} below.
\end{Remark}

To prepare for this construction let $z \in Q$ be a uniformizer at $\infty$ and denote by $\zeta$ the image of $z$ in $Q^\alg$ under the natural inclusion $Q\subset Q^\alg$. We consider the power series ring $Q^\alg\dbl z-\zeta\dbr$ over $Q^\alg$ in the ``variable'' $z-\zeta$ as a $Q$-algebra via $z\mapsto\zeta+(z-\zeta)$. Let $E$ be a finite dimensional, semi-simple, commutative $Q$-algebra. Then by \cite[Lemma~A.3]{HartlSingh} there is a decomposition 
\begin{equation}\label{EqDecomp}
E\otimes_QQ^\alg\dbl z-\zeta\dbr\,=\,\prod_{\psi\in H_E}Q^\alg\dbl y_\psi-\psi(y_\psi)\dbr,
\end{equation}
where $y_\psi$ is a uniformizer at a place of $E$ such that $\psi(y_\psi)\ne0$. By \cite[Lemma~1.5]{HartlJuschka} the factors are obtained as the completion of $\CO_E\otimes_A A_{Q^\alg}=\CO_E\otimes_{\BF_q}Q^\alg$ along the kernels $(a\otimes1-1\otimes\psi(a)\colon a\in\CO_E)$ of the homomorphisms $\psi\otimes\id_{Q^\alg}\colon\CO_E\otimes_{\BF_q}Q^\alg\to Q^\alg$ for $\psi\in H_E$. If $(E,(d_\psi)_{\psi\in H_E})$ is a CM-type, then there is a finite free $Q^\alg\dbl z-\zeta \dbr $-submodule 
\begin{equation}\label{EqCMTypePowerSeries}
\Fq \,:=\,\prod_{\psi\in H_E}\bigl(y_\psi-\psi(y_\psi)\bigr)^{-d_\psi}\cdot Q^\alg\dbl y_\psi-\psi(y_\psi)\dbr\subseteq E \otimes_{Q}Q^\alg\dpl z-\zeta \dpr 
\end{equation}
with $\Fq\cdot Q^\alg\dpl z-\zeta \dpr= E\otimes_{Q}Q^\alg\dpl z-\zeta \dpr $. Conversely, every such $Q^\alg\dbl z-\zeta \dbr $-submodule $\Fq$ uniquely determines a tuple $(d_\psi)_{\psi\in H_E}$ of integers satisfying \eqref{EqCMTypePowerSeries}. So we could equivalently call $(E,\Fq)$ a ``CM-type''. In this description, an isomorphism $f\colon(E, \Fq)\isoto(E', \Fq')$ of CM-types is an isomorphism $f:E\isoto E'$ of $Q$-algebras which satisfies $(f\otimes\id_{Q^\alg\dpl z-\zeta \dpr})(\Fq)= \Fq'$.

\begin{Definition}\label{DefCMTypeOfAMotive}
Let $\ulM$ be an $A$-motive over a finite field extension $K\subset Q^\alg$ of $Q$ with complex multiplication through  $E$. We assume that $K$ contains $\psi(E)$ for all $\psi\in H_E$. Then the decomposition~\eqref{EqDecomp} exists already with $Q^\alg$ replaced by $K$. The $E\otimes_Q K\dbl z-\zeta\dbr$-module $\Koh^1_{\dR}(\ulM,K\dbl z-\zeta\dbr)$ is finite free of rank one, and correspondingly decomposes into eigenspaces 
\[
\Koh^\psi(\ulM,K\dbl y_\psi-\psi(y_\psi)\dbr)\;:=\;\Koh^1_{\dR}(\ulM,K\dbl z-\zeta\dbr)\otimes_{E\otimes_QK\dbl z-\zeta\dbr}\,K\dbl y_\psi-\psi(y_\psi)\dbr
\]
each of which is free of rank one over $K\dbl y_\psi-\psi(y_\psi)\dbr$, that is
\[
\Fp^{\ulM}\;:=\;\Koh^1_\dR(\ulM,K\dbl z-\zeta\dbr ) \;=\; \DS\prod_{\psi\in H_E}\Koh^\psi(\ulM,K\dbl y_\psi-\psi(y_\psi) \dbr)\,.
\]
Since the Hodge-Pink lattice $\Fq^\ulM$ from Definition~\ref{DefDeRham} is also an $E\otimes_Q K\dbl z-\zeta\dbr$-module and contains $\Fp^\ulM$, there are non-negative integers $d_\psi\in\BZ_{\ge0}$ such that 
\[
\Fq^\ulM\;=\;\prod_{\psi\in H_E}(y_\psi-\psi(y_\psi))^{-d_\psi}\Koh^\psi(\ulM,K\dbl y_\psi-\psi(y_\psi)\dbr)\,. 
\]
The tuple $(d_\psi)_{\psi\in H_E}$ is the \emph{CM-type} of $\ulM$. Since $\coker\tau_M=\Fq^\ulM/\Koh^1_{\dR}(\ulM,K\dbl z-\zeta\dbr)$ we see that $d_\psi$ is the dimension over $K$ of the generalized $\psi$-eigenspace of the action of $E$ on $\coker\tau_M$.

If we fix an isomorphism $\alpha\colon \Koh^1_{\dR}(\ulM,K\dbl z-\zeta\dbr)\isoto E\otimes_Q K\dbl z-\zeta\dbr$, then the CM-type of $\ulM$ can equivalently be described as $(E,\alpha(\Fq^\ulM))$. 
\end{Definition}

\begin{Example}\label{ExCMDriMod}
Let $\ulG$ be a Drinfeld $A$-module over an $A$-field $K$ of generic $A$-characteristic, such that $\ulM:=\ulM(\ulG)$ has CM by $\CO_E$ for a field extension $E$ of $Q$ with $[E:Q]=\rk\ulM=\rk\ulG$. By Remark~\ref{RemCMDrinfMod} we may assume that $K$ is a finite extension of $Q$, and we can fix an embedding $K\subset Q^\alg$. Theorem~\ref{ThmEquivDriMod} and Corollary~\ref{CorEndDriMod} imply that $\QEnd_K(\ulM)=\QEnd_K(\ulG)^{\rm op}$ is a (commutative) field extension of $Q$ of degree dividing $\rk\ulG$ and containing $E$. Thus, $E=\QEnd_K(\ulM)=\QEnd_K(\ulG)$. The field $E$ acts $K$-linearly on the one dimensional $K$-vector space $\Lie G$. Therefore, there is a $Q$-homomorphism $\psi_0\colon E\to \End_K(\Lie G)=K$, that is, an element $\psi_0\in H_E$ such that every $a\in E$ acts on $\Lie G$ via multiplication with $\psi_0(a)$. If $K$ contains $\psi(E)$ for all $\psi\in H_E$, then as $E$-modules, sequence \eqref{EqHodgeSeqDriMod} takes the form
\[
0\longto\bigoplus_{\psi\ne\psi_0}K_\psi \longto \Koh_{1,\dR}(\ulG,K) \longto K_{\psi_0}\longto0
\]
where $K_\psi$ denotes the $1$-dimensional $K$-vector space on which $E$ acts via $\psi$. In particular $\Lie\ulG=K_{\psi_0}$, and hence \eqref{EqHodgeSeqDriMod} is analogous to the decomposition \eqref{EqHodgeCMAbVar}. Since $\coker\tau_M\cong(\Lie\ulG)^\vee$ is $1$-dimensional with the induced $E$-action also given by $\psi_0$, the CM-type of $\ulG$ is $(E,(d_\psi)_{\psi\in H_E})$ with $d_{\psi_0}=1$ and $d_\psi=0$ for all $\psi\ne\psi_0$. This yields an isomorphism
\[
\tau_{M}\colon \big(y_{\psi_0}-{\psi_0}(y_{\psi_0})\big)^{-1}\Koh^{\psi_0}\big(\ulM,K\dbl y_{\psi_0}-{\psi_0}(y_{\psi_0})\dbr \big)\big/\Koh^{\psi_0}\big(\ulM,K\dbl y_{\psi_0}-{\psi_0}(y_{\psi_0})\dbr \big) \;=\; \Fq^{\ulM}/ \Fp^{\ulM} \;\isoto\; \coker \tau_{M}.
\]
Let $\omega_{\psi_0}\in\Koh^{\psi_0}(\ulM,K\dbl y_{\psi_0}-\psi_0(y_{\psi_0}) \dbr)$ be a $K\dbl y_{\psi_0}-\psi_0(y_{\psi_0})\dbr$-generator. Then $m:=(y_{\psi_0}-{\psi_0}(y_{\psi_0}))^{-1}\cdot\omega_{\psi_0}\in\Fq^\ulM$ and the image of $m$ in $\coker\tau_M\cong\Fq^\ulM/\Fp^\ulM$ generates the one dimensional $K$-vector space $\coker\tau_M$. In particular, if $E/Q$ is separable, we can take $y_{\psi_0}=z$ and $\psi_0(y_{\psi_0})=\zeta$ by \cite[Lemma~1.3]{HartlJuschka}. Then $y_{\psi_0}-{\psi_0}(y_{\psi_0})=z-\zeta$ and $K\dbl y_{\psi_0}-\psi_0(y_{\psi_0})\dbr=K\dbl z-\zeta\dbr$.
\end{Example}

\section{The Taguchi height of a Drinfeld module}\label{SectTagHeight}

Pushing the analogy between abelian varieties and Drinfeld modules forward, Taguchi~\cite[Section 5]{Tag} defined the analog of the Faltings height for Drinfeld modules. It is today called the \emph{Taguchi height}. Taguchi used it to prove the Tate Conjecture~\ref{MorTate} for Drinfeld modules. We follow the exposition of Wei~\cite[\S\,5.1]{Wei20}.

\begin{Definition}\label{DefLattice}
For an $A$-lattice $\Lambda\subset\BC_\infty$ of rank $r$, a $Q_\infty$-basis $\{\lambda_i\}_{1\leq i \leq r}$ of $Q_\infty \cdot \Lambda$ is called \emph{orthogonal} if  $\lambda_1,\ldots,\lambda_r$ satisfy that 
\begin{enumerate}
\item \label{DefLattice_A}
$\lambda_i \in \Lambda$ for $1\leq i \leq r$ ,
\item \label{DefLattice_B}
$|a_1 \lambda_1+\ldots + a_{r} \lambda_{r}|_{\infty} = \max\{ \,|a_i \lambda_i|_\infty; 1\leq i \leq r\,\}$ for all $a_1,\ldots,a_{r} \in Q_\infty$ ,
\item \label{DefLattice_C}
$Q_\infty \cdot \Lambda = \Lambda + (A_\infty \lambda_1 + \ldots + A_\infty \lambda_r)$ . 
\end{enumerate}
Note that if $\lambda_i \in Q\cdot\Lambda$ for $1\leq i \leq r$ such that $\DS\oplus_{i=1}^{r} Q\lambda_i = Q\cdot \Lambda$ and \ref{DefLattice_B} holds, then \ref{DefLattice_A} and \ref{DefLattice_C} can be achieved by multiplying all $\lambda_i$ with some $a \in A$ that has $v_\infty(a)\ll 0$. Then we define the \emph{$A$-volume} $D_A(\Lambda)$ of $\Lambda$ by 
\begin{equation}\label{EqD_A}
D_A( \Lambda) \;:=\; \left( \frac{\prod_{1\leq i \leq r} |\lambda_i|_\infty}{\#\big(\Lambda /(A \lambda_1 + \cdots +A \lambda_r)\big)}\right)^{1/r} =\; q^{1-g_Q}\cdot\left( \frac{ \prod_{1\leq i \leq r} |\lambda_i|_\infty}{\#\big(\Lambda \cap (A_\infty\lambda_1 + \cdots +A_\infty \lambda_r)\big)}\right)^{1/r} ,
\end{equation}
where $g_Q$ is the genus of $Q$
\end{Definition}

\begin{Example}\label{ExDA(OE)}
Let $E$ be a finite \emph{imaginary} field extension of $Q$, that is, $E_\infty:=E\otimes_Q Q_\infty$ is still a field. Then the absolute value $|\,.\,|_\infty$ on $Q_\infty$ extends in a unique way to an absolute value on $E_\infty$. The latter equals the restriction of the absolute value $|\,.\,|_\infty$ on $\BC_\infty$ for any $Q_\infty$-embedding $E_\infty\into\BC_\infty$. Under any such embedding $\CO_E$ is an $A$-lattice in $\BC_\infty$ of rank $[E:Q]$, and we can define $D_A(\CO_E)$, which is independent of the chosen embedding. If the ramification of $\infty$ in $E/Q$ is tame then 
\[
\log D_A(\CO_E) \;=\; \frac{1}{2[E:Q]}\cdot \log \#(A/\Fd_{\CO_E/A})
\]
by \cite[Remark~5.6]{Wei20} where $\Fd_{\CO_E/A}$ is the (relative) discriminant of $\CO_E$ over $A$.
\end{Example}

For the Taguchi height \cite[\S\,5]{Tag} of a Drinfeld module the following alternative, equivalent definition was given by Wei~\cite[\S\,5.1]{Wei20}.

\begin{Definition}[{\cite[\S\,5]{Tag}, \cite[\S\,5.1]{Wei20}}] \label{DefTagHeight}
Let $\ulG=(G,\phi)$ be a Drinfeld $A$-module of rank $r$ over a finite field extension $K\subset Q^\alg$ of $Q$. For every $\eta\in H_K:=\Hom_Q(K,Q^\alg)$ the embedding $\eta\colon K\into Q^\alg\subset Q_v^\alg$ allows to restrict the valuation $v$ on $Q_v^\alg$ to a valuation, that is, a place $\tilde v_\eta$ of $K$, such that the completion $K_{\tilde v_\eta}$ equals the closure of $\eta(K)$ in $Q_v^\alg$.  Conversely, for each place $\tilde v$ of $K$ with $\tilde v|v$, we let $K_{\tilde v}$ be the completion of $K$ at $\tilde v$. We choose a $Q_v$-embedding $\eta\colon K_{\tilde v}\into Q_v^\alg$ and the induced $Q$-embedding $\eta\colon K\into Q^\alg$. Then $\tilde v=\tilde v_\eta$. In this way the place $\tilde v$ is obtained $[K_{\tilde v}:Q_v]$-many times. We let $\ulG^\eta=(G^\eta,\phi^\eta)$ be the base change of $\ulG$ to $Q^\alg$ via $\eta\colon K\into Q^\alg$ and also to $\BC_\infty$ via the fixed inclusion $Q^\alg\subset\BC_\infty$. 

We choose an isomorphism $m\colon G\isoto\BG_{a,K}$ and consider the induced isomorphisms $m^\eta\colon G^\eta\isoto\BG_{a,Q^\alg}$ and $\Lie m^\eta\colon\Lie G^\eta\isoto Q^\alg$ for every $\eta\in H_K$. The \emph{local height of $\ulG$ at $\wt\infty_\eta$ with respect to $m$} is given by
\begin{equation}\label{EqTagHtInfinity}
ht_{{\rm Tag},\wt\infty_\eta}(\ul G/K) \;:=\; -[K_{\wt\infty_\eta}: Q_\infty] \cdot \log_q D_A\bigl( \Lie m^\eta\bigl(\Koh_{1,\Betti}(\ulG^\eta,A)\bigr)\bigr).
\end{equation}
To define the local height of $\ulG$ at a finite place ${\tilde v_\eta}$ of $K$ with ${\tilde v_\eta}|v\ne \infty$ we write
$$
m^\eta\circ \phi^\eta_a\circ (m^\eta)^{-1} \;=\; \gamma(a)+ \sum_{i=1}^{r \deg a} \phi^\eta_{a,i} \,\tau^i \;\in\; \End_{Q^\alg,\BF_q}(\BG_{a,Q^\alg})\;=\;Q^\alg\{\tau\}\qquad\text{with}\quad \phi^\eta_{a,i}\;\in\; Q^\alg\,.
$$
for each $a \in A$. We put $\ord_{\tilde v_\eta}(\ul G) := \min\biggl\{ \DS\frac{e(\tilde v_\eta|v)\cdot v(\phi^\eta_{a,i})}{q^i-1}: a \in A\setminus\BF_q, \ 1\leq i \leq r\deg a \biggr\}$, where $e(\tilde v_\eta|v)$ is the ramification index of $\tilde v_\eta$ in $K/Q$. The \emph{local height of $\ulG$ at ${\tilde v_\eta}$ with respect to $m$} is given by
\begin{equation}\label{EqTagHtInfinity2}
ht_{{\rm Tag},{\tilde v_\eta}}(\ul G/K) \;:=\; -[\BF_{\tilde v_\eta}:\BF_q] \cdot \lfloor\ord_{{\tilde v_\eta}}(\ul G)\rfloor\,,
\end{equation}
where $\lfloor x\rfloor$ denotes the largest integer $n\le x$, and $\BF_{\tilde v_\eta}$ is the residue field of ${\tilde v_\eta}$. 

Then the \emph{Taguchi height} $ht_{\rm Tag}(\ulG/K)$ of $\ulG$ is defined by taking the sum over all places of $K$
\begin{equation}\label{EqTagHtInfinity3}
ht_{\rm Tag}(\ul G/K)\;:=\; \frac{1}{[K:Q]} \cdot \biggl(\sum_{{\tilde v} \nmid \infty} ht_{{\rm Tag},{\tilde v}}(\ul G/K) + \sum_{\wt\infty\mid \infty} ht_{{\rm Tag},\wt\infty}(\ul G/K)\biggr).
\end{equation}
It does not depend on the isomorphism $m$.
\end{Definition}

\begin{Remark}
(1) Let $K'$ be a finite field extension of $K$. Let $\eta'\colon K'\into Q^\alg$ be a $Q$-homomorphism and let $\eta\colon K\into Q^\alg$ be its restriction to $K$. Let $\wt\infty'_{\eta'}$ and $\wt\infty_\eta$ be the corresponding places of $K'$ and $K$, respectively. It is clear that $\Lie m\bigl(\Koh_{1,\Betti}(\ulG^{\eta'},A)\bigr)=\Lie m\bigl(\Koh_{1,\Betti}(\ulG^{\eta},A)\bigr) \subset \BC_\infty$, and
$$ht_{{\rm Tag},\wt\infty'_{\eta'}}(\ul G/K') \;=\; [K'_{\wt\infty'_{\eta'}}: K_{\wt\infty_\eta}] \cdot ht_{{\rm Tag},\wt\infty_\eta}(\ul G/K).$$
For places ${\tilde v}$ of $K$ and ${\tilde v}'$ of $K'$ with ${\tilde v}'\mid {\tilde v} \nmid \infty$, one has $\ord_{{\tilde v}'}(\ul G) = e({\tilde v}'|{\tilde v}) \cdot \ord_{{\tilde v}}(\ul G)$, where $e({\tilde v}'|{\tilde v})$ is the ramification index of ${\tilde v}' / {\tilde v}$. Thus we get
$$
ht_{{\rm Tag},{\tilde v}'}(\ul G/K') \;\leq\; [K'_{{\tilde v}'}:K_{\tilde v}] \cdot ht_{{\rm Tag},{\tilde v}}(\ul G/K).
$$
In particular, assume that $\ul G$ has stable reduction at ${\tilde v}$, that is, there is an $x\in K_{\tilde v}$ such that $v(x^{q^i-1}\phi^\eta_{a,i})\ge0$ for all $i$ and $a$, and for every $a\in A\setminus\BF_q$ there is an $i\ge1$ such that $v(x^{q^i-1}\phi^\eta_{a,i})=0$. Then $\ord_{\tilde v}(\ul G)=-e(\tilde v|v)\cdot v(x)=-\tilde v(x)$ is an integer, which implies that
$ht_{{\rm Tag},{\tilde v}'}(\ul G/K') = [K'_{{\tilde v}'}:K_{\tilde v}] \cdot ht_{{\rm Tag},{\tilde v}}(\ul G/K)$. In conclusion, we have  $ht_{\rm Tag}(\ul G/K') \leq ht_{\rm Tag}(\ul G/K)$, and the equality holds when $\ulG$ has stable reduction everywhere.

\medskip\noindent
(2) Note that every Drinfeld $A$-module $\ul G$ over $K$ has potentially stable reduction everywhere by \cite[Proposition~7.1]{Drinfeld}. Define the \textit{stable Taguchi height of $\ul G$} as
$$
ht_{\rm Tag}^{\rm st}(\ul G)\;:=\; \log q \cdot \lim_{K'/K\text{ finite}} ht_{\rm Tag}(\ul G/K'),
$$
which is always convergent by (1).

\medskip\noindent
(3) Let $\ul G$ and $\ul G'$ be two Drinfeld $A$-modules over $Q^\alg$ which are isomorphic over $Q^\alg$. Then 
$$ht_{\rm Tag}^{\rm st}(\ul G) \;=\; ht_{\rm Tag}^{\rm st}(\ul G') \,.$$
\end{Remark}

\section{The Analog of Colmez's Conjecture for CM $A$-Motives}\label{SectColmezConjAMot}
\setcounter{equation}{0}

In \cite{HartlSingh} the authors have formulated the analog of Colmez's conjecture (Section~\ref{SectColmezConjAbVar}) for periods of CM $A$-motives. We consider the following

\begin{Situation}\label{SitCMAMot}
Let $\ulM$ be a uniformizable $A$-motive over a finite extension $K\subset Q^\alg$ of $Q$ with complex multiplication of CM-type $(E,(d_\psi)_{\psi\in H_E})$, in the sense of Definition~\ref{DefCMTypeAMot} such that $E$ is a product of \emph{separable} field extensions of $Q$ and $\ulM$ has complex multiplication by the ring of integers $\CO_E$ of $E$. As an abbreviation we denote the CM-Type of $\ulM$ by $(E,\Phi)$ with $\Phi=(d_\psi)_{\psi\in H_E}$. Let $H_E:=\Hom_Q(E,Q^\alg)$ be the set of all $Q$-homomorphisms $E\into Q^\alg$ and assume that $K$ contains $\psi(E)$ for every $\psi\in H_E$. By Theorems~\ref{ThmSchindler} and \ref{ThmPelzer} we may assume moreover, that $K$ is a finite Galois extension of $Q$ and that $\ulM$ has good reduction at every prime of $K$. For a fixed $\psi\in H_E$ let $\omega_\psi$ be a generator of the $K\dbl y_\psi-\psi(y_\psi)\dbr$-module $\Koh^\psi(\ulM,K\dbl y_\psi-\psi(y_\psi)\dbr)$. The image of $\omega_\psi$ in $\Koh^1_{\dR}(\ulM,K)$ is non-zero and satisfies $a^*\omega_\psi=\psi(a)\cdot\omega_\psi$ for all $a\in E$. For every embedding $\eta\colon K\into Q^\alg$, let $\ulM^\eta:=\ulM\otimes_{K,\eta}K$ and $\omega_\psi^\eta\in\Koh^{\eta\psi}(\ulM^\eta,K\dbl y_{\eta\psi}-\eta\psi(y_{\eta\psi})\dbr)$ be deduced from $\ulM$ and $\omega_\psi$ by base extension, and let $u_\eta\in\Koh_{1,\Betti}(\ulM^\eta,Q)=\Hom_A\bigl(\Koh^1_\Betti(\ulM^\eta,A),Q\bigr)$ be an $E$-generator. Let $v$ be a place of $Q$. 

If $v=\infty$ the pairing \eqref{Eq3PeriodIso} from Theorem~\ref{PeriodIso} between Betti and de Rham cohomology gives a pairing
\[
\langle\,.\;,\,.\,\rangle_\infty\colon \Koh_{1,\Betti}(\ulM^\eta,Q) \times \Koh^1_{\dR}(\ulM^\eta,K) \;\longto\;\BC_\infty\,,\quad (u_\eta,\omega_\psi^\eta)\; \longmapsto\; \langle u_\eta,\omega_\psi^\eta\rangle_\infty\;=:\;{\TS \int_{u_\eta}}\omega_\psi^\eta\;.
\]
We define the absolute value $\bigl|\int_{u_\eta}\omega_\psi^\eta\bigr|_\infty:=|\langle u_\eta,\omega_\psi^\eta\rangle_\infty|_\infty=q_\infty^{-v_\infty\left(\langle u_\eta,\omega_\psi^\eta\rangle_\infty\right)}\in\BR$. 

If $v\subset A$ is a maximal ideal, the comparison isomorphism $h_{\Betti,v}$ from \eqref{Eq1PeriodIso} in Theorem~\ref{PeriodIso} between Betti and $v$-adic cohomology together with the comparison isomorphism $h_{v,\dR}$ between $v$-adic and de Rham cohomology from \eqref{EqHv,dR} in Theorem~\ref{ThmHv,dR} yield a pairing
\begin{eqnarray*}
\langle\,.\;,\,.\,\rangle_v\colon & \Koh_{1,\Betti}(\ulM^\eta,Q) \times \Koh^1_{\dR}(\ulM^\eta,K) & \longto\es\BC_v\dpl z_v-\zeta_v\dpr\,,\\[1mm]
& \qquad(u_\eta,\omega_\psi^\eta)\; & \longmapsto\es \langle u_\eta,\omega_\psi^\eta\rangle_v\;:=\; u_\eta\otimes\id_{\BC_v\dpl z_v-\zeta_v\dpr}\bigl(h_{\Betti,v}^{-1}\circ h_{v,\dR}^{-1}(\omega_\psi^\eta)\bigr)\,.
\end{eqnarray*}
We define the absolute value $\bigl|\int_{u_\eta}\omega_\psi^\eta\bigr|_v:=|\langle u_\eta,\omega_\psi^\eta\rangle_v|_v:=q_v^{-v\left(\langle u_\eta,\omega_\psi^\eta\rangle_v\right)}\in\BR$, where the ``valuation'' $v$ on $\BC_v\dpl z_v-\zeta_v\dpr$ was defined in \eqref{EqNorm} in Definition~\ref{DefNorm}. 
\end{Situation}

In analogy with Section~\ref{SectColmezConjAbVar} we now consider the product $\prod\limits_v\prod\limits_{\eta\in H_K}\bigl|\int_{u_\eta}\omega_\psi^\eta\bigr|_v$ over all places $v$ of $Q$, or equivalently $\tfrac{1}{\#H_K}$ times its logarithm 
\begin{equation}\label{EqSumAMot}
\TS\tfrac{1}{\#H_K}\sum\limits_v\sum\limits_{\eta\in H_K}\log\bigl|\int_{u_\eta}\omega_\psi^\eta\bigr|_v\;=\;\tfrac{1}{\#H_K}\sum\limits_{\eta\in H_K}\log\bigl|\int_{u_\eta}\omega_\psi^\eta\bigr|_\infty-\tfrac{1}{\#H_K}\sum\limits_{v\ne\infty}\sum\limits_{\eta\in H_K}v\bigl(\int_{u_\eta}\omega_\psi^\eta\bigr)\log q_v\,.
\end{equation}
Again the right sum over all $v\ne\infty$ does not converge. Namely, we prove in \cite[Theorem~1.3]{HartlSingh} the following Theorem~\ref{ThmValueAtV} below. To formulate the theorem we recall Definition~\ref{DefArtinMeasure}. For our CM-type $(E,\Phi)$ and for every $\psi\in H_E$ we define the functions
\begin{align}\label{Eq:a_E}
a_{E,\psi,\Phi}\colon\;\sG_Q\to\BZ, & \quad g\mapsto d_{g\psi}\qquad\text{and}\\[2mm]
a^0_{E,\psi,\Phi}\colon\;\sG_Q\to\BQ, & \quad g\mapsto \tfrac{1}{\#H_K}\TS\sum\limits_{\eta\in H_K} a_{E,\eta\psi,\eta\Phi}(g) \;=\; \tfrac{1}{\#H_K}\TS\sum\limits_{\eta\in H_K}d_{\eta^{-1}g\eta\psi} \label{Eq:a0_E}
\end{align}
which factor through $\Gal(K/Q)$ by our assumption that $\psi(E)\subset K$ for all $\psi\in H_E$. In particular, $a_{E,\psi,\Phi}\in\CC(\sG_Q,\BQ)$ and $a^0_{E,\psi,\Phi}\in\CC^0(\sG_Q,\BQ)$ is independent of $K$. 

We also define integers $v(\omega_\psi^\eta)$ and $v_{\eta\psi}(u_\eta)$ for all $v\ne\infty$ which are all zero except for finitely many. Let $\CO_{E_v}:=\CO_E\otimes_AA_v$ and let $c\in E_v:=E\otimes_Q Q_v$ be such that $c^{-1}u_\eta$ is an $\CO_{E_v}$-generator of $\Koh_{1,\Betti}(\ulM^\eta,A)\otimes_A A_v$ $=\Koh_{1,v}(\ulM^\eta,A_v)$, which exists because $\CO_{E_v}$ is a product of discrete valuation rings. Then $c$ is unique up to multiplication by an element of $\CO_{E_v}\mal$ and we set
\begin{equation}\label{EqVPsi}
v_{\eta\psi}(u_\eta)\;:=\;v\bigl(\eta\psi(c)\bigr)\;\in\;\BQ\,,
\end{equation}
where we extend $\eta\psi\in H_E$ by continuity to $\eta\psi\colon E_v\to Q_v^\alg$. 

Also let $K_v$ be the $v$-adic completion of $K\subset Q^\alg\subset Q_v^\alg\subset\BC_v$ and let $\ul\CM^\eta=(\CM^\eta,\tau_{\CM^\eta})$ be an $A$-motive over $\CO_{K_v}$ with good reduction and $\ul\CM^\eta\otimes_{\CO_{K_v}}K_v\cong\ulM^\eta\otimes_K K_v$; see Example~\ref{AMotSectLocalShtukas}. On $\Koh^1_\dR(\ulM^\eta,K_v)$ we consider the following two integral structures arising from $\Koh^1_\dR(\ul\CM^\eta,\CO_{K_v}):=\sigma^*\CM^\eta\otimes_{A_{\CO_{K_v}},\,\gamma\otimes\id_{\CO_{K_v}}}\CO_{K_v}$
\begin{alignat*}{4}
& \wt\Koh{}^{\eta\psi}(\ul\CM^\eta,\CO_{K_v}) & \;:=\; & \bigl\{\omega\in\Koh^1_\dR(\ul\CM^\eta,\CO_{K_v})\colon [a]^*\omega=\eta\psi(a)\cdot\omega\es\forall\;a\in\CO_E\bigr\}\qquad\text{and}\\[2mm]
& \Koh^{\eta\psi}(\ul\CM^\eta,\CO_{K_v}) & \;:=\; & \Koh^1_\dR(\ul\CM^\eta,\CO_{K_v})\big/([a]^*-\eta\psi(a)\colon a\in\CO_E)\cdot\Koh^1_\dR(\ul\CM^\eta,\CO_{K_v})\,.
\end{alignat*}
By \cite[Lemma~1]{HartlSinghErr} (see also the arXiv version of \cite[Lemma~B.1]{HartlSingh}) these are free $\CO_{K_v}$-modules of rank one contained in
\[
\Koh^{\eta\psi}(\ulM^\eta,K_v) \;=\;\wt\Koh{}^{\eta\psi}(\ul\CM^\eta,\CO_{K_v})\otimes_{\CO_{K_v}} K_v \;=\;\Koh^{\eta\psi}(\ul\CM^\eta,\CO_{K_v})\otimes_{\CO_{K_v}} K_v
\]
and satisfying $\wt\Koh{}^{\eta\psi}(\ul\CM^\eta,\CO_{K_v})\subset\Koh^{\eta\psi}(\ul\CM^\eta,\CO_{K_v})$ with $\Koh^{\eta\psi}(\ul\CM^\eta,\CO_{K_v})\big/\wt\Koh{}^{\eta\psi}(\ul\CM^\eta,\CO_{K_v})\cong \CO_{K_v}/\eta\psi(\FD_{\CO_E/A})$, where $\FD_{\CO_E/A}$ is the different of $\CO_E$ over $A$.
Then there are elements $\tilde x,x\in K_v\mal$, unique up to multiplication by $\CO_{K_v}\mal$, such that 
\begin{eqnarray*}
& & \text{$\tilde x^{-1}\omega_\psi^\eta\mod y_{\eta\psi}-\eta\psi(y_{\eta\psi})$ is an $\CO_{K_v}$-generator of $\wt\Koh{}^{\eta\psi}(\ul\CM^\eta,\CO_{K_v})$ and}\\[1mm]
& & \text{$x^{-1}\omega_\psi^\eta\mod y_{\eta\psi}-\eta\psi(y_{\eta\psi})$ is an $\CO_{K_v}$-generator of $\Koh{}^{\eta\psi}(\ul\CM^\eta,\CO_{K_v})\,.$}
\end{eqnarray*}
We set
\begin{eqnarray}
\label{EqFalseValuationOfOmegaPsiAMot}
v^\sim(\omega_\psi^\eta) & := & v(\tilde x)\;\in\;\BQ\quad\text{and} \\[1mm]
\label{EqValuationOfOmegaPsiAMot}
v(\omega_\psi^\eta) & := & v(x)\;\in\;\BQ\,.
\end{eqnarray}
Then 
\begin{equation*}
v(\omega_\psi^\eta)-v^\sim(\omega_\psi^\eta)\;=\;v\bigl(\eta\psi(\FD_{\CO_E/A})\bigr)\;=\;v(\FD_{\eta\psi(E_v)/Q_v})
\end{equation*}
by \cite[Corollary~2]{HartlSinghErr} (see also the arXiv version of \cite[Corollary~B.2]{HartlSingh}), and consequently
\begin{align}
\nonumber \TS\sum\limits_{\eta\in H_K} v(\omega_\psi^\eta)-v^\sim(\omega_\psi^\eta) & \;=\; \TS\sum\limits_{\eta\in H_K} v\bigl(\eta\psi(\FD_{\CO_E/A})\bigr) \;=\;\TS v\Bigl(\,\prod\limits_{\eta\in H_K}\eta\psi(\FD_{\CO_E/A})\Bigr)\;=\;v\bigl(N_{K/Q}(\FD_{\psi(\CO_E)/A})\bigr)\\[1mm]
\nonumber &\;=\;v\Bigl(N_{\psi(E)/Q}\bigl(N_{K/\psi(E)}(\FD_{\psi(\CO_E)/A})\bigr)\Bigr)\;=\;[K:\psi(E)]\cdot v(\Fd_{\psi(\CO_E)/A})\quad\text{and}\\[2mm]
\TS\sum\limits_{\eta\in H_K}\sum\limits_{v\ne\infty} \bigl(v(\omega_\psi^\eta)-v^\sim & (\omega_\psi^\eta)\bigr)\log q_v \;=\; [K:\psi(E)]\cdot \log\#(A/\Fd_{\psi(\CO_E)/A})\,. \label{EqRelationBetweenThe_v}
\end{align}
These value only depend on the image of $\omega_\psi^\eta$ in $\Koh^1_\dR(\ulM^\eta,K)$. They also do not depend on the choice of the model $\ul\CM^\eta$ with good reduction, because all such models are isomorphic over $\CO_{K_v}$ by \cite[Proposition~2.13(ii)]{Gardeyn4}.

\begin{Remark}
In \cite[Formula~(1.13) and Definition~4.10]{HartlSingh} there is an error in the definition of $v(\omega_\psi^\eta)$. Namely, there $v(\omega_\psi^\eta)$ is defined to be $v^\sim(\omega_\psi^\eta)$ as in \eqref{EqFalseValuationOfOmegaPsiAMot}. However, in the rest of \cite{HartlSingh} the above definition \eqref{EqValuationOfOmegaPsiAMot} for $v(\omega_\psi^\eta)$ is used; see \cite{HartlSinghErr} or the arXiv version of \cite[Erratum~B]{HartlSingh}.
\end{Remark}

In \cite[Theorem~1.3]{HartlSingh} we computed the terms in \eqref{EqSumAMot} as follows, where we use \eqref{EqRelationBetweenThe_v} and the logarithmic derivative $Z_v$ of the Artin $L$-function from \eqref{EqZFct2} in Definition~\ref{DefArtinMeasure}.

\begin{Theorem}\label{ThmValueAtV}
Let $\Fd_{\psi(\CO_E)/A}$ denote the discriminant of the extension of Dedekind rings $\psi(\CO_E)/A$. Then for every $v\ne\infty$ we have
\begin{eqnarray*}
\TS\tfrac{1}{\#H_K}\sum\limits_{\eta\in H_K}v({\TS\int_{u_\eta}\omega_\psi^\eta}) & = & Z_v(a^0_{E,\psi,\Phi},1)-\mu_{\Art,v}(a^0_{E,\psi,\Phi})-\dfrac{v(\Fd_{\psi(\CO_E)/A})}{[\psi(E):Q]}+\tfrac{1}{\#H_K}\sum\limits_{\eta\in H_K}\bigl(v(\omega_\psi^\eta)+v_{\eta\psi}(u_\eta)\bigr) \\[2mm]
& = & Z_v(a^0_{E,\psi,\Phi},1)-\mu_{\Art,v}(a^0_{E,\psi,\Phi})+\tfrac{1}{\#H_K}\sum\limits_{\eta\in H_K}\bigl(v^\sim(\omega_\psi^\eta)+v_{\eta\psi}(u_\eta)\bigr)\,.
\end{eqnarray*}
This formula holds more generally for all tuples of $E_v$-generators $u_\eta\in\Koh_{1,\Betti}(\ulM^\eta,Q_v)=\Koh_{1,v}(\ulM^\eta,Q_v)$.
\end{Theorem}

Since $-\mu_{\Art,v}(a^0_{E,\psi,\Phi})-\dfrac{v(\Fd_{\psi(\CO_E)/A})}{[\psi(E):Q]}+\tfrac{1}{\#H_K}\sum\limits_{\eta\in H_K}\bigl(v(\omega_\psi^\eta)+v_{\eta\psi}(u_\eta)\bigr)$ vanishes for all but finitely many places $v$ and $\sum\limits_{v\ne\infty}Z_v(a^0_{E,\psi,\Phi},1)$ diverges, the sum \eqref{EqSumAMot} diverges. But as in Section~\ref{SectColmezConjAbVar} we can assign to this divergent sum a value by the following

\begin{Convention}\label{ConventionAMot} Let $(x_v)_{v\ne\infty}$ be a tuple of complex numbers indexed by the finite places $v$ of $Q$. We will give a sense to the (divergent) series $\Sigma \stackrel{?}{=} \sum_{v\ne \infty} x_v$ in the following situation. We suppose that there exists an element $a\in\CC^0(\sG_Q,\BQ)$ such that $x_v = - Z_v(a,1) \log q_v$  for all $v$ except for finitely many. Then we let $a^*\in\CC^0(\sG_Q,\BQ)$ be defined by $a^*(g):=a(g^{-1})$. We further assume that $Z^\infty(a^*,s)$ does not have a pole at $s=0$, and we define the limit of the series $\sum_{v\ne \infty} x_v$ as
\begin{equation}\label{EqConventionAMot}
\Sigma\;:=\;-Z^\infty(a^*,0) - \mu^\infty_\Art(a) +\sum_{v\neq \infty} \bigl(x_v + Z_v(a,1)\log q_v\bigr)
\end{equation}
inspired by Weil's \cite[p.~82]{WeilRH} functional equation 
\[
Z(\chi,1-s)\;=\;-Z(\chi^*,s)-(2\cdot genus(C)-2)\chi(1)\log q-\mu_\Art(\chi)
\]
deprived of the summands at $\infty$, where the genus term is considered as belonging to $\infty$.
\end{Convention}

Convention~\ref{ConventionAMot}, Theorem~\ref{ThmValueAtV} and \eqref{EqRelationBetweenThe_v} allow us to give to the divergent sum \eqref{EqSumAMot} the convergent interpretation 
\begin{align}\label{EqConvSumAMot}
\nonumber & -Z^\infty((a^0_{E,\psi,\Phi})^*,0)+\frac{\log\#(A/\Fd_{\psi(\CO_E)/A})}{[\psi(E):Q]}+\tfrac{1}{\#H_K}\sum\limits_{\eta\in H_K}\Bigl(\log\bigl|{\TS\int_{u_\eta}}\omega_\psi^\eta\bigr|_\infty-\sum_{v\ne\infty}\bigl(v(\omega_\psi^\eta)+v_{\eta\psi}(u_\eta)\bigr)\log q_v\Bigr) \\[2mm]
& =\es -Z^\infty((a^0_{E,\psi,\Phi})^*,0)+\tfrac{1}{\#H_K}\sum\limits_{\eta\in H_K}\Bigl(\log\bigl|{\TS\int_{u_\eta}}\omega_\psi^\eta\bigr|_\infty-\sum_{v\ne\infty}\bigl(v^\sim(\omega_\psi^\eta)+v_{\eta\psi}(u_\eta)\bigr)\log q_v\Bigr).
\end{align}

\newcommand{\secondeta}{{\tilde\eta}}
\begin{Remark}\label{RemConvention}
The problem arises that formulas~\eqref{EqSumAMot} and \eqref{EqConvSumAMot} depend on the choices of the $E$-generators $u_\eta$ of $\Koh_{1,\Betti}(\ulM^\eta,Q)$. Namely, multiplying one $u_\eta$ with an element $a\in E$ changes these sums by the summand $\tfrac{1}{\#H_K}\sum_{\text{all }v}v\bigl(\eta\psi(a)\bigr)\log q_v$, which may be different from zero. On the other hand, if all $u_\eta$ are simultaneously multiplied with the same $a\in E$ then the term $\tfrac{1}{\#H_K}\sum_{\eta\in H_K}\sum_{\text{all }v}v\bigl(\eta\psi(a)\bigr)\log q_v$ is added, which is zero by \eqref{EqProdFormulaAMot}.

Colmez~\cite{ColmezPeriods} faces the same problem and overcomes it by considering the terms \eqref{EqDoubleOmega} instead, which are independent of the chosen $u_\eta$. This is not possible for general $A$-motives, because it relies on the existence of a $Q$-automorphism $c$ of $Q^\alg$ such that the set of integers $\{d_\psi,d_{c\psi}\}$ does not depend on $\psi\in H_E$. In \eqref{EqDoubleOmega}, $c$ is complex conjugation and $\{d_\psi,d_{c\psi}\}=\{0,1\}$ for every $\psi\in H_E$. These conditions are not satisfied for the more general CM-types we considered so far for $A$-motives. 

It should also be noted, that it is in general \emph{not} possible to choose all $u_\eta$ in a compatible way, although this is possible for the generators $\omega_\psi^\eta$ by pulling back $\omega_\psi$ under $\eta$. However, it is possible for $A$-motives to pull back the induced $E_v$-generators $u_\eta\otimes 1\in \Koh_{1,\Betti}(\ulM^\eta,Q)\otimes_Q Q_v=\Koh_{1,v}(\ulM^\eta,Q_v)$ under additional automorphisms $\secondeta\in\sG_Q=\Gal(Q^\sep/Q)$. Namely, it follows from the definition in \eqref{EqKoh_v} that applying $\secondeta$ yields an $\CO_{E_v}$-isomorphism
\begin{equation*}
\secondeta\colon\Koh^1_v(\ulM^\eta,A_v)\,\isoto\,\Koh^1_v(\ulM^{\secondeta\eta},A_v)\,,\quad m \,\mapsto\,\secondeta(m)\,. 
\end{equation*}
If $\secondeta=\kappa\in\Gal(Q^\sep/K)$ then this isomorphism is just $\rho_{\ulM^\eta}(\kappa)$ where $\rho_{\ulM^\eta}\colon\sG_Q\to\Aut_{\CO_{E_v}}\Koh_{1,v}(\ulM^\eta,A_v)=\CO_{E_v}\mal$ is the Galois representation. Then $\secondeta(u_\eta\otimes 1)\in \Koh_{1,v}(\ulM^{\secondeta\eta},Q_v)=\Hom_{Q_v}( \Koh^1_v(\ulM^{\secondeta\eta},Q_v), Q_v)$ is defined by requiring 
\begin{equation}\label{EqRemConvention}
\secondeta(u_\eta\otimes 1)\bigl(\secondeta(m)\bigr)\;=\;(u_\eta\otimes1)(m)\qquad\text{for every}\quad m\in \Koh^1_v(\ulM^\eta,Q_v)\,.
\end{equation}
$\secondeta(u_\eta\otimes 1)$ is an $E_v$-generator of $\Koh_{1,v}(\ulM^{\secondeta\eta},Q_v)$. If $\secondeta$ is replaced by $\secondeta'=\secondeta\circ\kappa$ with $\kappa\in\Gal(Q^\sep/K)$ then $\ulM^{\secondeta'\eta}=\ulM^{\secondeta\eta}$ and $\secondeta'(m)=\rho_{\ulM^\eta}(\kappa)\cdot\secondeta(m)$, and hence $\secondeta'(u_\eta\otimes1)=\rho_{\ulM^\eta}(\kappa)^{-1}\cdot\secondeta(u_\eta\otimes1)=\rho_{\ulM^\eta}\dual(\kappa)\cdot\secondeta(u_\eta\otimes1)$. In particular, the value $v_{\secondeta\eta\psi}\bigl(\secondeta(u_\eta\otimes1)\bigr)$ only depends on the image of $\secondeta$ in $\Gal(K/Q)=H_K$. We abbreviate $\secondeta(u_\eta\otimes 1)$ to $u_\eta^\secondeta$. Allthough the notation is similar to $\omega_\psi^\eta$, it is understood, that $u_\eta^\secondeta$ does not exist in $\Koh_{1,\Betti}(\ulM^{\secondeta\eta},Q)$, but only in $\Koh_{1,\Betti}(\ulM^{\secondeta\eta},\BA_Q^\infty)=\prod'_{v\ne\infty}\Koh_{1,v}(\ulM^{\secondeta\eta},Q_v)$ where $\BA_Q^\infty$ is the ad\`ele ring of $Q$. Then for every fixed $\eta\in H_K$ Convention~\ref{ConventionAMot}, Theorem~\ref{ThmValueAtV} and \eqref{EqRelationBetweenThe_v} yield
\begin{align}
\label{EqConvSum2AMot}
& \log\bigl|{\TS\int_{u_\eta}}\omega_\psi^\eta\bigr|_\infty + \TS\tfrac{1}{\#H_K}\sum\limits_{\secondeta\in H_K}\sum\limits_{v\ne\infty}\log\bigl|\int_{u_\eta^\secondeta}\omega_\psi^{\secondeta\eta}\bigr|_v\;=\;\\
& \;=\;\log\bigl|{\TS\int_{u_\eta}}\omega_\psi^\eta\bigr|_\infty-Z^\infty((a^0_{E,\psi,\Phi})^*,0)+\frac{\log\#(A/\Fd_{\psi(\CO_E)/A})}{[\psi(E):Q]}-\tfrac{1}{\#H_K}\sum\limits_{\secondeta\in H_K}\sum_{v\ne\infty}\bigl(v(\omega_\psi^{\secondeta\eta})+v_{\secondeta\eta\psi}(u_\eta^\secondeta)\bigr)\log q_v \nonumber\\
& \;=\;\log\bigl|{\TS\int_{u_\eta}}\omega_\psi^\eta\bigr|_\infty-Z^\infty((a^0_{E,\psi,\Phi})^*,0)-\tfrac{1}{\#H_K}\sum\limits_{\secondeta\in H_K}\sum_{v\ne\infty}\bigl(v^\sim(\omega_\psi^{\secondeta\eta})+v_{\secondeta\eta\psi}(u_\eta^\secondeta)\bigr)\log q_v\,. \nonumber
\end{align}
If we restrict to \emph{imaginary} CM-fields $E$, which means that $E_\infty:=E\otimes_Q Q_\infty$ is still a field and carries a unique extension of the valuation $v_\infty$, then this sum is independent of the choice of the $E$-generator $u_\eta\in\Koh_{1,\Betti}(\ulM^\eta,Q)$. Indeed, if $u_\eta$ is multiplied with a unit $a\in E\mal$, then in \eqref{EqConvSum2AMot} the term 
\[
\TS -v_\infty(\eta\psi(a))\log q_\infty- \tfrac{1}{\#H_K}\sum\limits_{\secondeta\in H_K}\sum\limits_{v\ne\infty}v\bigl(\secondeta(\eta\psi(a))\bigr)\log q_v\;=\;- \tfrac{1}{\#H_K}\sum\limits_{\secondeta\in H_K}\sum\limits_{\text{all }v}v\bigl(\secondeta(\eta\psi(a))\bigr)\log q_v
\]
is added, which is zero by \eqref{EqProdFormulaAMot}. Imaginary CM-fields are particularly relevant for Drinfeld modules, see Theorem~\ref{ThmTagHeight} below. On the other hand, if $E$ has more than one place above $\infty$, then only the place induced from the embedding $\eta\psi\colon E\into Q_\infty^\alg\subset\BC_\infty$ contributes to \eqref{EqConvSum2AMot}, and then this formula is not invariant under changing $u_\eta$.
\end{Remark}

We thus propose to average twice over $\eta,\secondeta\in H_K$ and make the following

\begin{Conjecture}\label{ConjColmezAMot} 
Let $E$ be a finite \emph{imaginary} field extension of $Q$, which means that $E_\infty:=E\otimes_Q Q_\infty$ is still a field. Then the sum
\begin{align}\label{EqConvSum3AMot} 
& \TS \sum\limits_{\eta\in H_K}\biggl(\log\bigl|{\TS\int_{u_\eta}}\omega_\psi^\eta\bigr|_\infty-Z^\infty((a^0_{E,\psi,\Phi})^*,0)+{\DS\frac{\log\#(A/\Fd_{\psi(\CO_E)/A})}{[\psi(E):Q]}}-\tfrac{1}{\#H_K}\sum\limits_{\secondeta\in H_K}\sum\limits_{v\ne\infty}\bigl(v(\omega_\psi^{\secondeta\eta})+v_{\secondeta\eta\psi}(u_\eta^\secondeta)\bigr)\log q_v\biggr)\nonumber \\
& = \es \TS \sum\limits_{\eta\in H_K}\biggl(\log\bigl|{\TS\int_{u_\eta}}\omega_\psi^\eta\bigr|_\infty-Z^\infty((a^0_{E,\psi,\Phi})^*,0)-\tfrac{1}{\#H_K}\sum\limits_{\secondeta\in H_K}\sum\limits_{v\ne\infty}\bigl(v^\sim(\omega_\psi^{\secondeta\eta})+v_{\secondeta\eta\psi}(u_\eta^\secondeta)\bigr)\log q_v\biggr)
\end{align}
is zero, or equivalently the \emph{product formula} holds:
\[
\prod\limits_{\secondeta,\eta\in H_K}\biggl(\bigl|{\TS\int_{u_\eta}}\omega_\psi^{\eta}\bigr|_\infty\cdot\prod\limits_{v\ne\infty}\bigl|{\TS\int_{u_\eta^\secondeta}}\omega_\psi^{\secondeta\eta}\bigr|_v\biggr) \;:=\; \prod\limits_{\secondeta,\eta\in H_K}\biggl( \bigl|\langle u_\eta, \omega_\psi^{\eta}\rangle_v\bigr|_\infty\cdot\prod\limits_{v\ne\infty}\bigl|\langle u_\eta^\secondeta, \omega_\psi^{\secondeta\eta}\rangle_v\bigr|_v \biggr) \;=\; 1\,.
\]
\end{Conjecture}

\begin{Example}\label{ExCarlitzPeriods}
Similarly to Example~\ref{ExGmPeriods}, the convention allows to prove the product formula for the Carlitz motive $\ulCC = (\CC = \BF_q(\theta)[t], \tau_\CC = t-\theta)$ from Example~\ref{CarMod} over the field $K=\BF_q(\theta)=Q$ for which $H_K=\{\id_K\}$. We let $u\in\Koh_{1,\Betti}(\ulCC,A)$ be the generator which is dual to $\eta\tminus{}\in\Koh^{1}_\Betti(\ulCC,A)$ and we let $\omega=1\in\Koh^1_{\dR}(\ulCC, {\BC_\infty})$. Then we have computed in Examples~\ref{ExCarBettiDR}, \ref{ExCarVDR} and \ref{ExCarValuation} that
\begin{alignat*}{5}
& \langle u, \omega\rangle_\infty && \;=\; \eta^{-q}\prod_{i=1}^{\infty}(1- \theta^{1-q^i})^{-1} && \qquad \text{and}\qquad && \log\bigl|\langle u, \omega\rangle_\infty\bigr|_\infty && \;=\; \log(q^{q/(q-1)})\;=\;\tfrac{q}{q-1}\log q\,,\\
& \langle u, \omega\rangle_v && \;=\; \hat\sigma_{\!v}^*(\tplus{v}) && \qquad \text{and}\qquad && \log\bigl|\langle u, \omega\rangle_v\bigr|_v && \;=\; -v\bigl(\hat\sigma_{\!v}^*(\tplus{v})\bigr)\log q_v \;=\; -\tfrac{\log q_v}{q_v-1} \;=\; -Z_v(\BOne,1)\log q_v\,,
\end{alignat*}
where $\BOne(g)=1$ for every $g\in\sG_Q$. Here the CM-field is $E=Q$, $H_E=\{\id\}$ and the CM-type is given by $d_\id=1$. This implies that $a^0_{E,\id,\Phi}=\BOne$. So Convention~\ref{ConventionAMot} implies $\sum_{v\ne\infty}\log|\langle u, \omega\rangle_v|_v=-\frac{\zeta'_A(0)}{\zeta_A(0)}=-\tfrac{q}{q-1}\log q$ for the Riemann Zeta-function 
\[
\zeta_A(s) \;:=\; \prod_{  v \ne \infty}(1-(\#\BF_v)^{-s})^{-1} \;=\; \prod_{v \ne \infty}(1-q_v^{-s})^{-1} \;=\; \frac{1}{1-q^{1-s}}\,.
\]
We conclude $\sum_v\log|\langle u, \omega\rangle_v|_v=0$ and $\prod\limits_v|\langle u, \omega\rangle_v|_v=1$.
\end{Example}

In Section~\ref{SectExample} we will discuss an interesting example where $C$ and $Q$ have genus $1$. In the remainder of this section we focus on CM $A$-motives which come from Drinfeld modules. As analog of Colmez's Theorem~\ref{ThmFaltHeight} we have the following

\begin{Theorem}\label{ThmTagHeight}
Let $\ulG$ be a Drinfeld $A$-module over a finite separable field extension $K\subset Q^\alg$ of $Q$ with complex multiplication of CM-type $(E,\Phi)$ as in Example~\ref{ExCMDriMod}, where $\Phi=(d_\psi)_{\psi\in H_E}$ with $d_{\psi_0}=1$ for one $\psi_0\in H_E$ and $d_\psi=0$ for all $\psi\ne\psi_0$. Assume that $\ulG$ has complex multiplication by $\CO_E$ and that $E$ is a separable field extension of $Q$. Let $\ulM=\ulM(\ulG)$ and choose $\omega_{\psi_0}$ and $u_\eta$ as in Situation~\ref{SitCMAMot}. Then the stable Taguchi height $ht_{\rm Tag}^{\rm st}(\ulG)$ of $\ulG$ satisfies
\begin{eqnarray}
ht_{\rm Tag}^{\rm st}(\ulG) & = &\tfrac{1}{\#H_K}\sum\limits_{\eta\in H_K}\Bigl(-\log\bigl|{\TS\int_{u_\eta}}\omega_{\psi_0}^\eta\bigr|_\infty+\tfrac{1}{\#H_K}\sum\limits_{\secondeta\in H_K}\sum\limits_{v\ne\infty}\bigl(v(\omega_{\psi_0}^{\secondeta\eta})+v_{\secondeta\eta\psi_0}(u_\eta^\secondeta)\bigr)\log q_v \Bigr) \nonumber \\[2mm]
& & \qquad - \frac{\log\#(A/\Fd_{\CO_E/A})}{[E:Q]}-\log D_A(\CO_E) \label{EqThmTagHeight1} \\[2mm]
& = &\tfrac{1}{\#H_K}\sum\limits_{\eta\in H_K}\Bigl(-\log\bigl|{\TS\int_{u_\eta}}\omega_{\psi_0}^\eta\bigr|_\infty+\tfrac{1}{\#H_K}\sum\limits_{\secondeta\in H_K}\sum\limits_{v\ne\infty}\bigl(v^\sim(\omega_{\psi_0}^{\secondeta\eta})+v_{\secondeta\eta\psi_0}(u_\eta^\secondeta)\bigr)\log q_v \Bigr)-\log D_A(\CO_E)\,. \nonumber 
\end{eqnarray}
\end{Theorem}

\begin{proof}
1. Since both sides of the claimed equality \eqref{EqThmTagHeight1} are invariant under extending the field $K$, we may assume that $K$ is Galois over $Q$ and that $\ulG$ has good reduction at every finite place of $K$. Via the inclusion $K\subset Q^\alg\subset Q_\infty^\alg$ the restriction of the valuation $v_\infty$ on $Q_\infty^\alg$ to $K$ corresponds to a place $\wt\infty$ of $K$ such that the completion $K_{\wt\infty}$ equals the closure of $K$ in $Q_\infty^\alg$. For every $\eta \in H_K=\Gal(K/Q)$ we denote the image of $\wt\infty$ under $\eta$ by $\wt\infty_\eta$. Note that $\wt\infty_{\eta'}=\wt\infty_\eta$ if and only if $\eta'\eta^{-1}\in\Gal(K_{\wt\infty}/Q_\infty)$.

For $\eta \in \Gal(K/Q)$, we obtained  $\ulG^\eta=(G^\eta,\phi^\eta), \ulM^\eta$ and $\omega_{\psi_0}^\eta\in\Koh^{\eta\psi_0}(\ulM^\eta,K\dbl z-\zeta\dbr)$ from $\ulG=(G,\phi),\ulM$ and $\omega_{\psi_0}$ in Situation~\ref{SitCMAMot} by applying $\eta$ to the coefficients in $K$. Note that $K\dbl y_{\psi_0}-\psi_0(y_{\psi_0})\dbr=K\dbl z-\zeta\dbr$ by \cite[Lemma~1.3]{HartlJuschka} because $E/Q$ is separable. In addition, we chose $E$-generators $u_\eta\in\Koh_{1,\Betti}(\ulM^\eta,Q)$ and as in Remark~\ref{RemConvention} we obtain $E_v$-generators $u_\eta^\secondeta\in\Koh_{1,v}(\ulM^{\secondeta\eta},Q_v)$ for every $v\ne\infty$ and every $\secondeta\in H_K$. As in Example~\ref{ExCMDriMod} let $m^\eta:=-(z-\zeta)^{-1}\cdot\omega_{\psi_0}^\eta\in\Fq^{\ulM^\eta}$. The image $\ol m^\eta=\tau_{M^\eta}(m^\eta)$ of $m^\eta$ in $\coker\tau_{M^\eta}=\Hom_K(\Lie G^\eta,K)$ provides an isomorphism
\[
\ol m^\eta\colon\Lie G^\eta\;\isoto\;K\,.
\]
using \eqref{EqLieG}. We can lift $\ol m^\eta$ in a unique way to an element $\wt m^\eta\in M^\eta$ which is an isomorphism $\wt m^\eta\colon G^\eta\isoto\BG_{a,K}$. Indeed, if we choose any isomorphism $n\colon G^\eta\isoto\BG_{a,K}$ with $n\in M^\eta$, then $\ol m^\eta=b\cdot\Lie n$ for some $b\in K\mal$, and we may take $\wt m^\eta:=b\cdot n$. In particular, $\wt m^\eta$ is obtained from $\wt m:=\wt m^\id\colon G\isoto\BG_{a,K}$ by pull back under $\eta$. We recall the $E$-equivariant isomorphism for Betti-homology from Proposition~\ref{PropCompBettiGandM}
\begin{equation}\label{EqCompIsomBetti}
\Koh_{1,\Betti}(\ulM^\eta,A)\otimes_A\Omega^1_{A/\BF_q}\;\isoto\;\Koh_{1,\Betti}(\ulG^\eta,A)\,.
\end{equation}
We tensor it to $Q$ and observe that $\Omega^1_{A/\BF_q}\otimes_A Q=\Omega^1_{Q/\BF_q}=Q\,dz$; see Remark~\ref{RemOmegaQ}. Under the isomorphism \eqref{EqCompIsomBetti} we consider the element $\lambda_\eta:=u_\eta\,dz\in\Koh_{1,\Betti}(\ulG^\eta,Q)$. We may multiply $u_\eta$ by an element $a\in A$ such that we can assume $u_\eta\in\Koh_{1,\Betti}(\ulM^\eta,A)$ and $\lambda_\eta\in\Koh_{1,\Betti}(\ulG^\eta,A)$. Since $c\in E$ acts on $\Lie G^\eta$ as multiplication with $\eta\psi_0(c)$, Theorem~\ref{ThmDriModIntegral} implies for every $c\in E$
\[
\big|{\TS\int_{cu_\eta}} \omega_{\psi_0}^\eta\big|_\infty \;=\; \big|\langle cu_\eta,\omega_{\psi_0}^\eta\rangle_\infty\big|_\infty\;=\; \big|\ol m^\eta(c\lambda_\eta)\big|_\infty\;=\; \big|\eta\psi_0(c)\big|_\infty\cdot\big|\ol m^\eta(\lambda_\eta)\big|_\infty\,.
\]

\medskip\noindent
2. We want to compute $ht_{{\rm Tag},\wt\infty_\eta}(\ulG/K)$ as in Equations~\eqref{EqTagHtInfinity} and \eqref{EqD_A}. From \cite[Proposition~4.7.17]{Goss} we know that $E_\infty : = E\otimes_Q Q_\infty$ is still a field, that is $E/Q$ is imaginary in the sense of Example~\ref{ExDA(OE)} and Remark~\ref{RemConvention}. For every $\eta\in H_E$ we consider the $Q_\infty$-homomorphism $\eta\psi_0\otimes\id_{Q_\infty}\colon E_\infty\to K_{\wt\infty}\subset Q_\infty^\alg$ which is hence injective. Therefore, the restriction to $E_\infty$ of the valuation $v_\infty$ on $Q_\infty^\alg$ is the unique valuation on $E_\infty$ extending $v_\infty$ on $Q_\infty$. It is thus independent of $\eta$. By \cite[\S\,I.4, Proposition~10]{SerreLF} and \cite[\S\,3.6.2, Proposition~5]{BGR} there are elements $c_{1},\ldots,c_{r}\in E$ such that $E=\oplus_{i=1}^r Q\cdot c_{i}$ and
\begin{equation}\label{EqECartesian}
\Big|\sum_i a_i\cdot\eta\psi_0(c_{i})\Big|_\infty \;=\;\max\big\{\,\big|a_i\cdot\eta\psi_0(c_{i})\big|_\infty \,\big\}
\end{equation}
for every tuple $a_1,\ldots,a_r\in Q_\infty$. Under the isomorphism \eqref{EqCompIsomBetti} we consider the elements $\lambda_{\eta,i}:=c_{i}\cdot\lambda_\eta=c_{i}\cdot u_\eta\,dz\in \Koh_{1,\Betti}(\ulG^\eta,Q)$. Then $\Koh_{1,\Betti}(\ulG^\eta,Q)=\sum_{i=1}^r Q\cdot\lambda_{\eta,i}$, because $u_\eta$ is an $E$-generator of $\Koh_1(\ulM^\eta,Q)$. We will check whether the tuple $\ol m^\eta(\lambda_{\eta,1}),\ldots,\ol m^\eta(\lambda_{\eta,r})$ is orthogonal in the sense of Definition~\ref{DefLattice} for the $A$-lattice 
\[
\Lambda(\ulG^\eta)\;:=\;\ol m^\eta\bigl(\Koh_{1,\Betti}(\ulG^\eta,A)\bigr)\;\subset\;\ol m^\eta(\Lie G^\eta\otimes_K\BC_\infty)\;=\;\BC_\infty\;. 
\]
For $a_1,\ldots,a_r\in Q_\infty$ equation~\eqref{EqECartesian} implies
\begin{align*}
\Big|\sum_i a_i \,\ol m^\eta(\lambda_{\eta,i})\Big|_\infty & \;=\; \Big|\sum_i a_i\cdot\eta\psi_0(c_{i}) \cdot \ol m^\eta(\lambda_\eta)\Big|_\infty \\
& \;=\;\Big|\sum_i a_i\cdot\eta\psi_0(c_{i})\Big|_\infty\cdot\big|m^\eta(\lambda_\eta)\big|_\infty \;=\;\max\big\{\,\big|a_i \,\ol m^\eta(\lambda_{\eta,i})\big|_\infty \,\big\}\,.
\end{align*}
By multiplying all $c_{i}$ by the same element $a\in A$ with $v_\infty(a)\ll 0$, we may assume that $c_{i}\in\CO_E$ for all $i$ and that conditions \ref{DefLattice_A}, \ref{DefLattice_B} and \ref{DefLattice_C} from Definition~\ref{DefLattice} are satisfied for $\ol m^\eta(\lambda_{\eta,i})$. We observe that $\big(\sum_{i=1}^{r}A\,c_{i}\big) \lambda_\eta\subset\CO_E\,\lambda_\eta\subset\Koh_{1,\Betti}(\ulG^\eta,A)$, and hence
\begin{eqnarray*}
\#\biggl(\frac{\Koh_{1,\Betti}(\ulG^\eta,A)}{\big(\sum_{i=1}^{r}A\,c_{i}\big) \lambda_\eta}\biggr) & = & \#\biggl(\frac{\Koh_{1,\Betti}(\ulG^\eta,A)}{\CO_E\,\lambda_\eta}\biggr)\cdot\#\biggl(\frac{\CO_E\,\lambda_\eta}{\big(\sum_{i=1}^{r}A\,c_{i}\big) \lambda_\eta}\biggr) \\
& = & \#\biggl(\frac{\Koh_{1,\Betti}(\ulG^\eta,A)}{\CO_E\,\lambda_\eta}\biggr)\cdot\#\biggl(\frac{\CO_E}{\sum_{i=1}^{r}A\,c_{i}}\biggr)\;.
\end{eqnarray*}
Then
\begin{align}
\nonumber \frac{ht_{{\rm Tag},\wt\infty_\eta}(\ul G/K)}{-[K_{\wt\infty_\eta}:Q_\infty]} & = \log_q D_A\bigl(\ol m^\eta\bigl(\Koh_{1,\Betti}(\ulG^\eta,A)\bigr)\bigr)\\[2mm]
\nonumber &= \log_q\left( \frac{\prod_{1\leq i \leq r} |\ol m^\eta(\lambda_{\eta,i})|_\infty}{\#\big(\Lambda(G^\eta) \big/(A\cdot \ol m^\eta(\lambda_{\eta,1}) + \cdots +A\cdot \ol m^\eta(\lambda_{\eta,r}))\big)}\right)^{1/r}  \\[2mm]
\nonumber & = \log_q \left( \frac{\prod_{1\leq i \leq r} \big|\eta\psi_0(c_{i})\big|_\infty\cdot\big|\int_{ u_\eta} \omega_{\psi_0}^{\eta}\big|_\infty}{\#\big(\Koh_{1,\Betti}(\ulG^\eta,A)\big/(\sum_{i=1}^{r}A\,c_{i}) \lambda_\eta\big)}\right)^{1/r}\\[2mm]
\nonumber & = \log_q  \big|{\TS\int_{ u_\eta}} \omega_{\psi_0}^{\eta}\big|_\infty-\log_q\#\left(\frac{\Koh_{1,\Betti}(\ulG^\eta,A)}{\CO_E\, \lambda_\eta}\right)^{1/r}+\log_q\left( \frac{\prod_{1\leq i \leq r} \big|\eta\psi_0(c_{i})\big|_\infty}{\#\big(\CO_E \big/\sum_{i=1}^{r}A\,c_{i}\bigr)}\right)^{1/r}\\[2mm]
\label{EqThmTagHeight2} & = \log_q  \big|{\TS\int_{ u_\eta}} \omega_{\psi_0}^{\eta}\big|_\infty-\frac{1}{r}\,\log_q\#\left(\frac{\Koh_{1,\Betti}(\ulG^\eta,A)}{\CO_E\, \lambda_\eta}\right)+\log_q D_A(\CO_E)\;,
\end{align}
where the last equation is the definition of $D_A(\CO_E)$ from Example~\ref{ExDA(OE)}. In particular, this formula holds equally for all $\eta'\in H_K$ with $\wt\infty_{\eta'}=\wt\infty_\eta$ of $K$, that is for all $\eta'\in \Gal(K_{\wt\infty_\eta}/Q_\infty)\cdot\eta$. 

\medskip\noindent
3. We compute further 
\[
\Koh_{1,\Betti}(\ulG^\eta,A)\big/\CO_E\,\lambda_\eta \;=\; \prod_{v\ne\infty}\bigl(\Koh_{1,\Betti}(\ulG^\eta,A)\big/\CO_E\,\lambda_\eta\bigr)\otimes_A A_v \;=\; \prod_{v\ne\infty}\Koh_{1,\Betti}(\ulG^\eta,A_v)\big/\CO_{E_v}\,\lambda_\eta\,.
\]
Under the isomorphism \eqref{EqCompIsomBetti}, tensored to $A_v$ we have
\[
\xymatrix @R=2pc @C=5pc {
\CO_{E_v}\,\lambda_\eta \ar@{^{ (}->}[r] & \Koh_{1,\Betti}(\ulG^\eta,A_v) \\
\CO_{E_v}\,u_\eta\otimes_{A_v} A_v\,dz \ar@{<-->}[r] \ar[u]^-\cong & *!<1cm,0cm>{\es\CO_{E_v}\,u_\eta\otimes_A\Omega^1_{A/\BF_q}\;\subset\;\Koh_{1,\Betti}(\ulM^\eta,A_v)\otimes_A\Omega^1_{A/\BF_q}\;,} \ar[u]^\cong
}
\]
where the dashed arrow in the lower left corner comes from a comparison of $A_v$-modules of rank one, which is an inclusion $A_v\,dz \subset \Omega^1_{A/\BF_q}\otimes_A A_v$ or $A_v\,dz \supset \Omega^1_{A/\BF_q}\otimes_A A_v$ and even an equality for almost all $v$. Therefore, 
\[
\log_q \#\bigl(\Koh_{1,\Betti}(\ulG^\eta,A_v)\big/\CO_{E_v}\,\lambda_\eta\bigr) \;=\; r \,\ord_v(dz)\cdot[\BF_v:\BF_q] + \log_q \#\bigl(\Koh_{1,\Betti}(\ulM^\eta,A_v)\big/\CO_{E_v}\,u_\eta\bigr)\,.
\]
Here the factor $r=\rk_{A_v}\CO_{E_v}$ comes from the tensor product with $\CO_{E_v}\,u_\eta$, and $\ord_v(dz)$ is the order at $v$ of the rational section $dz$ of the line bundle $\Omega^1_{C/\BF_q}$. That is, if $A_v\,dz \subset \Omega^1_{A/\BF_q}\otimes_A A_v$ then $\log_q\#\bigl(\Omega^1_{A/\BF_q}\otimes_A A_v\big/A_v\,dz\bigr)=[\BF_v:\BF_q]\ord_v(dz)$. 
Adding over all places $v\ne\infty$ we obtain
\begin{equation}\label{EqThmTagHeight3}
\log_q \#\bigl(\Koh_{1,\Betti}(\ulG^\eta,A)\big/\CO_E\,\lambda_\eta\bigr) \;=\; \sum_{v\ne\infty}\Bigl(r\ord_v(dz)\cdot[\BF_v:\BF_q]+\log_q \#\bigl(\Koh_{1,\Betti}(\ulM^\eta,A_v)\big/\CO_{E_v}\,u_\eta\bigr)\Bigr)\,.
\end{equation}

\medskip\noindent
4. We now fix a place $v\ne\infty$ and let $e_\eta\in E_v:=E\otimes_Q Q_v$ such that $e_\eta^{-1}u_\eta$ is an $\CO_{E_v}$-generator of $\Koh_{1,\Betti}(\ulM^\eta,A_v)=\Koh_{1,v}(\ulM^\eta,A_v)$. Then $\CO_{E_v}/e_\eta\CO_{E_v}\isoto\Koh_{1,\Betti}(\ulM^\eta,A_v)\big/\CO_{E_v}\,u_\eta$ under $a\mapsto a\,e_\eta^{-1}u_\eta$. By the definition of $u_\eta^\secondeta$ in \eqref{EqRemConvention} also $e_\eta^{-1}u_\eta^\secondeta$ is an $\CO_{E_v}$-generator of $\Koh_{1,v}(\ulM^{\secondeta\eta},A_v)$. This means
\[
v_{\secondeta\eta\psi_0}(u_\eta^\secondeta)\;:=\;v\bigl(\secondeta\eta\psi_0(e_\eta)\bigr)\,.
\]
The $Q_v$-algebra $E_v$ decomposes into a product of fields $E_v=\prod_i E_{v,i}$. To compute the cardinality of $\CO_{E_v}/e_\eta\CO_{E_v}=\prod_i \CO_{E_{v,i}}/e_\eta\CO_{E_{v,i}}$, note that each $\CO_{E_{v,i}}/e_\eta\CO_{E_{v,i}}$ is an $\BF_v$-vector space. We denote its dimension by $n_i$. Let $K_v$ be the closure in $\BC_v$ of $K\subset Q^\alg\subset Q_v^\alg\subset\BC_v$, let $\CO_{K_v}$ be its valuation ring and $k_v$ its residue field. For every $Q_v$-homomorphism $\wt\psi_i\in H_{E_{v,i}}:= \Hom_{Q_v}(E_{v,i},Q_v^\alg)$ the $\BF_v$-vector space
\[
\bigl(\CO_{E_{v.i}}/e_\eta\CO_{E_{v,i}}\bigr) \otimes_{\CO_{E_{v,i}},\,\wt\psi_i}\CO_{K_v} \;=\;\CO_{K_v}\big/\wt\psi_i(e_\eta)\CO_{K_v}
\]
has dimension $n_i\cdot[K_v:\wt\psi_i(E_{v,i})]$, because $\CO_{K_v}$ is free over $\CO_{E_{v,i}}$ of rank $[K_v:\wt\psi_i(E_{v,i})]$. This dimension is equal to $[k_v:\BF_v]\cdot\ord_{K_v}(\wt\psi_i(e_\eta))=[K_v:Q_v]\cdot v(\wt\psi_i(e_\eta))$. We conclude that 
\[
n_i\;:=\;\dim_{\BF_v}\bigl(\CO_{E_{v.i}}/e_\eta\CO_{E_{v,i}}\bigr) \;=\;\frac{[K_v:Q_v]}{[K_v:\wt\psi_i(E_{v,i})]}\cdot v(\wt\psi_i(e_\eta)) \;=\;[E_{v,i}:Q_v] \cdot v(\wt\psi_i(e_\eta))
\]
\[
\text{and} \qquad\log_q \#\Bigl(\Koh_{1,\Betti}(\ulM^\eta,A_v)\big/\CO_{E_v}\,u_\eta\Bigr) \;=\; \sum_i n_i \cdot [\BF_v:\BF_q] \,.
\]
We now consider the following maps
\[
\xymatrix @R=0pc 
{
*=<1cm,0.7cm>{H_K} \ar@{->>}[r] & *=<2.5cm,0.7cm>{H_E} \ar[r]^-\sim & *=<3.5cm,0.7cm>{\Hom_{Q_v}(E_v,Q_v^\alg)}  \\
*=<1cm,0.7cm>{\secondeta} \ar@{|->}[r] & *=<2.5cm,0.7cm>{\secondeta\eta\psi_0 \;=:\; \wt\psi} \ar@{|->}[r] & *=<3.5cm,0.7cm>{\wt\psi\otimes\id_{Q_v}\quad}
}
\]
The set $\Hom_{Q_v}(E_v,Q_v^\alg)$ is equal to $\coprod_i H_{E_{v,i}}$, because every $\wt\psi\otimes\id_{Q_v}$ factors in a unique way
\begin{equation}\label{EqDecompE_v}
\xymatrix @R=1pc @C=0.6pc {
\wt\psi\otimes\id_{Q_v}\colon E_v \ar@{=}[r] & \prod_i E_{v,i} \ar@{->>}[rd] \ar[rr] & & Q_v^\alg \\
& & E_{v,i(\wt\psi)} \ar@{^{ (}->}[ur]_(0.6){\TS\es\wt\psi_i\;:=\;(\wt\psi\otimes\id_{Q_v})|_{E_{v,i(\wt\psi)}}}
}
\end{equation}
for an index $i(\wt\psi)$. The number of elements $\secondeta\in H_K$ which are mapped to the same $\wt\psi:=\secondeta\eta\psi_0\in H_E$ equals $\#\Gal(K/\eta\psi_0(E))=[K:\eta\psi_0(E)]=\tfrac{[K:Q]}{[E:Q]}$, and the number of $\secondeta\in H_K$ which are mapped into the set $H_{E_{v,i}}$ equals 
\begin{equation}\label{EqNumberOfeta}
\#H_{E_{v,i}}\cdot\tfrac{[K:Q]}{[E:Q]}\;=\;[E_{v,i}:Q_v]\cdot\tfrac{[K:Q]}{[E:Q]}\,.
\end{equation}
For each of the latter $\secondeta$ the valuation $v(\secondeta\eta\psi_0(e_\eta))=v(\wt\psi_i(e_\eta))=\tfrac{n_i}{[E_{v,i}:Q_v]}$ is the same. This implies
\begin{eqnarray}
\nonumber\tfrac{1}{\#H_K}\sum\limits_{\secondeta\in H_K}v_{\secondeta\eta\psi_0}(u_\eta^\secondeta)\cdot[\BF_v:\BF_q] & = & \tfrac{1}{\#H_K}\sum\limits_{\secondeta\in H_K}v(\secondeta\eta\psi_0(e_\eta))\cdot[\BF_v:\BF_q] \\[2mm]
\nonumber & = & \tfrac{[K:Q]}{\#H_K}\sum\limits_i \frac{n_i}{[E_{v,i}:Q_v]}\cdot \frac{[E_{v,i}:Q_v]}{[E:Q]} \cdot[\BF_v:\BF_q] \\[2mm]
\label{EqThmTagHeight4} & = & \frac{1}{r}\,\log_q \#\Bigl(\Koh_{1,\Betti}(\ulM^\eta,A_v)\big/\CO_{E_v}\,u_\eta\Bigr)\,.
\end{eqnarray}
Putting equations~\eqref{EqThmTagHeight2}, \eqref{EqThmTagHeight3} and \eqref{EqThmTagHeight4} together we can compute
\begin{equation}\label{EqThmTagHeight5}
 \frac{ht_{{\rm Tag},\wt\infty_\eta}(\ul G/K)}{[K_{\wt\infty_\eta}:Q_\infty]} = -\log_q \big|{\TS\int_{ u_\eta}} \omega_{\psi_0}^{\eta}\big|_\infty + \tfrac{1}{\#H_K}{\TS\sum\limits_{\secondeta\in H_K}}{\TS\sum\limits_{v\ne\infty}}\bigl(\ord_v(dz)+v_{\secondeta\eta\psi_0}(u_\eta^\secondeta)\bigr)\,[\BF_v:\BF_q] -\log_q D_A(\CO_E)\;.
\end{equation}

\medskip\noindent
5. Now we take a finite place ${\tilde v_\eta}$ of $K$ and let $v\ne\infty$ be the place of $Q$ with ${\tilde v_\eta}|v$. We choose an $\eta\in H_K$ such that ${\tilde v_\eta}$ is the place induced from $v$ via $\eta\colon K\into Q^\alg\subset Q_v^\alg\subset\BC_v$ and view $\ulG^\eta$ as a Drinfeld module over $\BC_v$. We use the isomorphism $\wt m^\eta$ from Step 1 above to write 
\[
\wt m^\eta\circ \phi^\eta_a\circ (\wt m^\eta)^{-1} \;=\; \gamma(a)+ \sum_{i=1}^{r \deg a} \phi^\eta_{a,i} \,\tau^i \;\in\; \End_{\BC_v,\BF_q}(\BG_{a,\BC_v})\;=\;\BC_v\{\tau\}\quad\text{with}\quad \phi^\eta_{a,i}\;\in\; \BC_v\,.
\]
Since $\ulG$ has good reduction at $\tilde v_\eta$ there exists an element $x_\eta \in K_{\tilde v_\eta}\mal$ such that 
\[
x_\eta \wt m^\eta\circ \phi^\eta_a\circ (\wt m^\eta)^{-1} x_\eta^{-1} \;=\; \gamma(a) + \sum_{i=1}^{r \deg a} \phi^\eta_{a,i}\cdot x_\eta^{1-q^i}\,\tau^i \;\in\; \CO_{\BC_v}\{\tau\} \quad\text{and}\quad \phi^\eta_{a,\,r \deg a}\cdot x_\eta^{1-q^{r \deg a}}\;\in\; \CO_{\BC_v}\mal.
\]
We have $\DS\frac{e(\tilde v_\eta|v)\cdot v(\phi^\eta_{a,i})}{q^i-1} =\frac{e(\tilde v_\eta|v)\cdot v(\phi^\eta_{a,i}\cdot x_\eta^{1-q^i})}{q^i-1} + e(\tilde v_\eta|v)\cdot v(x_\eta)$. Note that $\DS\frac{e(\tilde v_\eta|v)\cdot v(\phi^\eta_{a,i}\cdot x_\eta^{1-q^i})}{q^i-1} \geq 0$ for all $i$ and equal to $0$ for $i = r \deg a$. So
\[
\ord_{\tilde v_\eta}(\ulG) \;:=\; \min\left\{ \frac{e(\tilde v_\eta|v)\cdot v(\phi^\eta_{a,i})}{q^i-1}: a \in A\setminus\BF_q, \ 1\leq i \leq r\deg a \right\} \;=\; e(\tilde v_\eta|v)\cdot v(x_\eta)\;\in\; \BZ\,.
\]
Then 
\begin{equation}\label{EqThmTagHeight6a}
ht_{{\rm Tag},\tilde v_\eta}(\ul G/K) \;:=\; -[\BF_{\tilde v_\eta}:\BF_q] \cdot  e(\tilde v_\eta|v)\cdot v(x_\eta) \;=\; - [K_{\tilde v_\eta}:Q_v]\cdot v(x_\eta)\cdot [\BF_v:\BF_q]\,.
\end{equation}
It remains to relate $v(x_\eta)$ to $v(\omega^\eta_{\psi_0})$. For this let $\ul\CG^\eta$ be the good model of $\ulG^\eta$ over $\CO_{\BC_v}$ and let $\ul\CM^\eta$ be the $A$-motive of $\ul\CG^\eta$. The latter is the good model of $\ulM^\eta$ over $\CO_{\BC_v}$. Then $x_\eta\wt m^\eta$ extends to a coordinate system $x_\eta\wt m^\eta\colon \ul\CG^\eta\isoto\BG_{a,\CO_{\BC_v}}$ over $\CO_{\BC_v}$ of $\ul\CG^\eta$ and induces an isomorphism 
\[
\End_{\CO_{\BC_v}, \BF_q}(\BG_{a,\CO_{\BC_v}})\;=\;\CO_{\BC_v}\{\tau\}\;\isoto\; \ul\CM^\eta \;:=\; \Hom_{\CO_{\BC_v}, \BF_q}(\ul\CG^\eta,\BG_{a,\CO_{\BC_v}})\,,\quad f\;\longmapsto\; f\circ x_\eta\wt m^\eta \,. 
\]
This implies that $x_\eta\ol m^\eta$ generates the $\CO_{\BC_v}$-module $\coker\tau_{\CM^\eta}$. Next let $w=w_\eta$ be the place of $E$ which is induced from the place $\tilde v_\eta$ of $K$ under the embedding $\psi_0\colon E\into K$. Then $w_\eta$ is induced from the valuation $v$ on $\BC_v$ under the embedding $\eta\psi_0\colon E\into\BC_v$ and lies above the place $v$ of $Q$. Let $y_w\in\CO_E$ be an element which is a uniformizing parameter at $w$, that is, which satisfies $w(y_w)=1$. Set $\theta_w:=\eta\psi_0(y_w)\in \CO_{\BC_v}$. We use the isomorphism induced from $\tau_{M^\eta}$
\[
(y_w-\theta_w)^{-1}\Koh^{\eta\psi_0}(\ulM^\eta,\BC_v\dbl y_w-\theta_w\dbr) \big/ \Koh^{\eta\psi_0}(\ulM^\eta,\BC_v\dbl y_w-\theta_w\dbr) \;\isoto\;\Fq^{\ulM^\eta}/\Fp^{\ulM^\eta}\;\underset{\tau_{M^\eta}}{\isoto}\;\coker\tau_{M^\eta}\,.
\]
In the source of this isomorphism the elements $x_\eta m^\eta$ and $\tau_{M^\eta}^{-1}(x_\eta \wt m^\eta)$ are equal, because both have the same image $x_\eta \ol m^\eta$ in the target $\coker\tau_{M^\eta}$. Therefore, $x_\eta m^\eta$ is a generator of the canonical $\CO_{\BC_v}$-module structure on the source induced from $\ul\CM^\eta$. Multiplication with $y_w-\theta_w$ maps this $\CO_{\BC_v}$-structure isomorphically onto the $\CO_{\BC_v}$-module $\Koh^{\eta\psi_0}(\ul\CM^\eta,\CO_{\BC_v})$, which is hence generated by $(y_w-\theta_w)x_\eta m^\eta$. On the other hand, after multiplication with $-(z-\zeta)\mod(z-\zeta)^2$ we obtain $x_\eta\omega_{\psi_0}^\eta=-(z-\zeta)x_\eta m^\eta$ in
\[
\Koh^{\eta\psi_0}(\ulM^\eta,\BC_v) \;=\; \Koh^{\eta\psi_0}(\ulM^\eta,\BC_v\dbl y_w-\theta_w\dbr) \big/ (y_w-\theta_w)\Koh^{\eta\psi_0}(\ulM^\eta,\BC_v\dbl y_w-\theta_w\dbr)\,.
\]
All these are one dimensional $\BC_v$-vector spaces. Note that $y_w-\theta_w$ and $z-\zeta$ are not equal. Namely, if we write $I:=\ker\bigl(\CO_E\otimes_{\BF_q}\CO_E\to \CO_E,\, a\otimes a'\mapsto aa'\bigr)=(a\otimes 1-1\otimes a\colon a\in \CO_E)$, the element $(z-\zeta)\mod(z-\zeta)^2$ of $\BC_v$ is the image of $dz:=(z\otimes1-1\otimes z)\mod I^2 \in \Omega^1_{\CO_E/\BF_q}:=I/I^2$ under the $\CO_E$-homomorphism 
\[
\Omega^1_{\CO_E/\BF_q}\;\longto\;\Omega^1_{\CO_E/\BF_q}\hspace{-2em}\underset{\;\qquad \CO_E\otimes \CO_E/I,\,\id_{\CO_E}\otimes\eta\psi_0}{\otimes}\,(\CO_E\otimes_{\BF_q}\BC_v)/(a\otimes 1-1\otimes\eta\psi_0(a)\colon a\in \CO_E) \;=\; \Omega^1_{\CO_E/\BF_q}\,\underset{\CO_E,\,\eta\psi_0}{\otimes} \BC_v\,.
\]
On the other hand, $y_w-\theta_w$ is the image of $dy_w:=(y_w\otimes1-1\otimes y_w)\mod I^2$ and is a generator of the $\CO_{\BC_v}$-module $\Omega^1_{\CO_E/\BF_q}\otimes_{\CO_E,\,\eta\psi_0} \CO_{\BC_v}$. Therefore, $x_\eta\tfrac{y_w-\theta_w}{z-\zeta}\cdot\omega_{\psi_0}^\eta$ is an $\CO_{\BC_v}$-generator of $\Koh^{\eta\psi_0}(\ul\CM^\eta,\CO_{\BC_v})$, and hence
\begin{equation*}
v(\omega_{\psi_0}^\eta)\;=\;v\bigl(x_\eta^{-1}\tfrac{z-\zeta}{y_w-\theta_w}\bigr)\;=\;v\bigl(x_\eta^{-1}\cdot\eta\psi_0(\tfrac{dz}{dy_w})\bigr)\;=\;-v(x_\eta)+\frac{\ord_{w_\eta}(dz)}{e(w_\eta|v)}\,,
\end{equation*}
where again $\ord_{w_\eta}(dz)\in\BZ$ is the order at $w_\eta$ of the rational section $dz$ of the line bundle $\Omega^1_{\CO_E/\BF_q}$. From \eqref{EqThmTagHeight6a} we obtain for the local Taguchi height at $\tilde v_\eta$
\begin{eqnarray}\label{EqThmTagHeight6}
\frac{ht_{{\rm Tag},\tilde v_\eta}(\ul G/K)}{[K_{\tilde v_\eta}:Q_v]} \;=\; -v(x_\eta)\cdot [\BF_v:\BF_q] \;=\; v(\omega_{\psi_0}^\eta)\cdot [\BF_v:\BF_q]-\frac{\ord_{w_\eta}(dz)\cdot [\BF_{w_\eta}:\BF_q]}{[E_{w_\eta}:Q_v]}\,. 
\end{eqnarray}

\medskip\noindent
6. The summand on the right is related to the different $\FD_{\CO_E/A}$. Namely, by \cite[\S\,III.7, Proposition~14]{SerreLF} the $\CO_E$-module of relative differentials $\Omega^1_{\CO_E/A}$ is generated by one element and is isomorphic to $\CO_E/\FD_{\CO_E/A}$. This gives rise to the exact sequence \cite[0$_{\rm IV}$, Th\'eor\`eme~0.20.5.7]{EGA_IV1}
\[
\xymatrix {
0 \ar[r] & \Omega^1_{A/\BF_q}\otimes_A \CO_E \ar[r] & \Omega^1_{\CO_E/\BF_q} \ar[r] & \CO_E/\FD_{\CO_E/A} \ar[r] & 0\,.
}
\]
There is an element $0\ne a\in A$ with $a\,dz\in\Omega^1_{A/\BF_q}$. Dividing out $\CO_E\cdot a\,dz$ yields the exact sequence
\[
\xymatrix {
0 \ar[r] & \bigl(\Omega^1_{A/\BF_q}\otimes_A \CO_E\bigr)\big/\CO_E\cdot a\,dz  \ar[r] & \Omega^1_{\CO_E/\BF_q}\big/\CO_E\cdot a\,dz \ar[r] & \CO_E/\FD_{\CO_E/A} \ar[r] & 0\,.
}
\]
Counting elements, and denoting the places of $E$ by $w$ and their residue fields by $\BF_w$, we obtain
\begin{eqnarray*}
\prod_{w\nmid\infty}(\#\BF_w)^{\ord_w(a\,dz)} & = & \#\bigl(\Omega^1_{\CO_E/\BF_q}\big/\CO_E\cdot a\,dz\bigr) \\
& = &  \#\bigl(\CO_E/\FD_{\CO_E/A}\bigr)\cdot \#\bigl((\Omega^1_{A/\BF_q}/A\cdot a\,dz)\otimes_A \CO_E\bigr)\\[2mm]
& = &  \#\bigl(\CO_E/\FD_{\CO_E/A}\bigr)\cdot \#\bigl(\Omega^1_{A/\BF_q}/A\cdot a\,dz\bigr)^{[\CO_E:A]}\\[2mm]
& = &  \#\bigl(\CO_E/\FD_{\CO_E/A}\bigr)\cdot \Bigl(\prod_{v\ne\infty}(\#\BF_v)^{\ord_v(a\,dz)}\Bigr)^r.
\end{eqnarray*}
We observe $\ord_w(a\,dz)=w(a)+\ord_w(dz)$ and that for every place $v\ne\infty$ of $Q$
\[
\prod_{w|v}(\#\BF_w)^{w(a)}\;=\;\prod_{w|v}(\#\BF_v)^{[\BF_w:\BF_v]\cdot e(w|v)\cdot v(a)}\;=\;(\#\BF_v)^{\sum_{w|v}[\BF_w:\BF_v]\cdot e(w|v)\cdot v(a)}\;=\;(\#\BF_v)^{r\cdot v(a)}.
\]
Taking $\log_q$ this yields
\begin{equation}\label{EqThmTagHeight7}
\sum_{w\nmid\infty}[\BF_w:\BF_q]\cdot\ord_w(dz) - r\cdot\sum_{v\ne\infty}[\BF_v:\BF_q]\cdot\ord_v(dz) \;=\; \log_q \#\bigl(\CO_E/\FD_{\CO_E/A}\bigr) \;=\; \log_q \#\bigl(A/\Fd_{\CO_E/A})\,,
\end{equation}
where $\Fd_{\CO_E/A}=N_{E/Q}(\FD_{\CO_E/A})$ is the discriminant of $\CO_E$ over $A$, and the last equality comes from the fact that for all maximal ideals $\FP\subset\CO_E$ and $\Fp:=A\cap\FP\subset A$ with residue fields $\BF_\FP$, respectively $\BF_\Fp$, and for every $n\in\BN$ we have $N_{E/Q}(\FP^n)=\Fp^{[\BF_\FP:\BF_\Fp]n}$ and $\#(\CO_E/\FP^n)=\#(\BF_\FP)^n=(\#\BF_\Fp)^{[\BF_\FP:\BF_\Fp]n}=\#\bigl(A/N_{E/Q}(\FP^n)\bigr)$.

\medskip\noindent
7. Fix a place $w$ of $E$ above $v$. In terms of the decomposition $E_v:=E\otimes_Q Q_v=\prod_i E_{v,i}$ from diagram \eqref{EqDecompE_v} the completion $E_w$ of $E$ at $w$ equals $E_{v,i}$ for some $i$ and the number of $\eta\in H_K$ which give rise to the same $w_\eta=w$ equals $[E_w:Q_v]\cdot\tfrac{[K:Q]}{[E:Q]}$ by \eqref{EqNumberOfeta}. This together with \eqref{EqThmTagHeight6}, \eqref{EqThmTagHeight5} and \eqref{EqThmTagHeight7} finally implies
\begin{eqnarray*}
ht_{\rm Tag}^{\rm st}(\ulG) &=& \frac{\log q}{[K:Q]} \cdot \Bigl( \sum_{{\tilde v} \nmid \infty} ht_{{\rm Tag},{\tilde v}}(\ul G/K) + \sum_{\wt\infty\mid \infty} ht_{{\rm Tag},\wt\infty}(\ul G/K) \Bigr) \\
&=& \frac{\log q}{[K:Q]} \cdot \sum_{\eta\in H_K} \biggl( \sum_{v\ne\infty} \frac{ht_{{\rm Tag},{\tilde v_\eta}}(\ul G/K)}{[K_{\wt v_\eta}:Q_v]} + \frac{ht_{{\rm Tag},\wt\infty_\eta}(\ul G/K)}{[K_{\wt\infty_\eta}:Q_\infty]} \biggr) \\
& = & \frac{\log q}{[K:Q]} \cdot \sum_{\eta\in H_K} \biggl( \sum_{v\ne\infty}\Bigl( v(\omega_{\psi_0}^\eta)\cdot [\BF_v:\BF_q] - \frac{\ord_{w_\eta}(dz)\cdot [\BF_{w_\eta}:\BF_q]}{[E_{w_\eta}:Q_v]} \Bigr) \\
& & \qquad -\log_q \big|{\TS\int_{ u_\eta}} \omega_{\psi_0}^{\eta}\big|_\infty +\tfrac{1}{\#H_K}{\TS\sum\limits_{\secondeta\in H_K}}{\TS\sum\limits_{v\ne\infty}} \bigl(\ord_v(dz)+v_{\secondeta\eta\psi_0}(u_\eta^\secondeta)\bigr)\,[\BF_v:\BF_q] -\log_q D_A(\CO_E) \biggr)\\
& = & \tfrac{1}{\#H_K}\sum\limits_{\eta\in H_K}\Bigl(-\log\bigl|{\TS\int_{u_\eta}}\omega_{\psi_0}^\eta\bigr|_\infty+\tfrac{1}{\#H_K}\sum\limits_{\secondeta\in H_K}\sum\limits_{v\ne\infty}\bigl(v(\omega_{\psi_0}^{\secondeta\eta})+v_{\secondeta\eta\psi_0}(u_\eta^\secondeta)\bigr)\log q_v \Bigr)-\log D_A(\CO_E)\\
& & \qquad +\frac{\log q}{[K:Q]}\cdot \Bigl(-\frac{[K:Q]}{[E:Q]}\sum_{w\nmid\infty}[\BF_w:\BF_q]\cdot\ord_w(dz) + [K:Q]\sum_{v\ne\infty}[\BF_v:\BF_q]\cdot\ord_v(dz)\Bigr)\\
& = & \tfrac{1}{\#H_K}\sum\limits_{\eta\in H_K}\Bigl(-\log\bigl|{\TS\int_{u_\eta}}\omega_{\psi_0}^\eta\bigr|_\infty+\tfrac{1}{\#H_K}\sum\limits_{\secondeta\in H_K}\sum\limits_{v\ne\infty}\bigl(v(\omega_{\psi_0}^{\secondeta\eta})+v_{\secondeta\eta\psi_0}(u_\eta^\secondeta)\bigr)\log q_v \Bigr)\\
& & \qquad -\frac{\log\#(A/\Fd_{\CO_E/A})}{[E:Q]}-\log D_A(\CO_E)
\end{eqnarray*}
which finishes the proof.
\end{proof}

\begin{Remark}\label{RemArtinCharDriMod}
For a Drinfeld module $\ulG$ of rank $r$ over a finite Galois extension $K/Q$ with CM by $\CO_E$ for a separable field extension $E/Q$ with CM type as in Theorem~\ref{ThmTagHeight}, the functions from \eqref{Eq:a_E} and \eqref{Eq:a0_E} are
\begin{align*}
a_{E,\psi_0,\Phi}(g) & \;=\; \left\{\begin{array}{cl} 1 & \text{if }g\in\Gal(K/\psi_0(E)) \\ 0 & \text{else} \end{array}\right\}\;=\; \BOne_{\Gal(K/\psi_0E)}(g)\qquad\text{and} \\[2mm]
a^0_{E,\psi_0,\Phi}(g) & \;=\; \tfrac{1}{\#H_K} {\TS\sum\limits_{\eta\in H_K}}\BOne_{\Gal(K/\eta\psi_0E)}(g) \;=\; \Bigl(\tfrac{1}{r}\cdot \Ind^{\Gal(K/Q)}_{\Gal(K/\psi_0E)}\BOne_{\Gal(K/\psi_0E)}\Bigr)(g) \,,
\end{align*}
where $\BOne_{\Gal(K/\eta\psi_0E)}$ is the characteristic function of the subset $\Gal(K/\eta\psi_0(E))\subset \Gal(K/Q)$ and $\Ind$ denotes the induction of characters; see \cite[Chapter~VIII, \S\,3, Property~(V), page~222]{CasselsFroehlich}. Then $(a^0_{E,\psi_0,\Phi})^*=a^0_{E,\psi_0,\Phi}$ and \cite[loc.\ cit.]{CasselsFroehlich} implies 
\begin{eqnarray*}
L^\infty\bigl((a^0_{E,\psi_0,\Phi})^*,s,K/Q\bigr)^r & = & L^\infty(\Ind^{\Gal(K/Q)}_{\Gal(K/\psi_0E)}\BOne_{\Gal(K/\psi_0E)},s,K/Q) \\
& = & L^\infty(\BOne_{\Gal(K/\psi_0E)},s,K/\psi_0E) \\
& = & \zeta_{\CO_E}(s)\,,
\end{eqnarray*}
and hence
\[
r\cdot Z^\infty\bigl((a^0_{E,\psi_0,\Phi})^*,0\bigr) \;=\; \frac{\zeta'_{\CO_E}(0)}{\zeta_{\CO_E}(0)}\,.
\]
If $\infty$ is tamely ramified in $E/Q$ then Example~\ref{ExDA(OE)} and \cite[Lemma~5.17 and Proposition~5.18]{HartlSingh} imply that
\[
\log D_A(\CO_E) \;=\;\frac{\log\#(A/\Fd_{\CO_E/A})}{2r}\;=\;\frac{1}{2}\cdot\mu_{\Art}^\infty(a^0_{E,\psi_0,\Phi})\,,
\]
where $\mu_{\Art}^\infty$ was defined in \eqref{EqMuArtFF}. This puts Theorem~\ref{ThmTagHeight} in a form analogous to Colmez's Theorem~\ref{ThmFaltHeight}. \end{Remark}

Thus to establish the product formula in Conjecture~\ref{ConjColmezAMot} for a CM Drinfeld $A$-module $\ulG$ it suffices to relate the Taguchi height of $\ulG$ to the logarithmic derivative of the Zeta-function $\zeta_{\CO_E}$. This was achieved by Fu-Tsun Wei~\cite{Wei20}:

\begin{Theorem}[{\cite[Theorem~1.6]{Wei20}}] \label{ThmWei}
In Situation~\ref{SitCMAMot} let $\ulM=\ulM(\ulG)$ for a Drinfeld $A$-module $\ulG$ of rank $r$ with complex multiplication by $\CO_E$ over $K$ which has everywhere good reduction. Then the stable Taguchi height (Definition~\ref{DefTagHeight}) satisfies 
\[
ht_{\rm Tag}^{\rm st}(\ulG)\;=\;-\frac{1}{r}\cdot\frac{\zeta'_{\CO_E}(0)}{\zeta_{\CO_E}(0)}-\log D_A(\CO_E)
\]
\end{Theorem}

Theorems~\ref{ThmWei} and \ref{ThmTagHeight} and Remark~\ref{RemArtinCharDriMod} imply the following

\begin{Corollary}
The product formula from Conjecture~\ref{ConjColmezAMot} holds for CM Drinfeld $A$-modules. \qed
\end{Corollary}

In \cite{Wei20} Theorem~\ref{ThmWei} follows from the function field analogs of Kronecker's limit theorem and Lerch's formula \eqref{EqLerch}. In that sense, Wei's theorem can be viewed as the analog of Colmez's Theorem~\ref{ThmColmezAbelianE} in the abelian case. Analogously to Remark~\ref{RemChowla}, it would be interesting to describe, also in the function field case, the relation on the one hand between the Kronecker limit and the Lerch-type formulas in \cite{Wei20}, and on the other hand Gross-Zagier formulas like the ones proved by Yun, Wei Zhang, Howard and Shnidman \cite{YunZhang,YunZhang2,HowardShnidman} for the intersection numbers of Heegner cycles on moduli spaces of global $\PGL_2$-shtukas.

In the direction of the Andr\'e-Oort conjecture over function fields there is the following analog of Theorem~\ref{ThmAO} by Breuer and Hubschmid.

\begin{Theorem} \label{ThmAOBreuer}
The Andr\'e-Oort-Conjecture holds for irreducible closed subvarieties $X$ in Drinfeld modular varieties $M$ in the following cases:
\begin{enumerate}
\item \cite{BreuerTAMS}
$M$ is a product of Drinfeld modular curves which parameterize Drinfeld $A$-modules of rank $2$.
\item \cite{Breuer12}
$M$ is a Drinfeld modular variety parameterizing Drinfeld $A$-modules of rank $r$ and $X$ is a curve.
\item \cite{Hubschmid13}
$M$ is a Drinfeld modular variety parameterizing Drinfeld $A$-modules of rank $r$ such that $(q,r)=1$.
\end{enumerate}
That is, in both cases $X\subset M$ is a special subvariety if and only if it contains a dense set of CM points.
\end{Theorem}
 
Like in Theorem~\ref{ThmAO} one crucial ingredient is to show that the Galois orbit of a special point, that is a CM Drinfeld module, is large. This is done by following the strategy of Edixhoven~\cite{EdixhovenTexel2001,Edixhoven05}, who proved cases of the original Andr\'e-Oort-Conjecture for Shimura varieties conditionally under assuming the generalized Riemann Hypothesis. Over function fields various zeta functions are known to satisfy the Riemann Hypothesis by Deligne~\cite{DeligneWeilI}. So this approach to the Andr\'e-Oort-Conjecture over function fields can become unconditional. One the other hand, Conjecture~\ref{ConjColmezAMot} might also imply lower bounds for Galois orbits once it is related to heights of $A$-motives.

\section{Example}\label{SectExample}
\setcounter{equation}{0}

We give an example for Conjecture~\ref{ConjColmezAMot} in case of an $A$-motive $\ulM$ of rank $1$ where the curve $C$ has genus $1$. In this case, Conjecture~\ref{ConjColmezAMot} follows from Theorem~\ref{ThmWei}. This example was studied in detail by Green and Papanikolas~\cite{PapasGreen}. It is a beautiful exercise in computing with elliptic curves.

\begin{Point}\label{Point18.1}
Let $C$ be an elliptic curve over $\BF_q$, given by the (non-homogeneous) Weierstra{\ss} equation
\[
F\;:=\;F(t,y)\;:=\;y^2 + a_1ty + a_3y - t^3 - a_2t^2 - a_4t - a_6,\qquad\text{with}\quad  a_i \in \BF_q,
\]
 in the variables $t=\tfrac{X}{Z}$ and $y=\tfrac{Y}{Z}$, compare \eqref{EqWeierstrass}. Let $\infty\in \Var(Z^3\cdot F)\subset\BP^2_{\BF_q}$ be the $\BF_q$-rational point with $(X:Y:Z)=(0:1:0)$ at which $t$ and $y$ have pole order given by
\[
v_\infty (t) = -2,  \quad v_\infty(y) = -3\,.
\]
We have $A = \Gamma(C\setminus\{\infty\}, \CO_C)=\BF_q[t, y]/(F(t,y))$. For any field extension $L$ of $\BF_q$ there is exactly one point $\infty_L$ on $C_L$ above $\infty$, because $\infty$ is $\BF_q$-rational. To shorten the notation we sometimes denote the point $\infty_L$ again by $\infty$.

We consider a second copy of the ring $A$ given by $\BF_q[\theta,\epsilon]/(F(\theta,\epsilon))$ in the variables $\theta$ and $\epsilon$, and its fraction field $\BF_q(\theta, \epsilon)$. This is the function field of a second copy of the elliptic curve $C$, which we denote by $X_0$ and which has coordinates $\theta$ and $\epsilon$. That is $\BF_q(\theta, \epsilon)=\BF_q(X_0)$. Let  $\gamma : A\to \BF_q(\theta, \epsilon)$ be given by $\gamma(t) = \theta$ and $\gamma(y) = \epsilon$. This makes $\BF_q(\theta, \epsilon)$ into an $A$-field. We use the isomorphism $\gamma:Q\isoto\BF_q(\theta, \epsilon)$ to embed $\BF_q(\theta, \epsilon)$ canonically into $\BC_v$ for all places $v$ of $Q$. We note that 
\[
\Xi \;=\; \Var(t-\theta, y-\epsilon)\;=\;\Var(\CJ) \qquad\text{for the ideal}\quad \CJ\;:=\;(a\otimes1-1\otimes\gamma(a)\colon a\in A)\;=\;(t-\theta, y-\epsilon)
\]
is an $\BF_q(\theta, \epsilon)$-rational point of $C$. Furthermore, $\Xi\in C\bigl(\BF_q(\theta, \epsilon)\bigr)\subset C(\BC_\infty)$ specializes to $\infty\in C(\kappa_\infty)$ under the reduction map $red:C(\BC_\infty)\to C(\kappa_\infty)$ from \eqref{EqRedMap}. Recall the rigid analytic space $\FC:=\FC_{\BC_\infty}=(C_{\BC_\infty})^\rig$ and the disc $\FD\subset\FC$, which is defined in Notation~\ref{NotRigidDiscs} as the preimage in $\FC=C(\BC_\infty)$ of $\infty\in C(\kappa_\infty)$. This disc $\FD$ is the formal group of the elliptic curve $C_{\BC_\infty}$ over $\BC_\infty$, see \cite[Example~IV.3.1.3]{Silverman86}, where this formal group is denoted $\hat C(\Fm_\infty)$ for the maximal ideal $\Fm_\infty\subset\CO_{\BC_\infty}$. 

For any field extension $L$ of $\BF_q$ the relative $q$-Frobenius isogeny ${\rm Fr}_{q,C_L/L}:C_L\to C_L$ of $C_L$ over $L$ is given on $\Spec A_L\subset C_L$ by the $L$-homomorphism ${\rm Fr}_{q,C_L/L}^*\colon A_L\to A_L$, $t\mapsto t^q$, $y\mapsto y^q$. For any point $P\in C_L(L)$ we denote by $P^{(1)}:={\rm Fr}_{q,C_L/L}(P)\in C_L(L)$ the image of $P$. The composition $\sigma\circ{\rm Fr}_{q,C_L/L}={\rm Fr}_{q,C_L/L}\circ\sigma$ with the morphism $\sigma\colon C_L\to C_L$ from \eqref{EqSigma} equals the absolute $q$-Frobenius on $C_L$, which is the identity on points and the $q$-power map on the structure sheaf. For example, the morphism ${\rm Fr}_{q,C/\BF_q}$ sends $\Xi$ to $\Xi^{(1)}={\rm Fr}_{q,C/\BF_q}(\Xi)=\Var(t-\theta^q, y-\epsilon^q)$. 

The  isogeny $1-{\rm Fr}_{q,C/\BF_q} : C \to C$  is separable by \cite[Corollary~III.5.5]{Silverman86} and it induces an isomorphism of formal groups  $1-{\rm Fr}_{q,C/\BF_q} : \hat C(\Fm_\infty) \to \hat C(\Fm_\infty)$ by \cite[Corollary~IV.4.3 and Lemma~IV.2.4]{Silverman86}. Therefore, we can pick a unique point  $V\in \hat C(\Fm_\infty)=\FD\subset C(\BC_\infty)$ so that under the group law of $C$
\begin{equation}\label{EqDefOfV}
(1-{\rm Fr}_{q,C/\BF_q})(V) = V - V^{(1)} = \Xi, 
\end{equation}
and moreover, $(1-{\rm Fr}_{q,C/\BF_q})^{-1}(\Xi) = \{ V+ P \ |\ P \in C(\BF_q)\}$.

If we set $V = \Var(t-\alpha, y-\beta)$ with $\alpha,\beta\in\BC_\infty$ then $K:=\BF_q(\theta, \epsilon)(\alpha, \beta)=\BF_q(\alpha, \beta)\subset\BC_\infty$ is  the Hilbert class field of $\BF_q(\theta, \epsilon)$ by \cite[Proposition~3.3]{PapasGreen}. We view $K$ as the function field of a third copy of the elliptic curve $C$, which we denote by $X_1$ and which has coordinates $\alpha$ and $\beta$. The inclusion of fields $\BF_q(\theta,\epsilon)\subset K$ corresponds to a morphism $X_1\to X_0$ which is equal to the morphism $1-{\rm Fr}_{q,C/\BF_q} : C \to C$ under the identifications $X_1=C=X_0$. In particular, the set $X_1(\BF_q)$ equals the preimage of $\infty=(0:1:0)\in X_0$ under this map. This set consists of the points with $\alpha,\beta\in\BF_q$ together with the point $P=\infty_1\in X_1$ where $\alpha$ and $\beta$ have poles of order $2$ and $3$ respectively. It follows that $X_1\setminus X_1(\BF_q)=\Spec \CO_K$ for the integral closure $\CO_K$ of $A$ in $K$.
\end{Point}

\begin{Point}\label{Point18.2}
Now by \eqref{EqDefOfV} and the definition of the group law on $C$, see \cite[\S\,III.2]{Silverman86}, the $K$-valued points $V^{(1)}=\Var(t-\alpha^q,y-\beta^q)$ and $-V=\Var(t-\alpha,y+\beta+a_1\alpha+a_3)$ and $\Xi$ in $C(K)$ are collinear. We take $m$ to be the slope of the line connecting them:
\begin{equation}\label{EqDefOfm}
m  \;=\; \frac{\epsilon -\beta^q }{\theta-\alpha^q} \;=\; \frac{\epsilon+\beta+a_1\alpha +a_3}{\theta-\alpha} \;=\; \frac{\beta^q+\beta+a_1\alpha +a_3}{\alpha^q-\alpha}\;\in\; K.
\end{equation}
With respect to the valuation $v_\infty$ on $K\subset\BC_\infty$ we compute $v_\infty(\theta)=v_\infty(\alpha)=-2$ and $v_\infty(\epsilon)=v_\infty(\beta)=-3$, and hence obtain $v_\infty(m)=v_\infty(\frac{\epsilon -\beta^q }{\theta-\alpha^q})=-q$. We extend this to the following
\end{Point}

\begin{Lemma} \label{LemmaPolesOfm}
Let $P\in X_1$ be a closed point. Then the element $m\in K$ has a pole at $P$ if and only if $P\in X_1(\BF_q)=X_1\setminus\Spec\CO_K$. In particular, $m\in\CO_K$. Moreover, for the normalized valuation $v_P$ corresponding to $P$ we have
\[
v_P(m) =
\begin{cases}
-1 & \text{when}\quad P\in X_1(\BF_q), P\ne\infty_1\,, \\
-q & \text{when}\quad P=\infty_1\,.
\end{cases}
\]
\end{Lemma}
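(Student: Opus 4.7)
The plan is to compute $v_P(m)$ at each closed point $P$ of $X_1$ using the three equivalent expressions for $m$ in \eqref{EqDefOfm}. The structural input used repeatedly is that the isogeny $\phi:=1-{\rm Fr}_{q,C/\BF_q}\colon X_1\to X_0$ is not merely separable (as recalled in \S\ref{Point18.1}) but actually \'etale, since its tangent map on $\Lie C$ is ${\rm id}-d({\rm Fr}_{q,C/\BF_q})={\rm id}$ (Frobenius being purely inseparable). Hence every point of $X_1$ has ramification index $1$ over its image in $X_0$, so $v_P(\phi^*f)=v_{\phi(P)}(f)$ for every $f\in Q$; combined with $\phi^{-1}(\infty)=X_1(\BF_q)$ (also recalled in \S\ref{Point18.1}), this fixes the valuations of $\theta$ and $\epsilon$ at every closed point of $X_1$.

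I would first treat $P\in X_1(\BF_q)\setminus\{\infty_1\}$ using $m=(\epsilon-\beta^q)/(\theta-\alpha^q)$. Since $\alpha$ and $\beta$ have poles only at $\infty_1$, we have $v_P(\alpha^q),v_P(\beta^q)\geq 0$, while \'etaleness forces $v_P(\theta)=v_\infty(t)=-2$ and $v_P(\epsilon)=v_\infty(y)=-3$. These dominate, so $v_P(\theta-\alpha^q)=-2$, $v_P(\epsilon-\beta^q)=-3$, and hence $v_P(m)=-1$. At $P=\infty_1$ the roles switch: $v_{\infty_1}(\alpha^q)=-2q$ and $v_{\infty_1}(\beta^q)=-3q$ now dominate (using $q\geq 2$), yielding $v_{\infty_1}(\theta-\alpha^q)=-2q$, $v_{\infty_1}(\epsilon-\beta^q)=-3q$, and hence $v_{\infty_1}(m)=-q$.

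The main obstacle is showing $v_P(m)\geq 0$ for every $P\in\Spec\CO_K=X_1\setminus X_1(\BF_q)$, since at points where two of $V^{(1)},\Xi,-V$ collide (for instance when $V(P)$ lies in the kernel of $1-2\,{\rm Fr}_{q,C/\BF_q}$), the direct estimate from any single expression in \eqref{EqDefOfm} fails as numerator and denominator vanish simultaneously. I would sidestep this via a Vieta argument: substituting the equation $y=mt+b$ of the line through $V^{(1)},\Xi,-V$ into the Weierstrass relation $F(t,y)=0$ yields a monic cubic in $t$ whose three roots are $\alpha^q,\theta,\alpha$, and equating the coefficient of $t^2$ gives
\[
m^2+a_1 m\;=\;a_2+\alpha+\alpha^q+\theta.
\]
This exhibits $m$ as a root of a monic quadratic with coefficients in $\CO_K$ (since $\alpha,\alpha^q,\theta\in\CO_K$), so $m$ is integral over $\CO_K$. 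As $\Spec\CO_K$ is an open subscheme of the smooth curve $X_1$ it is normal, hence $\CO_K$ is integrally closed in its fraction field $K$, so $m\in\CO_K$ and $v_P(m)\geq 0$ for all $P\in\Spec\CO_K$, completing the proof.
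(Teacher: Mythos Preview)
Your proof is correct and takes a genuinely different route from the paper's. The paper argues geometrically: since $m$ is the slope of the line through $V^{(1)}$, $-V$, and $\Xi$, a pole of $m$ at $P$ means this line (specialized at $P$) passes through the origin $\infty_L$ of $C_L$, forcing one of the three points to equal $\infty_L$; a short case analysis then pins down $P\in X_1(\BF_q)$ and the pole orders, and the converse is handled similarly.

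Your approach replaces this geometry by two algebraic devices. First, you use \'etaleness of $1-{\rm Fr}_{q,C/\BF_q}$ to transport the pole orders of $t,y$ at $\infty$ to those of $\theta,\epsilon$ at every point of $X_1(\BF_q)$, which makes the valuation of $m=(\epsilon-\beta^q)/(\theta-\alpha^q)$ immediate there. Second, and this is the nice idea, you bypass any delicate analysis at points of $\Spec\CO_K$ by extracting from the Weierstra{\ss} equation the Vieta relation $m^2+a_1m=a_2+\alpha+\alpha^q+\theta$, which exhibits $m$ as integral over $\CO_K$ and hence in $\CO_K$. This is more uniform than the paper's case-by-case reasoning and sidesteps the collision issues you flagged; the paper's argument, on the other hand, stays closer to the group-law picture that governs the whole example.
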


\begin{proof}
This can be proved by computing a uniformizing parameter at $P$, but we use the following different strategy. The element $m\in K$ was defined as the slope of the line through $V^{(1)}$, $-V$ and $\Xi$. This also holds over $X_1$ for the canonical extensions of $V^{(1)}$, $-V$ and $\Xi$ to $X_1$-valued points of $C\times_{\BF_q}X_1$. We now specialize to the residue field $L:=\kappa(P)$ of $P$. If $m(P) =\infty$, that is $\tfrac{1}{m}(P)=0$ then on the elliptic curve $C_L:=C\times_{\BF_q}\Spec L$ the line through $V^{(1)}$, $-V$ and   $\Xi$ contains the neutral element $\infty_{L}$, so $V^{(1)}= \infty_{L}$ or  $ - V =\infty_{L}$ or  $\Xi =\infty_{L}$. If $V^{(1)}= \infty_{L}$ or  $ - V =\infty_{L}$ then $V = \infty_{L}$, because $\infty_{L}=-\infty_{L}$ and this is the only point in ${\rm Fr}_{q,C_L/L}^{-1}(\infty_{L})$. From $V=\Var(t-\alpha,y-\beta)$ it follows that $P=\infty_1\in X_1(\BF_q)$. In this case $v_P(\theta)=v_P(\alpha)=-2$ and $v_P(\epsilon)=v_P(\beta)=-3$, and we obtain $v_P(m)=v_P(\frac{\epsilon -\beta^q }{\theta-\alpha^q})=-q$ as above. If $\infty_{L} = \Xi = V-V^{(1)}$ and $V\ne\infty_L$, then $V^{(1)}=V=\Var(t-\alpha,y-\beta)$ lies in $C(\BF_q)$. Thus $\alpha,\beta\in\BF_q$ and $P\in X_1(\BF_q)$. In this case $v_P(\alpha),v_P(\beta)\ge0$, and $\Xi=\Var(t-\theta,y-\epsilon)=\infty_L$ implies $v_P(\theta)=-2$ and $v_P(\epsilon)=-3$. We obtain $v_P(m)=v_P(\frac{\epsilon -\beta^q }{\theta-\alpha^q})=-1$. Conversely, if $P\in X_1(\BF_q)$, then $V=\Var(t-\alpha,y-\beta)\in C(\BF_q)$ and $\Xi = V-V^{(1)} = V-V = \infty_{L}$ and so the line through $V^{(1)}$, $-V$ and   $\Xi$  has slope $m = \infty$.
\end{proof}

\begin{Point}\label{Point18.3b}
By \eqref{EqDefOfV} and \cite[Corollary~III.3.5]{Silverman86} the divisor $[ V^{(1)}]-[V]+ [\Xi]-[\infty]$ on $C_K$ is principal. So there is a function $f \in K(t,y)=\Quot(A_K)$, called the \emph{shtuka function} for $A$ with
\begin{equation}\label{EqDivisorOff}
\di(f) = [V^{(1)}]-[V]+ [\Xi]-[\infty]\,.
\end{equation}
The shtuka function $f$ can be written as
\begin{equation}\label{EqDefOff}
f \;=\; \frac{\nu(t,y)}{\delta(t)} \;=\; \frac{y-\epsilon-m(t-\theta)}{t-\alpha} \;=\; \frac{y+\beta+a_1\alpha +a_3-m(t-\alpha)}{t-\alpha}\;=\;  \frac{y+\beta+a_1\alpha +a_3}{t-\alpha} -m,
\end{equation}
for
\begin{align*}
\nu \;:=\;\nu(t,y)\;:=\; y-\epsilon-m\cdot(t-\theta)\;\in\;\CO_K[t,y]\qquad\text{and} \qquad
\delta\;:=\;\delta(t)\;:=\;  t-\alpha\;\in\;\CO_K[t,y],
\end{align*}
with divisors on $C_K$ given by
\begin{align}\label{EqDivisors}
\di(\nu) \;=\; [V^{(1)}]+ [-V]+ [\Xi]-3[\infty] \qquad\text{and}\qquad
\di (\delta)\;=\; [V]+ [-V]-2[\infty].
\end{align}
The formulas \eqref{EqDivisorOff} and \eqref{EqDivisors} also hold for the Cartier divisors of $f$, $\nu$ and $\delta$ on the two dimensional scheme $C_{\CO_K}:=C\times_{\BF_q}\Spec\CO_K$, because $\nu$ and $\delta$ do not vanish on an entire fiber of $C_{\CO_K}$ over a closed point of $\Spec\CO_K$. Here we consider the $\CO_K$-valued points $\infty:=\Var(\tfrac{1}{t},\tfrac{t}{y})=\{\infty\}\times_{\BF_q}\Spec\CO_K$ and $V=\Var(t-\alpha,y-\beta)$ and $\Xi=\Var(t-\theta,y-\epsilon)$, etc.\ as Cartier divisors on $C_{\CO_K}$.
\end{Point}

\begin{Point}\label{Point18.3}
We consider the invertible sheaf $\CO_{C_K}([V])$ on $C_K$ with
\begin{align*}
\Gamma(\Spec A_K, \CO_{C_K}([V])) \;=\;& \bigl\{\,x\in\Quot(A_K)\colon \ord_P(x)\ge0 \ \forall\,P\in C_K\setminus\{V,\infty\}\text{ and }\ord_V(x)\ge-1\,\bigr\}\\
\;=\;& \bigl\{\,x\in\Quot(A_K)\colon \ord_P(x)\ge0 \ \forall\,P\ne V,\infty\text{ and }(t-\alpha)x,(y-\beta)x \in A_K\,\bigr\}\,.
\end{align*}
Then we compute $\Gamma(\Spec A_K, \sigma^*\CO_{C_K}([V]))$ as the $A_K$-module
\begin{align}
& \bigl\{\,x\otimes b\in\Quot(A_K)\otimes_{A_K,\sigma^*}A_K\colon \ord_P(x)\ge0 \ \forall\,P\ne V,\infty\text{ and }(t-\alpha)x,(y-\beta)x \in A_K\,\bigr\} \nonumber \\
\;=\;& \bigl\{\,x\otimes b\in\Quot(A_K)\otimes_{A_K,\sigma^*}A_K\colon \ord_P(x)\ge0 \ \forall\,P\ne V,\infty\text{ and }x\otimes b(t-\alpha^q),x\otimes b(y-\beta^q) \in A_K\,\bigr\} \nonumber \\
\;=\;& \Gamma(\Spec A_K,\CO_{C_K}([V^{(1)}]))\,.  \label{EqSigma*M}
\end{align}
We define an $A$-motive $\ulM=(M,\tau_M)$ over $K$ of rank $1$ and dimension $1$ as follows.
\begin{align*}
M \;=\;& \Gamma(\Spec A_K, \CO_{C_K}([V])) \\
\sigma^*M \;=\;& \Gamma(\Spec A_K, \CO_{C_K}([V^{(1)}]))\\
\tau_M \;:=\;& f\colon \sigma^*M \isoto M\otimes \CO_{C_K}(-[\Xi])\;\subset\; M\\
\coker\tau_M\;\cong\;& \CO_{C_K}/\CO_{C_K}(-[\Xi])\;\cong\; K\,.
\end{align*}
This $A$-motive corresponds to a Drinfeld $A$-module of rank $1$ over $K$, which is described more explicitly in \cite[\S\,3]{PapasGreen}. In particular, $\ulM$ is uniformizable. Moreover, $\ulM$ has CM through $\CO_E:=A$. We set $E=Q$ and then $H_E=\Hom_Q(E,Q^\alg)=\{\id_E\}$ consists of one single element $\psi=\id_E$. Correspondingly we drop all occurrences of $\psi$ from the notation used in Section~\ref{SectColmezConjAMot}. The de Rham cohomology of $\ulM$ is
\[
 \Koh^1_{\dR}(\ulM, K\dbl t-\theta \dbr) = \sigma^\ast M \otimes_{\CO_{C_K}}\invlim A_K/\CJ^n= \Gamma(\Spec A_K, \CO_{C_K}(V^{(1)})) \otimes_{\CO_{C_K}}K\dbl t-\theta \dbr=K\dbl t-\theta \dbr ,
\]
because $\invlim A_K/\CJ^n=K\dbl t-\theta\dbr$, and $\CO_{C_K}(V^{(1)})$ equals $\CO_{C_K}$ on the neighborhood $C_K\setminus\{V^{(1)}\}$ of $\Xi$.
For the unique element $\psi=\id_E$ in $H_E$ we have $\Koh^\psi(\ulM,K\dbl y_\psi-\psi(y_\psi)\dbr)=\Koh^1_\dR(\ulM,K\dbl t-\theta\dbr)$ and the Hodge-Pink lattice $\Fq^\ulM:=\tau_M^{-1}(M\otimes_{A_R}\invlim A_K/\CJ^n)\subset\Koh^1_{\dR}\bigl(\ulM,K\dpl t-\theta\dpr\bigr)$ of $\ulM$ satisfies 
\[
\Fq^\ulM=f^{-1}\cdot\Koh^1_{\dR}\bigl(\ulM,K\dbl t-\theta\dbr\bigr)=(t-\theta)^{-1}\cdot\Koh^1_{\dR}\bigl(\ulM,K\dbl t-\theta\dbr\bigr)
\]
by \eqref{EqDivisorOff}. So according to Definition~\ref{DefCMTypeOfAMotive} the CM-type of $\ulM$ is $\Phi=(d_{\id_E})$ with $d_{\id_E}=1$. 
\end{Point}

\begin{Point}\label{Point18.4}
We will next see that $\ulM$ has a good integral model $\ul\CM$ over $\CO_K$. Namely, by a similar computation as in \eqref{EqSigma*M} the invertible sheaf $\CO_{C_{\CO_K}}([V])$ on $C_{\CO_K}$ satisfies
\begin{align*}
\sigma^*\CO_{C_{\CO_K}}([V]) \;=\; \CO_{C_{\CO_K}}([V^{(1)}]).
\end{align*}
Then the good model $\ul\CM=(\CM,\tau_\CM)$ of $\ulM$ over $\CO_K$ is given by
\begin{align*}
\CM \;=\;& \Gamma(\Spec A_{\CO_K},\CO_{C_{\CO_K}}([V])) \\
\sigma^*\CM \;=\;& \Gamma(\Spec A_{\CO_K},\CO_{C_{\CO_K}}([V^{(1)}]))\\
\tau_\CM \;:=\;& f\colon \sigma^*\CM \isoto \CM\otimes\CO_{C_{\CO_K}}(-[\Xi])\;\subset\; \CM\\
\coker\tau_\CM\;\cong\;& A_{\CO_K}/A_{\CO_K}(-[\Xi])\;\cong\; \CO_K\,.
\end{align*}
\end{Point}

\begin{Point}\label{Point18.5}
With respect to the inclusion $K\subset\BC_\infty$ Papanikolas and Green~\cite[\S\,4]{PapasGreen} calculate $\Koh^1_\Betti(\ulM, A)$ as follows. They fix $(q-1)$-st roots of $-\alpha$ and $m\theta -\epsilon$, and set 
\begin{align*}
  \nu_\phi &:= (m\theta - \epsilon)^{1/(1-q)} \prod_{i=0}^\infty \Biggl( 1 - \biggl( \frac{m}{m\theta - \epsilon} \biggr)^{q^i} t + \biggl( \frac{1}{m\theta -\epsilon} \biggr)^{q^i} y \Biggr),\\
  \delta_\phi &:= (-\alpha)^{1/(1-q)} \prod_{i=0}^\infty \biggl( 1 - \frac{t}{\alpha^{q^i}}
  \biggr). 
\end{align*}
Since $v_\infty(\alpha) = -2$ in $\BC_\infty$, it follows that the product for $\delta_{\phi}$ converges in $\Gamma(\FC\setminus\{\infty\},\CO_\FC)$, is invertible on $\FC\setminus\FD$ and has zeroes of order $1$ at $V^{(i)}$ and $-V^{(i)}$ for all $i\in\BN_0$. Since  $v_\infty(m) = -q$, and so $v_\infty(m\theta-\epsilon)=-q-2$ and $v_\infty(\frac{m}{m\theta-\epsilon})=2$ it similarly follows that $\nu_{\phi} $ converges in $\Gamma(\FC\setminus\{\infty\},\CO_\FC)$ and is invertible on $\FC\setminus\FD$. Moreover, $\nu_\phi$ has zeroes of order $1$ at $\Xi^{(i)}$ and $-V^{(i)}$ and $V^{(i+1)}$ for all $i\in\BN_0$, because $1- \frac{m}{m\theta - \epsilon} \theta + \frac{1}{m\theta -\epsilon} \epsilon=0$ and $1- \frac{m}{m\theta - \epsilon} \alpha - \frac{1}{m\theta -\epsilon} (\beta+a_1\alpha+a_3)=0$ and $1- \frac{m}{m\theta - \epsilon} \alpha^q + \frac{1}{m\theta -\epsilon} \beta^q=0$. These functions satisfy the equations
\[
\nu_\phi \;=\; \nu\cdot \sigma^* \nu_\phi \;=\;(y-\epsilon-m\cdot(t-\theta))\cdot\sigma^*\nu_\phi\qquad \text{and}\qquad  \delta_\phi \;=\; \delta\cdot  \sigma^*\delta_\phi\;=\;(t-\alpha)\cdot \sigma^*\delta_\phi\,.
\]
Thus with the corresponding $(q-1)$-st root $\xi^{1/(q-1)}$ of $\xi = -\frac{m\theta -\epsilon}{\alpha} = -( m + \frac{\beta+a_1\alpha +a_3}{\alpha})$ we set
\begin{equation}\label{EqlambdaM}
\lambda_\ulM \;:=\; \frac{\nu_\phi}{\delta_\phi}\;=\;\xi^{1/(1-q)} \prod_{i=0}^\infty \frac{\sigma^{i*}f}{\xi^{q^i}}\;\in\; \Gamma(\FC\setminus\FD,\CO_\FC)^{\times}.
\end{equation}
Then $\tau_M(\sigma^* \lambda_\ulM) = f\cdot\sigma^*\lambda_\ulM=\lambda_\ulM$, and $\lambda_\ulM$ is a meromorphic function on $\FC\setminus\{\infty\}$ without poles or zeroes on $\FC\setminus\FD$. (By looking at the product decomposition of $\lambda_\ulM$ one even sees that it has a simple pole at $V$ and simple zeroes at $\Xi^{(i)}$ for all $i\in\BN_0$.) So we obtain
\begin{equation} \label{omegarhoprod}
\Koh^1_\Betti(\ulM, A) \;=\; \lambda_\ulM\cdot A.
\end{equation}
Let $u \in  \Koh_{1,\Betti}(\ulM, A)$ be the generator such that  $\langle u, \lambda_\ulM\rangle =1$. We also write $u_{\id_K}:=u$.
\end{Point}

\begin{Point}\label{Point18.6}
We can take $\omega:=\omega_\psi:= \sigma^*\delta^{-1}=(t-\alpha^q)^{-1}$ as a generator of $\Koh^1_{\dR}(\ulM, K\dbl t-\theta \dbr)$. Then the comparison isomorphism $h_{\Betti, \dR} = \sigma^\ast h_\ulM$ from Theorem~\ref{PeriodIso} sends the generator $\lambda_\ulM$ of $\Koh^1_\Betti(\ulM, A)$ to $\sigma^*\lambda_\ulM=\sigma^*(\lambda_\ulM\delta)\cdot\omega\in\Koh^1_{\dR}(\ulM, K\dbl t-\theta \dbr)$ and the comparison isomorphism $h_{\Betti, \dR} = \sigma^\ast h_\ulM\mod\CJ$ from \eqref{Eq2PeriodIso} sends the generator $\lambda_\ulM$ of $\Koh^1_\Betti(\ulM, A)$ to $\sigma^*(\lambda_\ulM\delta)(\Xi)\cdot\omega\in\Koh^1_{\dR}(\ulM, K)$. Therefore,
\begin{align*}
\langle u, h_{\Betti, \dR}^{-1}(\omega)\rangle_\infty \;=\; \langle u, \sigma^*(\lambda_\ulM\delta)(\Xi)^{-1}\cdot \lambda_\ulM\rangle_\infty \;=\;\frac{\xi^{q/(q-1)}}{(\sigma^*\delta)(\Xi)} \prod_{i=1}^\infty \frac{\xi^{q^i}}{(\sigma^{i*}f)(\Xi)}\,.
\end{align*}
To compute the absolute value of $\langle u, h_{\Betti, \dR}^{-1}(\omega)\rangle_\infty$ we observe that for every $i\in\BN_{>0}$
\[
\biggl|\frac{\xi^{q^i}}{(\sigma^{i*}f)(\Xi)}\biggr|_\infty\;=\;\biggl|\frac{1-\frac{\theta}{\alpha^{q^i}}}{1 - ( \frac{m}{m\theta - \epsilon} )^{q^i} \theta + ( \frac{1}{m\theta -\epsilon})^{q^i} \epsilon}\biggr|_\infty\;=\;1\,,
\]
as well as $v_\infty(\xi)=-q$, whence $|\xi^{q/(q-1)}|_\infty=q^{q^2/(q-1)}$, and $|(\sigma^*\delta)(\Xi)|_\infty=|(t-\alpha^q)(\Xi)|_\infty=|\theta-\alpha^q|_\infty=|\alpha^q|_\infty=q^{2q}$. Thus we obtain
\begin{align}\label{EqValueAtInfinity}
& \mathrel{\Big |}{\TS\int_{u}\omega}\mathrel{\Big |_\infty}\;:=\; \mathrel{\Big |}{\langle u, h_{\Betti, \dR}^{-1}(\omega)\rangle_\infty}\mathrel{\Big |_\infty}\;=\;q^{\frac{q^2}{q-1}-2q}\;=\;q^{\frac{q}{q-1}-q}\qquad\text{and} \nonumber \\
& \fbox{$\log\mathrel{\Big |}{\TS\int_{u}\omega}\mathrel{\Big |_\infty}\;=\;\bigl(\frac{q}{q-1}-q\bigr)\log q$\,.}
\end{align}
\end{Point}

\begin{Point}\label{Point18.7}
We consider the set $H_K:=\Hom_Q(K,Q^\alg)=\Gal\bigl(K/\BF_q(\theta,\epsilon)\bigr)$ which actually is a group, because $K$ is Galois over $\BF_q(\theta,\epsilon)$. It is isomorphic to the group $C(\BF_q)$ under the map $\eta\mapsto P_\eta:=V-\eta(V)$. Indeed, since $\eta(\Xi)=\Xi\in C(K)$ is fixed by $\eta$ we see that $\eta(V)$ still satisfies $\eta(V)-\eta(V)^{(1)}=\eta(V)-\eta(V^{(1)})=\eta(\Xi)=\Xi=V-V^{(1)}$. Therefore, the point $P_\eta=V-\eta(V)$ satisfies $P_\eta^{(1)}=P_\eta$, and hence $P_\eta\in C(\BF_q)$. Since the coordinates $(\alpha,\beta)$ of $V$ generate the field extension $K/\BF_q(\theta,\epsilon)$, the map $\eta\mapsto P_\eta$ is bijective. It is a group homomorphism, because $P_{\tilde\eta\eta}=V-\tilde\eta\eta(V)=V-\tilde\eta(V)+\tilde\eta(V)-\tilde\eta\eta(V)=P_{\tilde\eta}+\tilde\eta(P_\eta)=P_{\tilde\eta}+P_\eta$, as $P_\eta\in C(\BF_q)$ is fixed by $\tilde\eta$. In particular, $\#H_K=\#C(\BF_q)$.

We now fix an element $\eta\in H_K$ with $\eta\ne\id_K$ and let the $A$-motive $\ul\CM^\eta$ over $\CO_K$ and $\omega^\eta\in\Koh^1_\dR(\ulM^\eta,K\dbl t-\theta\dbr)$ be deduced from $\ul\CM$ and $\omega$ by base extension. Then $\ul\CM^\eta$ is isogenous to $\ul\CM$ by the theory of complex multiplication, which was developed for Drinfeld modules by Hayes~\cite{Hayes79} and for general $A$-motives by Pelzer~\cite{Pelzer09}. We give an elementary and explicit treatment for our $\ul\CM$. We claim that there is an isomorphism
\begin{equation}\label{EqIsomgeta}
g_\eta\colon\ul\CM^\eta\;\isoto\;\ul\CM\otimes\CO(-[P_\eta])\;=:\;\ul\CM(-[P_\eta])\,, 
\end{equation}
where $\CO(-[P_\eta])$ denotes the invertible sheaf on $\Spec A_{\CO_K}$ associated to the divisor $-[P_\eta]\times_{\BF_q}\Spec\CO_K$. Namely, the $A$-motives $\ul\CM^\eta$ and $\ul\CM(-[P_\eta])$ correspond to the invertible sheaves $\CO_{C_K}([\eta(V)])=\CO_{C_K}([V-P_\eta])$ and $\CO_{C_K}([V])\otimes\CO_{C_K}(-[P_\eta])=\CO_{C_K}([V]-[P_\eta])$ on $C_K$, respectively.

By \eqref{EqDefOfV} and \cite[Corollary~III.3.5]{Silverman86} the divisor $[V-P_\eta]-[V]+[P_\eta]-[\infty]$ on $C_K$ is principal and there is a function $g_\eta \in K(t,y)=\Quot(A_K)$ with
\begin{equation}\label{EqDivisorOfgeta}
\di(g_\eta) \;=\; [V-P_\eta]-[V]+[P_\eta]-[\infty] \;=\; [V-P_\eta]+[-V]+[P_\eta]-[V]-[-V]-[\infty]\,.
\end{equation}
It can be written explicitly as follows. By construction of the group law on $C$, the three points $V-P_\eta=\eta(V)$ and $-V$ and $P_\eta$ lie on a single line whose slope is
\[
\frac{\eta(\beta)-y(P_\eta)}{\eta(\alpha)-t(P_\eta)}\;=\;\frac{\eta(\beta)+\beta+a_1\alpha+a_3}{\eta(\alpha)-\alpha}\;=\;\frac{y(P_\eta)+\beta+a_1\alpha+a_3}{t(P_\eta)-\alpha}\;\in\;\CO_K\,.
\]
This slope is a priory an element of $K$, but we see that it lies in $\CO_K$ by reasoning like in Lemma~\ref{LemmaPolesOfm}. Indeed, the slope has a pole if and only if one of the points $P_\eta$ or $-V$ or $V-P_\eta=\eta(V)$ equals $\infty$. If $P_\eta=\infty$, then the bijectivity of the map $\eta\mapsto P_\eta$ implies $\eta=\id_K$ which was excluded. If $V-P_\eta=\infty$, and hence $V=P_\eta\in C(\BF_q)$, or if $-V=\infty$, then $\Xi=\infty$, and so the poles of the slope do not lie in $\Spec\CO_K$. That is, the slope lies in $\CO_K$ as claimed.  Then we can take
\begin{equation}\label{EqDefOfgeta}
g_\eta \;=\; \frac{y-\eta(\beta)-\frac{\eta(\beta)+\beta+a_1\alpha+a_3}{\eta(\alpha)-\alpha}(t-\eta(\alpha))}{t-\alpha}
\end{equation}
as an isomorphism $\CM^\eta\isoto\CM\otimes\CO(-[P_\eta])$. Here we use that formula~\eqref{EqDivisorOfgeta} for the divisor of $g_\eta$ also holds on $C_{\CO_K}$, because both numerator and denominator of $g_\eta$ lie in $\CO_K[t,y]$ and do not vanish on an entire fiber of $C_{\CO_K}$ over a closed point of $\Spec\CO_K$.

In order to see that $g_\eta$ is an isomorphism of $A$-motives, it remains to prove that $g_\eta\circ\eta(f)=f\circ\sigma^*g_\eta$. Since the divisor on both sides equals $[\eta(V)^{(1)}]+[P_\eta]-[V]+[\Xi]-2[\infty]$, both sides differ by multiplication with an element of $K\mal$. Multiplying both sides with the common denominator and comparing the coefficients of $t^2y$ shows that both sides are equal as desired.
\end{Point}

\begin{Point}\label{Point18.8}
The isomorphism $g_\eta\colon\ul\CM^\eta\isoto\ul\CM(-[P_\eta])$ induces isomorphisms on (co-)homology
\begin{alignat*}{2}
g_\eta\colon & \Koh^1_\dR(\ul\CM^\eta,\CO_K) && \;\isoto\;\Koh^1_\dR(\ul\CM(-[P_\eta]),\CO_K)\,, \\
g_\eta\colon & \Koh^1_\Betti(\ulM^\eta,A) && \;\isoto\;\Koh^1_\Betti(\ulM(-[P_\eta]),A)\,,\qquad\text{and} \\
g_\eta\colon & \Koh_{1,\Betti}(\ulM^\eta,A) && \;\isoto\;\Koh_{1,\Betti}(\ulM(-[P_\eta]),A)\,.
\end{alignat*}
These are compatible with the period isomorphisms $h_{\Betti,\dR}$ and the pairing between $\Koh^1_\Betti$ and $\Koh_{1,\Betti}$. So we may replace $\ul\CM^\eta$ by $\ul\CM(-[P_\eta])$ in the rest of our computation.

Since $\omega=(t-\alpha^q)^{-1}$ and $\omega\mod(t-\theta)=(\theta-\alpha^q)^{-1}\in\Koh^1_\dR(\ul\CM,\CO_K)$ we obtain $\omega^\eta=(t-\eta(\alpha)^q)^{-1}$ and $\omega^\eta\mod(t-\theta)=(\theta-\eta(\alpha)^q)^{-1}$, and we set $\wt\omega^\eta:=g_\eta(\omega^\eta)\in\Koh^1_{\dR}(\ulM(-[P_\eta]),K\dbl t-\theta\dbr)$ and $\wt\omega^\eta\mod(t-\theta)=g_\eta(\omega^\eta)\mod(t-\theta)\in\Koh^1_{\dR}(\ul\CM(-[P_\eta]),\CO_K)$. By definition, $\Koh^1_{\dR}(\ul\CM,\CO_K):=\sigma^*\CM/\CJ\sigma^*\CM=\sigma^*\CM|_\Xi$, with $\CJ=(t-\theta,y-\epsilon)$ being the vanishing ideal of the $\CO_K$-valued point $\Xi\in C(\CO_K)$. We compute
\begin{align*}
\wt\omega^\eta&\;=\;\sigma^*(g_\eta)\cdot(t-\eta(\alpha)^q)^{-1}\\[2mm]
&\;=\;\frac{y-\eta(\beta)^q-\frac{\eta(\beta)^q+\beta^q+a_1\alpha^q+a_3}{\eta(\alpha)^q-\alpha^q}(t-\eta(\alpha)^q)}{t-\alpha^q}\,\cdot\,(t-\eta(\alpha)^q)^{-1}\\[2mm]
&\;=\;\frac{y-\eta(\beta)^q-\frac{\eta(\beta)^q+\beta^q+a_1\alpha^q+a_3}{\eta(\alpha)^q-\alpha^q}(t-\eta(\alpha)^q)}{t-\eta(\alpha)^q}\,\cdot\,(t-\alpha^q)^{-1} \qquad\text{and}\\[2mm]
\wt\omega^\eta\mod(t-\theta)&\;=\;\frac{\epsilon-\eta(\beta)^q-\frac{\eta(\beta)^q+\beta^q+a_1\alpha^q+a_3}{\eta(\alpha)^q-\alpha^q}(\theta-\eta(\alpha)^q)}{\theta-\eta(\alpha)^q}\,\cdot\,(\theta-\alpha^q)^{-1}\\[2mm]
&\;=\;\Bigl(\frac{\epsilon-\eta(\beta)^q}{\theta-\eta(\alpha)^q}-\frac{\eta(\beta)^q+\beta^q+a_1\alpha^q+a_3}{\eta(\alpha)^q-\alpha^q}\Bigr)\,\cdot\,\omega\mod(t-\theta)\,.
\end{align*}
The element $\sigma^*g_\eta|_\Xi:=\frac{\epsilon-\eta(\beta)^q}{\theta-\eta(\alpha)^q}-\frac{\eta(\beta)^q+\beta^q+a_1\alpha^q+a_3}{\eta(\alpha)^q-\alpha^q}$ has absolute value
\begin{equation}\label{EqValueOfgeta}
\bigl|\sigma^*g_\eta|_\Xi\bigr|_\infty\;=\;q^{q}\,,\qquad\text{and hence}\qquad \log\bigl|\sigma^*g_\eta|_\Xi\bigr|_\infty\;=\;q\,\log q\,,
\end{equation}
because the first summand has absolute value $q$ and is dominated by the second summand which has absolute value $q^{q}$.
\end{Point}

\begin{Point}\label{Point18.9}
We now compute $v(\omega^\eta)$ for all places $v\ne\infty$ of $Q$ and for all $\eta\in H_K$. Observe that by \eqref{EqDivisors} the multiplication with $t-\alpha$ induces an isomorphism $\CO_{C_{\CO_K}}([V])\isoto\CO_{C_{\CO_K}}(2[\infty]-[-V])$ and the multiplication with $t-\alpha^q$ induces an isomorphism $\CO_{C_{\CO_K}}([V^{(1)}])\isoto\CO_{C_{\CO_K}}(2[\infty]-[-V^{(1)}])$. We restrict this morphism to the $\CO_K$-valued point $\Xi$, that is, we pull it back under the corresponding morphism $h_\Xi\colon\Spec\CO_K\to C_{\CO_K}$. To do so we first claim that $h_\Xi$ factors through the open subscheme of $C_{\CO_K}$ which is the complement of $\{\infty\}\cup\{-V^{(1)}\}$. Indeed, the locus on $C_{\CO_K}$ where $\Xi=-V^{(1)}$ is equal to the locus where $V=\infty$, and the latter locus does not lie above $\Spec\CO_K$. The same is true for the locus where $\Xi=\infty$. We conclude that multiplication with $\theta-\alpha^q$ induces an isomorphism
\begin{alignat*}{3}
\theta-\alpha^q\colon\;\Koh^1_\dR(\ul\CM,\CO_K)\;=\;&&\;h_\Xi^*\,\CO_{C_{\CO_K}}([V])&\;\isoto\;&& h_\Xi^*\,\CO_{C_{\CO_K}}(2[\infty]-[-V])\;=\;h_\Xi^*\,\CO_{C_{\CO_K}}\;=\;\CO_K \\[1mm]
\omega\mod(t-\theta)\;=\;&&\;(\theta-\alpha^q)^{-1} \quad & \;\longmapsto\; && \quad 1\;.
\end{alignat*}
This shows that $\Koh^1_\dR(\ul\CM,\CO_K)=\CO_K\cdot\omega\mod(t-\theta)$, and by base extension under $\eta$, also $\Koh^1_\dR(\ul\CM^\eta,\CO_K)=\CO_K\cdot\omega^\eta\mod(t-\theta)$. This yields
\begin{equation}\label{EqvOfomega}
\fbox{$v(\omega^\eta)\;=\;0\qquad\text{for every place }v\ne\infty\text{ and every }\eta\in H_K$\,.}
\end{equation}
\end{Point}

\begin{Point}\label{Point18.10}
We next compute $\Koh^1_\Betti(\ulM(-[P_\eta]),A)$ for the $A$-motive $\ulM(-[P_\eta])=\bigl(\CO_{C_K}([V]-[P_\eta]), \tau=f\bigr)$. The function $\lambda_\ulM$ from \eqref{EqlambdaM} satisfies $\tau(\sigma^* \lambda_\ulM) = f\cdot\sigma^*\lambda_\ulM=\lambda_\ulM$, but it does not have a zero at $P_\eta$, and hence does not lie in $\ulM(-[P_\eta])\otimes_{A_K}\CO_{\FC\setminus\FD}$ and not in $\Koh^1_\Betti(\ulM(-[P_\eta]),A)$. Instead, 
\[
\Koh^1_\Betti(\ulM(-[P_\eta]),A)\;=\;\lambda_\ulM\cdot\Gamma(\Spec A,\CO_C(-[P_\eta]))\;=\;\lambda_\ulM\cdot\Fp_\eta\,, 
\]
where $\Fp_\eta\subset A$ is the maximal ideal defining the $\BF_q$-valued point $P_\eta\in C$. Correspondingly, when we take $\tilde u_\eta:=u\in\Koh_{1,\Betti}(\ulM(-[P_\eta]),Q)=\Koh_{1,\Betti}(\ulM,Q)$, which pairs with $\lambda_\ulM$ to $\langle \tilde u_\eta,\lambda_\ulM\rangle=\langle u,\lambda_\ulM\rangle=1$, we obtain
\[
\Koh_{1,\Betti}(\ulM(-[P_\eta]),A)\;=\;\tilde u_\eta\cdot\Gamma(\Spec A,\CO_C([P_\eta]))\;=\;\tilde u_\eta\cdot\Fp_\eta^{-1}\,.
\]
This yields
\begin{equation}\label{EqvetaOfueta}
\fbox{$v_\eta(\tilde u_\eta)\cdot\log q_v\;=\;
\begin{cases}
0\quad&\text{if }v\ne\Fp_\eta\text{ or }\eta=\id_K\,,\\[1mm]
\log q\quad&\text{if }v=\Fp_\eta\text{ and }\eta\ne\id_K\,.   
\end{cases}
$}
\end{equation}
Also from \eqref{EqValueAtInfinity} and \eqref{EqValueOfgeta} we compute the absolute value
\begin{equation}\label{EqValuationPeriodeta}
\fbox{$\log\Bigl|{\TS\int_{\tilde u_\eta}}\wt\omega^\eta\Bigr|_\infty=\log\Bigl|\langle u,\sigma^*g_\eta|_\Xi\cdot\omega\rangle_\infty\Bigr|_\infty=\log\bigl|\sigma^*g_\eta|_\Xi\bigr|_\infty+\log\bigl|\langle u,\omega\rangle_\infty\bigr|_\infty=\tfrac{q}{q-1}\log q\,.$}
\end{equation}
\end{Point}

\begin{Point}\label{Point18.12}
Finally, we recall the zeta functions for the elliptic curve $C$, which are defined as the following products which converge for $s \in \BC$ with $\CR e(s)>1$ 
\begin{align*}
& \zeta_C(s) \;:=\; \prod_{\text{all }v}(1-(\#\BF_v)^{-s})^{-1} \;=\; \prod_{\text{all }v}(1-q_v^{-s})^{-1} \;=\; \frac{1-(q+1 -\#C(\BF_q))q^{-s}+q^{1-2s}}{(1-q^{-s})(1-q^{1-s})} \qquad\text{and}\\
& \zeta_A(s) \;:=\; \prod_{  v \ne \infty}(1-(\#\BF_v)^{-s})^{-1} \;=\; \prod_{v \ne \infty}(1-q_v^{-s})^{-1} \;=\; \frac{1-(q+1 -\#C(\BF_q))q^{-s}+q^{1-2s}}{1-q^{1-s}}.
\end{align*}
Since the CM-field is $E=Q$, $H_E=\{\id\}$ and the CM-type is given by $d_\id=1$, we have $a^0_{E,\id,\Phi}=\BOne$. Since $L^\infty(\BOne,s)=\zeta_A(s)$ we obtain
\begin{equation}\label{EqZetaInExample18}
\fbox{$\DS Z^\infty(\BOne,0)=\frac{\zeta'_A(0)}{\zeta_A(0)} = \Bigl(\frac{q+1-\#C(\BF_q)-2q}{1-(q+1-\#C(\BF_q))+q} -\frac{q}{1-q}\Bigl)\log q= \Bigl(\frac{1-\#C(\BF_q)-q}{\#C(\BF_q)} +\frac{q}{q-1}\Bigl)\log q\,.$}
\end{equation}
\end{Point}
We now put everything together using Theorem~\ref{ThmValueAtV} and formula~\eqref{EqConvSumAMot} to compute
\begin{alignat*}{2}
\TS\tfrac{1}{\#H_K}\sum\limits_v\sum\limits_{\eta\in H_K}\log\bigl|\int_{\tilde u_\eta}\omega_\psi^\eta\bigr|_v\;=\;&\Bigl(\frac{q+\#C(\BF_q)-1}{\#C(\BF_q)}-\frac{q}{q-1}\Bigl)\cdot\log q &&\text{from \eqref{EqZetaInExample18}} \\
& +\frac{1}{\#C(\BF_q)}\Bigl(\frac{q}{q-1}-q\Bigr)\cdot\log q \qquad\qquad\qquad&& \text{from \eqref{EqValueAtInfinity}} \\
& +\frac{\#C(\BF_q)-1}{\#C(\BF_q)}\,\frac{q}{q-1}\cdot\log q && \text{from \eqref{EqValuationPeriodeta}}
\\
& -\frac{\#C(\BF_q)-1}{\#C(\BF_q)}\cdot\log q && \text{from \eqref{EqvOfomega} and \eqref{EqvetaOfueta}}\\
\;=\;&0\,.
\end{alignat*}
Miraculously, all terms cancel and this shows that in the present example our Conjecture~\ref{ConjColmezAMot} holds true.


\bigskip\noindent
{\it Acknowledgments.} We thank F.~Breuer, F.~Pellarin and D.~Thakur for helpful suggestions and for remarks about the early history of the subject. We thank the unknown referees for their careful reading and their useful suggestions. Both authors acknowledge the support received from the German Science Foundation (DFG) in form of grant SFB 878 and Germany's Excellence Strategy EXC 2044--390685587 ``Mathematics M\"unster: Dynamics--Geometry--Structure''.


\addcontentsline{toc}{section}{References}

\bibliography{references1}

\vfill

\begin{minipage}[t]{0.5\linewidth}
\noindent
Urs Hartl\\
Universit\"at M\"unster\\
Mathematisches Institut \\
Einsteinstr.~62\\
D -- 48149 M\"unster
\\ Germany
\\[1mm]
\href{https://www.uni-muenster.de/Arithm/hartl/}{https:/\hspace{-1mm}/www.uni-muenster.de/Arithm/hartl/}
\end{minipage}
\begin{minipage}[t]{0.45\linewidth}
\noindent
Rajneesh Kumar Singh\\
Indian Institute of Technology Goa\\
Goa Engineering College Campus\\
Farmagudi, Ponda-403401, Goa\\
India
\\[1mm]
\end{minipage}

\end{document}